\title{Framed instanton homology and concordance}
\author{John A. Baldwin}
\email{john.baldwin@bc.edu}
\address{Department of Mathematics\\Boston College}
\author{Steven Sivek}
\email{s.sivek@imperial.ac.uk}
\address{Department of Mathematics\\Imperial College London}
\thanks{JAB was supported by NSF CAREER Grant DMS-1454865.}
\newtheorem*{rep@theorem}{\rep@title}
\newcommand{\newreptheorem}[2]{%
\newenvironment{rep#1}[1]{%
 \def\rep@title{#2 \ref{##1}}%
 \begin{rep@theorem}}%
 {\end{rep@theorem}}}
\newtheorem {theorem}{Theorem}
\newtheorem {lemma}[theorem]{Lemma}
\newtheorem {proposition}[theorem]{Proposition}
\newtheorem {corollary}[theorem]{Corollary}
\newtheorem {conjecture}[theorem]{Conjecture}
\newtheorem {question}[theorem]{Question}
\numberwithin{equation}{section}
\numberwithin{theorem}{section}
\theoremstyle{definition}
\newtheorem{definition}[theorem]{Definition}
\newtheorem{remark}[theorem]{Remark}
\newtheorem*{remark*}{Remark}
\newtheorem{example}[theorem]{Example}
\newlist{pcases}{enumerate}{1}
\setlist[pcases]{
  label=\bf{Case~\arabic*:}\protect\thiscase.~,
  ref=\arabic*,
  align=left,
  labelsep=0pt,
  leftmargin=0pt,
  labelwidth=0pt,
  parsep=0pt
}
\newcommand{\case}[1][]{%
  \if\relax\detokenize{#1}\relax
    \def\thiscase{}%
  \else
    \def\thiscase{~#1}%
  \fi
  \item
}
\newcommand{\Z}{\mathbb{Z}}
\newcommand{\R}{\mathbb{R}}
\newcommand{\C}{\mathbb{C}}
\newcommand{\F}{\mathbb{F}}
\newcommand{\Q}{\mathbb{Q}}
\newcommand{\spc}{\operatorname{Spin}^c}
\newcommand{\ltb}{\mathit{tb}}
\newcommand{\lsl}{\mathit{sl}}
\newcommand{\maxtb}{\overline{\ltb}}
\newcommand{\maxsl}{\overline{\lsl}}
\newcommand{\rank}{\operatorname{rank}}
\newcommand\cC{\mathcal{C}}
\newcommand{\cT}{\mathcal{T}}
\newcommand\hfk{\mathit{HFK}}
\newcommand\hfkhat{\widehat{\hfk}}
\newcommand\SHItfun{\mathit{SHI}}
\newcommand\SHItfunn{\textbf{\textup{\underline{SHI}}}} 
\newcommand\iinvt{\Theta} 
\DeclareFontFamily{U}{mathx}{\hyphenchar\font45}
\DeclareFontShape{U}{mathx}{m}{n}{
      <5> <6> <7> <8> <9> <10>
      <10.95> <12> <14.4> <17.28> <20.74> <24.88>
      mathx10
      }{}
\DeclareSymbolFont{mathx}{U}{mathx}{m}{n}
\DeclareMathAccent{\widecheck}{0}{mathx}{"71}
\newcommand{\inr}{\operatorname{int}}
\newcommand{\hmtilde}{\widetilde{\mathit{HM}}}
\newcommand{\hfhat}{\widehat{\mathit{HF}}}
\newcommand{\pt}{\mathrm{pt}}
\newcommand{\PD}{\mathit{PD}}
\newcommand{\mirror}[1]{\overline{#1}}
\newcommand{\cinvt}{\nu^\sharp}
\newcommand{\chominvt}{\tau^\sharp}
\newcommand{\Kh}{\mathrm{Kh}}
\newcommand{\Khr}{\overline{\Kh}}
\newcommand{\Khodd}{\Kh'}
\newcommand{\Khoddr}{\overline{\Khodd}}
\newcommand{\dcover}{\Sigma_2}
\tikzset{every picture/.style=thick}
\tikzset{link/.style = { white, double = black, line width = 1.75pt, double distance = 1.25pt, looseness=1.75 }}
\tikzset{crossing/.style = {draw, circle, dotted, minimum size=0.5cm, inner sep=0, outer sep=0}}
\pgfplotsset{compat=1.12}
\begin{document}

\begin{abstract}
We define two concordance invariants of knots using framed instanton homology. These invariants $\cinvt$ and $\chominvt$ provide bounds on slice genus and maximum self-linking number, and the latter is a concordance homomorphism which agrees in all known cases with the $\tau$ invariant in Heegaard Floer homology. We use $\cinvt$ and $\chominvt$ to compute the framed instanton homology of all nonzero rational Dehn surgeries on: $20$ of the $35$ nontrivial prime knots through $8$ crossings, infinite families of twist and pretzel knots, and instanton L-space knots; and of 19 of the first 20 closed hyperbolic manifolds in the Hodgson--Weeks census. In another application, we determine when the cable of a knot is an instanton L-space knot. Finally, we discuss applications to the spectral sequence from odd Khovanov homology to the framed instanton homology of branched double covers, and to the behaviors of $\chominvt$ and $\tau$ under genus-2 mutation.
\end{abstract}

\maketitle

\section{Introduction} \label{sec:intro}
Of the four main Floer-theoretic invariants of $3$-manifolds, it is now known that Heegaard Floer homology, monopole Floer homology, and embedded contact homology are isomorphic, while the connections between these three and instanton Floer homology remain elusive.

We will consider a version of the latter called \emph{framed instanton homology}. This invariant, introduced by Kronheimer and Mrowka in \cite{km-yaft}, assigns to any closed, oriented $3$-manifold $Y$ an abelian group $I^\#(Y)$, defined as the instanton Floer homology of an admissible bundle on $Y\# T^3$. Kronheimer and Mrowka conjectured in \cite[Conjecture 7.24]{km-excision} that this theory is isomorphic to the \emph{hat} version of Heegaard Floer homology, which is  in turn isomorphic to the \emph{tilde} and \emph{hat} versions of monopole Floer  and embedded contact homology, respectively:
\begin{equation}\label{eqn:conj-iso}I^\#(Y)\cong_{\textrm{conj.}}\hfhat(Y)\cong\hmtilde(Y)\cong\widehat{\mathit{ECH}}(-Y).\end{equation}
This conjecture has only been verified for a small subset of 3-manifolds,  due to the difficulty of computing $I^\#$. Indeed, framed instanton homology has only been computed  for: connected sums of $S^1\times S^2$ \cite{scaduto}, branched double covers of two-fold quasi-alternating links \cite{scaduto,scaduto-stoffregen}, some Brieskorn  spheres including $\Sigma(2,3,6k\pm 1)$ \cite{scaduto}, a surface times a circle \cite{chen-scaduto}, and some Dehn surgeries on instanton L-space knots \cite{lpcs,bs-lspace}.
One of the main goals of our paper is to expand this list. 

We define  two concordance invariants of knots using framed instanton homology, and use these to compute the framed instanton homology (over $\C$) of:
\begin{itemize}
\item all nonzero rational Dehn surgeries on: $20$ of the $35$ nontrivial prime knots through $8$ crossings,  infinite families of twist and pretzel knots, and instanton L-space knots;  
\item 19 of the first 20 closed hyperbolic manifolds in the Hodgson--Weeks census.
\end{itemize} 
We also verify a version of the conjectured isomorphism \eqref{eqn:conj-iso} in these cases. 

We remark that the   computations above  are in some sense  a proof-of-concept, in that we expect our methods to enable computation of framed instanton homology for surgeries on many more infinite families of knots.

 In addition to these results, we determine when the cable of a knot is an instanton L-space knot, and we discuss applications  to (1) the spectral sequence from odd Khovanov homology to the framed instanton homology of branched double covers, and (2) the behaviors of our concordance invariants and the Heegaard Floer tau invariant  under genus-2 mutation. We outline our main constructions and results in detail below.

\subsection{Concordance and framed instanton homology}
We introduce two concordance invariants $\cinvt$ and $\chominvt$ using framed instanton homology with $\C$ coefficients. The first assigns to a knot $K$ the integer $\cinvt(K)$ defined by \[\cinvt(K):=N(K)-N(\mirror{K}),\] where $N(K)$ is the smallest nonnegative integer $n$ for which the  cobordism map \begin{align*}
I^\#(S^3;\C)&\to I^\#(S^3_n(K);\C)
\end{align*} induced by the trace of $n$-surgery vanishes. 
We prove in Theorem~\ref{thm:conc-invt} that $\cinvt(K)$ depends only on the smooth concordance class of $K$, and  satisfies the smooth slice genus bound \[|\cinvt(K)| \leq \max(2g_s(K)-1,0).\] 
It will be unsurprising from the definition  that  $\cinvt$ is relevant to computing framed instanton homology of Dehn surgeries. We prove the following result in this vein, via extensive analysis of surgery exact triangles (see Theorem~\ref{thm:rational-surgeries} for a slightly stronger statement):

\begin{theorem}
\label{thm:main-surgery}
For every knot $K\subset S^3$,  there is an integer $r_0(K) \geq |\cinvt(K)|$ such that
\[ \dim_\C I^\#(S^3_{p/q}(K);\C) = q\cdot r_0(K) + |p - q\cinvt(K)| \]
for all nonzero rational  $p/q$ with $p$ and $q$ relatively prime and $q \geq 1$.  If $\cinvt(K) \neq 0$ then the same is true for $p/q=0/1$.
\end{theorem}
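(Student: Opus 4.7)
My plan is to first establish the formula for integer surgeries via the $I^\#$ surgery exact triangle, and then extend to rational slopes by induction on the denominator using Farey surgery triangles. For integer $n$, the exact triangle for the slope triad $\{n-1,n,\infty\}$ on $K$ reads
\[ I^\#(S^3_{n-1}(K);\C) \to I^\#(S^3_n(K);\C) \to I^\#(S^3;\C) \xrightarrow{F_{n-1}} I^\#(S^3_{n-1}(K);\C)[1], \]
where $F_{n-1}$ is the cobordism map induced by the trace of $(n-1)$-surgery on $K$. Since $I^\#(S^3;\C)=\C$, exactness over a field forces $f(n):=\dim_\C I^\#(S^3_n(K);\C)$ to change by exactly $\pm 1$ at each step, with $f(n)-f(n-1)=+1$ precisely when $F_{n-1}=0$. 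To obtain a closed-form formula, I would next prove that the vanishing set $\{n\in\Z:F_n=0\}$ is an upward-closed half-line $\{n\geq M(K)\}$ by exhibiting a cobordism factorization $F_{n+1}=(W_n)_*\circ F_n$ through a $2$-handle cobordism $W_n:S^3_n(K)\to S^3_{n+1}(K)$. Setting $r_0(K):=f(M(K))$, the triangle analysis yields $f(n)=r_0(K)+|n-M(K)|$. Orientation-reversal invariance $I^\#(-Y)\cong I^\#(Y)$ together with the identification $S^3_n(K)=-S^3_{-n}(\mirror K)$ gives $f_K(n)=f_{\mirror K}(-n)$, so $M(\mirror K)=-M(K)$, and hence
\[ M(K) \;=\; \max(0,M(K)) - \max(0,-M(K)) \;=\; N(K)-N(\mirror K) \;=\; \cinvt(K). \]
The lower bound $r_0(K)\geq |\cinvt(K)|$ then follows from the Euler-characteristic formula $\chi(I^\#(Y))=|H_1(Y;\Z)|$ applied at $Y=S^3_{\cinvt(K)}(K)$ when $\cinvt(K)\neq 0$.

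For $p/q$ with $q\geq 2$, I would induct on $q$ via the Farey graph. Writing $p/q=(a+c)/(b+d)$ with $|ad-bc|=1$ and $b,d<q$, the triad of slopes $\{a/b,\,c/d,\,p/q\}$ on the knot exterior produces an $I^\#$ exact triangle, giving the dimension bound
\[ \dim_\C I^\#(S^3_{p/q}(K);\C) \;\leq\; \dim_\C I^\#(S^3_{a/b}(K);\C) + \dim_\C I^\#(S^3_{c/d}(K);\C), \]
with equality iff the corresponding connecting map vanishes. Applying the induction hypothesis at $a/b$ and $c/d$, plus a case analysis on the signs of $a-b\cinvt(K)$ and $c-d\cinvt(K)$ (which must agree since $p/q$ lies between them in the Farey graph), shows that the right side equals $q\,r_0(K)+|p-q\cinvt(K)|$, and the vanishings needed for equality should propagate from the integer case through functoriality of the cobordisms built into the Farey triad. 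The $p/q=0/1$ clause under the hypothesis $\cinvt(K)\neq 0$ is then covered directly by the integer formula, since $n=0$ lies unambiguously on one side of the tent minimum $M(K)=\cinvt(K)\neq 0$.

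The main obstacle I anticipate is verifying the connecting-map vanishings required at each Farey inductive step: the triangles only yield dimension inequalities, and coercing equality throughout the induction requires simultaneous control of infinitely many cobordism maps. I plan to address this by bootstrapping from the integer-case vanishing via cobordism functoriality, supplemented by the Euler-characteristic identity $\chi(I^\#(S^3_{p/q}(K);\C))=|p|$ to rule out lower-dimensional alternatives. A secondary technical hurdle is the monotonicity statement for the integer vanishing set, which depends on a Kirby-calculus factorization of the trace cobordisms that must interact correctly with the $I^\#$ functor; absent such a clean factorization, one would have to argue via more delicate naturality or blow-up arguments.
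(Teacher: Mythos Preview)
Your integer-case argument has a genuine error: the vanishing set $\{n:F_n=0\}$ is \emph{not} in general an upward-closed half-line, and the proposed factorization $F_{n+1}=(W_n)_*\circ F_n$ does not exist.  For the unknot one has $\dim I^\#(S^3_n(U);\C)=|n|$ for $n\neq 0$ but $\dim I^\#(S^3_0(U);\C)=2$, so $F_{-1}=0$ while $F_0$ is injective; the sequence $f(n)$ is W-shaped rather than V-shaped, with two minima at $n=\pm 1$.  Your formula $f(n)=r_0(K)+|n-M(K)|$ would then fail at either $n=0$ or $n=\pm 1$.  This is exactly why the theorem excludes $p/q=0$ when $\cinvt(K)=0$, and why the paper's definition of $r_0(K)$ in the W-shaped case uses $\dim I^\#(S^3_0(K),\mu)$ rather than $\dim I^\#(S^3_0(K))$.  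The paper's substitute for your factorization is Lemma~\ref{lem:cancel-2-handles}, which shows $F_n\circ G_n=0$ for $n\neq 0$ via an embedded sphere in the composite cobordism; this propagates injectivity of $F_n$ \emph{downward} (not vanishing upward), and crucially breaks down at $n=0$, forcing the separate analysis in Proposition~\ref{prop:unimodal-rank} and Remark~\ref{rem:N-equals-1-1}.  Your identity $M(\mirror K)=-M(K)$ also presupposes a unique minimum, which fails in the W-shaped case; the paper instead defines $\cinvt(K)=N(K)-N(\mirror K)$ with $N(K)\geq 0$, and these need not be negatives of each other.

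For the rational step, the Euler-characteristic bound you propose is too weak: knowing $\chi(I^\#(S^3_{p/q}(K)))=|p|$ only constrains the dimension modulo $2$, so it cannot by itself force the connecting map in a single Farey triangle to vanish.  The paper's mechanism (Proposition~\ref{prop:two-triangles}) is to produce a \emph{second} surgery triangle involving the Farey parents $a/b$, $c/d$ together with a third slope $e/f$, and to show that the composite $F\circ G$ (or $G\circ F$) of the relevant connecting maps vanishes, again via an embedded $2$-sphere of square zero.  The inductive hypothesis then makes one of $F,G$ injective or surjective in the second triangle, which forces the other to be zero, splitting the first triangle.  Without this two-triangle trick your induction does not close.
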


\begin{remark}
The integer $r_0(K)$ is defined in most cases (e.g., when $\cinvt(K)\neq 0$) by  \[r_0(K):=\dim_\C I^\#(S^3_{\cinvt(K)}(K);\C).\] See Definition \ref{def:shape} for   details.
\end{remark}

\begin{remark}
\label{rmk:nufromsurgery}
Note from Theorem \ref{thm:main-surgery} that $\cinvt(K)$ and $r_0(K)$ are determined by the pair \[\dim_\C I^\#(S^3_{N}(K);\C) \quad\textrm{and}\quad\dim_\C I^\#(S^3_{-N}(K);\C)\] for sufficiently large integers $N$. Similarly, \[\dim_\C I^\#(S^3_{p/q}(K);\C)\] is determined for \emph{all} nonzero rationals $p/q$ by $\cinvt(K)$ and the value of \[\dim_\C I^\#(S^3_{r/s}(K);\C)\] for \emph{any} nonzero rational number $r/s$.
\end{remark}

\begin{remark}
\label{rmk:euler}
Framed instanton  homology comes with  an absolute $\Z/2\Z$-grading. We note that  $\dim_\C I^\#(Y;\C)$ determines this grading since the associated Euler characteristic is \[\chi (I^\#(Y;\C))=\begin{cases}|H_1(Y;\Z)|,& b_1(Y)=0\\
0,& b_1(Y)>0
\end{cases}\] as proven by Scaduto \cite{scaduto}.  In particular, the formula in Theorem \ref{thm:main-surgery}  completely determines the graded vector space $I^\#(S^3_{p/q}(K);\C)$.
\end{remark}

\begin{remark}
One should compare Theorem \ref{thm:main-surgery} with \cite[Proposition 13]{hanselman-cosmetic}, proved using the immersed curves formulation of bordered Floer homology; and \cite[Proposition~9.6]{osz-rational}, proved earlier using the rational surgery formula in knot Floer homology.
\end{remark}

Given Theorem \ref{thm:main-surgery} and our interest in computing framed instanton homology of surgeries, we would like tools for computing $\cinvt$. This partially motivates the definition of our second concordance invariant $\chominvt$. We  prove in Theorem~\ref{thm:nu-quasi-hom} that $\cinvt$ defines a quasi-morphism from the smooth concordance group $\cC$ to $\Z$, and  define  \[\chominvt(K) := \frac{1}{2}\lim_{n\to\infty} \frac{\cinvt(\#^n K)}{n} \] to be one-half its homogenization. This  concordance invariant then defines a homomorphism \[\chominvt:\cC\to \R\] which satisfies the slice genus bound (Proposition \ref{prop:tau-invariant}) \begin{equation}\label{eqn:tauslice}|\chominvt(K)|\leq g_s(K).\end{equation} Moreover, we use the contact invariant we defined in \cite{bs-instanton} to prove that this concordance homomorphism gives rise to a \emph{slice-torus invariant}, as defined by Lewark \cite{lewark} following Livingston \cite{livingston-tau}; more precisely:

\begin{theorem}\label{thm:slice-torus}
 The concordance homomorphism $2\chominvt$ is a slice-torus invariant.
 \end{theorem}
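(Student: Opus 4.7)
The plan is to verify the two defining conditions of a slice-torus invariant (in the Livingston--Lewark convention) for $2\chominvt$: that it is a concordance homomorphism satisfying $|2\chominvt(K)| \leq 2g_s(K)$, and that $2\chominvt(T_{p,q}) = (p-1)(q-1)$ on every positive torus knot $T_{p,q}$. The homomorphism property is immediate from the fact that $\chominvt : \cC \to \R$ is itself a homomorphism (as stated just before the theorem), and the slice genus bound $|\chominvt(K)| \leq g_s(K)$ from Proposition~\ref{prop:tau-invariant} doubles to give the required $|2\chominvt(K)| \leq 2g_s(K)$. So the real content lies in the torus-knot equality.

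The upper bound $2\chominvt(T_{p,q}) \leq (p-1)(q-1) = 2g_s(T_{p,q})$ is immediate from the slice genus bound. For the matching lower bound, I would invoke the maximum self-linking bound that $\cinvt$ is advertised to provide in the abstract, derived from the contact invariant of \cite{bs-instanton}. Concretely, one expects an inequality of the form $\cinvt(K) \geq \maxsl(K)$, established by taking a Legendrian representative $\Lambda$ of $\mirror{K}$ with $\lsl(\Lambda) = \maxsl(\mirror{K})$ and using non-vanishing of the associated instanton contact class under Stein $2$-handle attachment to rule out vanishing of the cobordism maps defining $N(\mirror{K})$ at small surgery coefficients. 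Homogenizing this bound via iterated connected sums (and using the near-additivity $\maxsl(K_1 \# K_2) = \maxsl(K_1) + \maxsl(K_2) + 1$) promotes it to $2\chominvt(K) \geq \maxsl(K) + 1$. Plugging in the classical value $\maxsl(T_{p,q}) = pq - p - q$ then yields $2\chominvt(T_{p,q}) \geq (p-1)(q-1)$, closing the gap.

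The main obstacle is the first step: establishing the self-linking bound on $\cinvt$ via the contact invariant. This requires translating non-vanishing statements for the contact class in $I^\#$ of positive contact surgeries on a maximal-$\lsl$ Legendrian representative into non-triviality of the cobordism maps from $I^\#(S^3;\C)$ that enter the definition of $N(\cdot)$. Once that contact-geometric input is in place, the remainder of the argument is a short reduction to the classical computation of $\maxsl(T_{p,q})$ together with the already-established upper bound from slice genus.
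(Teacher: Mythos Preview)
Your overall strategy matches the paper's: take the slice-genus upper bound from Proposition~\ref{prop:tau-invariant}, then use the instanton contact invariant of \cite{bs-instanton} to force the matching lower bound on positive torus knots. But the specific route you describe has gaps that the paper avoids.

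You propose to establish $\cinvt(K) \geq \maxsl(K)$ directly from contact geometry and then homogenize. The paper does \emph{not} do this. Legendrian surgery on a representative $\Lambda$ of $K$ yields a Stein-fillable structure on $S^3_{\ltb(\Lambda)-1}(K)$, so the natural contact output is a bound involving $\maxtb$, not $\maxsl$; getting an $\maxsl$ bound directly would require arguments you have not supplied. And beware of circularity: the paper's $\maxsl$ bound (Theorem~\ref{thm:sl-bound}) is deduced \emph{from} the slice-torus property via a theorem of Lewark, so it cannot be used here. Also, your sentence about ``a Legendrian representative $\Lambda$ of $\mirror{K}$'' has the roles reversed: one takes Legendrians in $K$ to push $N(K)$ up, not in $\mirror{K}$.

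What the paper actually proves (Proposition~\ref{prop:tb-bound}) is much weaker: if $\maxtb(K) = 2g(K)-1$ and $g(K) \geq 2$, then $\cinvt(K) = 2g(K)-1$. The restrictions arise from two technical points you glossed over. First, the contact argument shows that $I^\#(X_n(K),0)$ is injective, but the map in the surgery triangle is $F_n = I^\#(X_n(K),\nu_n)$; identifying these up to sign requires an adjunction argument forcing all $s$-components to vanish except at $s_0=0$, and this needs $n = 2g(K)-2 > 0$. Second, to pass from $N(K) \geq 2g(K)-1$ to $\cinvt(K) = N(K) - N(\mirror{K}) = 2g(K)-1$ one must know $N(\mirror{K})=0$, which Proposition~\ref{prop:unimodal-rank} only gives once $N(K) \geq 2$. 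The paper then applies this weak bound to $nT_{p,q}$ for $n \geq 2$ (so the genus hypothesis is met) and homogenizes to obtain $\chominvt(T_{p,q}) = g(T_{p,q})$.
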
 

We  use this  to prove in Theorem \ref{thm:sl-bound} that $\cinvt$ and $\chominvt$ bound maximum self-linking number, \begin{equation} \label{eqn:sl} \maxsl(K) \leq 2\chominvt(K)-1 \leq \cinvt(K). \end{equation} The inequalities \eqref{eqn:tauslice} and \eqref{eqn:sl}, together with the fact that $\maxsl(K) = 2g_s(K)-1$ for quasipositive knots $K$, then immediately imply:

\begin{corollary}
\label{cor:chom-qp}
If $K$ is a quasipositive knot then $\chominvt(K) = g_s(K)$.
\end{corollary}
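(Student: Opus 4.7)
The plan is to chain together the two inequalities \eqref{eqn:tauslice} and \eqref{eqn:sl} already established in the paper, using the classical fact that for a quasipositive knot $K$ the slice--Bennequin bound is sharp, i.e.\ $\maxsl(K) = 2g_s(K)-1$ (due to Rudolph, using the Kronheimer--Mrowka resolution of the Thom conjecture).

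First I would apply the left-hand side of \eqref{eqn:sl}, namely $\maxsl(K) \leq 2\chominvt(K)-1$. Substituting the quasipositive identity $\maxsl(K) = 2g_s(K)-1$ and rearranging gives the lower bound $g_s(K) \leq \chominvt(K)$.

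Next I would apply \eqref{eqn:tauslice}, which in particular gives $\chominvt(K) \leq |\chominvt(K)| \leq g_s(K)$. Combining this upper bound with the lower bound above forces $\chominvt(K) = g_s(K)$, as desired.

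There is essentially no obstacle here: once the sharpness of the slice--Bennequin bound on quasipositive knots is invoked, the corollary is a formal squeeze between the two inequalities already proved for $\chominvt$. The only point worth flagging is that the quasipositive equality $\maxsl(K) = 2g_s(K)-1$ is really a two-sided statement—the inequality $\maxsl(K) \leq 2g_s(K)-1$ holds for any knot, and quasipositivity supplies the reverse inequality via Rudolph's construction of a quasipositive Seifert surface realized as a piece of an algebraic curve—and it is this reverse inequality that drives the argument.
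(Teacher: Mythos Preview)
Your proposal is correct and is exactly the argument the paper gives: the introduction already notes that the corollary follows immediately from \eqref{eqn:tauslice}, \eqref{eqn:sl}, and the Rudolph identity $\maxsl(K)=2g_s(K)-1$, and the formal proof in \S\ref{ssec:slice-torus} (via Proposition~\ref{prop:slice-bennequin-positive}) is precisely the squeeze $2g_s(K)-1=\maxsl(K)\le 2\chominvt(K)-1\le 2g_s(K)-1$ that you describe.
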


Lewark proves in \cite{lewark} that the values of any two slice-torus invariants agree on homogeneous knots. Since $2\tau$ is also a slice-torus invariant, where $\tau(K)\in\Z$ is the tau invariant in Heegaard Floer homology, it follows that:

\begin{corollary}
\label{cor:homogeneous}
If $K$ is a homogeneous knot then $\chominvt(K)=\tau(K)$.\end{corollary}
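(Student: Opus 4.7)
The plan is to invoke directly the three ingredients gathered in the sentences immediately preceding the corollary: (i) Theorem \ref{thm:slice-torus}, which asserts that $2\chominvt$ is a slice-torus invariant; (ii) the established fact that $2\tau$ is a slice-torus invariant, for which I would just cite Livingston's paper \cite{livingston-tau}; and (iii) Lewark's theorem \cite{lewark} that any two slice-torus invariants take the same value on every homogeneous knot. Applying (iii) to the pair $(2\chominvt, 2\tau)$ gives $2\chominvt(K) = 2\tau(K)$ for every homogeneous knot $K$, and dividing by $2$ yields the stated equality $\chominvt(K) = \tau(K)$.

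In terms of the order of steps, I would first recall Lewark's definition of a slice-torus invariant (concordance homomorphism to $\R$ satisfying the slice-genus bound and computing the slice genus on positive torus knots up to the factor relating $\tau$ and $g_s$), so that the reader sees why $2\chominvt$ qualifies: Theorem \ref{thm:slice-torus} gives precisely this, and Corollary \ref{cor:chom-qp} already confirms the torus knot normalization via quasipositivity. I would then note that $\tau$ satisfies the analogous properties by Ozsv\'ath--Szab\'o, so $2\tau$ is a slice-torus invariant as well. Finally I would state Lewark's equality on the class of homogeneous knots and conclude.

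There is essentially no obstacle here, since everything is assembled from results stated earlier or cited in the literature; the only thing to be careful about is the factor of $2$ (our invariant is normalized so that $\chominvt = g_s$ on quasipositives, whereas $\tau$ has the same normalization, so $2\chominvt$ and $2\tau$ are the appropriate slice-torus invariants to compare, not $\chominvt$ and $\tau$ themselves). Once that factor is tracked correctly, the corollary is a one-line deduction from Theorem \ref{thm:slice-torus} and Lewark's theorem.
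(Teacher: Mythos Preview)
Your proposal is correct and follows essentially the same approach as the paper: both argue that $2\chominvt$ and $2\tau$ are slice-torus invariants (via Theorem~\ref{thm:slice-torus} and known properties of $\tau$), then apply Lewark's result that all slice-torus invariants agree on homogeneous knots, and divide by $2$.
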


\begin{remark}
\label{rmk:tau-tau2}
Note that $\chominvt = \tau$ for quasipositive knots as well, since both equal $g_s$ in this case. It follows that $\chominvt = \tau$ for all prime knots through 9 crossings, except possibly for  $9_{42}$, $9_{44}$, $9_{48}$; see Corollary \ref{cor:homogeneous2}.
\end{remark}

Lewark also shows that slice-torus invariants are equal to minus  signature for alternating knots, so we have:

\begin{corollary}
\label{cor:alternating}
If $K$ is an alternating knot then $\chominvt(K) = -\sigma(K)/2$.\footnote{In our convention, the signature of the right-handed trefoil is $-2$.}\end{corollary}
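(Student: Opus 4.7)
The plan is to derive this corollary directly from Theorem~\ref{thm:slice-torus} by invoking the same result of Lewark~\cite{lewark} alluded to in the paragraph preceding the corollary: namely, that every slice-torus invariant $\nu\colon\cC\to\R$ satisfies $\nu(K) = -\sigma(K)$ whenever $K$ is alternating. With this in hand I would simply apply Lewark's theorem to $\nu = 2\chominvt$, which is a slice-torus invariant by Theorem~\ref{thm:slice-torus}, obtaining $2\chominvt(K) = -\sigma(K)$ for every alternating $K$ and therefore $\chominvt(K) = -\sigma(K)/2$.

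Since the whole argument reduces to a single substitution into Lewark's theorem, there is no genuine obstacle at this stage of the paper. All of the substantive work has already been carried out earlier: the heavy lifting is in the verification that $2\chominvt$ is a slice-torus invariant, which relies on the concordance homomorphism property of $\chominvt$ together with the calibration $\chominvt = g_s$ on positive torus knots coming from Corollary~\ref{cor:chom-qp}, and on the slice genus bound $|\chominvt(K)|\leq g_s(K)$ recorded in \eqref{eqn:tauslice}. Lewark's alternating-knot theorem, in turn, is established using the fact that alternating knots realize $g_s = g_3$ (Murasugi) and that $-\sigma/2$ attains this bound on any alternating diagram; we would simply cite it.

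The only point requiring care is the normalization convention. The paper fixes the convention (used by Lewark) in which $2\tau$ and the Rasmussen invariant $s$ are the canonical slice-torus invariants, so that any such $\nu$ satisfies $\nu(T_{p,q}) = (p-1)(q-1) = 2g_s(T_{p,q})$ and the alternating formula reads $\nu = -\sigma$ rather than $\nu = -\sigma/2$. Combined with the sign convention $\sigma(T_{2,3}) = -2$ recorded in the footnote, this gives the consistency check $2\chominvt(T_{2,3}) = 2 = -\sigma(T_{2,3})$, which is exactly what Corollary~\ref{cor:chom-qp} predicts, confirming that the factors of two line up correctly in the deduction above.
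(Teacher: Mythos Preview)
Your proposal is correct and matches the paper's own proof: both simply apply Lewark's result \cite[Corollary~5.9]{lewark} that any slice-torus invariant equals $-\sigma$ on alternating knots to $2\chominvt$, which is slice-torus by Theorem~\ref{thm:slice-torus}. One minor inaccuracy in your commentary: the calibration on positive torus knots feeding into Theorem~\ref{thm:slice-torus} is Proposition~\ref{prop:torus-tau-bound} (proved via the weak Thurston--Bennequin bound), not Corollary~\ref{cor:chom-qp}, which is itself a consequence of Theorem~\ref{thm:slice-torus}; but this does not affect the proof of the corollary at hand.
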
 

\begin{remark}
Ghosh, Li, and Wong \cite{glw} have defined a tau-like concordance invariant using instanton Floer homology, by very different means, and proved it equal to $\chominvt$.  Since their invariant is an integer by construction, this gives the only proof we know of that $\chominvt(K) \in \Z$ for all knots $K$.
\end{remark}

These corollaries enable us to compute $\chominvt$ for all prime knots through 8 crossings.
We use these computations of $\chominvt$ to determine $\cinvt$ for all such knots except $7_7$ and $8_{13}$ (\S\ref{sec:computations-dim}, Table~\ref{table:small-values}).
We then use these values of $\cinvt$, together with Theorem~\ref{thm:main-surgery} and lots of Kirby calculus, to determine $r_0(K)$ and thus the framed instanton homology (over $\C$) for all nonzero rational  surgeries on 20 of the 35 nontrivial prime knots through 8 crossings:

\begin{theorem}
\label{thm:8-crossings}
For any of the knots $K$ listed in Table~\ref{table:main}, the values $r_0(K)$ and $\cinvt(K)$ are as shown in the table. 
\end{theorem}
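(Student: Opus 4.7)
The theorem is a case-by-case computation: for each of the twenty knots $K$ listed in Table~\ref{table:main}, we must verify the tabulated values of $\cinvt(K)$ and $r_0(K)$. The plan has two stages. First I would pin down $\cinvt(K)$ using the concordance invariant $\chominvt$ from the preceding corollaries. Then, with $\cinvt(K)$ in hand, I would extract $r_0(K)$ from a single explicit computation of $\dim_\C I^\#(S^3_n(K);\C)$ for some conveniently chosen integer $n$, via the formula of Theorem~\ref{thm:main-surgery}.

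For the first stage, I would compute $\chominvt(K)$ using one of Corollary~\ref{cor:homogeneous} ($\chominvt(K)=\tau(K)$ for homogeneous knots), Corollary~\ref{cor:alternating} ($\chominvt(K)=-\sigma(K)/2$ for alternating knots), or Corollary~\ref{cor:chom-qp} ($\chominvt(K)=g_s(K)$ for quasipositive knots); each of the twenty table knots falls into at least one of these classes. The inequality $2\chominvt(K)-1\leq \cinvt(K)$ from \eqref{eqn:sl}, applied also to the mirror (using $\cinvt(\mirror{K})=-\cinvt(K)$ from the definition and $\chominvt(\mirror{K})=-\chominvt(K)$ since $\chominvt$ is a homomorphism), yields $\cinvt(K)\in[2\chominvt(K)-1,\,2\chominvt(K)+1]$. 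Combined with the slice genus bound $|\cinvt(K)|\leq 2g_s(K)-1$ of Theorem~\ref{thm:conc-invt}, this pins $\cinvt(K)$ down uniquely for all twenty table knots; it is precisely the failure of these bounds to coincide that excludes $7_7$ and $8_{13}$.

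For the second stage, having $\cinvt(K)$, Theorem~\ref{thm:main-surgery} gives
\[ r_0(K)=\dim_\C I^\#(S^3_n(K);\C)-|n-\cinvt(K)| \]
for any nonzero integer $n$, so a single dimension computation per knot suffices. For each $K$ I would find such an $n$ and perform Kirby calculus to exhibit $S^3_n(K)$ as a $3$-manifold whose framed instanton homology has already been determined: a connected sum of copies of $S^1\times S^2$, a Brieskorn sphere of the form $\Sigma(2,3,6k\pm 1)$ handled by Scaduto, the branched double cover of a quasi-alternating link treated by Scaduto--Stoffregen (applicable via the Montesinos trick whenever $K$ is strongly invertible), or a Dehn surgery on an instanton L-space knot treated in \cite{lpcs,bs-lspace}. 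When $\cinvt(K)\neq 0$, taking $n=\cinvt(K)$ and reading off $r_0(K)$ directly, per the remark after Theorem~\ref{thm:main-surgery}, is often the most efficient route.

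The main obstacle is the Kirby calculus in the second stage: for each of the twenty knots one must exhibit by hand a surgery coefficient $n$ and an explicit sequence of handleslides, blowups, and cancellations producing a recognizable target manifold whose $I^\#$ is already known. The choice depends delicately on the topology of $K$ (strong invertibility is particularly convenient), and it is precisely the failure of such a recognition for the remaining knots that restricts the scope of the theorem from thirty-three knots down to twenty.
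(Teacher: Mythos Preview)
Your first-stage plan for $\cinvt$ is essentially the paper's, but your claim that the two bounds pin $\cinvt$ down uniquely for all twenty knots is not quite correct. For the knots with $\chominvt=0$ and $g_s\geq 1$ --- namely $4_1$, $6_3$, $8_3$, and $8_1$ --- the inequality $|2\chominvt-\cinvt|\leq 1$ together with $|\cinvt|\leq 2g_s-1$ only gives $\cinvt\in\{-1,0,1\}$. The paper resolves the first three via amphichirality (so $\cinvt(\mirror K)=-\cinvt(K)$ forces $\cinvt=0$), while $8_1$ is the twist knot $K_6$ and $\cinvt(8_1)=0$ is obtained as part of the twist-knot computation (Proposition~\ref{prop:twist-2n+2-surgery}), not from the concordance bounds.

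The second stage has a more substantive gap. For most table knots your plan matches the paper's: identify a surgery on $K$ with a surgery on a simpler knot (chiefly via the two-bridge identities of Proposition~\ref{prop:two-bridge-surgeries}), then bootstrap inductively from torus knots using Theorem~\ref{thm:rational-surgeries}. But for $6_3$, $8_6$, and $8_8$ the paper's argument (Proposition~\ref{prop:6_3-surgeries}) needs an ingredient your list omits. Kirby calculus identifies $S^3_{-1}$ of each of these with a surgery on a common knot $P$, and exact triangles then give \emph{upper} bounds on $\dim I^\#(S^3_0(P))$ and $\dim I^\#(S^3_1(P))$; the matching \emph{lower} bound comes from Proposition~\ref{prop:alexander-lower-bound}, which combines Lim's theorem (expressing the Alexander polynomial as an Euler characteristic in instanton homology) with Fukaya's connected sum theorem to show $\dim I^\#(S^3_0(8_8))\geq 14$. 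This is not a Kirby-calculus identification with a previously computed manifold, and none of your listed target classes supplies it.
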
 
\begin{table}
\centering
\begin{tabular}{ccc}
$K$ & $\cinvt(K)$ & $r_0(K)$ \\
\hline
$3_1$ & $-1$ & $1$ \\
$4_1$ & $0$ & $2$ \\
$5_1$ & $-3$ & $3$ \\
$5_2$ & $-1$ & $3$ \\
$6_1$ & $0$ & $4$ \\
$6_2$ & $-1$ & $5$ \\
$6_3$ & $0$ & $6$ \\
$7_1$ & $-5$ & $5$ \\
$7_2$ & $-1$ & $5$ \\
$7_3$ & $3$ & $7$ \\
$7_4$ & $1$ & $7$ \\
\hline
\end{tabular}
\hspace{0.5cm}
\begin{tabular}{ccc}
$K$ & $\cinvt(K)$ & $r_0(K)$ \\
\hline
$8_1$ & $0$ & $6$ \\
$8_2$ & $-3$ & $9$ \\
$8_3$ & $0$ & $8$ \\
$8_4$ & $-1$ & $9$ \\
$8_5$ &  $3$ & $11$ \\
$8_6$ & $-1$ & $11$ \\
$8_8$ & $0$ & $12$ \\
$8_{19}$ & $5$ & $5$ \\
$8_{20}$ & $0$ & $4$ \\
\\
\\
\hline
\end{tabular}
\caption{Values which determine $\dim_\C I^\#(S^3_r(K);\C)$ for  rational $r$, with the restriction that if $\cinvt(K)=0$ then $r\neq 0$, by Theorem~\ref{thm:main-surgery}.} \label{table:main}
\end{table}

We use similar strategies to compute the framed instanton homology  (over $\C$) of surgeries on infinite families of twist knots and pretzel knots, as in the theorems below.

\begin{theorem}
\label{thm:main-twist}
Let $K_n$ be the twist knot with a positive clasp and $n \geq 1$ positive half-twists shown in Figure~\ref{fig:main-twist}.  Then
\[ r_0(K_n)=n \quad\textrm{ and } \quad\cinvt(K_n) = \begin{cases} 0, & n\textrm{ even} \\ -1, & n\textrm{ odd}. \end{cases} \] Suppose $p$ and $q$ are relatively prime integers with $q \geq 1$. Then  \[ \dim_\C I^\#(S^3_{p/q}(K_n);\C) = \begin{cases}
qn+|p|, &n\textrm{ even and }p\neq 0,\\
qn+|p+q|, &n\textrm{ odd}.
\end{cases}\]

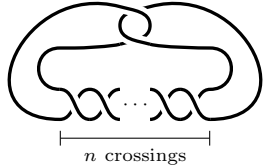
\begin{figure}
\begin{tikzpicture}
\draw[link] (0.2,0) to[out=0,in=180,looseness=1] ++(0.4,0.4) to[out=0,in=180,looseness=1] ++(0.4,-0.4) to [out=0,in=180] (1,0) to[out=0,in=15,looseness=2] (0.2,1.5) to[out=195,in=90,looseness=1] (-0.2,1.25) to[out=270,in=165,looseness=1] (0.2,1) to[out=345,in=180,looseness=0.5] (1,0.95) to[out=0,in=0] (1,0.4) to [out=180,in=0,looseness=1] ++(-0.4,-0.4) to[out=180,in=0,looseness=1] ++(-0.4,0.4) (-0.2,0.4) to[out=180,in=0,looseness=1] (-0.6,0) to[out=180,in=0,looseness=1] (-1,0.4) to[out=180,in=180] (-1,0.95) to[out=0,in=180] (-0.8,0.95) to[out=0,in=195,looseness=0.5] (-0.2,1);
\draw[link] (-0.2,1) to[out=15,in=270,looseness=1] (0.2,1.25) to[out=90,in=345,looseness=1] (-0.2,1.5) to[out=165,in=180,looseness=2] (-1,0) to[out=0,in=180,looseness=1] (-0.6,0.4) to[out=0,in=180,looseness=1] (-0.2,0);
\node at (0.03,0.2) {\tiny $\cdots$};
\begin{scope} \clip (-0.2,0.8) rectangle (0.2,1.2);
\draw[link] (-0.2,1.25) to[out=270,in=165,looseness=1] (0.2,1);
\end{scope}
\begin{scope} \clip (-1,-0.1) rectangle (-0.6,0.5);
\draw[link] (-0.6,0) to[out=180,in=0,looseness=1] (-1,0.4);
\end{scope}
\begin{scope} \clip (0,0) rectangle (1,0.4);
\draw[link] (0.2,0.4) to[out=0,in=180,looseness=1] ++(0.4,-0.4) ++(0,0.4) to[out=0,in=180,looseness=1] ++(0.4,-0.4);
\end{scope}
\draw[thin,|-|] (-1,-0.25) to node[midway,below] {\tiny $n \mathrm{\ crossings}$} ++(2,0);
\end{tikzpicture}
\caption{The twist knot $K_n$.}
\label{fig:main-twist}
\end{figure}
\end{theorem}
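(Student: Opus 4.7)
The plan is to reduce the computation to Theorem~\ref{thm:main-surgery}, so it suffices to determine $\cinvt(K_n)$ and $r_0(K_n)$. Since $K_n$ is a two-bridge and hence alternating knot, Corollary~\ref{cor:alternating} gives $\chominvt(K_n) = -\sigma(K_n)/2$; a standard Seifert-form computation for twist knots yields $\sigma(K_n) = 0$ when $n$ is even and $\sigma(K_n) = 2$ when $n$ is odd, so $\chominvt(K_n)$ equals $0$ or $-1$ accordingly.

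To pin down $\cinvt(K_n)$, apply the inequality $2\chominvt(K) - 1 \leq \cinvt(K)$ from \eqref{eqn:sl} to both $K_n$ and its mirror. Using $\cinvt(\mirror{K}) = -\cinvt(K)$ (immediate from the definition of $\cinvt$) and $\chominvt(\mirror{K}) = -\chominvt(K)$, this yields the symmetric two-sided bound
\[ 2\chominvt(K_n) - 1 \;\leq\; \cinvt(K_n) \;\leq\; 2\chominvt(K_n) + 1. \]
Combined with the slice-genus bound $|\cinvt(K_n)| \leq 2g_s(K_n) - 1 \leq 1$ (since $g_s(K_n) \leq g(K_n) = 1$), this forces $\cinvt(K_n) = -1$ when $n$ is odd and narrows it to $\{-1, 0, 1\}$ when $n$ is even. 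The remaining even cases would then be handled by a direct computation of $\dim_\C I^\#(S^3_{\pm 1}(K_n); \C)$ and comparison against the three possible values produced by the formula in Theorem~\ref{thm:main-surgery}, with amphichirality settling $K_2 = 4_1$ and sliceness settling $K_4 = 6_1$ as easy base cases.

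The main substantive step is establishing $r_0(K_n) = n$: by Theorem~\ref{thm:main-surgery}, it suffices to compute $\dim_\C I^\#(S^3_r(K_n); \C)$ for a single convenient slope $r$. The plan is to exploit the classical description of $K_n$ as the image of one component $W_2$ of the Whitehead link $W = W_1 \cup W_2$ after $(-1/n)$-surgery on $W_1$, so that $S^3_r(K_n)$ is rewritten as a two-component surgery on $W$; via the symmetry of $W$ swapping its two components, combined with Kirby calculus, this reduces to a surgery on a simpler knot, and iterating the framed instanton surgery exact triangle then drives an induction on $n$ whose base case ($n=1$ or $n=2$) uses Scaduto's known computations of $I^\#$ for small Brieskorn spheres such as $\pm\Sigma(2,3,5)$ or $\pm\Sigma(2,3,7)$. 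The principal obstacle is that the surgery exact triangle yields only rank inequalities a priori, so one must control the connecting maps tightly enough --- combining exactness with Scaduto's Euler characteristic formula $\chi(I^\#(Y)) = |H_1(Y;\Z)|$ --- to conclude that $\dim_\C I^\#$ grows by exactly one with each additional half-twist. Once $\cinvt(K_n)$ and $r_0(K_n) = n$ are both secured, Theorem~\ref{thm:main-surgery} directly yields the claimed formula $\dim_\C I^\#(S^3_{p/q}(K_n);\C) = qn + |p - q\cinvt(K_n)|$, which specializes to the two stated expressions.
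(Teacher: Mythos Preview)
Your overall strategy is correct and matches the paper's: determine $\cinvt(K_n)$ by concordance-theoretic bounds, compute one convenient surgery to pin down $r_0(K_n)$, and then invoke Theorem~\ref{thm:main-surgery}. Your Whitehead-link reduction is essentially the paper's Proposition~\ref{prop:two-bridge-surgeries}: for a well-chosen slope $r$ (namely $r=\pm 1$), the symmetry gives $S^3_{\pm 1}(K_n) \cong S^3_{-1/m}(J)$ where $J$ is the trefoil or the figure eight. The paper then simply applies Theorem~\ref{thm:rational-surgeries} (i.e.\ Theorem~\ref{thm:main-surgery}) to $J$, since $\cinvt(J)$ and $r_0(J)$ are already known from Lemmas~\ref{lem:t2q-surgery} and~\ref{lem:figure-eight}. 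This immediately yields $\dim I^\#(S^3_{\pm 1}(K_n))$ for all $n$, and in the even case the equality of the $\pm 1$ values forces $\cinvt(K_n)=0$.

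There is, however, a genuine gap in your proposed inductive step. After the Kirby reduction you want to run an induction on $n$ using surgery exact triangles, controlling the connecting maps with the Euler characteristic formula $\chi(I^\#(Y)) = |H_1(Y;\Z)|$. That will not work: the manifolds $S^3_{-1/m}(3_1)$ are all integer homology spheres, so $\chi = 1$ for every $m$, and the exact triangle together with parity only constrains $\dim I^\#(S^3_{-1/m}(3_1))$ to lie in $\{2m-3,\,2m-1,\,2m+1\}$. What actually forces the triangles to split is the vanishing of the composite cobordism maps (Proposition~\ref{prop:two-triangles} / Lemma~\ref{lem:compute-1/n}), coming from an adjunction argument with embedded spheres in the composite $2$-handle cobordism --- not the Euler characteristic. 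But this is exactly the content already packaged into Theorem~\ref{thm:main-surgery}, so the cleanest fix is to drop the induction entirely and apply Theorem~\ref{thm:main-surgery} a second time, now to $J\in\{3_1,4_1\}$, as the paper does.
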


\begin{remark}
Prior to this theorem, the framed instanton homology \[I^\#(S^3_{p/q}(K_n);\C)\] was known only for  $p/q=\pm 1$, by Scaduto's computation of $I^\#$ for the Brieskorn spheres $\Sigma(2,3,6n\pm 1)$ \cite{scaduto}.
\end{remark}

\begin{theorem}
\label{thm:main-pretzels} 
Let $P(a,b,c)$ denote the $(a,b,c)$ pretzel knot. 
Then  \[ \cinvt(P(n,3,-3)) = 0 \quad\textrm{and}\quad r_0(P(n,3,-3)) = 4 \] for all $n\in\Z$.
Suppose $p$ and $q$ are relatively prime integers with $q \geq 1$ and $p\neq 0$. Then \[ \dim_\C I^\#(S^3_{p/q}(P(n,3,-3));\C) = 4q + |p|. \] 
\end{theorem}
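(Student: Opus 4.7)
The plan is to reduce everything to Theorem~\ref{thm:main-surgery}: since that theorem yields
\[ \dim_\C I^\#(S^3_{p/q}(K);\C) = q\cdot r_0(K) + |p - q\cdot \cinvt(K)| \]
for any knot $K$ and any nonzero $p/q$ with $q\geq 1$ and $\gcd(p,q)=1$, the desired formula $4q + |p|$ follows immediately from the two claims $\cinvt(P(n,3,-3)) = 0$ and $r_0(P(n,3,-3)) = 4$. I would establish these two claims by induction on $|n|$, exploiting the fact that $P(n, 3, -3)$ and $P(n+2, 3, -3)$ differ by a full twist on two strands of the $n$-tangle, and so are related by Floer's surgery exact triangle.

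Concretely, I would pick a small meridional unknot $c \subset S^3 \ssm P(n, 3, -3)$ encircling two adjacent strands of the $n$-tangle, so that $(-1)$-surgery on $c$ turns $P(n, 3, -3)$ into $P(n+2, 3, -3)$. Then for any integer slope $p$, the three manifolds $S^3_p(P(n, 3, -3))$, $S^3_p(P(n+2, 3, -3))$, and a third auxiliary manifold $Y_p(n)$ (obtained by $0$-surgery on $c$ together with $p$-surgery on the knot) fit into an exact triangle in framed instanton homology. If $Y_p(n)$ can, after Kirby calculus, be identified with a surgery on a simpler knot whose framed instanton homology is already under control, then this triangle together with the Euler characteristic constraint of Remark~\ref{rmk:euler} pins down $\dim_\C I^\#(S^3_p(P(n+2,3,-3));\C)$ exactly. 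By Remark~\ref{rmk:nufromsurgery}, applying this for a large positive and a large negative integer slope $p$ upgrades to the statements $\cinvt(P(n+2, 3, -3)) = 0$ and $r_0(P(n+2, 3, -3)) = 4$.

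For base cases, I would use $P(1, 3, -3)$ and $P(2, 3, -3)$, covering both parities, and handle negative $n$ by mirror symmetry: since $\mirror{P(n, 3, -3)}$ is isotopic to $P(-n, 3, -3)$ and since $\cinvt$ changes sign under mirroring while $r_0$ is preserved, verifying the formula for $n \geq 0$ suffices. These base-case knots are 7- and 8-crossing Montesinos knots that either appear in Table~\ref{table:main} (allowing the values to be read off directly) or can be handled by the same surgery-exact-triangle and Kirby-calculus methods that were used to populate that table.

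The main obstacle, as in other computations of this paper, will be identifying the auxiliary manifold $Y_p(n)$ in the exact triangle and computing its framed instanton homology sharply enough to extract equalities. The standard rank bounds from an exact triangle give only $|\dim A - \dim B| \leq \dim C \leq \dim A + \dim B$, so one must combine these with the $\Z/2\Z$-graded Euler characteristic computed by Scaduto (i.e., $|H_1|$ of each manifold in the triangle, when nonzero) to rule out excess rank and force $r_0 = 4$ rather than something larger. In practice this likely requires running the inductive step in parallel for several slopes $p$, using Theorem~\ref{thm:main-surgery} applied to $P(n,3,-3)$ itself to propagate information between slopes and keep the bookkeeping tractable.
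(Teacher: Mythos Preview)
Your proposal has a genuine gap and also diverges substantially from the paper's argument.

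First, you miss the observation that makes $\cinvt(P(n,3,-3)) = 0$ immediate: each $P(n,3,-3)$ is smoothly slice, since it arises from the two-component unlink $P(3,-3)$ by attaching a single oriented band. By Theorem~\ref{thm:conc-invt} this gives $\cinvt = 0$ for free, with no induction needed. Your plan to extract $\cinvt = 0$ from exact triangles at large $\pm N$ is unnecessary work.

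Second, and more seriously, the inductive step you outline is not actually carried out. You acknowledge that identifying the third manifold $Y_p(n)$ in your exact triangle is ``the main obstacle,'' but you never identify it, and you give no reason to expect that $\dim I^\#(Y_p(n))$ is known or that the resulting rank inequality becomes an equality. Exact triangles only give $|\dim A - \dim B| \le \dim C \le \dim A + \dim B$; the Euler-characteristic parity constraint cuts this down by at most one, which is generally not enough to force $r_0 = 4$ rather than $6$, $8$, etc. The paper avoids this entirely: it establishes (Proposition~\ref{prop:pretzel-homeo}, via the Montesinos trick and branched double covers) a direct \emph{homeomorphism}
\[ S^3_{-2}(P(n,3,-3)) \cong S^3_{2}(P(n+3,3,-3)), \]
which transports the equality $\dim I^\#(S^3_{\pm 2}) = 6$ in steps of three with no loss. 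This requires three base cases, one for each residue mod $3$: $P(\pm 1,3,-3)$ is handled because $P(1,3,-3) = 6_1$ is the twist knot $K_4$; the residue $n \equiv 0$ uses a result of Meier identifying $S^3_1(P(3,3,-3)) \cong -S^3_1(P(4,3,-3))$ as a specific small Seifert fibered space.

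Finally, note a circularity hazard in your base cases: the Table~\ref{table:main} entry for $8_{20} = P(2,3,-3)$ is \emph{derived from} Theorem~\ref{thm:main-pretzels}, not an independent input to it, so you cannot simply read it off.
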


\begin{remark}
Since each $P(n,3,-3)$ is smoothly slice, we also know that
\[ \dim_\C I^\#(S^3_0(P(n,3,-3));\C) = 4q+2. \]
Indeed, Theorem~\ref{thm:main-surgery} does not apply a priori to 0-surgery on $K$ when $\cinvt(K)=0$, but if $K$ is smoothly slice then we have
\[ \dim_\C I^\#(S^3_0(K);\C) = \dim_\C I^\#(S^3_1(K);\C) + 1 \]
because Theorem~\ref{thm:conc-invt} guarantees that $K$ is ``W-shaped'', as discussed in \S\ref{sec:nu-sharp}.  By contrast, the twist knots $K_{2m}$ from Theorem~\ref{thm:main-twist} have $\cinvt(K_{2m})=0$ but are not generally slice, so we do not make any claims about $\dim_\C I^\#(S^3_0(K_{2m});\C)$.
\end{remark}

\begin{theorem}
\label{thm:main-pretzels-2} 
Let $P(a,b,c)$ denote the $(a,b,c)$ pretzel knot. 
Then \[ \cinvt(P(2n-1,3,2)) = 2n-1 \quad\textrm{and}\quad r_0(P(2n-1,3,2)) = 6n-1 \] for all $n \geq 1$.
Suppose $p$ and $q$ are relatively prime integers with $q \geq 1$. Then \[ \dim_\C I^\#(S^3_{p/q}(P(2n-1,3,2));\C) = (6n-1)q + |p-(2n-1)q|. \] 
\end{theorem}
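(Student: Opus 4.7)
The plan is to combine an identification of $\cinvt$ via the concordance-theoretic inequalities with a direct computation of $r_0$ from a surgery diagram, in the spirit of Theorem~\ref{thm:main-twist}. Write $K_n := P(2n-1, 3, 2)$ throughout.

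First I would establish $\cinvt(K_n) = 2n-1$ by squeezing it between two matching bounds. For the upper bound, I would exhibit a surface of genus $n$ in $B^4$ with boundary $K_n$, constructed by oriented band moves on the standard pretzel diagram; this gives $g_s(K_n) \leq n$, and Theorem~\ref{thm:conc-invt} then yields $\cinvt(K_n) \leq 2g_s(K_n) - 1 \leq 2n-1$. For the lower bound, I would identify $K_n$ either as a homogeneous knot (so that Corollary~\ref{cor:homogeneous} gives $\chominvt(K_n) = \tau(K_n)$, which can be pinned down to $n$ from a quasipositive band presentation or a knot Floer calculation) or directly as quasipositive (so that Corollary~\ref{cor:chom-qp} gives $\chominvt(K_n) = g_s(K_n) = n$). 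Either way, the lower inequality in \eqref{eqn:sl} then forces $\cinvt(K_n) \geq 2\chominvt(K_n) - 1 = 2n-1$, and equality holds.

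Second, to compute $r_0(K_n) = \dim_\C I^\#(S^3_{2n-1}(K_n);\C)$, I would proceed by induction on $n$. The transformation $K_n \rightsquigarrow K_{n+1}$ is realized by inserting one positive full twist in the $(2n-1)$-tangle, which is $(-1)$-surgery on an unknot $U$ encircling that tangle. Performing Kirby calculus on the resulting two-component surgery diagram and applying the surgery exact triangle associated to variations of the framing on $U$ relates the dimensions of $I^\#$ for $S^3_{2n-1}(K_n)$, $S^3_{2n+1}(K_{n+1})$, and a third intermediate (generically rational) surgery. Using Scaduto's Euler characteristic formula (Remark~\ref{rmk:euler}) together with the values of $\cinvt(K_n)$ and $\cinvt(K_{n+1})$ established in the first step to rule out cancellation in the triangle then yields the recursion $r_0(K_{n+1}) = r_0(K_n) + 6$. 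The base case is $P(1,3,2) = \mirror{6_2}$ (via cyclic permutation and mirroring of pretzel entries), and Theorem~\ref{thm:8-crossings} gives $r_0(\mirror{6_2}) = r_0(6_2) = 5$, so the recursion produces $r_0(K_n) = 6n-1$.

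Finally, with $\cinvt(K_n) = 2n-1 \neq 0$ and $r_0(K_n) = 6n-1$ in hand, Theorem~\ref{thm:main-surgery} applies directly to give the claimed formula
\[ \dim_\C I^\#(S^3_{p/q}(K_n);\C) = (6n-1)q + |p - (2n-1)q| \]
for all coprime $p,q$ with $q \geq 1$. The key obstacle is the inductive calculation of $r_0$: pinning down the exact increment of $6$ per step (rather than a smaller value due to cancellation in the connecting map) requires a careful combined use of the $\Z/2\Z$ grading on $I^\#$, the Euler characteristic formula, and the independently-established values of $\cinvt(K_n)$ to constrain the rank contributions from each integer surgery appearing in the triangle.
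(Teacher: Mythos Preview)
Your computation of $\cinvt(K_n)=2n-1$ is essentially the same strategy as the paper's: bound $g_s(K_n)\le n$ via a crossing-change cobordism, compute $\chominvt(K_n)=n$, and apply Corollary~\ref{cor:tau-maximal}.  The paper obtains $\chominvt(K_n)=n$ by observing that $K_n$ is alternating with $\sigma(K_n)=-2n$ and invoking Corollary~\ref{cor:alternating}; your proposed route via ``homogeneous'' would work too (alternating implies homogeneous), but the quasipositivity claim is not justified and should be dropped.

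The genuine gap is in your computation of $r_0$.  The encircling unknot $U$ has linking number $2$ with $K_n$, so blowing down a $(-1)$-framed $U$ shifts the framing on the knot by $+4$, not $+2$; thus a single surgery triangle does not relate $S^3_{2n-1}(K_n)$ to $S^3_{2n+1}(K_{n+1})$ as you claim.  More seriously, you never identify the third manifold in your triangle (the $0$-filling of $U$), and without its framed instanton homology the Euler-characteristic and grading constraints you invoke cannot determine the increment: they give parity information and the inequality $\dim I^\#(B)\le \dim I^\#(A)+\dim I^\#(C)$, but not the matching lower bound needed to rule out cancellation.

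The paper avoids this by a global rather than stepwise argument.  Instead of one twist at a time, it performs a Rolfsen twist along $U$ with coefficient $-1/n$ to convert $K_n$ into the \emph{unknot} $K_0=P(-1,3,2)$ with framing $-1$, and then blows down $K_0$ to obtain a single fixed knot $Q$ (independent of $n$) such that
\[
S^3_{4n-1}(K_n)\cong S^3_{(4n-1)/n}(Q)\qquad\text{for all }n\in\Z.
\]
Now Theorem~\ref{thm:rational-surgeries} reduces everything to knowing enough about $\cinvt(Q)$ and $r_0(Q)$.  The paper extracts these by plugging in two values of $n$ for which the left side is already known: $n=1$ gives $K_1=\mirror{6_2}$ (so $\dim I^\#(S^3_3(Q))=7$ via Proposition~\ref{prop:isharp-5_2-surgeries}), and $n=-1$ gives $K_{-1}=P(-3,3,2)\cong P(2,3,-3)$ (so $\dim I^\#(S^3_5(Q))=9$ via Theorem~\ref{thm:main-pretzels}).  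These force $\cinvt(Q)\le 3$ and $r_0(Q)-\cinvt(Q)=4$, which is exactly enough to evaluate $\dim I^\#(S^3_{(4n-1)/n}(Q))=8n-1$ for all $n\ge 1$, hence $r_0(K_n)=6n-1$.  The point is that passing to $Q$ replaces your unidentified ``third manifold'' in each inductive step by rational surgeries on a single knot, to which the rational-surgery formula applies directly.
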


We further compute framed instanton Floer homology (over $\C$) for all Dehn surgeries on instanton L-space knots. Recall that a nontrivial knot $K$ is an \emph{instanton L-space knot} if \[\dim_\C I^\#(S^3_{n}(K);\C) = n\] for some  integer $n>0$. These include  all nontrivial knots  with positive lens space surgeries, like positive torus knots; or, more generally, knots with positive surgeries that are branched double covers of quasi-alternating links \cite[Corollary~1.2]{scaduto}. In previous work \cite{bs-lspace}, we implicitly computed $\cinvt$ for such knots (using some results from \cite{lpcs}); in combination with Theorem \ref{thm:main-surgery}, this gives:

\begin{theorem}
\label{thm:nu-l-space}
Let $K$ be an instanton L-space knot. Then \[\cinvt(K) = r_0(K) = 2g(K)-1.\] Suppose $p$ and $q$ are relatively prime integers with $q\geq 1$. Then 
\[\dim_\C I^\#(S^3_{p/q}(K);\C)= \begin{cases}
p, & p/q \geq 2g(K)-1 \\
2q(2g(K)-1) - p, & p/q < 2g(K)-1.
\end{cases}\]
\end{theorem}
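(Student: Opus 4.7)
The plan is to reduce the claim to Theorem~\ref{thm:main-surgery} by pinning down $\cinvt(K)$ and $r_0(K)$ for an instanton L-space knot $K$, using the surgery computations from our previous work \cite{bs-lspace} (which in turn relies on \cite{lpcs}).

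For $\cinvt(K)=2g(K)-1$, the upper bound is automatic from Theorem~\ref{thm:conc-invt}, since $\cinvt(K)\leq \max(2g_s(K)-1,0)\leq 2g(K)-1$. For the matching lower bound I would invoke the results from \cite{bs-lspace}: there it is shown that an instanton L-space knot $K$ satisfies $\dim_\C I^\#(S^3_n(K);\C)=n$ for every integer $n\geq 2g(K)-1$, and moreover that the trace cobordism map $I^\#(S^3;\C)\to I^\#(S^3_n(K);\C)$ vanishes in exactly this range. This forces $N(K)=2g(K)-1$, and the parallel analysis applied to the mirror $\mirror{K}$ (whose positive integer surgeries are never L-spaces) forces $N(\mirror{K})=0$, so that $\cinvt(K)=N(K)-N(\mirror{K})=2g(K)-1$. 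Since $\cinvt(K)>0$, Definition~\ref{def:shape} then gives $r_0(K)=\dim_\C I^\#(S^3_{2g(K)-1}(K);\C)=2g(K)-1$, again because $(2g(K)-1)$-surgery is an instanton L-space.

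The final step is a direct substitution into Theorem~\ref{thm:main-surgery}: for relatively prime integers $p,q$ with $q\geq 1$,
\[ \dim_\C I^\#(S^3_{p/q}(K);\C) \;=\; q(2g(K)-1) + \bigl|p-q(2g(K)-1)\bigr|. \]
Splitting the absolute value according to the sign of $p-q(2g(K)-1)$ gives the two branches of the stated piecewise formula, and since $\cinvt(K)\neq 0$ the formula also covers the case $p/q=0/1$.

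The main obstacle is the first step: precisely translating the computations of \cite{bs-lspace,lpcs} into the statements about $N(K)$ and $N(\mirror{K})$ used above, since those papers predate the definition of $\cinvt$ and instead phrase their conclusions directly in terms of the dimensions of framed instanton homology groups and the vanishing/nonvanishing of explicit cobordism maps arising from surgery exact triangles. Once this translation is carried out, the remainder is mechanical.
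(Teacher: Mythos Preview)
Your overall strategy matches the paper's exactly: the paper isolates the computation of $\cinvt(K)$ and $r_0(K)$ as Proposition~\ref{prop:nu-l-space}, and then the proof of Theorem~\ref{thm:nu-l-space} is precisely the substitution into Theorem~\ref{thm:rational-surgeries} that you describe.

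However, your argument for $N(\mirror{K})=0$ has a real gap. Saying that ``positive integer surgeries on $\mirror{K}$ are never L-spaces'' only tells you that $\dim I^\#(S^3_n(\mirror{K}))>n$ for $n>0$; it does \emph{not} tell you that the trace cobordism maps $F_{\mirror{K},n}$ vanish, which is what $N(\mirror{K})=0$ means. There is no ``parallel analysis'' here: the input from \cite{bs-lspace} is a statement about dimensions of $I^\#$ of positive surgeries on $K$, and it gives no direct control over the maps $F_{\mirror{K},n}$. The paper instead uses the structural result of Proposition~\ref{prop:unimodal-rank}: once you know $N(K)=2g(K)-1\geq 3$ (i.e.\ $g(K)\geq 2$), that proposition forces $N(\mirror{K})=0$ automatically.

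This leaves the case $g(K)=1$, where $N(K)=1$ and Proposition~\ref{prop:unimodal-rank} does not rule out $N(\mirror{K})=1$ (the W-shaped case). The paper handles this separately by invoking the classification from \cite{bs-lspace} that the only genus-one instanton L-space knot is the right-handed trefoil, and then citing the direct computation $\cinvt(T_{2,3})=1$ from Example~\ref{ex:trefoil-plot}. Your sketch does not address this case.
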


\begin{remark} Theorem \ref{thm:nu-l-space} extends one of the main results of Lidman--Pinz{\'o}n-Caicedo--Scaduto \cite{lpcs}, which gave the   framed instanton homology (over $\C$) for \emph{integer} surgeries on instanton L-space knots.
\end{remark}

In a related vein, we use Theorem \ref{thm:main-surgery} to determine exactly when the cable of a knot is an instanton L-space knot, in analogy with the results of Hedden and Hom  in Heegaard Floer homology \cite{hedden-cabling-2,hom-cabling}: 

\begin{theorem}
\label{thm:l-space-cable}
The cable $K_{p,q}$ is an instanton L-space knot if and only if $K$ is an instanton L-space knot and $p/q > 2g(K)-1$.
\end{theorem}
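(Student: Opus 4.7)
The plan rests on the classical Gordon decomposition
\[
S^3_{pq}(K_{p,q}) \,\cong\, L(q,p) \,\#\, S^3_{p/q}(K),
\]
combined with Scaduto's multiplicativity of $I^\#(-;\C)$ under connected sum and the lens-space value $\dim_\C I^\#(L(q,p);\C) = q$, which together yield the master identity
\[
\dim_\C I^\#\bigl(S^3_{pq}(K_{p,q});\C\bigr) \,=\, q \cdot \dim_\C I^\#\bigl(S^3_{p/q}(K);\C\bigr).
\]
I may assume $p,q > 0$ with $\gcd(p,q) = 1$ and $q \geq 2$, and from the cable genus formula $g(K_{p,q}) = qg(K) + (p-1)(q-1)/2$ one verifies that $pq \geq 2g(K_{p,q})-1$ if and only if $p/q \geq 2g(K)-1$.

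The ``if'' direction is now immediate: if $K$ is an instanton L-space knot and $p/q > 2g(K)-1$, Theorem \ref{thm:nu-l-space} gives $\dim_\C I^\#(S^3_{p/q}(K);\C) = p$, so the master identity yields $\dim_\C I^\#(S^3_{pq}(K_{p,q});\C) = pq$, making $K_{p,q}$ an instanton L-space knot. For the converse I assume $K_{p,q}$ is an instanton L-space knot and split on the sign of $p/q - (2g(K)-1)$. In the case $p/q \geq 2g(K)-1$, Theorem \ref{thm:nu-l-space} applied to $K_{p,q}$ evaluates the master identity as $\dim_\C I^\#(S^3_{p/q}(K);\C) = p$; pairing this with the rank formula of Theorem \ref{thm:main-surgery} and the bound $r_0(K) \geq |\cinvt(K)|$ forces $\cinvt(K) = r_0(K) \in [0, p/q]$, whence Theorem \ref{thm:main-surgery} gives $\dim_\C I^\#(S^3_n(K);\C) = n$ for every sufficiently large positive integer $n$, so $K$ is an instanton L-space knot and Theorem \ref{thm:nu-l-space} identifies $\cinvt(K) = 2g(K)-1$. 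The strict inequality $p/q > 2g(K)-1$ is automatic since $\gcd(p,q) = 1$ and $q \geq 2$ preclude equality.

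The hard part is ruling out the remaining case $p/q < 2g(K)-1$, which I expect to handle by a parity argument. Here the L-space formula for $K_{p,q}$ at slope $pq$ combined with the genus formula reduces the master identity to
\[
q \cdot \dim_\C I^\#\bigl(S^3_{p/q}(K);\C\bigr) \,=\, 4qg(K) + pq - 2p - 2q.
\]
Divisibility by $q$ with $\gcd(p,q) = 1$ and $q \geq 2$ forces $q = 2$, so $p$ is odd and $\dim_\C I^\#(S^3_{p/2}(K);\C) = 4g(K)-2$ is even. But Theorem \ref{thm:main-surgery} writes this same quantity as $2r_0(K) + |p - 2\cinvt(K)|$, which is odd since $p$ is odd and $2\cinvt(K)$ is even. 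This parity mismatch gives the contradiction, and in my view it is the single delicate observation that makes the proof go through without needing a cabling formula for $\chominvt$ or other finer structural input.
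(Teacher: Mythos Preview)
Your proof is correct and takes a genuinely different route from the paper's.

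The paper works at the neighboring integer slopes, using Gordon's identities $S^3_{pq\pm 1}(K_{p,q}) \cong S^3_{(pq\pm 1)/q^2}(K)$, and needs two preliminary results: a lemma bounding $\cinvt(K_{p,q})$ when $\cinvt(K) < p/q$, and a separate proposition showing $p/q > 2g(K)-1$. You instead exploit the \emph{reducible} slope $pq$, where Gordon's decomposition $S^3_{pq}(K_{p,q}) \cong L(q,p) \# S^3_{p/q}(K)$ together with the K\"unneth formula and Corollary~\ref{cor:lens-spaces} gives the clean multiplicative identity $\dim I^\#(S^3_{pq}(K_{p,q})) = q\cdot\dim I^\#(S^3_{p/q}(K))$. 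Both arguments ultimately force $q=2$ by a divisibility constraint and finish with a parity contradiction, but yours reaches that endgame more directly, without first having to locate $\cinvt(K)$ or $\cinvt(K_{p,q})$ relative to the relevant slopes. The trade-off is that the paper's approach, by tracking $\cinvt$ of the cable, yields slightly more information (Lemma~\ref{lem:cinvt-cable}) as a byproduct.

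One small point to tighten: your assumption $p>0$ is not justified a priori in the ``only if'' direction, and the genus formula you quote should read $g(K_{p,q}) = qg(K) + (|p|-1)(q-1)/2$. In Case~1 one checks that $pq \geq 2g(K_{p,q})-1 > 0$ forces $p>0$ anyway, so nothing changes there. In Case~2 with $p<0$ the right-hand side of your displayed identity becomes $4qg(K) - 3pq + 2p - 2q$ rather than $4qg(K) + pq - 2p - 2q$, but the same argument applies verbatim: $q \mid 2p$ forces $q=2$, and then $\dim I^\#(S^3_{p/2}(K)) = 4g(K) - 2p - 2$ is still even while $2r_0(K) + |p - 2\cinvt(K)|$ is odd. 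So the gap is easily patched, but it would be cleaner to either treat both signs of $p$ or explain why $p<0$ lands you in Case~2 automatically.
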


Finally, we  use the techniques  above to determine the framed instanton homology (over $\C$) of many of the smallest volume  hyperbolic 3-manifolds:

\begin{theorem}
\label{thm:hyperbolic} The framed instanton homology of the first 20 manifolds in the Hodgson--Weeks census of closed hyperbolic manifolds is as shown in Table~\ref{table:hw-census}.
\end{theorem}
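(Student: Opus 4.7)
The plan is to present each of the 19 computable manifolds in the Hodgson--Weeks census as a Dehn surgery on a knot in $S^3$, and then apply Theorem~\ref{thm:main-surgery} to obtain $\dim_\C I^\#$. First I would use SnapPy (or its underlying data) to obtain an explicit surgery description of each of the first 20 census manifolds. Many are known to arise as fillings of low-volume cusped manifolds such as the Whitehead link complement or the magic manifold, and via standard Kirby calculus (handle slides, $\pm 1$-framed unknot blowdowns, slam-dunks) one can convert these descriptions into $S^3_{p/q}(K)$ for a relatively simple knot $K$.

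The main effort is then to identify, for each resulting knot $K$, its pair $(\cinvt(K),r_0(K))$. The strategy is to match $K$ against the library of knots whose invariants were computed earlier in the paper: the twenty knots through 8 crossings in Table~\ref{table:main} via Theorem~\ref{thm:8-crossings}, the twist knots $K_n$ via Theorem~\ref{thm:main-twist}, the pretzel families $P(n,3,-3)$ and $P(2n-1,3,2)$ via Theorems~\ref{thm:main-pretzels} and~\ref{thm:main-pretzels-2}, and the instanton L-space knots via Theorem~\ref{thm:nu-l-space}. Once a match is found, Theorem~\ref{thm:main-surgery} gives $\dim_\C I^\#(S^3_{p/q}(K);\C) = q\cdot r_0(K) + |p - q\cinvt(K)|$, and Remark~\ref{rmk:euler} promotes this to the $\Z/2\Z$-graded answer via the formula $\chi(I^\#(Y;\C)) = |H_1(Y;\Z)|$, which one cross-checks against the homology of the census manifold.

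For manifolds whose natural surgery descriptions do not immediately fit into this library, the fallback is to apply the same surgery-exact-triangle analysis that underlies Theorem~\ref{thm:main-surgery}: one can often reduce a general rational surgery on a small knot to the question of $\cinvt$ and the dimension at a single nonzero slope (compare Remark~\ref{rmk:nufromsurgery}), which in turn can sometimes be extracted from a single integer surgery computable by recognizing the result as a lens space, a Brieskorn sphere, or another already-computed 3-manifold. For this step, concordance inputs like Corollaries~\ref{cor:homogeneous} and~\ref{cor:alternating} are the main sources of $\chominvt$, and hence of $\cinvt$ via the bound $2\chominvt(K)-1 \leq \cinvt(K)$ together with a single surgery dimension count to pin down the exact value.

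The main obstacle will be the one excluded census manifold: presumably it admits no surgery description on a knot $K$ whose $\cinvt$ and $r_0$ can be pinned down by present techniques (for instance because the relevant knot has $\chominvt < g_s$ leaving $\cinvt$ ambiguous, or because $\cinvt$ is known but $r_0$ cannot be independently determined from any computable surgery). For the other 19, the delicate cases will be those requiring a nontrivial Kirby-calculus simplification and then an ad~hoc argument using a surgery triangle to identify $r_0(K)$, together with careful bookkeeping of orientations so as to not confuse $K$ with $\overline{K}$ (which would flip the sign of $\cinvt$). The remaining verification, that our dimension counts are consistent with $\hfhat$ where the latter is known, serves as a sanity check for each row of Table~\ref{table:hw-census}.
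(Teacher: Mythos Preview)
There is a genuine gap in your plan: not all of the first twenty census manifolds can be realized as Dehn surgery on a knot in $S^3$. A manifold $S^3_{p/q}(K)$ has cyclic first homology $\Z/p\Z$, but census manifold~0 (the Weeks manifold) has $H_1 \cong \Z/5\Z \oplus \Z/5\Z$, so no such description exists; several of the other manifolds you would need (for instance census manifolds 3, 15) have the same obstruction. Your proposal to ``present each of the 19 computable manifolds as a Dehn surgery on a knot in $S^3$'' therefore cannot succeed as stated, and the fallback you describe (surgery exact triangles pinning down $\cinvt$ and $r_0$ of some knot) still presupposes a knot-surgery description.

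The paper's proof splits into three pieces. Twelve of the manifolds are indeed surgeries on knots in $S^3$ (on $4_1$, $\mirror{5_2}$, the $(-2,3,7)$ pretzel, or the census knot $k5_1$), and Theorem~\ref{thm:main-surgery} handles those as you suggest. For five others (census manifolds 0, 3, 8, 14, 15) the paper instead identifies each as the branched double cover of a knot with thin reduced odd Khovanov homology, so that Proposition~\ref{prop:ss-thin} forces $\dim I^\# = \det(K)$; this is the essential tool you are missing. The remaining three (2, 7, 16) are placed in surgery triads with manifolds already computed (lens spaces, branched double covers, earlier census entries), and the exact triangle together with the Euler characteristic constraint~\eqref{eq:framed-chi} determines or bounds the dimension. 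Your ``fallback'' paragraph gestures at this triad idea, but only in the context of surgeries on a fixed knot; here the triads come from fillings of a common cusped manifold whose other fillings are \emph{not} surgeries on any single knot in $S^3$.
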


\begin{table}
\centering
\begin{tabular}{clcc}
\# & Name & $|H_1|$ & $\dim I^\#$ \\
\hline
$0$ & \texttt{m003(-3,1)} & $25$ & $25$ \\
$1$ & \texttt{m003(-2,3)} & $5$ & $7$ \\
$2$ & \texttt{m007(3,1)} & $18$ & $18$ \\
$3$ & \texttt{m003(-4,3)} & $25$ & $25$ \\
$4$ & \texttt{m004(6,1)} & $6$ & $8$ \\
$5$ & \texttt{m004(1,2)} & $1$ & $5$ \\
$6$ & \texttt{m009(4,1)} & $6$ & $8$ \\
$7$ & \texttt{m003(-3,4)} & $10$ & $10$ or $12$ \\
$8$ & \texttt{m003(-4,1)} & $35$ & $35$ \\
$9$ & \texttt{m004(3,2)} & $3$ & $7$ \\
\hline
\end{tabular}
\hspace{0.5cm}
\begin{tabular}{clcc}
\# & Name & $|H_1|$ & $\dim I^\#$ \\
\hline
$10$ & \texttt{m004(7,1)} & $7$ & $9$ \\
$11$ & \texttt{m004(5,2)} & $5$ & $9$ \\
$12$ & \texttt{m003(-5,3)} & $35$ & $35$ \\
$13$ & \texttt{m007(1,2)} & $21$ & $21$ \\
$14$ & \texttt{m007(4,1)} & $21$ & $21$ \\
$15$ & \texttt{m007(3,2)} & $27$ & $27$ \\
$16$ & \texttt{m006(3,1)} & $30$ & $30$ \\
$17$ & \texttt{m003(-5,4)} & $30$ & $30$ \\
$18$ & \texttt{m006(-3,2)} & $15$ & $15$ \\
$19$ & \texttt{m015(5,1)} & $7$ & $9$ \\
\hline
\end{tabular}
\caption{Framed instanton homology of the first 20 closed manifolds in the Hodgson--Weeks census (we could not  pin down $I^\#$ for census manifold 7).} \label{table:hw-census}
\end{table}

\subsection{The spectral sequence from odd Khovanov homology} Let $L$ be any link in $S^3$. In \cite{scaduto}, Scaduto constructed a spectral sequence \[\Khoddr(L) \Rightarrow I^\#(-\dcover(L)) \] with $E^2$ page  the reduced odd Khovanov homology of $L$,  converging to the framed instanton homology of the branched double cover of $S^3$ along $L$ (with its orientation reversed). This spectral sequence collapses whenever the bigraded group \[\Khoddr(L)=\bigoplus_{i,j} \Khoddr_{\!i,j}(L)\] is \emph{thin}, meaning that there is some $\delta\in \Z$ such that \[ \Khoddr_{\!i,j}(L) = 0 \textrm{ unless } j-2i = \delta, \] as is the case for quasi-alternating links. Interestingly, it appears to be   an open  question whether the converse is true (though we could not find this question or its analogue for the spectral sequence in Heegaard Floer homology \cite{osz-branched} posed anywhere):

\begin{question}
Is there a link $L\subset S^3$ for which $\Khoddr(L)$ is not thin but the spectral sequence \[\Khoddr(L) \Rightarrow I^\#(-\dcover(L))\] collapses at the $E^2$ page?
\end{question}

In this vein, we prove the following:

\begin{theorem}

\label{thm:main-ss}
With coefficients in $\C$, the spectral sequence \[\Khoddr(K;\C) \Rightarrow I^\#(-\dcover(K);\C)\] does not collapse  at the $E^2$ page for any of the  prime knots with at most 10 crossings whose reduced odd Khovanov homology over $\C$ is not thin, except possibly for $10_{152}$.
\end{theorem}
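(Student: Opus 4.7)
The plan is to exploit the basic principle that a spectral sequence with $E^2$-page $\Khoddr(K;\C)$ converging to $I^\#(-\dcover(K);\C)$ collapses at $E^2$ if and only if
\[
\dim_\C \Khoddr(K;\C) = \dim_\C I^\#(-\dcover(K);\C).
\]
So it suffices to establish the strict inequality $\dim_\C I^\#(-\dcover(K);\C) < \dim_\C \Khoddr(K;\C)$ for each relevant knot.

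First I would enumerate the prime knots $K$ of at most 10 crossings whose reduced odd Khovanov homology over $\C$ is non-thin, and tabulate $\dim_\C \Khoddr(K;\C)$ from standard sources such as KnotAtlas or Shumakovitch's tables. Next, for each such $K$, I would compute $\dim_\C I^\#(-\dcover(K);\C)$ using the surgery technology built earlier in the paper. The key move is to present $\dcover(K)$ as rational Dehn surgery on a knot $K'\subset S^3$ (for many small examples this can be done via the Montesinos trick or explicit Kirby calculus), and then apply Theorem~\ref{thm:main-surgery} using the values of $\cinvt(K')$ and $r_0(K')$ recorded in Table~\ref{table:main} (or in Theorems~\ref{thm:main-twist}, \ref{thm:main-pretzels}, \ref{thm:main-pretzels-2}, \ref{thm:nu-l-space}). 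For knots whose branched double cover is already a well-studied manifold such as a Brieskorn or Seifert fibered space, one can instead appeal to Scaduto's prior computations (e.g.\ of $I^\#(\Sigma(2,3,6k\pm 1))$), or to the Hodgson--Weeks census dimensions determined in Theorem~\ref{thm:hyperbolic}.

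With both quantities in hand for each non-thin knot, the final step is to verify strict inequality in every case except $10_{152}$. The exception reflects the fact that the numerics are presumably consistent with collapse for $10_{152}$, so the dimension argument is simply inconclusive there; one should be explicit that this does not mean collapse occurs, only that our method cannot rule it out.

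The main obstacle will be the case-by-case computational labor of step two: each non-thin knot requires an explicit surgery description of its branched double cover on a knot or link whose framed instanton invariants we have already pinned down, followed by a careful run through the rational surgery formula. This is exactly the sort of bookkeeping that the formalism of $\cinvt$ and $r_0$ is designed to streamline, but verifying the full list of candidates (roughly a dozen non-thin knots through 10 crossings) will still demand substantial Kirby calculus for a handful of the more complicated examples.
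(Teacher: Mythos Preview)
Your proposal is correct and matches the paper's approach essentially exactly: the paper lists the seven non-thin knots $10_{124}, 10_{139}, 10_{145}, 10_{152}, 10_{153}, 10_{154}, 10_{161}$ (not a dozen), identifies each $\dcover(K)$ except $\dcover(10_{152})$ as a surgery on a knot whose $\cinvt$ and $r_0$ are known, and verifies the strict dimension inequality. The only nuance is that $10_{152}$ is excepted not because the numerics happen to match, but because the paper simply does not find a usable surgery description for $\dcover(10_{152})$ and so cannot compute $\dim I^\#$ there.
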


We prove this theorem by identifying $\dcover(K)$ for each such  knot $K$ as surgery on another knot in $S^3$, and using our   results to bound the dimension of the framed instanton homology (over $\C$) of these surgeries.

\subsection{Comparison with Heegaard Floer homology} Let $\F:=\Z/2\Z$ from this point on. We have  noted that $\chominvt = \tau$ for homogeneous and quasipositive knots. In fact,  we will  prove:

\begin{proposition}
\label{prop:tau-tau} If the equality \[\dim_\C I^\#(Y;\C) = \dim_\F \hfhat(Y;\F)\] holds for all $Y$ obtained via integer surgery on knots in $S^3$ then $\chominvt = \tau$ for all knots in $S^3$.
\end{proposition}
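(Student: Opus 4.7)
The plan is to compare Theorem~\ref{thm:main-surgery} with its Heegaard Floer counterparts \cite[Proposition~9.6]{osz-rational} and \cite[Proposition~13]{hanselman-cosmetic}. Those results produce, for every knot $K \subset S^3$, integers $r_0^{HF}(K) \geq 0$ and $\cinvt^{HF}(K) \in \Z$ satisfying
\[
\dim_\F \hfhat(S^3_{p/q}(K);\F) \;=\; q \cdot r_0^{HF}(K) + |p - q\cinvt^{HF}(K)|
\]
for every nonzero rational $p/q$ with $\gcd(p,q)=1$ and $q \geq 1$, and they further identify $\cinvt^{HF}(K) = \nu(K) - \nu(\mirror K)$ as the difference of the Ozsv\'ath--Szab\'o nu invariants of $K$ and its mirror. (This identification can also be verified by matching the shape formula against the known dimensions of $\hfhat$ of surgeries on torus knots.)

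Under the stated hypothesis, $\dim_\C I^\#(S^3_n(K);\C) = \dim_\F \hfhat(S^3_n(K);\F)$ for every knot $K$ and every integer $n$. Setting $q=1$ and considering $n \to \pm\infty$, as in Remark~\ref{rmk:nufromsurgery}, the shape formulas from Theorem~\ref{thm:main-surgery} and the Heegaard Floer analog must coincide term by term, which forces
\[
r_0(K) = r_0^{HF}(K) \quad \text{and} \quad \cinvt(K) = \nu(K) - \nu(\mirror K)
\]
for every $K$. Applying this identity to $\#^n K$ and combining Hom's inequalities $\tau \leq \nu \leq \tau+1$ with additivity of $\tau$ under connected sum yields
\[
|\cinvt(\#^n K) - 2n\tau(K)| \;=\; |\nu(\#^n K) - \nu(\#^n \mirror K) - 2\tau(\#^n K)| \;\leq\; 1.
\]
Dividing by $n$ and passing to the limit, we conclude $\chominvt(K) = \tfrac{1}{2}\lim_{n\to\infty} \cinvt(\#^n K)/n = \tau(K)$.

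The only substantive step that is not automatic is the identification $\cinvt^{HF}(K) = \nu(K) - \nu(\mirror K)$. Hanselman's immersed-curve formulation expresses $\dim_\F \hfhat(S^3_{p/q}(K);\F)$ in terms of minimal intersections between an invariant curve and the surgery slope, and one must verify that the resulting shift parameter is precisely $\nu(K) - \nu(\mirror K)$; alternatively, one can extract the same identification from the Ozsv\'ath--Szab\'o rational surgery formula. Modulo this conventions-matching, the remainder of the proof is a direct comparison of shape formulas together with Hom's bound on $\nu - \tau$.
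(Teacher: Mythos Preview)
Your approach is essentially the same as the paper's: both set up a Heegaard Floer analogue $\hat\nu$ of $\cinvt$ via the shape formula of Hanselman/Ozsv\'ath--Szab\'o, show that $|\hat\nu(K)-2\tau(K)|\le 1$, deduce under the hypothesis that $\cinvt(\#^n K)=\hat\nu(\#^n K)$ for all $n$, and then homogenize. The one point of difference is how the bound $|\hat\nu-2\tau|\le 1$ is established. You assert the clean formula $\hat\nu(K)=\nu(K)-\nu(\mirror K)$ and then apply $\nu\in\{\tau,\tau+1\}$ (due to Ozsv\'ath--Szab\'o, not Hom, incidentally). The paper instead proves a piecewise formula $\hat\nu(K)=\pm\max(2\nu(\pm K)-1,0)$ depending on the sign of $\nu(K)-\nu(\mirror K)$, and derives the same bound from that. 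Your formula is in fact equivalent to theirs---one can check this case-by-case using Hom's $\epsilon$ invariant, since $\nu(K)=\tau(K)$ when $\epsilon(K)\in\{0,1\}$ and $\nu(K)=\tau(K)+1$ when $\epsilon(K)=-1$, together with $\epsilon(\mirror K)=-\epsilon(K)$---but you should be aware that neither \cite{osz-rational} nor \cite{hanselman-cosmetic} states the identification $\hat\nu=\nu-\nu(\mirror{\,\cdot\,})$ explicitly, so the ``conventions-matching'' you flag is a genuine (if short) computation rather than a citation.
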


Note that this result provides a potential strategy for \emph{disproving} Kronheimer and Mrowka's conjectured isomorphism \eqref{eqn:conj-iso}: one need only find a knot $K$ for which $\tau^\#(K)\neq \tau(K)$. One source of possible such $K$ are twisted Whitehead doubles of $(2,2n+1)$ torus knots, as these are knots for which the $\tau$ invariant differs from Rasmussen's $s$ invariant, by work of Hedden and Ording \cite{hedden-ording}. In particular, a negative answer to the following  would disprove \eqref{eqn:conj-iso}:

\begin{question}
Is $\tau^\#(K) =0$ for $K$ the $2$-twisted positive Whitehead double of $T_{2,3}$?
\end{question}

Many of our computations of $\cinvt(K)$ and $r_0(K)$ use basic topology together with somewhat formal properties of framed instanton homology that are common to other Floer homology theories including Heegaard Floer homology. Accordingly, we are able to prove the following, providing some evidence for the conjectured isomorphism \eqref{eqn:conj-iso}:

\begin{theorem}
\label{thm:main-comparison}
Let $K$ be any of the  knots in Table~\ref{table:main}, any of the twist or pretzel knots in Theorems \ref{thm:main-twist}, \ref{thm:main-pretzels}, or \ref{thm:main-pretzels-2}, or any knot which is both an instanton and Heegaard Floer L-space knot. Then
\[\dim_\C I^\#(S^3_{p/q}(K);\C) = \dim_\F \hfhat(S^3_{p/q}(K);\F)\] for all nonzero rational numbers $p/q$. Let $Y$ be any of the  3-manifolds in Table~\ref{table:hw-census}, except possibly for census manifold 7. Then \[\dim_\C I^\#(Y;\C) = \dim_\F \hfhat(Y;\F).\]
\end{theorem}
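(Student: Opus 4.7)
The plan is to exploit the parallel between Theorem~\ref{thm:main-surgery} and the analogous rational surgery formula for $\hfhat$, originally due to Ozsv\'ath--Szab\'o~\cite{osz-rational} and reinterpreted via immersed curves by Hanselman~\cite{hanselman-cosmetic}: for every knot $K \subset S^3$ and every nonzero $p/q$ with $\gcd(p,q)=1$ and $q\geq 1$,
\[ \dim_\F \hfhat(S^3_{p/q}(K); \F) = q \cdot r^{HF}_0(K) + |p - q\,\nu(K)|, \]
where $\nu(K)\in\Z$ is the Ozsv\'ath--Szab\'o $\nu$-invariant and $r^{HF}_0(K)$ is the HF analogue of $r_0(K)$, both computable from $\mathit{CFK}^\infty(K)$. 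Given this, to establish $\dim_\C I^\#(S^3_{p/q}(K);\C) = \dim_\F \hfhat(S^3_{p/q}(K);\F)$ for all nonzero $p/q$, it suffices to verify $\cinvt(K) = \nu(K)$ and $r_0(K) = r^{HF}_0(K)$ for each $K$ in the statement.

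For the knots in Table~\ref{table:main}, I would read $\nu$ and $r^{HF}_0$ off the knot Floer complexes tabulated in KnotInfo. All such knots except $8_{19}$ and $8_{20}$ are alternating, hence thin, so $\nu = \tau = -\sigma/2$ and $r^{HF}_0 = \det$; the comparison with Table~\ref{table:main} is then immediate using Corollary~\ref{cor:alternating}. The knot $8_{19} = T_{3,4}$ is an HF L-space knot of genus $3$, giving $\nu = r^{HF}_0 = 5$; and $8_{20}$ has a thin Floer complex whose required invariants are likewise tabulated, matching the table entry $\cinvt = 0$, $r_0 = 4$.

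For the twist knots $K_n$ and the slice pretzels $P(n,3,-3)$, the knot Floer complex has been computed explicitly, and the mapping cone formula produces exactly the surgery dimensions of Theorems~\ref{thm:main-twist} and~\ref{thm:main-pretzels}. The pretzels $P(2n-1,3,2)$ admit positive lens-space surgeries and so are HF L-space knots of genus $2n-1$, and the HF L-space surgery formula then reproduces Theorem~\ref{thm:main-pretzels-2}. For any $K$ that is simultaneously an instanton and HF L-space knot of genus $g$, both the instanton formula (Theorem~\ref{thm:nu-l-space}) and its HF counterpart reduce to the same closed-form expression in $p$, $q$, and $g$, yielding the equality at once.

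For the 19 census manifolds in Table~\ref{table:hw-census} to which the theorem applies, the proof of Theorem~\ref{thm:hyperbolic} identifies each as an explicit rational surgery on a knot in $S^3$; applying the HF mapping cone formula, or invoking known HF rank computations, to the same surgery presentation gives the matching dimension. The main obstacle is bookkeeping rather than structural: one must assemble, case by case, the relevant $\mathit{CFK}^\infty$, L-space, or thin-knot data so that the two rational surgery formulas are fed matching input. Since all this HF data is either already tabulated or follows from the thin/L-space structure of the knots involved, no new machinery on the HF side is required.
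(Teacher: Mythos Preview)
Your proposal contains several concrete errors that prevent it from going through as written. First, the claim that $r^{HF}_0(K) = \det(K)$ for alternating (thin) knots is false: for example $\det(4_1)=5$ but $r_0^{HF}(4_1)=2$, and $\det(3_1)=3$ but $r_0^{HF}(3_1)=1$; indeed comparing with Table~\ref{table:main} shows $r_0 \neq \det$ for every entry. Second, you have conflated the Ozsv\'ath--Szab\'o invariant $\nu$ with the quantity appearing in Hanselman's formula; the latter (called $\hat\nu$ in \S\ref{sec:comparison}) is roughly $2\nu-1$, not $\nu$ itself, so ``$\nu = \tau = -\sigma/2$'' is not what you need. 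Third, the pretzels $P(2n-1,3,2)$ do \emph{not} admit lens space surgeries and are not L-space knots in either theory: already $P(3,3,2)=8_5$ has $r_0=11 \neq 2g-1=5$. Finally, not all of the census manifolds in Table~\ref{table:hw-census} are exhibited as surgeries on knots in $S^3$; several are handled as branched double covers or via surgery triads (Propositions~\ref{prop:hw-branched} and \ref{prop:census-2-7-16}).

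The paper's own argument sidesteps these pitfalls by observing that every computation of $\cinvt$, $\chominvt$, and $r_0$ in \S\ref{sec:computations-dim} used only inputs with exact Heegaard Floer analogues: the surgery exact triangle, the adjunction inequality, the slice-torus property of $2\tau$, Corollary~\ref{cor:lens-spaces}, and (for $6_3$, $8_6$, $8_8$) a single direct verification that $\dim_\F \hfhat(S^3_0(8_8);\F)=14$. Rerunning the same topological arguments in Heegaard Floer then yields $\hat\nu = \cinvt$ and $\hat r_0 = r_0$ for each knot without ever computing $\mathit{CFK}^\infty$ explicitly. For the census manifolds one similarly mirrors the instanton argument, using the $\Khr \Rightarrow \hfhat(\Sigma_2)$ spectral sequence in place of Scaduto's. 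Your strategy of matching the two rational surgery formulas is sound in principle, but you would need to actually compute $\hat r_0$ correctly rather than invoke incorrect shortcuts.
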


\subsection{Mutation and concordance invariants} We conclude with some new observations regarding the behaviors of the concordance invariants $\chominvt$ and $\tau$ under  mutation. 

Given a knot $K\subset S^3$ and a  genus-2 surface $\Sigma$ in the complement of $K$, \emph{genus-2 mutation} along $\Sigma$ is process of  cutting $S^3$ open along $\Sigma$, and then regluing the pieces according to the hyperelliptic involution of $\Sigma$. The resulting manifold is still $S^3$ but the knot $K$ is taken to a potentially different knot, its \emph{genus-2 mutant}. 
Readers might be more familiar with \emph{Conway mutation}, which is an operation on knots defined diagrammatically. It turns out that any Conway mutation can be achieved by a sequence of genus-2 mutations \cite{ruberman,tillmann}.

There has been a lot of  work devoted to understanding whether various knot invariants are preserved by mutation. Very recently, for instance, Kotelskiy--Watson--Zibrowius \cite{KWZ} proved that Rasmussen's $s$ invariant (over a field) is preserved by Conway mutation. This is interesting because Conway mutation need not preserve  slice genus. Another result along these lines follows from Zibrowius's  work \cite{zibrowius} showing that $\delta$-graded knot Floer homology is preserved by Conway mutation. This implies that if $\hfkhat(K)$ is \emph{thin}, as  is the case for quasi-alternating knots, then $\tau(K) = \tau(K')$ for any Conway mutant $K'$ of $K$.
It remains open whether $\tau$ is preserved by Conway mutation in general, and is even less  clear  \emph{a priori} whether $\tau$ should  be preserved by arbitrary genus-2 mutation (which need not   preserve $\delta$-graded knot Floer homology). The same questions apply to $\chominvt$. We conjecture the following:

\begin{conjecture}
\label{conj:tau-mutation}
 The values of $\chominvt$ and $\tau$ are preserved by genus-2 mutation.
\end{conjecture}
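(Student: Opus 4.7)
The plan is to deduce Conjecture \ref{conj:tau-mutation} from the following structural invariance statement, which would be interesting in its own right: if a closed 3-manifold $Y$ contains an embedded genus-2 surface $\Sigma$ and $Y_\Sigma$ denotes the result of cutting $Y$ along $\Sigma$ and regluing via the hyperelliptic involution, then
\[ \dim_\C I^\#(Y;\C) = \dim_\C I^\#(Y_\Sigma;\C) \quad\textrm{and}\quad \dim_\F \hfhat(Y;\F) = \dim_\F \hfhat(Y_\Sigma;\F). \]

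Assuming this, the argument runs as follows. Let $K'$ be the genus-2 mutant of $K$ along $\Sigma\subset S^3\ssm K$. The surface $\Sigma$ remains embedded in $S^3_n(K)$ disjoint from the surgery solid torus, and mutation along this $\Sigma$ inside $S^3_n(K)$ recovers $S^3_n(K')$, so the structural invariance gives $\dim_\C I^\#(S^3_n(K);\C) = \dim_\C I^\#(S^3_n(K');\C)$ for all integers $n$. Feeding this into Theorem \ref{thm:main-surgery} and Remark \ref{rmk:nufromsurgery} (apply with $n=\pm N$ large and subtract) yields $\cinvt(K)=\cinvt(K')$. Applied to the iterated connected sums $\#^j K$, which admit $j$ disjoint parallel copies of $\Sigma$ (one in each summand) whose successive mutation produces $\#^j K'$, the same argument gives $\cinvt(\#^j K)=\cinvt(\#^j K')$ for every $j\geq 1$; the homogenization formula then delivers $\chominvt(K)=\chominvt(K')$. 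The analogous chain through Ozsv\'ath--Szab\'o's characterization of $\tau$ via $\hfhat$ of large surgeries establishes $\tau(K)=\tau(K')$.

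The main obstacle is the structural invariance itself, for which no general technique is currently available. For $\hfhat$, a natural starting point is Zibrowius's theorem that $\delta$-graded knot Floer homology is preserved by Conway mutation; the hope would be to extend this first from knots in $S^3$ to surgery descriptions of arbitrary closed 3-manifolds, and then to pass from Conway mutation to genus-2 mutation via the Ruberman--Tillmann decomposition. For $I^\#$, the most plausible route appears to be an equivariant excision theorem adapted to the hyperelliptic involution on $\Sigma$, using Chen--Scaduto's computation of $I^\#(\Sigma\times S^1)$ as a local model; however, existing excision theorems in all Floer-theoretic settings handle only tori, and extending them to higher-genus surfaces equipped with an involution will likely require essentially new analytic ingredients. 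In the short term, partial progress may be feasible for restricted classes: for instance, when $\Sigma$ compresses on one side (amenable to a Heegaard-splitting analysis), or when the odd Khovanov homology of $K$ is thin (so that Scaduto's spectral sequence collapses and the question reduces to a purely Khovanov-homological one via the Wehrli-type invariance of Khovanov homology under mutation).
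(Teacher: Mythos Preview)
The statement you are trying to prove is a \emph{conjecture}: the paper does not prove it, and indeed explicitly labels it as open. What the paper does establish is Proposition~\ref{prop:mutation}, namely that Conjecture~\ref{conj:rank-mutation} (the rank-invariance of $I^\#$ and $\hfhat$ under genus-2 mutation) implies Conjecture~\ref{conj:tau-mutation}. Your proposal reproduces exactly this reduction: your ``structural invariance statement'' is Conjecture~\ref{conj:rank-mutation}, and your deduction of $\chominvt(K)=\chominvt(K')$ and $\tau(K)=\tau(K')$ from it is essentially the paper's proof of Proposition~\ref{prop:mutation}, passing through $\cinvt(\#^nK)=\cinvt(\#^nK')$ via Remark~\ref{rmk:nufromsurgery} and then homogenizing. (One small point the paper makes explicit and you leave implicit: one must check that mutation along $\Sigma$ in $S^3_n(K)$ really yields $S^3_n(K')$ with the \emph{same} framing $n$; the paper notes this follows since mutation preserves $|H_1|$.)

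You are candid that the structural invariance itself is the obstacle, and the paper agrees: Conjecture~\ref{conj:rank-mutation} is stated as open, with only Ruberman's $\F$-coefficient result for Floer's original instanton homology of integer homology spheres cited as partial evidence. Your suggested attack lines (excision adapted to the hyperelliptic involution, extending Zibrowius-type results) are reasonable research directions, but none of them currently go through, as you acknowledge. So what you have written is a correct account of the known reduction together with a research plan for the missing ingredient---not a proof of the conjecture, because no such proof exists in the paper or, as far as is known, anywhere.
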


One can define genus-2 mutation more generally. Given a separating genus-2 surface $\Sigma$ in a  $3$-manifold $Y$,  the corresponding \emph{genus-2 mutant} of $Y$ is the $3$-manifold $Y'$ obtained by cutting $Y$ open along $\Sigma$ and then regluing according to the hyperelliptic involution. Genus-2 mutation preserves classical homology, \[H_1(Y;\Z)\cong H_1(Y';\Z).\] Clarkson proved in \cite{clarkson} that it does not preserve  $\spc$-graded Heegaard Floer homology, but there is some evidence suggesting that \[\rank\hfhat(Y) = \rank\hfhat(Y'),\] for any closed $3$-manifold $Y$ and any genus-2 mutant $Y'$.  We conjecture that this holds for both Heegaard Floer homology and framed instanton homology:

\begin{conjecture}
\label{conj:rank-mutation}
 The ranks of  $I^\#$ and $\hfhat$ are preserved by genus-2 mutation.
\end{conjecture}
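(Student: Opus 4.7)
The plan is to reduce Conjecture~\ref{conj:rank-mutation} to an analysis of the action of the hyperelliptic involution $\iota\colon\Sigma\to\Sigma$ on a Floer-theoretic invariant of the splitting surface. First I would cut $Y$ along $\Sigma$ into two pieces $Y_1$ and $Y_2$, equip $\Sigma=\partial Y_i$ with a common system of sutures $\gamma$, and apply a gluing/pairing theorem---Juh\'asz's for sutured Floer homology, and a closure-based analogue built from the sutured instanton theory of Kronheimer--Mrowka for framed instanton homology---to write, schematically,
\[
\hfhat(Y) \cong \Hom_{\mathcal{A}(\Sigma)}\bigl(F(Y_1),\, F(Y_2)\bigr),
\quad
I^\#(Y;\C) \cong \Hom_{\mathcal{A}^\#(\Sigma)}\bigl(F^\#(Y_1),\, F^\#(Y_2)\bigr),
\]
for appropriate invariants $\mathcal{A}(\Sigma)$ and $\mathcal{A}^\#(\Sigma)$ of the decorated surface. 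The mutant $Y'$ then corresponds to pre-composing the right-hand argument by the autoequivalence induced by $\iota$, so the conjecture reduces to showing that this autoequivalence preserves the total dimension of the pairing.

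Next I would analyze this autoequivalence. The key observation is that $\iota$ acts on $H_1(\Sigma;\Z)$ by $-1$, hence on any exterior-algebra model of $\mathcal{A}(\Sigma)$ or $\mathcal{A}^\#(\Sigma)$ it acts on the $k$-th exterior power by $(-1)^k$; this action has $\pm 1$ eigenspaces of equal total dimension and in particular preserves rank. Combined with the Birman--Hilden correspondence, which identifies $\iota$ with the trivial class in the mapping class group of the six-punctured sphere, one expects this algebraic symmetry to upgrade to an honest vector-space isomorphism between the pairings associated to $Y$ and $Y'$.

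The main obstacle will be making Step~1 precise in the framed instanton setting, where a bordered theory or higher-genus gluing framework is not yet available. As a first target I would therefore treat the restricted case in which $\Sigma$ bounds a genus-$2$ handlebody on one side, so that the pairing collapses to an honest module structure and the action of $\iota$ can be tracked via the surgery exact triangles developed in the present paper. Even a successful treatment of this special case---or of the closely related case in which $Y$ is Dehn surgery on a knot whose exterior contains $\Sigma$---would already give strong evidence for the conjecture and clarify exactly which pieces of a future sutured-instanton gluing theorem are needed for a general proof.
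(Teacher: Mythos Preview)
The statement you are trying to prove is labeled in the paper as a \emph{conjecture}, not a theorem or proposition; the paper does not prove it and does not claim to. The only related result the paper establishes is Proposition~\ref{prop:mutation}, which shows that Conjecture~\ref{conj:rank-mutation} implies Conjecture~\ref{conj:tau-mutation}. So there is no ``paper's own proof'' to compare against---you are attempting an open problem.

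Your proposal is a research outline, not a proof, and you essentially acknowledge this yourself. The central gap is that the pairing formula you write for $I^\#$ does not presently exist: there is no bordered or gluing theory for framed instanton homology that expresses $I^\#(Y)$ as a $\Hom$ over an algebra associated to a genus-$2$ surface. Even on the Heegaard Floer side, the sutured gluing theorem of Juh\'asz does not by itself yield a pairing of the shape you wrote; one would need bordered Floer homology for a genus-$2$ boundary, which is also not available in the form you invoke. The further step---that the hyperelliptic involution acts on such a hypothetical category in a rank-preserving way because it acts by $-1$ on $H_1(\Sigma)$---is heuristic: acting by $(-1)^k$ on exterior powers is an observation about an \emph{algebra}, not about the \emph{modules} $F(Y_i)$ over it, and there is no reason the induced autoequivalence on modules should preserve dimensions of $\Hom$ spaces without substantial additional input. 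In short, the proposal identifies a plausible strategy but every step rests on machinery that is either unavailable or would itself require proof at the level of the conjecture.
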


\begin{remark}
The analogue of Conjecture \ref{conj:rank-mutation}  was proven by Ruberman \cite{ruberman-mutation-floer} for the instanton Floer homology of homology $3$-spheres over $\F$.\footnote{The original proof claimed mutation invariance over $\Z$, but it contains an error.}
\end{remark}

The new observation we wish to highlight is:

\begin{proposition}
\label{prop:mutation}
Conjecture \ref{conj:rank-mutation} implies  Conjecture \ref{conj:tau-mutation}.
\end{proposition}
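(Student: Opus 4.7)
The plan is to reduce preservation of $\chominvt$ and $\tau$ under genus-$2$ mutation to preservation of surgery dimensions via the rational surgery formulas, and then to upgrade individual-knot equalities to statements about connected sums. Assume Conjecture~\ref{conj:rank-mutation} throughout.

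The starting observation is topological: if $K, K' \subset S^3$ are genus-$2$ mutants along a surface $\Sigma \subset S^3 \ssm K$, then $\Sigma$ continues to sit inside $S^3_{p/q}(K)$ for every slope $p/q$ (surgery does not touch the knot complement, and in particular does not touch the boundary torus of $\nu(K)$, so the $p/q$ framing is preserved by the mutation), and genus-$2$ mutation of $S^3_{p/q}(K)$ along $\Sigma$ produces exactly $S^3_{p/q}(K')$. Conjecture~\ref{conj:rank-mutation} therefore yields
\[ \dim_\C I^\#(S^3_{p/q}(K);\C) = \dim_\C I^\#(S^3_{p/q}(K');\C) \]
and the analogous equality for $\hfhat$, for every coprime $(p,q)$ with $q \geq 1$. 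By Remark~\ref{rmk:nufromsurgery}, the instanton surgery-rank function determines $\cinvt$, so $\cinvt(K) = \cinvt(K')$; the Heegaard Floer analog of Theorem~\ref{thm:main-surgery} (\cite[Proposition~9.6]{osz-rational}, \cite[Proposition~13]{hanselman-cosmetic}) similarly determines the Ozsv\'ath--Szab\'o invariant $\nu(K)$ from $\hfhat$ surgery ranks, so $\nu(K) = \nu(K')$.

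To reach $\chominvt$ and $\tau$, extend the argument to connected sums. Embed $\Sigma$ in the complement of a single $K$-summand of $\#^nK$: mutation along $\Sigma$ converts that summand to $K'$, producing $\#^{n-1}K \# K'$. Iterating $n$ times, $\#^nK$ and $\#^nK'$ are linked by a chain of genus-$2$ mutations, and applying the previous paragraph along each step gives $\cinvt(\#^nK) = \cinvt(\#^nK')$ and $\nu(\#^nK) = \nu(\#^nK')$ for every $n \geq 1$. The definition of $\chominvt$ as the homogenization of $\cinvt$ immediately yields $\chominvt(K) = \chominvt(K')$. Since $\tau$ is a concordance homomorphism satisfying $|\nu(\#^n K) - n\tau(K)| \leq 1$, the sequence $\nu(\#^nK)/n$ converges to $\tau(K)$, and matching the limits forces $\tau(K) = \tau(K')$.

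The main point requiring care is the Heegaard Floer bookkeeping for $\tau$: one must verify that the Ozsv\'ath--Szab\'o rational surgery formula really does let one read off $\nu(K)$ (and hence $\tau(K)$ in the connected-sum limit) purely from the function $p/q \mapsto \dim_\F \hfhat(S^3_{p/q}(K);\F)$. The instanton half of the statement is essentially formal once the mutation-commutes-with-surgery observation is in place, since both $\cinvt$ and $\chominvt$ were designed precisely around the surgery-rank formula of Theorem~\ref{thm:main-surgery}.
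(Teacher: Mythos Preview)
Your approach matches the paper's: genus-$2$ mutation commutes with Dehn surgery, so Conjecture~\ref{conj:rank-mutation} forces the surgery dimensions of $\#^n K$ and $\#^n K'$ to agree for all $n$, hence $\cinvt$ and its Heegaard Floer analogue agree on every $\#^n K$, and homogenizing recovers $\chominvt$ and $\tau$.

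The one substantive difference is on the Heegaard Floer side. You work with the Ozsv\'ath--Szab\'o invariant $\nu$, and you correctly flag in your last paragraph that the crux is whether $\nu(K)$ can be read off from the surgery-rank function. In fact this is not clear: when the ``kink'' of the surgery-rank function lies at a negative slope, Hanselman's formula and \cite[Proposition~9.6]{osz-rational} recover $\nu(\mirror{K})$ rather than $\nu(K)$. The paper sidesteps this by introducing in \S\ref{sec:comparison} an invariant $\hat\nu$ that is \emph{by construction} the location of that kink, hence determined by surgery ranks, and proving $\hat\nu\in\{2\tau-1,2\tau,2\tau+1\}$ so that $\tau$ is half its homogenization. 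Replacing $\nu$ by $\hat\nu$ throughout your argument makes it complete and literally identical to the paper's.
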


\begin{proof}
Let us suppose  the rank of $I^\#$ is preserved by genus-2 mutation and show that $\chominvt$ is as well.  Let $K$ be a knot in $S^3$ and $K'$ a genus-2 mutant of $K$.
Then $\#^nK$ and $\#^nK'$ are   related by a sequence of genus-2 mutations for every positive integer $n$. The $3$-manifolds \[S^3_{\pm N}(\#^nK) \quad \textrm{and}\quad S^3_{\pm N}(\#^nK')\] are also  then related by a sequence of genus-2 mutations for every positive integer $N$ (the fact that the mutation preserves the surgery framing $N$ follows from the fact that it preserves the order of first homology). Therefore, \[\dim_\C I^\#(S^3_{\pm N}(\#^nK);\C) = \dim_\C I^\#(S^3_{\pm N}(\#^nK');\C)\] for all positive integers $n$ and $N$. By Remark \ref{rmk:nufromsurgery}, this implies that \[\cinvt(\#^nK) = \cinvt(\#^nK')\] for all positive integers $n$, and therefore that $\chominvt(K) = \chominvt(K')$ by definition. 

The same  argument shows that the mutation invariance of $\tau$ follows from that of $\rank\hfhat$, since $\tau$ can also be expressed as the homogenization of a concordance invariant $\hat\nu$ which is determined by the ranks of $\hfhat$ of  surgeries, as shown in \S\ref{sec:comparison}. \end{proof}

\subsection{Organization}

We review framed instanton homology in \S\ref{sec:background}.  In \S\ref{sec:nu-sharp} we define $\cinvt$ and prove that it is a concordance invariant. In \S\ref{sec:rational-surgeries} we  prove Theorems~\ref{thm:main-surgery},   \ref{thm:nu-l-space}, and \ref{thm:l-space-cable}. In \S\ref{sec:quasi-morphism} and \S\ref{sec:tools} we establish some properties of $\cinvt$ and $\chominvt$, including tools for computing them, and prove Theorem \ref{thm:slice-torus}.  In \S\ref{sec:computations-dim} we compute  $\cinvt(K)$ and $\chominvt(K)$ for various $K$ and use these values to determine $r_0(K)$ in many cases, proving Theorems~\ref{thm:8-crossings}, \ref{thm:main-twist}, \ref{thm:main-pretzels}, and \ref{thm:main-pretzels-2}.  We prove Theorem~\ref{thm:main-ss} in \S\ref{sec:ss}, and compute the framed instanton homology of several small volume closed hyperbolic manifolds in \S\ref{sec:small-hyperbolic}, proving Theorem \ref{thm:hyperbolic}. We prove Theorem \ref{thm:main-comparison} in \S\ref{sec:comparison}.

\subsection{Conventions}
\label{ssec:conventions}

In the later sections of this paper we use data from various knot tables, including KnotInfo \cite{knotinfo}, the Rolfsen table \cite{rolfsen}, and the Knot Atlas \cite{knotatlas}.  KnotInfo does not always agree with the others about the chiralities of various knots, so when distinguishing between $K$ and $\mirror{K}$, we always use \cite{rolfsen,knotatlas} to decide which is which.  For example, $3_1$ is the left-handed trefoil. We  adopt the convention that the signature of the right-handed trefoil is $-2$. Beginning in \S\ref{sec:nu-sharp}, we will use $\C$ coefficients for framed instanton homology unless  stated otherwise. 
\subsection{Acknowledgments}

We are indebted to the creators of SnapPy \cite{snappy} and Regina \cite{regina}, which were essential tools in discovering every result in this paper about Kirby calculus or branched double covers.  We thank Ken Baker for providing the proof of Proposition~\ref{prop:pretzel-homeo}.  We also thank Jen Hom for explaining how our $\cinvt$ relates to the Heegaard Floer $\nu$ invariant, as discussed in \S\ref{sec:comparison}, and the anonymous referee for helpful feedback.

\section{Background on framed instanton homology} \label{sec:background}
In this section, we provide    background on framed instanton  homology and establish some notational conventions. All manifolds in this paper are smooth, oriented, and compact, and all submanifolds are smoothly and properly embedded. 

\subsection{Framed instanton homology}
 
Let $Y$ be a closed 3-manifold, and  let $\lambda$ be  a  (possibly empty) multicurve in $Y$.  To define the framed instanton homology of $(Y,\lambda)$,   introduced in \cite{km-yaft} and   developed further in \cite{scaduto}, we  first equip the connected sum $Y\#T^3$ with a Hermitian line bundle $w$ such that $c_1(w)$ is Poincar\'e dual to \[[\lambda \cup S^1] \in H_1(Y\#T^3;\Z),\] where the $S^1 $ above is a factor \[S^1\times \{\pt\}\textrm{ of }T^3 = S^1\times T^2.\] We then fix a $U(2)$-bundle $E\to Y\#T^3$ and an isomorphism $\det(E)\xrightarrow{\sim} w$. From this data one defines the instanton Floer homology group
\[ I_*(Y\# T^3)_w, \]
via a Chern--Simons functional on the space of projectively flat connections on $E$, modulo determinant-1 gauge transformations; see \cite{donaldson-book}. 
This is a relatively $\Z/8\Z$-graded, absolutely  $\Z/2\Z$-graded abelian group which admits a degree-4 involution $\frac{1}{2}\mu(\pt)$, and we define the \emph{framed instanton homology}
\[ I^\#(Y,\lambda) \]
to be the fixed set of this involution.  It is relatively $\Z/4\Z$-graded, and retains the absolute $\Z/2\Z$-grading; its Euler characteristic with respect to the latter is
\begin{equation} \label{eq:framed-chi}
\chi(I^\#(Y,\lambda)) :=\rank I_{\textrm{even}}^\#(Y,\lambda)-\rank I_{\textrm{odd}}^\#(Y,\lambda)= \begin{cases} |H_1(Y;\Z)|, & b_1(Y)=0 \\ 0, & b_1(Y)>0 \end{cases}
\end{equation}
as mentioned in Remark \ref{rmk:euler} \cite[Corollary~1.4]{scaduto}.  

\begin{remark}
The group $I^\#(Y,\lambda)$  depends implicitly on the basepoint $y\in Y$ at which we take the connected sum with $T^3$, but we omit this from the notation.
\end{remark}

\begin{remark}
\label{rmk:isomorphism-homology}
Up to isomorphism, $I^\#(Y,\lambda)$  depends on $\lambda$ only through its mod 2 homology class \[[\lambda]\in H_1(Y;\Z/2\Z).\]  We will frequently write $I^\#(Y)$ for $I^\#(Y,\lambda)$ when this class is trivial, and will often conflate $\lambda$ with its mod 2 homology class.
\end{remark}

A smooth cobordism $(X,\nu): (Y_0,\lambda_0) \to (Y_1,\lambda_1)$ induces a  homomorphism
\[ I^\#(X,\nu): I^\#(Y_0,\lambda_0) \to I^\#(Y_1,\lambda_1), \]
as  in \cite[\S7.2]{scaduto}.  This map depends implicitly on the basepoints $y_0 \in Y_0$ and $y_1 \in Y_1$ at which we take the connected sums with $T^3$, an  arc $\gamma \subset X$ from $y_0$ to $y_1$,  as well as  framings of these basepoints and a compatible framing of $\gamma$. Given these choices, we construct a new cobordism \begin{equation}\label{eq:Xsharp}X^\# := X \bowtie (T^3 \times [0,1]): Y_0\#T^3 \to Y_1\#T^3\end{equation} by removing tubular neighborhoods of the arcs\[\gamma \subset X \quad \textrm{and}\quad \{\pt\}\times [0,1]\subset T^3\times[0,1],\] and gluing what remains along the resulting copies of $S^2\times[0,1]$ according to the framings (using the product framing on the latter arc). The map $I^\#(X,\nu)$ is then defined to be the cobordism map on instanton Floer homology associated to the  line bundle over $X^\#$ whose first Chern class is Poincar{\'e} dual to  \[[\nu \cup (S^1\times [0,1])]\in H_2(X^\#,\partial X^\#;\Z),\] restricted to the fixed point set of the involution $\frac{1}{2}\mu(\pt).$ 
This map is well-defined up to sign, and  depends on $\nu$  only through the  mod 2 homology class \[[\nu]\in H_2(X,\partial X;\Z/2\Z).\] (The  sign can be pinned down by choosing a homology orientation on $X$, and the resulting map depends on $\nu$ only through its  \emph{integral} homology class.)

\begin{remark}In this paper we  only consider cobordisms built by attaching  $2$-handles to a product $Y\times [0,1]$. We will assume  the basepoints $y_0$ and $y_1$ are disjoint from the attaching regions of these 2-handles and  that $\gamma$ is a product arc with product framing, and  therefore omit  this extra data from the notation.
\end{remark}

\subsection{The surgery exact triangle}

Framed instanton homology comes equipped with a surgery exact triangle \cite{floer-surgery,braam-donaldson}.  We describe it here as presented by Scaduto in \cite{scaduto}.

\begin{theorem}[{\cite[\S7.5]{scaduto}}] \label{thm:exact-triangle}
Let $K$ be a framed knot in a closed $3$-manifold $Y$, and let $\mu \subset Y\setminus N(K)$ be a meridian of $K$. There is an exact triangle
\[ \dots \to I^\#(Y,\lambda) \to I^\#(Y_0(K),\lambda \cup \mu) \to I^\#(Y_1(K),\lambda) \to I^\#(Y,\lambda) \to \dots \]
for any multicurve $\lambda \subset Y \setminus N(K)$.  Moreover, each map in this triangle is a cobordism map induced by the corresponding 2-handle cobordism.
\end{theorem}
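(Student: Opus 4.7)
The plan is to reduce the statement to the classical instanton surgery exact triangle of Floer \cite{floer-surgery} and Braam--Donaldson \cite{braam-donaldson}, applied to $K$ regarded as a framed knot in $Y \# T^3$ lying disjointly from the $T^3$ summand. Under this inclusion, the three Dehn surgeries $Y$, $Y_0(K)$, $Y_1(K)$ correspond to the three closed manifolds
\[
Y \# T^3, \qquad Y_0(K) \# T^3, \qquad Y_1(K) \# T^3,
\]
related by the three $2$-handle cobordisms obtained from the surgery cobordisms by boundary connected sum with $T^3 \times [0,1]$, exactly in the spirit of \eqref{eq:Xsharp}.

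I would begin by choosing admissible Hermitian line bundles $w$, $w_0$, $w_1$ on these three manifolds so that their first Chern classes are Poincar\'e dual to $[\lambda \cup S^1]$, $[\lambda \cup \mu \cup S^1]$, and $[\lambda \cup S^1]$ respectively, matching the bundles used to define $I^\#(Y,\lambda)$, $I^\#(Y_0(K),\lambda\cup\mu)$, and $I^\#(Y_1(K),\lambda)$.  The extra meridian $\mu$ on the $0$-surgery vertex is forced by the triangle: the line bundle on the surgery cobordism $Y \# T^3 \to Y_0(K) \# T^3$ required for exactness is supported on the co-core disk of the attached $2$-handle, and the restriction of its $c_1$ to the outgoing end is precisely the meridian $\mu$ of $K$ in $Y_0(K)$.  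The Floer--Braam--Donaldson triangle then supplies a long exact sequence
\[
\cdots \to I_*(Y \# T^3)_w \to I_*(Y_0(K) \# T^3)_{w_0} \to I_*(Y_1(K) \# T^3)_{w_1} \to \cdots,
\]
in which each arrow is induced by the corresponding $2$-handle cobordism equipped with its distinguished bundle.

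Next I would restrict to the fixed subspaces of the degree-$4$ involution $\tfrac{1}{2}\mu(\pt)$.  Provided the basepoints used to form the connected sums with $T^3$ are chosen disjoint from the $2$-handle attaching regions and are joined by a product arc with product framing through each cobordism, the cobordism maps commute with $\mu(\pt)$ on the nose.  The long exact sequence therefore restricts to the $+1$-eigenspaces of $\tfrac{1}{2}\mu(\pt)$, yielding the claimed exact triangle
\[
\cdots \to I^\#(Y,\lambda) \to I^\#(Y_0(K),\lambda \cup \mu) \to I^\#(Y_1(K),\lambda) \to I^\#(Y,\lambda) \to \cdots,
\]
with maps identified as the $2$-handle cobordism maps on framed instanton homology.

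The main obstacle I anticipate is the bookkeeping of bundle data.  One must verify admissibility, i.e.\ the absence of flat reducibles, simultaneously on all three vertex manifolds \emph{and} on the three intervening cobordisms (so none of the moduli spaces that enter the triangle are obstructed), and one must identify the homology class of the surface carrying $c_1(w_0)$ with one whose restriction to $Y_0(K) \# T^3$ is Poincar\'e dual to $[\lambda \cup \mu \cup S^1]$.  Both points are carried out carefully in Scaduto \cite[\S7.5]{scaduto}; once they are in place the passage to $\tfrac{1}{2}\mu(\pt)$-fixed subspaces is formal.
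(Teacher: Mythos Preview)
The paper does not give its own proof of this theorem: it is stated as a citation of Scaduto \cite[\S7.5]{scaduto}, with no argument supplied beyond that reference.  Your sketch is a faithful outline of Scaduto's argument---apply the Floer/Braam--Donaldson surgery triangle to $K \subset Y \# T^3$ with the appropriate admissible bundles, then pass to the $(+1)$-eigenspace of the involution $\tfrac{1}{2}\mu(\pt)$---and you correctly flag the bundle bookkeeping (admissibility and the appearance of $\mu$ on the $0$-surgery vertex) as the substantive content, deferring it to Scaduto.  One small point worth making explicit: the passage to eigenspaces preserves exactness because $\tfrac{1}{2}\mu(\pt)$ is a genuine involution commuting with all three cobordism maps, so over any ring in which $2$ is invertible the long exact sequence splits as a direct sum of its $(\pm 1)$-eigenspace subsequences; over $\Z$ one should instead phrase this in terms of the $\Z/4\Z$-grading, as Scaduto does.
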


Given a knot $K \subset S^3$, with framing $n$ relative to the Seifert framing, we can choose
\[ \lambda = \begin{cases} 0, & n\mathrm{\ odd} \\ \mu, & n\mathrm{\ even} \end{cases} \]
to make the various $\lambda$ and $\lambda \cup \mu$ appearing in Theorem~\ref{thm:exact-triangle} nullhomologous mod $2$.  Thus, we have an exact triangle
\begin{equation} \label{eq:triangle-untwisted}
\dots \to I^\#(S^3) \xrightarrow{I^\#(X_n,\nu_n)} I^\#(S^3_n(K)) \to I^\#(S^3_{n+1}(K)) \to \dots,
\end{equation}
in which \[X_n=X_n(K):S^3\to S^3_n(K)\] is  the trace of the $n$-surgery on $K$ and \[\nu_n=\nu_{K,n}\] is some properly embedded surface in $X_n$.  For all $n \geq 0$, the map \[F_n=F_{K,n}=I^\#(X_n,\nu_n)\] has odd degree and the other two maps in the triangle have even degree \cite[\S7.3]{scaduto}.  One can also verify from \cite[\S3.3]{scaduto} that the  surface $\nu_n\subset X_n$ satisfies
\begin{equation} \label{eq:nu-parity}
[\nu_n] \cdot [\Sigma_n] \equiv n \pmod{2},
\end{equation}
where $\Sigma_n$ is the union of a Seifert surface for $K$ and a core of the $n$-framed $2$-handle.  (We only care about the intersection number mod 2 because $I^\#(X_n,\nu_n)$ is determined up to sign by the class $[\nu_n] \in H_2(X_n,\partial X_n;\Z/2\Z)$, as discussed above.)

More generally, for any knot $K\subset Y$, we can \emph{always} choose $\lambda$ so that the various $\lambda$ and $\lambda \cup \mu$  in Theorem~\ref{thm:exact-triangle} are nullhomologous mod $2$. Indeed, if $K$ is nullhomologous mod 2 in $Y$ then $\mu$ is nullhomologous mod 2 in exactly one of $Y_0(K)$ or $Y_1(K)$, and we set $\lambda = 0$ or $\mu$, respectively. If $K$ is not nullhomologous mod 2 in $Y$ then there is a closed surface $\Sigma\subset Y$ which intersects $K$ in $n$ points for  some odd integer $n$.  Removing small neighborhoods of these intersection points from $\Sigma$, we obtain a surface  bounded mod 2 by $n$ copies of $\mu$. This shows (since $n$ is odd) that \[[\mu] = 0\in H_1(Y\setminus N(K);\Z/2\Z),\] which  implies  that $[\mu]=0\in H_1(Y_0(K);\Z/2\Z)$ as well, so we can just take $\lambda = 0$. Thus, we can always arrange that the exact triangle in Theorem~\ref{thm:exact-triangle} takes the form \[\dots \to I^\#(Y) \to I^\#(Y_0(K)) \to I^\#(Y_1(K)) \to\dots\] We use this implicitly in \S\ref{sec:rational-surgeries}, where the surgery exact triangles appear without any multicurves.

In \S\ref{sec:nu-sharp} and elsewhere, we will need to show that certain cobordism maps  in these exact triangles vanish when working with $\C$ coefficients. We will do so using the following proposition (cf.\ \cite{km-embedded2}):

\begin{proposition}[{\cite[Proposition~6.7]{bs-lspace}}] \label{prop:adjunction-inequality}
Let $(X,\nu): (Y_0,\lambda_0) \to (Y_1,\lambda_1)$ be a smooth cobordism with $b_1(X)=0$.  Suppose that $S \subset X$ is a closed, smoothly embedded surface such that either:
\begin{itemize}
\item $[S] \cdot [S] \geq \max(2g(S)-1,1)$; or
\item $S$ is a sphere of self-intersection zero, and $[S]\cdot[F] \neq 0$ for some other closed surface $F \subset X$.
\end{itemize}
Then the induced map \[I^\#(X,\nu): I^\#(Y_0,\lambda_0;\C) \to I^\#(Y_1,\lambda_1;\C)\] on framed instanton homology with $\C$ coefficients is zero.
\end{proposition}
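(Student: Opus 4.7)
The plan is to derive the vanishing from an adjunction-type inequality for the operator $\mu(S)$ on framed instanton homology. By construction, $I^\#(X,\nu)$ is the restriction of the instanton Floer cobordism map on $I_*(X^\#, w^\#)$, where $X^\# = X \bowtie (T^3 \times [0,1])$, to the joint fixed set of the $\frac{1}{2}\mu(\pt)$ involutions on the two ends. We arrange the tubing defining $X^\#$ to be disjoint from $S$, so that $S$ persists as a closed surface in $X^\#$ with the same genus and self-intersection.

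For the first case, we work with $\C$ coefficients and decompose each end $I_*(Y_i\# T^3;\C)_{w_i}$ into generalized eigenspaces of the degree-$2$ operator $\mu(S)$, where $S$ is pushed into either boundary. The two pushed actions agree on homology and the cobordism map intertwines them, so it preserves the decomposition. The Kronheimer--Mrowka adjunction inequality then bounds every eigenvalue $\alpha$ on which the cobordism map can be nontrivial by $|\alpha| + [S]^2 \leq 2g(S)-2$ when $g(S) \geq 1$, and forces $[S]^2 \leq 0$ when $g(S) = 0$. Under the hypothesis $[S]^2 \geq \max(2g(S)-1,1)$ both constraints fail for every $\alpha$, so the cobordism map vanishes on every generalized eigenspace and hence on all of $I^\#(Y_0,\lambda_0;\C)$.

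For the second case, we reduce to the first by building a surface of large self-intersection from $S$ and $F$. Orient $S$ so that $k := [F]\cdot[S] > 0$. Since $[S]^2 = 0$, the sphere $S$ has trivial normal bundle and admits $n$ pairwise disjoint parallel push-offs $S_1,\dots,S_n$ for any $n$; after isotoping $F$ to be transverse to each $S_i$, and performing a standard sequence of moves inside the product neighborhood $S \times D^2$ to cancel opposite-signed pairs of intersections, we may assume $F \cap S_i$ consists of exactly $k$ points of the same sign. Oriented resolution at all of these intersections yields a smoothly embedded closed oriented surface $\tilde F_n \subset X$ with class $[F] + n[S]$, self-intersection
\[
[\tilde F_n]^2 = [F]^2 + 2nk,
\]
and, by a direct Euler characteristic count, genus $g(\tilde F_n) = g(F) + n(k-1)$. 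Thus $[\tilde F_n]^2 - (2g(\tilde F_n)-1) = [F]^2 - 2g(F) + 2n + 1$, which is positive for $n$ large enough, so $\tilde F_n$ satisfies the hypothesis of the first case and the induced map vanishes.

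The main obstacle is making the Kronheimer--Mrowka adjunction inequality for $\mu(S)$ precise in the framed instanton setting with $\C$ coefficients. In particular, one must verify that $I_*(Y_i\# T^3;\C)_{w_i}$ decomposes under $\mu(S)$ without contamination from the $T^3$ factor, and that the cobordism map on the tubed cobordism $X^\#$ inherits the usual Kronheimer--Mrowka eigenvalue bounds on basic classes. These ingredients combine the known $\mu$-action on $I_*(T^3)_w$ with the Floer-theoretic simple-type structure, and are carried out in detail in our earlier paper \cite{bs-lspace}, whence we simply cite the result.
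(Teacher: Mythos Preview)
The paper does not prove this proposition; it is quoted from \cite[Proposition~6.7]{bs-lspace} and used as a black box throughout. Your closing paragraph, which defers the analytic core to \cite{bs-lspace}, is therefore doing exactly what the paper does, and there is no in-paper argument to compare against.

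That said, your sketch of the first case misidentifies the mechanism. You propose to decompose $I_*(Y_i\# T^3)_{w_i}$ into generalized $\mu(S)$-eigenspaces by ``pushing $S$ into either boundary,'' but $S$ is a closed surface in the interior of $X^\#$ and its class need not come from $H_2$ of either end, so there is no such operator on the ends to diagonalize. The decomposition actually used in \cite{bs-lspace} (see Theorem~1.16 there, invoked later in this paper in the proof of Proposition~\ref{prop:legendrian-injective}) is of the \emph{cobordism map}: one writes $I^\#(X,\nu) = \sum_s I^\#(X,\nu;s)$ over homomorphisms $s\colon H_2(X;\Z)\to\Z$, with each nonzero summand satisfying $|s(S)| + [S]\cdot[S] \leq 2g(S)-2$. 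The conclusion is the same, but the eigenspace picture you describe lives on the wrong object.

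Your reduction of the second case to the first---tubing $F$ to $n$ parallel copies of the square-zero sphere to obtain $\tilde F_n$ with $[\tilde F_n]^2 - (2g(\tilde F_n)-1) = [F]^2 - 2g(F) + 2n + 1 \to \infty$---is correct and rather clean. The cancellation of opposite-sign intersections really can be carried out inside the product neighborhood $S^2\times D^2$ (replace two oppositely-oriented fiber disks by an embedded annulus in $S^2\times(D^2\setminus\{0\})$), so no Whitney trick is needed. This is a genuinely different route from the neck-stretching argument for the sphere case sketched elsewhere in the paper, in the proof of Proposition~\ref{prop:unimodal-rank}.
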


\subsection{The spectral sequence from odd Khovanov homology}

Odd Khovanov homology \cite{orsz} associates to any link $L \subset S^3$ a bigraded abelian group \[\Khodd(L)= \bigoplus_{i,j}\Khodd_{i,j}(L).\]  There is a smaller invariant, the \emph{reduced} odd Khovanov homology \[\Khoddr(L)= \bigoplus_{i,j}\Khoddr_{i,j}(L),\] which is related to the unreduced version by 
\[ \Khodd_{i,j}(L) \cong \Khoddr_{\!i,j-1}(L) \oplus \Khoddr_{\!i,j+1}(L). \]
Its graded Euler characteristic is the Jones polynomial of $L$, meaning that
\[ V_L(q^2) = \sum_{i,j\in\Z} (-1)^i \rank(\Khoddr_{\!i,j}(L)) \cdot q^j. \]

\begin{theorem}[{\cite[Theorem~1.1]{scaduto}}] \label{thm:kh-to-i-ss}
For any link $L\subset S^3$, there is a spectral sequence
\[ \Khoddr(L) \Rightarrow I^\#(-\dcover(L)), \]
whose $E^2$ page is the reduced odd Khovanov homology of $L$.
\end{theorem}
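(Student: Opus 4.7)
The plan is to adapt the strategy Ozsváth--Szabó used to produce a spectral sequence from (even) Khovanov homology to $\hfhat$ of branched double covers, now in the instanton setting, with the twist being that the sign data on the cube of resolutions must match that of \emph{odd} Khovanov homology rather than the even version. First I would fix a link diagram $D$ of $L$ with $n$ crossings and, for each vertex $v \in \{0,1\}^n$, form the complete resolution $L_v$, which is an unlink with some number $k_v$ of components. The branched double cover $\dcover(L_v)$ is then diffeomorphic to $\#^{k_v-1}(S^1 \times S^2)$, whose framed instanton homology was computed by Scaduto to be the exterior algebra $\Lambda^*(\C^{k_v-1})$ of the correct total rank, and moreover this group is canonically identified with the tensor factor of the odd Khovanov chain group indexed by $v$.

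Next I would construct, for each cube edge $v \to v'$ corresponding to changing a $0$-resolution to a $1$-resolution at a single crossing, a saddle cobordism $L_v \rightsquigarrow L_{v'}$, and lift it to a $2$-handle cobordism $W_{v,v'}: \dcover(L_v) \to \dcover(L_{v'})$ (attached along the lift of the saddle arc). Applying framed instanton homology to all these cobordisms gives a hypercube of groups and maps; using the surgery exact triangle of Theorem~\ref{thm:exact-triangle} at each edge and iterating the standard mapping-cone argument (edge by edge over the $n$ crossings), the total complex of this hypercube converges to $I^\#(-\dcover(L))$. The cube filtration then yields a spectral sequence whose $E^1$ page is the direct sum over vertices of $I^\#(\dcover(L_v))$ and whose $E^1$ differential is induced by the $2$-handle cobordism maps $I^\#(W_{v,v'})$.

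The main obstacle, as always in this kind of construction, is sign-commutativity of the cube and the identification of the $E^1$ differential with the odd Khovanov differential on the nose. Scaduto's calculation identifies the map $I^\#(W_{v,v'})$ with either a wedge-product (merge) or contraction (split) map on the exterior algebra, matching one of the two local building blocks of Khovanov-type theories; the question is which sign conventions arise when gluing these local maps along a $2$-face of the cube. I would handle this by choosing basepoints and arcs on the surface cobordism carefully, then computing the composition along each square face using the fact that $I^\#$ is functorial up to sign and that two compositions $W_{v,v''} \circ W_{v',v}$ and $W_{v,v''} \circ W_{v'',v}$ differ by a diffeomorphism acting on the cube; the resulting signs should be precisely those singled out by Ozsváth--Rasmussen--Szabó to make odd Khovanov homology well-defined, rather than the Khovanov signs. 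Once the identification of the $E^1$ complex with the odd Khovanov chain complex (with its sign assignment) is in place, taking homology of $d_1$ yields $\Khoddr(L)$ on the $E^2$ page, completing the proof.
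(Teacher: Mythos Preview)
The paper does not prove this theorem: it is quoted verbatim as \cite[Theorem~1.1]{scaduto} and used as a black box, with no argument given. So there is no ``paper's own proof'' to compare against.

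That said, your sketch is a faithful outline of Scaduto's actual proof in \cite{scaduto}. He does build a cube of resolutions, identify $I^\#(\#^{k-1}(S^1\times S^2))$ with the exterior algebra, iterate the surgery exact triangle to get a link-surgeries spectral sequence converging to $I^\#(-\dcover(L))$, and then check that the edge maps are wedge/contraction maps. The delicate point you flag---that the face relations in the instanton cube satisfy the \emph{odd} Khovanov sign rule rather than the even one---is exactly the content of Scaduto's analysis, and arises from how the cobordism maps interact with the $SO(3)$-bundle data and the absolute $\Z/2$-grading. Your description of the strategy is correct, though of course the actual verification of the sign assignment (type-X versus type-Y squares in the ORS terminology) requires real work that a sketch at this level necessarily elides.
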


We say that $\Khoddr(L)$ is \emph{thin} if there is some integer $\delta\in\Z$ such that
\[ \Khoddr_{\!i,j}(L) = 0 \textrm{ unless } j-2i = \delta. \]
If $\Khoddr(L)$ is thin, then the fact that $V_L(-1) = \pm \det(L)$ implies that
\[ \rank \Khoddr(L) = \det(L). \]
In particular, quasi-alternating knots have thin odd Khovanov homology \cite{mo-qa}.
 
\begin{proposition} \label{prop:ss-thin}
Let $K \subset S^3$ be a knot satisfying \[\dim_\C \Khoddr(K;\C) = \det(K),\] such as any knot for which $\Khoddr(K)$ is thin.  Then the spectral sequence of Theorem~\ref{thm:kh-to-i-ss},  with coefficients in $\C$, collapses at the $E^2$ page, which then implies that \[\dim_\C I^\#(-\dcover(L);\C)=\det(K).\]
\end{proposition}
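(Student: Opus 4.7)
The plan is to sandwich $\dim_\C I^\#(-\dcover(K);\C)$ between $\det(K)$ from above and $\det(K)$ from below, and then read off the $E^2$ collapse as an immediate consequence of the two bounds agreeing.

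First I would use Theorem~\ref{thm:kh-to-i-ss} with $\C$ coefficients. Since the total dimension of a spectral sequence of $\C$-vector spaces is non-increasing from one page to the next, and the $E^\infty$ page is the associated graded of a filtration on the target, we obtain
\[ \dim_\C I^\#(-\dcover(K);\C) \;\leq\; \dim_\C E^\infty \;\leq\; \dim_\C E^2 \;=\; \dim_\C \Khoddr(K;\C) \;=\; \det(K). \]

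Next I would produce the matching lower bound from the Euler characteristic. The branched double cover of a knot in $S^3$ is a rational homology sphere with $|H_1(\dcover(K);\Z)|=\det(K)$, so $b_1(-\dcover(K))=0$ and formula~\eqref{eq:framed-chi} gives
\[ \chi\bigl(I^\#(-\dcover(K);\C)\bigr) \;=\; |H_1(-\dcover(K);\Z)| \;=\; \det(K). \]
Since the total dimension of any $\Z/2\Z$-graded $\C$-vector space is at least the absolute value of its Euler characteristic, this yields $\dim_\C I^\#(-\dcover(K);\C)\geq \det(K)$.

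Combining the two inequalities forces every step to be an equality. In particular $\dim_\C E^\infty = \dim_\C E^2$, so all differentials $d_r$ with $r\geq 2$ vanish and the spectral sequence collapses at $E^2$; simultaneously we conclude $\dim_\C I^\#(-\dcover(K);\C)=\det(K)$. I do not foresee any real obstacle: the only inputs are the existence of the spectral sequence, the Euler characteristic formula~\eqref{eq:framed-chi}, and the standard fact that the branched double cover is a $\Q$-homology sphere with $|H_1|=\det(K)$.
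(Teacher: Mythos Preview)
Your proposal is correct and follows essentially the same approach as the paper: sandwich $\dim_\C I^\#(-\dcover(K);\C)$ between the upper bound $\dim_\C \Khoddr(K;\C)=\det(K)$ coming from the spectral sequence and the lower bound $\det(K)$ coming from the Euler characteristic formula~\eqref{eq:framed-chi} together with $|H_1(\dcover(K);\Z)|=\det(K)$. The paper's proof is the same argument, just stated in the reverse order.
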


\begin{proof}
Using \eqref{eq:framed-chi}, but with coefficients in $\C$, it is always true that
\[ \dim I^\#(-\dcover(K);\C) \geq |\chi(I^\#(-\dcover(K);\C))| = |H_1(-\dcover(K);\Z)| = \det(K). \]
If we know that $\dim_\C \Khoddr(K;\C) = \det(K)$, then the spectral sequence gives us an inequality
\[ \det(K) = \dim \Khoddr(K;\C) \geq \dim I^\#(-\dcover(K);\C), \]
so all of these terms must be equal.
\end{proof}

\begin{remark}
As noted in \S\ref{ssec:conventions}, we will henceforth work with coefficients in $\C$, and use $I^\#(Y,\lambda)$ to mean $I^\#(Y,\lambda;\C)$, unless stated otherwise.
\end{remark}

\section{A concordance invariant from integer surgeries} \label{sec:nu-sharp}

In this section, we  study the cobordism maps arising in the exact triangles
\begin{equation} \label{eq:triangles}
\dots \to I^\#(S^3) \xrightarrow{F_n} I^\#(S^3_n(K)) \to I^\#(S^3_{n+1}(K)) \xrightarrow{G_{n+1}} \dots
\end{equation}
of \eqref{eq:triangle-untwisted}, with \[F_n = I^\#(X_n,\nu_n),\] where $X_n=X_n(K)$ is the trace of $n$-surgery on $K$. In particular, we  make   use of Proposition~\ref{prop:adjunction-inequality} to determine when the  maps $F_n$ and $G_n$ vanish (recall that we are henceforth working with coefficients in $\C$), and  use that to understand how the dimension of $I^\#(S^3_n(K))$ varies with $n$. This will lead to the definition of our first concordance invariant $\cinvt$ and set the stage for the proof of Theorem \ref{thm:main-surgery}.

We begin with the following, which is a direct analogue of \cite[Proposition~7.2]{kmos}.

\begin{lemma} \label{lem:cancel-2-handles}
Let $K \subset S^3$ be a knot, and let $F_n$ and $G_n$ be the cobordism maps  in \eqref{eq:triangles}.  If $n \neq 0$ then $F_n \circ G_n = 0$ as a map $I^\#(S^3_n(K)) \to I^\#(S^3_n(K))$.
\end{lemma}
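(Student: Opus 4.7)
The plan is to realize $F_n \circ G_n$ as the cobordism map induced by a composite 4-manifold $W \colon S^3_n(K) \to S^3_n(K)$, and then to find an embedded 2-sphere $S \subset W$ with $[S]^2 = 0$ that pairs non-trivially with another closed surface in $W$, so that the second bullet of Proposition~\ref{prop:adjunction-inequality} forces the induced map to vanish.

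The first step is to identify $W$ concretely. Since $F_n$ comes from the trace cobordism $X_n \colon S^3 \to S^3_n(K)$ of the $n$-surgery, and $G_n$ from the surgery exact triangle is induced by a 2-handle cobordism $S^3_n(K) \to S^3$ which a short Kirby-calculus check matches with the orientation-reverse $\overline{X_n}$ of the trace, we get
\[
 W \;\cong\; \overline{X_n} \cup_{S^3} X_n.
\]
Inside $W$, let $F^+$ and $F^-$ denote two parallel, disjoint Seifert surfaces for $K$ in the middle $S^3$, and let $D_+ \subset X_n$ and $D_- \subset \overline{X_n}$ be the cores of the $n$-framed 2-handles, each bounded by $K$. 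Define the closed surfaces of genus $g(K)$
\[
 \Sigma^+ = F^+ \cup D_+ \subset X_n, \qquad \Sigma^- = F^- \cup D_- \subset \overline{X_n}.
\]
The orientation reversal on $\overline{X_n}$ gives
\[
 [\Sigma^+]^2 = n, \qquad [\Sigma^-]^2 = -n, \qquad [\Sigma^+]\cdot [\Sigma^-] = 0
\]
as elements of $H_2(W)$.

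The key step is to produce an embedded 2-sphere $S$ in $W$ by gluing the two core disks $D_+$ and $D_-$ along their common boundary $K$ with opposite orientations. Then $[S] = [\Sigma^+] - [\Sigma^-]$ in $H_2(W)$, and a direct computation yields
\[
 [S]^2 = n - 2\cdot 0 + (-n) = 0, \qquad [S]\cdot[\Sigma^+] = n - 0 = n.
\]
Since $n \neq 0$, we have a self-intersection-zero sphere whose pairing with $\Sigma^+$ is nonzero, so the second bullet of Proposition~\ref{prop:adjunction-inequality} shows $I^\#(W,\nu) = 0$ for every choice of surface $\nu$. Hence $F_n \circ G_n = 0$.

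The main obstacle in this outline is the Kirby-calculus identification of the cobordism underlying $G_n$ with $\overline{X_n}$. This is a bookkeeping exercise, verifying that the 2-handle attached for $G_n$ (with the framing dictated by the surgery triangle) agrees up to diffeomorphism of cobordisms with the dual 2-handle in the orientation-reverse of the trace; both contribute a rank-one intersection form of self-intersection $-n$. The hypothesis $n \neq 0$ is essential: for $n = 0$ the pairing $[S]\cdot[\Sigma^+]$ also vanishes, and no useful sphere is available from this construction.
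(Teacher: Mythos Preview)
Your approach is essentially the same as the paper's: build the composite cobordism $W\colon S^3_n(K)\to S^3\to S^3_n(K)$, locate an embedded $2$-sphere $S$ with $[S]^2=0$ that pairs nontrivially with another closed surface, and invoke Proposition~\ref{prop:adjunction-inequality}.  The sphere you write as $D_-\cup D_+$ is exactly the paper's ``cocore of $H_\mu$ union core of $H_K$''; you just use a different dual surface ($\Sigma^+$ rather than the paper's $|n|$ parallel cores of $H_\mu$ capped off in $S^3_n(K)$), and either choice gives pairing $\pm n\neq 0$.

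The one point that distinguishes the two write-ups is your identification of the $G_n$-cobordism with $\overline{X_n}$.  This is correct---turning the handle decomposition of $\hat W_G=W_G\cup_{S^3}D^4$ upside down exhibits it as $D^4$ with a single $2$-handle along $\mirror{K}$ with framing $-n$, which is exactly $-\hat X_n$---but it is not quite a one-line Kirby check, and keeping the orientations straight is where the work lies.  The paper sidesteps this entirely by describing $W$ concretely as two successive $2$-handle attachments (a $0$-framed handle along a meridian $\mu$, then an $n$-framed handle along $K$) and reading off $S$ and the dual surface directly from that picture; this avoids any appeal to the global diffeomorphism type of $W_G$.  Your route is a bit more conceptual, the paper's a bit more hands-on, but they amount to the same argument.
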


\begin{proof}
This is essentially Lemma~4.13 and Remark~4.14 of \cite{bs-stein}, which in turn follows the proof of \cite[Proposition~6.5]{km-embedded2}.   The composition is induced by a cobordism
\[ W: S^3_n(K) \to S^3 \to S^3_n(K)\, \]
in which we attach a $0$-framed $2$-handle $H_\mu$ to $S^3_n(K) \times [0,1]$ along a meridian $\mu$ of $K\times\{1\}$ and then attach an $n$-framed 2-handle $H_K$ to $K$ in the resulting $S^3$.  The cobordism $W$ contains a smoothly embedded $2$-sphere $S$ of self-intersection zero, given by the union of a cocore of $H_\mu$ and a core of $H_K$.

We wish to apply Proposition~\ref{prop:adjunction-inequality} to $S$, so we must construct a surface $F$ with $[S]\cdot[F] \neq 0$.  Note that the disjoint union of $|n|$ parallel cores of $H_\mu$ is bounded by a nullhomologous link in $S^3_n(K) \times \{1\}$. Letting $F$ be the union of the $|n|$ parallel cores of $H_\mu$ with a Seifert surface for this link,  we have that $[S] \cdot [F] = |n| \neq 0$ as desired.
\end{proof}

We now use Lemma~\ref{lem:cancel-2-handles} to determine how the dimension of $I^\#(S^3_n(K))$ varies with $n$.

\begin{proposition} \label{prop:unimodal-rank-part-1}
Let $K \subset S^3$ be a knot.  There is an integer $N = N(K) \geq 0$ depending on $K$ such that $N \leq \max(2g_s(K)-1,1)$ and
\begin{equation} \label{eq:rank-n-geq-0}
\dim I^\#(S^3_n(K)) = \dim I^\#(S^3_N(K)) + |N-n|
\end{equation}
for all $n \geq 0$.
\end{proposition}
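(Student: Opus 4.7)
The plan is to analyze the surgery exact triangles \eqref{eq:triangle-untwisted} together with Lemma~\ref{lem:cancel-2-handles} to determine how $\dim I^\#(S^3_n(K))$ varies with $n \geq 0$, and then bound the minimum $N$ by producing an embedded surface in the trace cobordism via Proposition~\ref{prop:adjunction-inequality}.

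First I would recall that $\dim I^\#(S^3) = 1$, so in the exact triangle
\[ I^\#(S^3) \xrightarrow{F_n} I^\#(S^3_n(K)) \to I^\#(S^3_{n+1}(K)) \xrightarrow{G_{n+1}} I^\#(S^3) \]
exactness forces $\dim(\Img F_n) + \dim(\Img G_{n+1}) = 1$. A direct dimension count in the triangle then yields
\[ \dim I^\#(S^3_{n+1}(K)) - \dim I^\#(S^3_n(K)) = \begin{cases} -1, & F_n \neq 0,\\ +1, & F_n = 0. \end{cases} \]

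Next I would use Lemma~\ref{lem:cancel-2-handles} to show that $\{ n \geq 0 : F_n = 0 \}$ is upward closed: if $F_n = 0$ then exactness forces $G_{n+1}$ to be surjective, and since $n+1 \geq 1$ the lemma gives $F_{n+1} \circ G_{n+1} = 0$, hence $F_{n+1} = 0$. Let $N \geq 0$ be the smallest integer with $F_N = 0$; then $F_n \neq 0$ for $n < N$ and $F_n = 0$ for $n \geq N$, and the dimension formula above immediately yields $\dim I^\#(S^3_n(K)) = \dim I^\#(S^3_N(K)) + |n - N|$ for all $n \geq 0$.

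Finally, to bound $N \leq \max(2g_s(K) - 1, 1)$ (which also ensures $N$ is finite), I would construct a closed embedded surface $\Sigma \subset X_n$ by taking a slice surface $F \subset B^4$ of genus $g_s(K)$, pushing it into a collar $S^3 \times [0,1] \subset X_n$, and capping it off with the core of the $n$-framed $2$-handle. A standard computation shows that $\Sigma$ has genus $g_s(K)$ and self-intersection $[\Sigma]^2 = n$. When $n \geq \max(2g_s(K) - 1, 1)$, Proposition~\ref{prop:adjunction-inequality} applies (using the first bullet in general, and the sphere case when $g_s(K) = 0$ and $n \geq 1$) and forces $F_n = I^\#(X_n,\nu_n) = 0$, so $N \leq \max(2g_s(K) - 1, 1)$.

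The only real subtlety is the hypothesis $n \neq 0$ in Lemma~\ref{lem:cancel-2-handles}, but this is automatic in the implication ``$F_n = 0 \Rightarrow F_{n+1} = 0$'' since we invoke the lemma at index $n+1 \geq 1$.
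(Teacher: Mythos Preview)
Your proof is correct and follows essentially the same approach as the paper: both use the exact triangles \eqref{eq:triangle-untwisted}, the adjunction inequality (Proposition~\ref{prop:adjunction-inequality}) applied to a capped-off slice surface to force $F_n=0$ for large $n$, and Lemma~\ref{lem:cancel-2-handles} to propagate this vanishing. The only cosmetic difference is that you prove upward closure of $\{n\geq 0 : F_n=0\}$ via ``$G_{n+1}$ surjective and $F_{n+1}\circ G_{n+1}=0$'', whereas the paper proves the contrapositive downward closure of $\{n : F_n \text{ injective}\}$ via ``$F_n$ injective and $F_n\circ G_n=0$''; note also that the first bullet of Proposition~\ref{prop:adjunction-inequality} already covers the case $g_s(K)=0$ (a sphere with $[S]\cdot[S]=n\geq 1$), so invoking a separate ``sphere case'' is unnecessary.
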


\begin{proof}
Consider the surgery exact triangles \eqref{eq:triangles}.    Since $I^\#(S^3) \cong \C$, the map $F_n$ is either zero or injective.  Note that $F_n = I^\#(X_n,\nu_n)$ satisfies
\[ F_n = 0 \quad\mathrm{for\ all}\quad n \geq \max(2g_s(K)-1, 1) \]
by Proposition~\ref{prop:adjunction-inequality}, because $X_n$ contains a closed surface of genus $g_s(K)$ and self-intersection $n$, built by taking a smooth genus-$g_s(K)$ surface in $S^3\times [0,1]$ with boundary $K\times\{1\}$ and attaching a core of the 2-handle.  Moreover, if $F_n=0$ then
\[ \dim I^\#(S^3_{n+1}(K)) = \dim I^\#(S^3_n(K)) + 1, \]
by exactness.  We thus let $N \leq \max(2g_s(K)-1,1)$ be the smallest nonnegative integer such that $F_n = 0$ for all $n \geq N$, and then the proposition follows for all $n \geq N$ by induction.

If $N = 0$ then we are done, so we suppose now that $N \geq 1$; then $F_{N-1}$ is injective.  In general, if $F_n$ is injective for some $n$ then $G_{n+1}=0$ by the exactness of \eqref{eq:triangles}, hence
\[ \dim I^\#(S^3_n(K)) = \dim I^\#(S^3_{n+1}(K)) + 1. \]
If in addition $n > 0$, then $F_n \circ G_n = 0$ by Lemma \ref{lem:cancel-2-handles}, which implies that $G_n = 0$, and so again by exactness we conclude that $F_{n-1}$ is also injective.  Thus $F_{N-1}, F_{N-2}, \dots, F_0$ are injective and the cases $N-1 \geq n \geq 0$ of the proposition follow by induction as well.
\end{proof}

Proposition~\ref{prop:unimodal-rank-part-1} cannot always be extended to all $n \in \Z$, because Lemma~\ref{lem:cancel-2-handles} does not tell us that $F_0 \circ G_0 = 0$.  However, in most cases we can strengthen it substantially.

\begin{proposition} \label{prop:unimodal-rank}
Suppose that $N(K)$ and $N(\mirror{K})$ are not both 1.  Then at least one of $N(K)$ and $N(\mirror{K})$ is zero, and if we let
\[ N = \begin{cases} N(K), & N(K) > 0 \\ -N(\mirror{K}), & N(K)=0 \end{cases} \]
then
\begin{equation} \label{eq:unimodal-for-all-n}
\dim I^\#(S^3_n(K)) = \dim I^\#(S^3_N(K)) + |N-n| \quad\mathrm{for\ all\ }n \in \Z.
\end{equation}
If $N(K)$ and $N(\mirror{K})$ are also not both zero, then \[\dim I^\#(S^3_0(K),\mu) = \dim I^\#(S^3_0(K),0),\] where $\mu\subset S^3\setminus N(K)$ is a meridian of $K$.
\end{proposition}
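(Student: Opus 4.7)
The plan is to extend Proposition~\ref{prop:unimodal-rank-part-1} from $n \geq 0$ to all $n \in \Z$ by applying it to the mirror $\mirror K$.  Via the orientation-reversing diffeomorphism $S^3_n(\mirror K) \cong -S^3_{-n}(K)$ and the invariance of $\dim I^\#$ under orientation reversal, the proposition translates into the formula
\[
\dim I^\#(S^3_n(K)) = \dim I^\#(S^3_{-N(\mirror K)}(K)) + |n + N(\mirror K)|, \qquad n \leq 0,
\]
so that $f(n) := \dim I^\#(S^3_n(K))$ is V-shaped on $(-\infty, 0]$ with vertex at $-N(\mirror K)$, and V-shaped on $[0, \infty)$ with vertex at $N(K)$, the two V's meeting at $n=0$.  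If $N(K)=0$ (resp.\ $N(\mirror K)=0$), the right-hand (resp.\ left-hand) V has its vertex at $0$, and the two glue into a single V with vertex at $N := -N(\mirror K)$ (resp.\ $N(K)$), giving \eqref{eq:unimodal-for-all-n}.

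The heart of the argument is therefore to show that one cannot have $N(K) \geq 1$ and $N(\mirror K) \geq 1$ unless both equal $1$.  Exactness of the surgery triangles, together with Proposition~\ref{prop:unimodal-rank-part-1} applied to $K$ and to $\mirror K$, pins down when each of $F_n$ and $G_n$ vanishes: in particular, $N(K) \geq 1$ makes $F_0$ injective and $G_1 = 0$, while $N(\mirror K) \geq 1$ makes $G_0$ surjective and $F_{-1} = 0$.  When both conditions hold, the composition $F_0 \circ G_0 : I^\#(S^3_0(K)) \to I^\#(S^3_0(K))$ has $1$-dimensional image.  The main obstacle is to rule out the ``deep W'' situation in which either $N(K) \geq 2$ or $N(\mirror K) \geq 2$.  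My proposed approach is to apply Proposition~\ref{prop:adjunction-inequality} to the composite cobordism $W_G \cup X_0(K) : S^3_0(K) \to S^3_0(K)$ realizing $F_0 \circ G_0$; as in Lemma~\ref{lem:cancel-2-handles}, this cobordism contains a $0$-self-intersection sphere $S$, and under the additional hypothesis $N(K) \geq 2$ I expect to build a second closed surface $F$ with $[S] \cdot [F] \neq 0$ by combining the genus-$g(K)$ Seifert surface of $K$ (capped off in $X_0(K)$ to $\Sigma_0$) with parallel cores of the $2$-handle in $W_G$. This would force $F_0 \circ G_0 = 0$ and contradict the dimension count above.

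For the last claim, suppose $N(K)$ and $N(\mirror K)$ are not both zero, so that at least one of $F_0, G_0$ is nonzero.  I would compare the standard surgery triangle involving $I^\#(S^3_0(K), 0)$ with the non-standard triangle $I^\#(S^3) \to I^\#(S^3_0(K), \mu) \to I^\#(S^3_1(K)) \to I^\#(S^3)$ obtained by choosing $\lambda = 0$ in Theorem~\ref{thm:exact-triangle} with framing $n = 0$: the cobordism maps in the two triangles differ only by the choice of surface decoration $\nu$ on the trace $X_0(K)$, and Proposition~\ref{prop:adjunction-inequality} applies equally to both choices.  Comparing ranks in the respective triangles, together with the Euler characteristic normalization $\chi(I^\#(S^3_0(K), \lambda)) = 0$, should yield the desired equality $\dim I^\#(S^3_0(K), \mu) = \dim I^\#(S^3_0(K), 0)$.
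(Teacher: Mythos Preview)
Your overall framework is correct and matches the paper: apply Proposition~\ref{prop:unimodal-rank-part-1} to both $K$ and $\mirror K$, use $S^3_n(\mirror K)\cong -S^3_{-n}(K)$, and observe that once one of $N(K),N(\mirror K)$ is zero the two V-shapes glue to a single V.  The entire content of the proposition is therefore the claim that $N(K)\geq 2$ forces $N(\mirror K)=0$, and here your proposed argument does not work.

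You want to show $F_0\circ G_0=0$ by finding a closed surface $F$ in the composite cobordism $W:S^3_0(K)\to S^3\to S^3_0(K)$ with $[S]\cdot[F]\neq 0$.  But $W=W_G\cup_{S^3}X_0(K)$, so Mayer--Vietoris gives $H_2(W)\cong H_2(W_G)\oplus H_2(X_0(K))\cong\Z\oplus\Z$, generated by the capped Seifert surfaces $\Sigma_0'$ and $\Sigma_0$ on each side.  Each has self-intersection $0$ (the handle framing is $0$), and they are disjoint, so the intersection form on $H_2(W)$ is \emph{identically zero}.  No surface $F$ with $[S]\cdot[F]\neq 0$ exists for any $K$, and indeed for $K=U$ one checks directly that $G_0$ is surjective and $F_0$ injective, so $F_0\circ G_0\neq 0$.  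The hypothesis $N(K)\geq 2$ is Floer-theoretic and cannot manufacture such a surface.

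The paper's mechanism is genuinely different: it introduces the twisted triangles with maps $\tilde F_0,\tilde G_0$ through $I^\#(S^3_0(K),\mu)$, and shows that the \emph{mixed} composites $F_0\circ\tilde G_0$ and $\tilde F_0\circ G_0$ vanish because the $SO(3)$-bundle on the composite cobordism restricts nontrivially to the $0$-sphere $S$ (so a neck-stretching argument produces flat connections on $S^1\times S^2$ that cannot exist).  From $N(K)\geq 1$ one then gets $\tilde G_0=0$ and hence $\dim I^\#(S^3_{-1}(K))=\dim I^\#(S^3_0(K),\mu)+1$; from $N(K)\geq 2$ one gets $G_1=\tilde G_1=0$ via Lemma~\ref{lem:cancel-2-handles}, yielding $\dim I^\#(S^3_0(K),\mu)=\dim I^\#(S^3_0(K),0)=\dim I^\#(S^3_1(K))+1$.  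Combining these forces $N(\mirror K)=0$.  Your sketch for the final claim is also too vague: the paper deduces $\dim I^\#(S^3_0(K),\mu)=\dim I^\#(S^3_0(K),0)$ from the specific equality $\dim I^\#(S^3_{-1}(K))=\dim I^\#(S^3_0(K),\mu)+1$ just established, together with Proposition~\ref{prop:unimodal-rank-part-1} applied to $\mirror K$.
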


\begin{proof}
We first observe that in addition to the exact triangles \eqref{eq:triangles} involving $F_n$ and $G_n$, we have a pair of exact triangles
\[ \xymatrix@R1ex{
\dots \ar[r] & I^\#(S^3) \ar[r]^-{\tilde{F}_{-1}} & I^\#(S^3_{-1}(K)) \ar[r] & I^\#(S^3_0(K),\mu) \ar[r]^-{\tilde{G}_0} & \dots \\
\dots \ar[r] & I^\#(S^3) \ar[r]^-{\tilde{F}_0} & I^\#(S^3_0(K),\mu) \ar[r] & I^\#(S^3_1(K)) \ar[r]^-{\tilde{G}_1} & \dots \\
} \]
as in Theorem~\ref{thm:exact-triangle}.  Lemma~\ref{lem:cancel-2-handles} does not tell us whether $F_0\circ G_0$ or $\tilde{F}_0 \circ \tilde{G}_0$ are zero, but we claim that
\[ F_0 \circ \tilde{G}_0 = \tilde{F}_0 \circ G_0 = 0. \]
To prove that the map
\[ \tilde{F}_0\circ G_0: I^\#(S^3_0(K),0) \to I^\#(S^3) \to I^\#(S^3_0(K),\mu) \]
vanishes (the other case is identical), we observe that the bundles on either copy of $S^3_0(K)$ uniquely determine the $SO(3)$ bundles over the cobordisms defining $G_0$ and $\tilde{F}_0$, since in either case the restriction map from $H^2$ of the cobordism to that of $S^3_0(K)$ is an isomorphism.  There is now a 2-sphere of self-intersection 0 in the composite cobordism, formed as the union of a cocore of the $G_0$-handle and a core of the $\tilde{F}_0$-handle, on which the restriction of the $SO(3)$-bundle is nontrivial, and this forces the composite map to be zero as claimed (by a neck-stretching argument, a nonzero map leads to flat connections over the boundary $S^1\times S^2$ of a neighborhood of the 2-sphere, but these cannot exist over any $\{\pt\} \times S^2$ since the bundle is nontrivial there).

We can apply the above fruitfully whenever $N(K) \geq 1$.  In this case, we saw in the proof of Proposition~\ref{prop:unimodal-rank-part-1} that $F_0$ is injective, and since $F_0 \circ \tilde{G}_0 = 0$ we also have $\tilde{G}_0 = 0$, hence
\begin{equation} \label{eq:-1-vs-0mu}
\dim I^\#(S^3_{-1}(K)) = \dim I^\#(S^3_0(K),\mu) + 1.
\end{equation}
If in fact $N(K) \geq 2$ then we similarly know that $F_1$ is injective, so Lemma~\ref{lem:cancel-2-handles} says that $G_1$ and $\tilde{G}_1$ are both zero (the proof did not depend on the bundles involved).  In this case we have 
\begin{equation} \label{eq:zero-surgery-0-mu}
\dim I^\#(S^3_0(K),\mu) = \dim I^\#(S^3_0(K),0) = \dim I^\#(S^3_1(K)) + 1,
\end{equation}
and combining this with \eqref{eq:-1-vs-0mu} gives
\begin{align*}
\dim I^\#(S^3_1(\mirror{K})) &= \dim I^\#(S^3_{-1}(K)) \\
&= \dim I^\#(S^3_0(K),\mu)+1 \\
&= \dim I^\#(S^3_0(K),0)+1 \\
&= \dim I^\#(S^3_0(\mirror{K}),0) + 1.
\end{align*}
By Proposition~\ref{prop:unimodal-rank-part-1}, this is only possible if $N(\mirror{K}) = 0$, in which case
\begin{equation}\label{eq:mirror-n} \dim I^\#(S^3_n(\mirror{K})) = \dim I^\#(S^3_0(\mirror{K})) + n \end{equation}
for all $n \geq 0$.  Reversing orientation again, this extends \eqref{eq:rank-n-geq-0} from $n\geq 0$ to all integers $n$, completing the proof of the proposition in the case where $N(K) \geq 2$.

The case where $N(\mirror{K}) \geq 2$ is identical, so we may now assume that $N(K)$ and $N(\mirror{K})$ are both $0$ or $1$; by assumption at least one of these is zero, so \eqref{eq:unimodal-for-all-n} is automatically satisfied by the reasoning following \eqref{eq:mirror-n}. It remains to be shown that
\[ \dim I^\#(S^3_0(K),0) = \dim I^\#(S^3_0(K),\mu) \]
as long as $N(K)$ and $N(\mirror{K})$ are not both zero.  Supposing that $N(K) = 1$ and $N(\mirror{K}) = 0$, we have
\begin{align*}
\dim I^\#(S^3_0(K),0) &=\dim I^\#(S^3_0(\mirror{K}),0) \\
&= \dim I^\#(S^3_1(\mirror{K})) - 1 \\
&= \dim I^\#(S^3_{-1}(K)) - 1 \\
&= \dim I^\#(S^3_0(K),\mu),
\end{align*}
where the last equality is \eqref{eq:-1-vs-0mu}.
The case $(N(K),N(\mirror{K})) = (0,1)$ follows by exchanging the roles of $K$ and $\mirror{K}$.
\end{proof}

\begin{remark} \label{rem:N-equals-1-1}
In the case $N(K)=N(\mirror{K})=1$, the equality \[\dim I^\#(S^3_{-1}(K)) = \dim I^\#(S^3_0(K),\mu) + 1\] in \eqref{eq:-1-vs-0mu} holds since $N(K)=1$, while $N(\mirror{K})=1$ implies that \[\dim I^\#(S^3_1(\mirror{K})) = \dim I^\#(S^3_0(\mirror{K}),0)-1,\] by Proposition \ref{prop:unimodal-rank-part-1}.  Together these give
\[ \dim I^\#(S^3_0(K),\mu) = \dim I^\#(S^3_0(K),0)-2, \]
and then it follows easily that
\[ \dim I^\#(S^3_n(K)) = \dim I^\#(S^3_0(K),\mu) + |n| \]
for all integers $n \neq 0$.
\end{remark}

Proposition~\ref{prop:unimodal-rank} nearly says that the sequence of integers
\[ \dim I^\#(S^3_n(K)) \quad (n \in \Z) \]
is unimodal.  Indeed, in every case except  when $N(K) = N(\mirror{K}) = 1$, it achieves a unique minimum at $n = N(K) - N(\mirror{K})$.  In the remaining case, where $N(K)=N(\mirror{K})=1$, there are two minima, at $n=\pm1$, by the discussion in Remark \ref{rem:N-equals-1-1}; this happens, for instance, when $K$ is the unknot.  See Figure~\ref{fig:dim-plots} for some examples.
\begin{figure}
\begin{tikzpicture}
\begin{axis}[height=5cm, axis equal image,
    title = {$\dim I^\#(S^3_n(U))$},
    axis y line=middle, axis x line=bottom, x axis line style={latex-latex},
    xmin=-5.5, xmax=5.5, xtick = {-4,-2,...,4}, minor xtick = {-5,-3,...,5},
    ymin=0, ymax=7.5, ytick = {2,4,6}, minor ytick = {1,3,5,7},
]
\addplot [mark=*, samples=11] {min(abs(x+1),abs(x-1))+1};
\end{axis}

\begin{axis}[xshift=5cm, height=5cm, axis equal image,
    title = {$\dim I^\#(S^3_n(T_{2,3}))$},
    axis y line=middle, axis x line=bottom, x axis line style={latex-latex},
    xmin=-5.5, xmax=5.5, xtick = {-4,-2,...,4}, minor xtick = {-5,-3,...,5},
    ymin=0, ymax=7.5, ytick = {2,4,6}, minor ytick = {1,3,5,7},
]
\addplot [mark=*, samples=11] {abs(x-1)+1};
\end{axis}

\begin{axis}[xshift=10cm, height=5cm, axis equal image,
    title = {$\dim I^\#(S^3_n(4_1))$},
    axis y line=middle, axis x line=bottom, x axis line style={latex-latex},
    xmin=-5.5, xmax=5.5, xtick = {-4,-2,...,4}, minor xtick = {-5,-3,...,5},
    ymin=0, ymax=7.5, ytick = {2,4,6}, minor ytick = {1,3,5,7},
]
\addplot [mark=*, samples=5, domain=-5:-1] {abs(x)+2};
\addplot [mark=*, samples=5, domain=1:5] {abs(x)+2};
\end{axis}
\end{tikzpicture}
\caption{Plots of $\dim I^\#(S^3_n(K))$ for $K$ the unknot (left), right-handed trefoil (middle), and figure eight (right, shown for $n\neq 0$) respectively.  The unknot and trefoil are W-shaped and V-shaped, respectively, and it is not clear which shape describes the figure eight, though it is true that $\dim I^\#(S^3_0(4_1),\mu) = 2$.} \label{fig:dim-plots}
\end{figure}
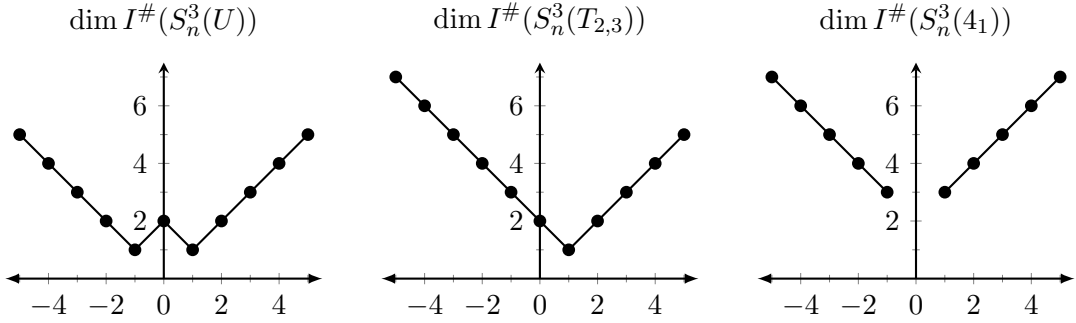
This motivates the following definitions.

\begin{definition} \label{def:conc-invt}
For any knot $K \subset S^3$, we define $\cinvt(K) \in \Z$ by the equation
\[ \cinvt(K) = N(K) - N(\mirror{K}). \]
\end{definition}

\begin{definition} \label{def:shape}
We say that $K\subset S^3$ is \emph{V-shaped} if either $\cinvt(K) \neq 0$ or $\cinvt(K) = N(K) = 0$.  Otherwise Proposition~\ref{prop:unimodal-rank} says that $\cinvt(K) = 0$ and $N(K) = 1$, and in this case we say that $K$ is \emph{W-shaped}.  (See Figure~\ref{fig:dim-plots}.)  We then define
\begin{equation} \label{eq:r_0}
r_0(K) = \begin{cases} \dim I^\#(S^3_{\cinvt(K)}(K)), & K \textrm{ is V-shaped} \\ \dim I^\#(S^3_0(K),\mu), & K \textrm{ is W-shaped}. \end{cases}
\end{equation}
We note that $r_0(K) = r_0(\mirror{K})$, since Definition~\ref{def:conc-invt} implies that $\cinvt(\mirror{K})=-\cinvt(K)$ and hence \[S^3_{\cinvt(K)}(K) \cong -S^3_{\cinvt(\mirror{K})}(\mirror{K}).\]  Moreover, it is clear from the definition and from \eqref{eq:framed-chi} that $r_0(K) \geq |\cinvt(K)|$ and that $r_0(K)$ has the same parity as $\cinvt(K)$.
\end{definition}

\begin{theorem} \label{thm:conc-invt}
The invariant $\cinvt(K)$ is a smooth concordance invariant.  It satisfies
\[ \dim I^\#(S^3_n(K)) = r_0(K) + |n - \cinvt(K)| \]
for all integers $n$, except when $K$ is W-shaped and $n=0$; in particular, all smoothly slice knots are W-shaped.  We also have $\cinvt(\mirror{K}) = -\cinvt(K)$ and $|\cinvt(K)| \leq \max(2g_s(K)-1,0)$.
\end{theorem}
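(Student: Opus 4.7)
My plan is to establish the claims in rough order of increasing difficulty.

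First, the identity $\cinvt(\mirror{K}) = -\cinvt(K)$ follows directly from Definition~\ref{def:conc-invt} since mirroring is an involution. The dimension formula $\dim I^\#(S^3_n(K)) = r_0(K) + |n - \cinvt(K)|$ then follows by case analysis on whether $K$ is V-shaped or W-shaped. In the V-shaped case, I will combine Proposition~\ref{prop:unimodal-rank} for $K$ with the same result for $\mirror{K}$, extending to negative $n$ via $S^3_{-n}(K) \cong -S^3_n(\mirror{K})$, to identify the minimum of the dimension function at $n = N(K) - N(\mirror{K}) = \cinvt(K)$, with value $r_0(K)$. In the W-shaped case, where $\cinvt(K) = 0$ and $N(K) = N(\mirror{K}) = 1$, Remark~\ref{rem:N-equals-1-1} provides the formula for $n \neq 0$.

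For the slice genus bound $|\cinvt(K)| \leq \max(2g_s(K) - 1, 0)$, when $g_s(K) \geq 1$ I would apply Proposition~\ref{prop:unimodal-rank-part-1} to both $K$ and $\mirror{K}$ (using $g_s(\mirror{K}) = g_s(K)$) to obtain $N(K),\, N(\mirror{K}) \leq 2g_s(K)-1$, whence $|\cinvt(K)| \leq 2g_s(K)-1$. The remaining case $g_s(K)=0$ reduces to showing $\cinvt(K)=0$ for slice knots, which will follow from concordance invariance below.

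The main work is concordance invariance, which I expect to be the hardest step. Given a smooth concordance $A \subset S^3 \times [0,1]$ from $K_0$ to $K_1$, I would construct from $A$ a cobordism $V_A: S^3_n(K_0) \to S^3_n(K_1)$ by removing a tubular neighborhood $\nu(A) \cong A \times D^2$ of the annulus and regluing along its (well-defined) Seifert framing with an $n$-twist. The key geometric claim is that $X_n(K_0) \cup_{S^3_n(K_0)} V_A$ is diffeomorphic to $X_n(K_1)$ with matching bundle data, yielding a factorization $F_n^{K_1} = \pm\, I^\#(V_A) \circ F_n^{K_0}$; reversing $A$ gives the other direction. Hence $F_n^{K_0}=0$ iff $F_n^{K_1}=0$, so $N(K_0)=N(K_1)$; applying the same argument to the mirror concordance from $\mirror{K_0}$ to $\mirror{K_1}$ gives $N(\mirror{K_0})=N(\mirror{K_1})$, proving $\cinvt(K_0)=\cinvt(K_1)$. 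Since any slice knot is concordant to the unknot $U$, and a direct surgery computation gives $N(U)=N(\mirror{U})=1$ (using $\dim I^\#(S^3_n(U)) = |n|$ for $n \neq 0, \pm 1$ together with $\dim I^\#(\pm S^3)=1$ and $\dim I^\#(S^1 \times S^2)=2$), every slice knot $K$ satisfies $N(K)=1$ and $\cinvt(K)=0$, hence is W-shaped; this also completes the $g_s=0$ case of the slice bound. The principal obstacle I anticipate is making the construction of $V_A$ precise and verifying the diffeomorphism $X_n(K_0) \cup V_A \cong X_n(K_1)$ via handle calculus, including careful tracking of the bundle $\nu_n$ (particularly at $n=0$ where the exact triangle uses the $\mu$-twisted version of $I^\#(S^3_0(K_i))$) so that the cobordism maps genuinely agree up to sign.
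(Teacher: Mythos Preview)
Your proposal is correct and follows essentially the same approach as the paper: the mirror identity, dimension formula, and slice-genus bound are handled just as you describe (via Definition~\ref{def:conc-invt}, Proposition~\ref{prop:unimodal-rank}, Remark~\ref{rem:N-equals-1-1}, and Proposition~\ref{prop:unimodal-rank-part-1}), and the concordance invariance of $N(K)$ is proved by factoring $F_{K_1,n}$ through $F_{K_0,n}$ via the concordance. The only cosmetic difference is that the paper phrases the key geometric step as an embedding $X_n(K_0)\hookrightarrow X_n(K_1)$ (taking a collar of $S^3\times\{0\}$ together with a neighborhood of the concordance capped off by the core of the $2$-handle) rather than building $V_A$ separately and gluing; this makes the bundle check a one-line intersection-number computation with the capped Seifert surface, which addresses exactly the obstacle you flagged.
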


\begin{proof}
The claims about the dimension of $I^\#(S^3_n(K))$ are immediate from Proposition~\ref{prop:unimodal-rank} and Remark~\ref{rem:N-equals-1-1}.  We will see below that $N(K)$ is itself a concordance invariant; thus if $K$ is slice then $N(K) = N(U) = 1$ and $N(\mirror{K}) = N(\mirror{U}) = 1$, and so $K$ is W-shaped.

The claim that $\cinvt(\mirror{K}) = -\cinvt(K)$ is immediate from the definition.  Since $N(\mirror{K}) \geq 0$, we have
\[ \cinvt(K) \leq N(K) \leq \max(2g_s(K)-1,1), \]
where the second inequality is by Proposition~\ref{prop:unimodal-rank-part-1}.  The same applies to $\cinvt(\mirror{K})$, and since $g_s(K) = g_s(\mirror{K})$, we then have
\[ |\cinvt(K)| = \max(\cinvt(K),\cinvt(\mirror{K})) \leq \max(2g_s(K)-1,1). \]
The right side is equal to $2g_s(K)-1$ in all cases except $g_s(K)=0$.  But we have $\cinvt(U) = N(U)-N(U) = 0$, so if $K$ is smoothly slice then the concordance invariance of $\cinvt$ will imply that $\cinvt(K)=0$, proving the desired inequality.

We will now show that $N(K)$ is a concordance invariant; an identical argument applies to $N(\mirror{K})$, and then this implies the same for their difference $\cinvt(K)$.  In the proof of Proposition~\ref{prop:unimodal-rank-part-1}, we defined $N(K)$ as the least nonnegative integer such that the map
\[ F_n: I^\#(S^3) \to I^\#(S^3_n(K)) \]
is zero for all $n \geq N$, so it suffices to prove that the rank of $F_n$ (which is either $0$ or $1$, since $I^\#(S^3) \cong \C$) is itself a smooth concordance invariant.

Suppose that $K_0$ is smoothly concordant to $K_1$, and let $C \subset S^3 \times [0,1]$ be a smoothly embedded cylinder with boundary $-K_0 \times \{0\} \sqcup K_1 \times \{1\}$; we can arrange for simplicity that it restricts to a product cobordism inside $S^3 \times ([0,\frac{1}{3}] \cup [\frac{2}{3},1])$.  We build the $n$-framed $2$-handle cobordism $X_n(K_1)$ by attaching an $n$-framed 2-handle to $K_1 \times \{1\} \subset S^3 \times [0,1]$; let $D$ denote the core of this handle, with boundary $K_1 \times \{1\}$.  Inside $X_n(K_1)$, the union of $S^3 \times [0,\frac{1}{3}]$ and a neighborhood of $C \cup D$ is then diffeomorphic to the 2-handle cobordism $X_n(K_0)$, so that we can write $X_n(K_1)$ as a composition
\[ S^3 \xrightarrow{X_n(K_0)} S^3_n(K_0) \xrightarrow{V} S^3_n(K_1) \]
for some smooth cobordism $V$; see Figure~\ref{fig:concordance-invariance}.
\begin{figure}
\begin{tikzpicture}
\draw (-3,-2) rectangle (0,2);
\node[above,inner sep=1pt] at (-1.5,-2) {\small $S^3\times[0,1]$};
\draw (0,-0.5) arc (-90:90:0.5);
\draw (0,-1.5) arc (-90:90:1.5);
\coordinate (K0d) at (-3,-0.5);
\coordinate (K0u) at (-3,0.5);
\coordinate (K1d) at (0,-1);
\coordinate (K1u) at (0,1);
\draw[fill=black] (K0u) circle (0.05) node[anchor=south west] {\small $K_0$};
\draw[fill=black] (K0d) circle (0.05) node[anchor=north west] {\small $K_0$};
\draw[fill=black] (K1u) circle (0.05) node[anchor=south east] {\small $K_1$};
\draw[fill=black] (K1d) circle (0.05) node[anchor=north east] {\small $K_1$};
\draw (K0u) to +(1,0) to[out=0,in=180] ($(K1u)+(-1,0)$) node[below] {\small $C$} to (K1u);
\draw (K0d) to +(1,0) to[out=0,in=180] ($(K1d)+(-1,0)$) to (K1d);
\draw (K1d) arc (-90:90:1) node[pos=0.75,right] {\small $D$};

\node at (2.25,0) {$\longleftrightarrow$};

\begin{scope}[xshift=6cm]
\draw[fill=black!10] (0,2) -- (-3,2) -- (-3,-2) -- (0,-2) -- (0,-1.5) arc(-90:90:1.5) -- cycle;
\coordinate (K0d) at (-3,-0.5);
\coordinate (K0u) at (-3,0.5);
\coordinate (K1d) at (0,-1);
\coordinate (K1u) at (0,1);
\draw [fill=white] (-3,2) -- (-2,2) -- (-2,0.75) to[out=0,in=180] ($(K1u)+(-1,0.25)$) -- +(1,0) arc(90:-90:1.25) -- +(-1,0) to[out=180,in=0] (-2,-0.75) -- (-2,-2) -- (-3,-2) -- cycle;
\draw[fill=black!10] ($(K0u)+(1,-0.25)$) to[out=0,in=180] ($(K1u)+(-1,-0.25)$) -- +(1,0) arc(90:-90:0.75) -- +(-1,0) to[out=180,in=0] ($(K0d)+(1,0.25)$) -- cycle;
\draw[fill=white] (0,0.5) -- (0,-0.5) arc(-90:90:0.5) -- cycle;
\draw[thin,dashed] (K0u) to +(1,0) to[out=0,in=180] ($(K1u)+(-1,0)$) to (K1u) arc (90:-90:1) -- +(-1,0) to[out=180,in=0] ($(K0d)+(1,0)$) -- (K0d);
\end{scope}
\end{tikzpicture}
\caption{Decomposing $X_n(K_1)$ as the union $X_n(K_0) \cup V$, where $V$ is the shaded region at right.}
\label{fig:concordance-invariance}
\end{figure}
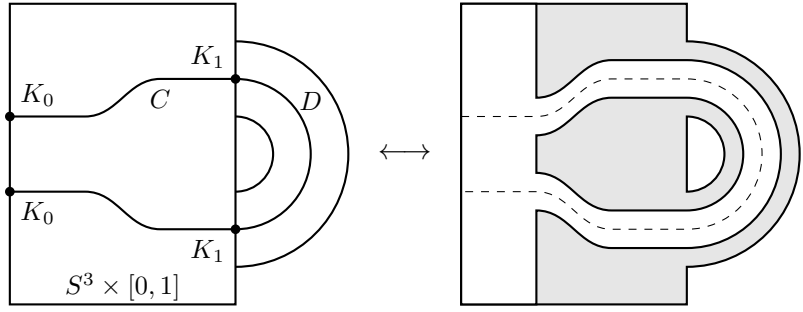

The restriction to $X_n(K_0)$ of the surface $\nu_{K_1,n} \subset X_n(K_1)$ defining the map \[F_{K_1,n} = I^\#(X_n(K_1),\nu_{K_1,n})\] still has pairing $n \pmod{2}$ with a capped-off Seifert surface  for $K_0$, just as it did for $K_1$, so it follows that $F_{K_1,n}$ factors through
\[ F_{K_0,n} = I^\#(X_n(K_0),\nu_{K_0,n}): I^\#(S^3) \to I^\#(S^3_n(K_0)). \]
This implies that $\rank F_{K_1,n} \leq \rank F_{K_0,n}$, and the opposite inequality holds as well since concordance is symmetric.  Thus, the rank of $F_{K,n}$ depends on $K$ only up to smooth concordance, as claimed.
\end{proof}

We can now justify the data plotted in Figure~\ref{fig:dim-plots}.  We note that according to Theorem~\ref{thm:conc-invt}, we have $\cinvt(K)=0$ if $K$ is amphichiral, such as the unknot or figure eight.

\begin{example} \label{ex:unknot-plot}
For the unknot we have $\cinvt(U) = 0$, and $\dim I^\#(S^3_{\pm1}(U)) = \dim I^\#(S^3) = 1$ implies that $r_0(U) = 0$ as well.  We also know that $\dim I^\#(S^3_0(K)) = \dim I^\#(S^1\times S^2) = 2$ \cite[\S7.6]{scaduto}, so that $N(U)=N(\mirror{U})=1$ and hence $U$ is W-shaped, with $\dim I^\#(S^3_n(U))$ having the form shown in Figure~\ref{fig:dim-plots}.
\end{example}

\begin{example} \label{ex:trefoil-plot}
For the right-handed trefoil we have
\begin{align*}
\dim I^\#(S^3_1(T_{2,3})) = \dim I^\#(-\Sigma(2,3,5)) &= 1, \\
\dim I^\#(S^3_{-1}(T_{2,3})) = \dim I^\#(\Sigma(2,3,7)) &= 3,
\end{align*}
as computed in \cite[Corollary~1.7]{scaduto} and \cite[Corollary~1.6]{scaduto}, respectively.  It follows from this and the fact that $\dim I^\#(S^3_n(T_{2,3})) \geq n$ for all $n \geq 1$ that $\cinvt(T_{2,3})=1$, and that $\dim I^\#(S^3_n(T_{2,3})) = |n-1|+1$ for all $n \in \Z$.
\end{example}

\begin{example} \label{ex:figure-eight-plot}
We have $\cinvt(4_1)=0$ since $4_1$ is amphichiral.  We know once again that
\[ \dim I^\#(S^3_{\pm1}(4_1)) = \dim I^\#(\pm \Sigma(2,3,7)) = 3, \]
from which everything follows except the case of $0$-surgery.  In fact, we can show that $\dim I^\#(S^3_0(4_1),\mu) = 2$ because this $0$-surgery is a torus bundle over the circle, but we do not know whether $\dim I^\#(S^3_0(4_1))$ is $2$ or $4$.  The answer depends on whether $4_1$ is V-shaped or W-shaped.
\end{example}

\begin{remark}
The computations of Examples~\ref{ex:trefoil-plot} and \ref{ex:figure-eight-plot} rely on knowing $I^\#$ for some Brieskorn spheres.  Later on, we will redo these computations (in Lemmas~\ref{lem:t2q-surgery} and \ref{lem:figure-eight}) in a way which only relies on the facts that $\cinvt(U)=r_0(U)=0$, which in Example~\ref{ex:unknot-plot} only required us to know that $I^\#(S^3) \cong \C$.
\end{remark}

Recall from the introduction that a nontrivial knot $K$ is an \emph{instanton L-space knot} if \[\dim_\C I^\#(S^3_{n}(K);\C) = n\] for some integer $n>0$. We implicitly computed $\cinvt$ for such knots in \cite{bs-lspace}:

\begin{proposition} \label{prop:nu-l-space}
If $K$ is an instanton L-space knot then $\cinvt(K)=r_0(K)=2g(K)-1$.
\end{proposition}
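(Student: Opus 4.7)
The plan is to read off $\cinvt(K)$ and $r_0(K)$ directly from the dimension formula in Theorem~\ref{thm:conc-invt}, using two facts about integer surgeries on an instanton L-space knot established in our prior work \cite{bs-lspace} together with results from \cite{lpcs}: namely, $\dim_\C I^\#(S^3_n(K);\C) = n$ for every integer $n \geq 2g(K)-1$, and $\dim_\C I^\#(S^3_{2g(K)-2}(K);\C) > 2g(K)-2$ (the latter being vacuous only if $g(K) = 0$, but a nontrivial instanton L-space knot has positive Seifert genus, as $S^3_n(U)$ always has dimension at least $n+1$ by Example~\ref{ex:unknot-plot}).

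Taking $n$ sufficiently large and positive, the formula $\dim_\C I^\#(S^3_n(K);\C) = r_0(K) + |n-\cinvt(K)|$ equated with $n$ gives $r_0(K) = \cinvt(K) \geq 0$. In particular $K$ is V-shaped, so Theorem~\ref{thm:conc-invt} supplies the formula at every integer slope. Evaluating at $n = 2g(K)-1$ using the L-space condition yields
\[ \cinvt(K) + |2g(K)-1 - \cinvt(K)| = 2g(K)-1, \]
which rules out $\cinvt(K) > 2g(K)-1$ (since that case would make the left-hand side equal $2\cinvt(K) - (2g(K)-1) > 2g(K)-1$). Evaluating at $n = 2g(K)-2$ and using the strict inequality yields
\[ \cinvt(K) + |2g(K)-2 - \cinvt(K)| > 2g(K)-2, \]
which rules out $\cinvt(K) \leq 2g(K)-2$ (as that case would make the left-hand side equal $2g(K)-2$ exactly). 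Combining, we get $\cinvt(K) = r_0(K) = 2g(K)-1$.

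The main task, rather than any new obstacle in the present argument, is ensuring that the two ingredients from \cite{bs-lspace,lpcs} are stated in the precise form needed — in particular the strict genus-detecting inequality at slope $2g(K)-2$, since the upper bound $\cinvt(K) \leq 2g(K)-1$ alone can already be obtained from the slice genus bound in Theorem~\ref{thm:conc-invt} (together with $g_s(K) \leq g(K)$), and the complementary lower bound is what forces the value of $\cinvt(K)$ to equal $2g(K)-1$ exactly.
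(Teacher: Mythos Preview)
Your argument works cleanly when $g(K) \geq 2$, since then both slopes $2g(K)-1$ and $2g(K)-2$ at which you evaluate are nonzero and the formula of Theorem~\ref{thm:conc-invt} applies regardless of shape. The gap is in the sentence ``In particular $K$ is V-shaped'': the conclusion $r_0(K)=\cinvt(K)\geq 0$ does \emph{not} by itself force $K$ to be V-shaped. The unknot already has $r_0(U)=\cinvt(U)=0$ yet is W-shaped, so this implication fails in general, and nothing in your argument to that point has ruled out $\cinvt(K)=0$.

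This matters precisely when $g(K)=1$, because then your second evaluation is at $n=2g(K)-2=0$. If $K$ were W-shaped with $\cinvt(K)=r_0(K)=0$, the formula of Theorem~\ref{thm:conc-invt} does not apply at $n=0$; instead Remark~\ref{rem:N-equals-1-1} gives $\dim I^\#(S^3_0(K))=r_0(K)+2=2>0$, so the strict inequality $\dim I^\#(S^3_0(K))>0$ from \cite{bs-lspace} is satisfied rather than violated, and your contradiction evaporates. The paper closes this case by a different route: it invokes the separate result from \cite{bs-lspace} that the only genus-one instanton L-space knot is the right-handed trefoil, for which $\cinvt=1$ is computed directly in Example~\ref{ex:trefoil-plot}. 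You need either that classification or some independent reason why a nontrivial instanton L-space knot cannot be W-shaped; the dimension formula alone does not supply one.
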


\begin{proof}
Suppose $K$ is an instanton L-space knot. In \cite{bs-lspace}, we proved that $\dim I^\#(S^3_n(K))=n$ if and only if $n \geq 2g(K)-1$.  Thus, if $g(K) \geq 2$ then we must have $N(K)=2g(K)-1 \geq 3$, which implies that $N(\mirror{K})=0$ by Proposition~\ref{prop:unimodal-rank}, so $\cinvt(K) = 2g(K)-1$ in this case. When $g(K)=1$, we proved that $K$ must be the right-handed trefoil, and then \[\cinvt(K) = 2g(K)-1 =1,\]  by Example~\ref{ex:trefoil-plot}. Thus, $r_0(K) = \dim I^\#(S^3_{2g(K)-1}(K))=2g(K)-1$ in both cases.
\end{proof}

\section{Rational surgeries} \label{sec:rational-surgeries}
We proved Theorem \ref{thm:main-surgery} for integer surgeries in the previous section, as part of Theorem \ref{thm:conc-invt}. The main goal of this section is to prove Theorem \ref{thm:main-surgery} for  rational surgeries, restated in a slightly stronger form as Theorem \ref{thm:rational-surgeries} below. We then use Theorem \ref{thm:rational-surgeries} to prove Theorem \ref{thm:nu-l-space}, regarding surgeries on instanton L-space knots, and Theorem \ref{thm:l-space-cable}, which determines when cables of a knot are instanton L-space knots.

For notational simplicity in what follows, let us introduce the  quantity $\delta(K)$ below, which is well-defined according to Theorem~\ref{thm:conc-invt}:

\begin{definition} \label{def:delta-k}
Let $K \subset S^3$ be a knot, and write $\nu = \cinvt(K)$.  We define
\[ \delta(K) \in 2\Z_{\geq 0} \]
to be the unique integer such that
\[ \dim I^\#(S^3_n(K)) = (\delta(K) + \cinvt(K)) + |n-\cinvt(K)| \]
for all nonzero integers $n$.
\end{definition}

Indeed, Theorem~\ref{thm:conc-invt} immediately implies that
\begin{equation} \label{eq:delta-expression}
\delta(K) = r_0(K) - \cinvt(K) := \dim I^\#(S^3_{\cinvt(K)}(K),\lambda)-\cinvt(K),\end{equation} where $\lambda$ is either $0$ or $\mu$, depending on whether $K$ is V-shaped or W-shaped.
Note that the difference on the right-hand side, and hence $\delta(K)$, is both nonnegative and even by \eqref{eq:framed-chi}, as claimed.  One should view $\delta(K)$ as measuring how far $K$ is from having positive integer L-space surgeries.

We will show below that the framed instanton homologies of nearly all rational surgeries on $K$ are determined completely by the slope and the values of $\cinvt(K)$ and $\delta(K)$.  Following \cite[Proposition~7.3]{kmos}, which carries out a similar approach in monopole Floer homology, we break up the computation of $I^\#(S^3_{p/q}(K))$ into a sequence of surgery exact triangles and see that they all split.  This will reduce the computation to an  argument by induction.

Throughout this section, we will write continued fractions in the notation
\[ [a_0,a_1,\dots,a_n] = a_0 - \frac{1}{a_1 - \frac{1}{\ddots - \frac{1}{a_n}}}. \]
Every $\frac{p}{q} \in \Q$ has a unique continued fraction of this form where the $a_i$ are all integers and $a_i \geq 2$ for all $i \geq 1$, though $a_0$ may be any integer.  The following lemma is standard (see e.g. \cite{hardy-wright}) but we include a proof because it is usually stated for continued fractions defined with a different sign convention.

\begin{lemma} \label{lem:convergents}
Let $a_0,a_1,\dots$ be a sequence of nonzero real numbers, possibly finite, and define a sequence $(p_n,q_n)$ for all $n \geq -1$ by
\begin{equation} \label{eq:convergent-recurrence}
\begin{aligned}
(p_{-1},q_{-1}) &= (1,0), \\
(p_0,q_0) &= (a_0,1), \\
(p_n,q_n) &= (a_np_{n-1}-p_{n-2}, a_nq_{n-1}-q_{n-2}) \qquad\mathrm{for\ all\ }n\geq 1.
\end{aligned}
\end{equation}
Then $[a_0,\dots,a_n] = \frac{p_n}{q_n}$ for all $n \geq 0$.  We also have $q_np_{n-1}-p_nq_{n-1} = 1$ for all $n \geq 0$, and if $a_n \geq 2$ for all $n \geq 1$ then the sequence $(q_n)$ is positive and strictly increasing for $n \geq 0$.
\end{lemma}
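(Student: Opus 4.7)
The plan is to prove the three assertions in order, with all three following from routine inductions once the right setup is in place. The cleanest viewpoint encodes the recurrence \eqref{eq:convergent-recurrence} as a matrix identity: if $M_k = \begin{pmatrix} a_k & -1 \\ 1 & 0 \end{pmatrix}$, then
\[ \begin{pmatrix} p_n & q_n \\ p_{n-1} & q_{n-1} \end{pmatrix} = M_n M_{n-1}\cdots M_1 \begin{pmatrix} p_0 & q_0 \\ p_{-1} & q_{-1} \end{pmatrix}. \]
Each $M_k$ has determinant $1$, and the right-most matrix has determinant $-1$, so taking determinants gives $p_n q_{n-1} - q_n p_{n-1} = -1$, i.e.\ the identity $q_n p_{n-1} - p_n q_{n-1} = 1$. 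If one prefers to avoid matrices, the same identity falls out by an immediate induction: the base case $n=0$ is $1\cdot 1 - a_0\cdot 0 = 1$, and the inductive step substitutes the recurrence to show $q_n p_{n-1} - p_n q_{n-1} = -(q_{n-1} p_{n-2} - p_{n-1} q_{n-2})$, flipping sign at each stage; since the starting determinant is $-1$, we need to be careful about the sign convention but the computation is standard.

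Next I would prove $[a_0,\dots,a_n] = p_n/q_n$ by induction on $n$. The case $n=0$ is $[a_0]=a_0 = p_0/q_0$. For the inductive step I would use the identity
\[ [a_0,\dots,a_{n-1},a_n] = \bigl[a_0,\dots,a_{n-2},\, a_{n-1}-\tfrac{1}{a_n}\bigr], \]
which follows directly from the definition of the continued fraction. Applying the inductive hypothesis to the shorter sequence $(a_0,\dots,a_{n-2},a_{n-1}-\tfrac{1}{a_n})$ and calling its convergents $(p'_k,q'_k)$, one has $(p'_k,q'_k)=(p_k,q_k)$ for $k\le n-2$, and a direct calculation using the recurrence shows
\[ p'_{n-1} = \bigl(a_{n-1}-\tfrac{1}{a_n}\bigr)p_{n-2} - p_{n-3} = \tfrac{1}{a_n}(a_n p_{n-1}-p_{n-2}) = \tfrac{p_n}{a_n}, \]
and similarly $q'_{n-1}=q_n/a_n$. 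Hence $[a_0,\dots,a_n]=p'_{n-1}/q'_{n-1}=p_n/q_n$.

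Finally, assume $a_k \ge 2$ for all $k\ge 1$; I would prove by induction that $q_n > q_{n-1} > 0$ for all $n\ge 1$. The base case gives $q_0=1>0=q_{-1}$ and $q_1 = a_1 \ge 2 > 1 = q_0$. For the inductive step, if $q_{n-1}>q_{n-2}>0$ then
\[ q_n = a_n q_{n-1}-q_{n-2} \ge 2q_{n-1}-q_{n-2} = q_{n-1}+(q_{n-1}-q_{n-2}) > q_{n-1}, \]
which closes the induction. There is no real obstacle here; the only point demanding care is the non-standard sign convention in the definition of $[a_0,\dots,a_n]$, which forces the minus signs in the recurrence \eqref{eq:convergent-recurrence} and in the matrix $M_k$ above, so that the determinant identity picks up a single sign flip relative to the usual formula $q_np_{n-1}-p_nq_{n-1}=(-1)^n$ found in standard references.
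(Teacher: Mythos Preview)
Your approach is essentially the same as the paper's: you use the same tail-collapse identity $[a_0,\dots,a_n]=[a_0,\dots,a_{n-2},\,a_{n-1}-1/a_n]$ for the convergent formula, the same telescoping for the determinant identity, and the identical inequality chain for the monotonicity of $q_n$. The matrix repackaging of the determinant identity is a nice touch and is correct as stated.

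One correction to your non-matrix aside, however: in this sign convention the quantity $q_n p_{n-1}-p_n q_{n-1}$ does \emph{not} flip sign under the recurrence. Substituting gives
\[
q_n p_{n-1}-p_n q_{n-1}=(a_nq_{n-1}-q_{n-2})p_{n-1}-(a_np_{n-1}-p_{n-2})q_{n-1}=q_{n-1}p_{n-2}-p_{n-1}q_{n-2},
\]
with no minus sign, and the base value is $q_0p_{-1}-p_0q_{-1}=+1$, not $-1$. (This is consistent with your own matrix argument, since each $M_k$ has determinant $+1$, so the running determinant is constant.) The alternating-sign behavior $(-1)^n$ you are remembering belongs to the plus-sign continued fraction convention; here the determinant is simply constant. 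Also note that your inductive step for the convergent formula invokes $p_{n-3}$, so you should check $n=1$ separately (it is immediate).
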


\begin{proof}
We induct on $n$; the lemma clearly holds for $n \leq 1$.  For $n = k \geq 2$ we observe that
\[ [a_0,\dots,a_{k-2},a_{k-1},a_k] = [a_0,\dots,a_{k-2},b]  \]
where $b = (a_{k-1}a_k-1)/a_k$, and by the $n=k-1$ case we know that the right side is equal to
\begin{align*} \frac{bp_{k-2}-p_{k-3}}{bq_{k-2}-q_{k-3}} &= \frac{(a_{k-1}a_k-1)p_{k-2} - a_kp_{k-3}}{(a_{k-1}a_k-1)q_{k-2}-a_kq_{k-3}} \\
&= \frac{ a_k(a_{k-1}p_{k-2}-p_{k-3}) - p_{k-2} }{ a_k(a_{k-1}q_{k-2}-q_{k-3}) - q_{k-2} } \\
&= \frac{ a_k p_{k-1} - p_{k-2} }{ a_k q_{k-1} - q_{k-2} } = \frac{p_k}{q_k},
\end{align*}
so the left side is also equal to $\frac{p_k}{q_k}$, as claimed.

For the remaining claims, we first compute for all $n \geq 1$ that
\begin{align*}
q_n p_{n-1} - p_n q_{n-1} &= (a_nq_{n-1}-q_{n-2})p_{n-1} - (a_np_{n-1}-p_{n-2})q_{n-1} \\
&= q_{n-1}p_{n-2} - p_{n-1}q_{n-2},
\end{align*}
so by induction this is equal to $q_0p_{-1} - p_0q_{-1} = 1$.  The assertion that the $q_n$ are positive and increasing if $a_n \geq 2$ for all $n \geq 1$ also follows by induction: we have $q_0=1$, and then if $a_n \geq 2$ and $q_{n-1} > 0$ then 
\[ q_n = a_nq_{n-1} - q_{n-2} \geq 2q_{n-1}-q_{n-2} = q_{n-1} + (q_{n-1}-q_{n-2}) > q_{n-1}. \qedhere \]
\end{proof}

\begin{proposition} \label{prop:two-triangles}
Let $\frac{p}{q} \not\in \Z$ be a rational number, where $p$ and $q$ are relatively prime and $q \geq 2$.  Then there is a pair of surgery exact triangles
\begin{align*}
\dots \to I^\#(S^3_{a/b}(K)) &\to I^\#(S^3_{p/q}(K)) \to I^\#(S^3_{c/d}(K)) \xrightarrow{F} \dots \\
\dots \to I^\#(S^3_{c/d}(K)) &\to I^\#(S^3_{e/f}(K)) \to I^\#(S^3_{a/b}(K)) \xrightarrow{G} \dots,
\end{align*}
where $a,b,c,d,e,f$ are integers satisfying the following:
\begin{itemize}
\item the pairs $(a,b)$, $(c,d)$, and $(e,f)$ are each relatively prime;
\item we have $b,d > 0$ and $f \geq 0$, with equality only if $e=1$;
\item we have $(p,q) = (a+c,b+d)$, and either $(a,b)=(c+e,d+f)$ or $(c,d) = (a+e,b+f)$;
\item both $\frac{a}{b}$ and $\frac{c}{d}$ lie between $\lfloor\frac{p}{q}\rfloor$ and $\lceil\frac{p}{q}\rceil$, inclusive; and so does $\frac{e}{f}$ unless $b=d=1$ and $\frac{e}{f}=\frac{1}{0}$.
\end{itemize}
We have $F\circ G = 0$ if $\frac{p}{q} \neq \frac{1}{q}$, and likewise if $\frac{p}{q} \neq -\frac{1}{q}$ then $G \circ F = 0$.
\end{proposition}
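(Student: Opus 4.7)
The plan is to produce both exact triangles from Farey-triangle relations among the slopes via Theorem~\ref{thm:exact-triangle}, and then to establish the vanishing statements by adapting the argument of Lemma~\ref{lem:cancel-2-handles} to the composite two-handle cobordisms.

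First, I will choose the integers $a, b, c, d, e, f$ from the continued-fraction expansion $p/q = [a_0, a_1, \ldots, a_n]$ furnished by Lemma~\ref{lem:convergents}, where $a_i \geq 2$ for $i \geq 1$ and $n \geq 1$ since $q \geq 2$. Writing $(p_k, q_k)$ for the associated convergents, I set
\[ (c, d) := (p_{n-1},\, q_{n-1}), \qquad (a, b) := \bigl((a_n - 1)p_{n-1} - p_{n-2},\, (a_n - 1) q_{n-1} - q_{n-2}\bigr), \]
so that $(a + c,\, b + d) = (p_n, q_n) = (p, q)$. The identity $q_n p_{n-1} - p_n q_{n-1} = 1$ from Lemma~\ref{lem:convergents} then yields $ad - bc = \pm 1$, so $a/b$, $c/d$, and $p/q$ are pairwise Farey neighbors. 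I take $(e, f)$ to be the remaining Farey neighbor completing the relevant triangle: explicitly $(e, f) = (p_{n-2}, q_{n-2})$ when $a_n = 2$ (yielding $(c, d) = (a + e,\, b + f)$), and $(e, f) = ((a_n-2)p_{n-1} - p_{n-2},\, (a_n-2)q_{n-1} - q_{n-2})$ when $a_n \geq 3$ (yielding $(a, b) = (c + e,\, d + f)$); the degenerate case $(e, f) = (1, 0)$ occurs precisely when $n=1$ and $a_1 = 2$, i.e.\ when $b = d = 1$. The stated positivity conditions on $b, d, f$, coprimality, and the fact that $a/b, c/d$, and $e/f$ all lie in $[\lfloor p/q \rfloor, \lceil p/q \rceil]$ then follow from standard monotonicity and interlacing properties of convergents.

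Second, since $a/b$, $p/q$, and $c/d$ are pairwise Farey neighbors, the corresponding slopes on $\partial N(K)$ are three pairwise distance-one curves on the torus. The core of the surgery solid torus in $S^3_{a/b}(K)$ therefore admits a framing for which $0$- and $1$-surgery recover $S^3_{p/q}(K)$ and $S^3_{c/d}(K)$, respectively, so Theorem~\ref{thm:exact-triangle} applied to this framed core produces the first exact triangle (any multicurves $\lambda$ needed to trivialize mod-$2$ homology classes of meridians are chosen as in the discussion following \eqref{eq:nu-parity} and then suppressed from the notation). The second exact triangle is produced identically from the Farey triangle $\{c/d,\, a/b,\, e/f\}$.

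Finally, each of $F \circ G$ and $G \circ F$ is induced by a four-dimensional cobordism obtained by concatenating two two-handle cobordisms, and each such concatenation contains a $2$-sphere $S$ of self-intersection zero formed from a cocore of the first handle and a core of the second. To apply Proposition~\ref{prop:adjunction-inequality}, I will construct a closed surface $F$ in the composite cobordism having nonzero algebraic intersection with $S$, by taking the union of several parallel cocores of the first handle with a Seifert surface in the intermediate surgered manifold for the link that they bound; such a Seifert surface exists exactly when an appropriate multiple of the meridian is nullhomologous in the intermediate three-manifold. A direct computation in $H_1$ of the intermediate surgery shows that this nullhomology, and hence the vanishing, holds so long as $p/q \neq 1/q$ in the case of $F \circ G$, and $p/q \neq -1/q$ in the case of $G \circ F$. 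The main obstacle will be this final homological bookkeeping: tracking precisely which multiple of the meridian bounds in the intermediate manifold and verifying that its pairing with $S$ is nonzero in exactly the claimed ranges of slopes.
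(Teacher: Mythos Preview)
Your approach is essentially the paper's: both use the continued-fraction convergents to produce $a/b$, $c/d$, $e/f$, obtain the two exact triangles from the resulting distance-one triples of slopes (the paper realizes this concretely via a chain of meridians and slam-dunks, while you invoke the Farey picture directly --- these are equivalent), and prove the vanishing by finding a self-intersection-zero sphere $S$ and a dual closed surface as in Lemma~\ref{lem:cancel-2-handles}.

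One point to watch in your final step: the paper builds the dual surface from parallel \emph{cores} of the first handle, capped by a Seifert surface in the \emph{initial} manifold $S^3_{a/b}(K)$; the core's boundary generates $H_1(S^3_{a/b}(K))\cong\Z/a\Z$, so the capping exists iff $a\neq 0$, and the core--cocore pairing then gives $[S]\cdot[F]=|a|\neq 0$. The paper checks that $a=0$ forces $b=c=1$ and hence $p/q=1/(d+1)=1/q$, and similarly $c=0$ forces $p/q=-1/q$. If you literally use \emph{cocores} of the first handle capped in the \emph{intermediate} manifold $S^3_{c/d}(K)$ as you wrote, the homological obstruction you extract will be governed by $c$ rather than $a$, and your two exceptional cases will come out swapped. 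Switching to cores of the first handle (or cocores of the second) fixes this and makes the intersection count transparent.
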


\begin{proof}
We write $\frac{p}{q}$ in terms of its continued fraction
\[ \frac{p}{q} = [a_0,a_1,\dots,a_n], \qquad a_k \geq 2 \mathrm{\ for\ }1 \leq k \leq n, \]
and define the sequence $(p_k,q_k)$ for $k \leq n$ as in \eqref{eq:convergent-recurrence}.  We note that $n \geq 1$ since $\frac{p}{q}$ is not an integer.  We can also write
\[ \frac{p}{q} = [a_0,\dots,a_{n-1},a_n-1,-1], \]
and so we define the rational slopes $\frac{a}{b}$ and $\frac{c}{d}$ by
\begin{align} \label{eq:triangle-ab-cd}
\frac{a}{b} &= [a_0,\dots,a_{n-1},a_n-1], & \frac{c}{d} &= [a_0,\dots,a_{n-1}].
\end{align}
We thus form a 3-manifold $Y$ by performing $a_0$-surgery on $K$ and then surgeries of slopes $a_1,\dots,a_{n-1},(a_n - 1)$ on a chain of meridians of $K$ of length $n$.  We then add one last meridian $m \subset Y$ to this chain and perform surgeries of slopes $\infty$, $-1$, or $0$ on it, as shown at the top of Figure~\ref{fig:slam-dunks}.
\begin{figure}
\begin{tikzpicture}
\begin{scope}
\draw[link] (0.5,0) circle (0.5);
\node[draw,fill=white,rectangle] at (0,0) {$K$};
\node[inner sep=0,below] at (0.5,-0.5) {$\strut a_0$};
\draw[link] (1.25,0) circle (0.5);
\draw[link] (0.5,0.5) arc (90:0:0.5);
\node[inner sep=0,below] at (1.25,-0.5) {\strut $a_1$};
\draw[link] (2,0) circle (0.5);
\draw[link] (1.25,0.5) arc (90:0:0.5);
\node[inner sep=0,below] at (2,-0.5) {\strut $a_2$};
\draw[link] (2.75,0.5) arc (90:270:0.5);
\draw[link] (2,0.5) arc (90:0:0.5);
\node at (3,0) {\small $\dots$};
\draw[link] (3.25,-0.5) arc (-90:90:0.5);
\draw[link] (4,0) circle (0.5);
\draw[link] (3.25,0.5) arc (90:0:0.5);
\node[inner sep=0,below] at (4,-0.5) {\strut $a_{n-1}$};
\draw[link] (4.75,0) circle (0.5);
\draw[link] (4,0.5) arc (90:0:0.5);
\node[inner sep=0,above] at (4.75,0.5) {\strut $a_n - 1$};
\draw[link] (5.5,0) circle (0.5);
\draw[link] (4.75,0.5) arc (90:0:0.5);
\node[inner sep=0,right] at (6.1,0) {\strut $\{\infty,-1,0\}$};
\node at (8.4,0) {\Large $\cong$};
\draw[link] (9.75,0) circle (0.5);
\node[draw,fill=white,rectangle] at (9.25,0) {$K$};
\node[inner sep=0,right] at (10.4,0) {$\strut \{\frac{a}{b},\frac{p}{q},\frac{c}{d}\}$};
\end{scope}

\begin{scope}[yshift=-2.5cm]
\draw[link] (0.5,0) circle (0.5);
\node[draw,fill=white,rectangle] at (0,0) {$K$};
\node[inner sep=0,below] at (0.5,-0.5) {$\strut a_0$};
\draw[link] (1.25,0) circle (0.5);
\draw[link] (0.5,0.5) arc (90:0:0.5);
\node[inner sep=0,below] at (1.25,-0.5) {\strut $a_1$};
\draw[link] (2,0) circle (0.5);
\draw[link] (1.25,0.5) arc (90:0:0.5);
\node[inner sep=0,below] at (2,-0.5) {\strut $a_2$};
\draw[link] (2.75,0.5) arc (90:270:0.5);
\draw[link] (2,0.5) arc (90:0:0.5);
\node at (3,0) {\small $\dots$};
\draw[link] (3.25,-0.5) arc (-90:90:0.5);
\draw[link] (4,0) circle (0.5);
\draw[link] (3.25,0.5) arc (90:0:0.5);
\node[inner sep=0,below] at (4,-0.5) {\strut $a_{n-1}$};
\draw[link] (4.75,0) circle (0.5);
\draw[link] (4,0.5) arc (90:0:0.5);
\node[inner sep=0,above] at (4.75,0.5) {\strut $a_n - 1$};
\draw[link] (5.5,0) circle (0.5);
\draw[link] (4.75,0.5) arc (90:0:0.5);
\node[inner sep=0,below] at (5.5,-0.5) {\strut $0$};
\draw[link] (6.25,0) circle (0.5);
\draw[link] (5.5,0.5) arc (90:0:0.5);
\node[inner sep=0,below] at (7.25,-0.5) {\strut $\{\infty,-1,0\}$};
\node at (8.4,0) {\Large $\cong$};
\draw[link] (9.75,0) circle (0.5);
\node[draw,fill=white,rectangle] at (9.25,0) {$K$};
\node[inner sep=0,right] at (10.4,0) {$\strut \{\frac{c}{d},\frac{e}{f},\frac{a}{b}\}$};
\end{scope}
\end{tikzpicture}
\caption{Realizing rational surgeries on $K$ as integral surgeries on the union of $K$ and a chain of meridians.}
\label{fig:slam-dunks}
\end{figure}
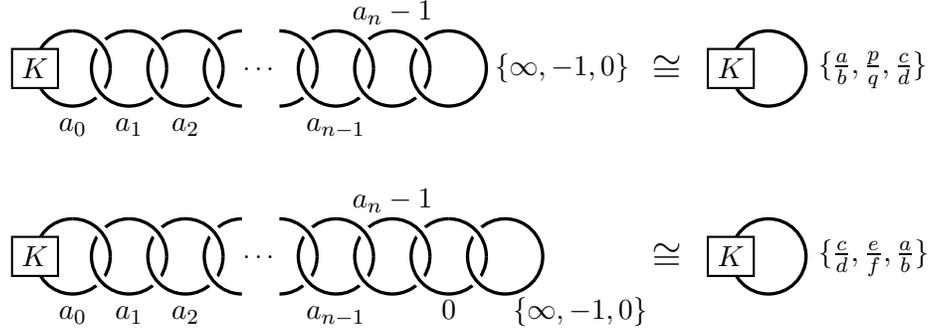
These surgeries on $m$ fit into a surgery exact triangle
\[ \dots \to I^\#(Y) \to I^\#(Y_{-1}(m)) \to I^\#(Y_0(m)) \to \dots \]
by Theorem~\ref{thm:exact-triangle} and \eqref{eq:triangle-untwisted}, and a series of slam dunks identify the results of these surgeries with $\frac{a}{b}$-, $\frac{p}{q}$-, and $\frac{c}{d}$-surgery on $K \subset S^3$, respectively.

To determine the values of $\frac{a}{b}$ and $\frac{c}{d}$, we note that $\frac{p}{q} = \frac{p_n}{q_n}$ by Lemma~\ref{lem:convergents}, and applying the same lemma to \eqref{eq:triangle-ab-cd} gives
\begin{align*}
\frac{a}{b} &= \frac{(a_n-1)p_{n-1}+p_{n-2}}{(a_n-1)q_{n-1}-q_{n-2}} = \frac{p_n - p_{n-1}}{q_n-q_{n-1}}, &
\frac{c}{d} &= \frac{p_{n-1}}{q_{n-1}}.
\end{align*}
Lemma~\ref{lem:convergents} says that $0 < q_{n-1} < q_n$, from which we see that $b$ and $d$ are strictly positive and $(p,q)=(a+c,b+d)$; and that $qc - pd = 1$.  From the latter it is easy to compute that $pb-qa = p(q-d) - q(p-c) = qc - pd = 1$, and likewise $bc-ad = b(p-a)-a(q-b) = pb-qa = 1$.  These relations tell us that $(a,b)$ and $(c,d)$ are relatively prime, and the relation $bc-ad=1$ also says that
\begin{itemize}
\item $b\neq d$ unless $b=d=1$ and $c=a+1$;
\item $a\neq 0$ unless $b=c=1$, in which case $\frac{p}{q} = \frac{a+c}{b+d} = \frac{1}{d+1}$.
\end{itemize}

Next, we let $\frac{e}{f} = [a_0,\dots,a_{n-1},a_n-2]$; in the event that $a_n=2$, we interpret this as $[a_0,\dots,a_{n-2}]$ if $n \geq 2$ and as $\frac{1}{0}$ if $n=1$.  We then fix the framing $0$ on $m$, and consider doing further Dehn surgeries of slopes $\infty,-1,0$ on a meridian $m'$ of $m$, as shown at the bottom of Figure~\ref{fig:slam-dunks}.  These amount to rational surgeries on $K$ with slopes
\begin{align*}
[a_0,\dots,a_{n-1}] &= \frac{c}{d}, & [a_0,\dots,a_{n-1},a_n-2] &= \frac{e}{f}, & [a_0,\dots,a_{n-1},a_n-1] &= \frac{a}{b}
\end{align*}
respectively, hence yield a surgery exact triangle
\[ \dots \to I^\#(S^3_{c/d}(K)) \to I^\#(S^3_{e/f}(K)) \to I^\#(S^3_{a/b}(K)) \to \dots. \]
As above, we have
\[ \frac{e}{f} = \frac{(a_n-2)p_{n-1} - p_{n-2}}{(a_n-2)q_{n-1}-q_{n-2}} = \frac{a-p_{n-1}}{b-q_{n-1}} = \frac{a - c}{b - d}, \]
though if we insist that $f \geq 0$ then the precise signs of $e$ and $f$ depend on whether $a_n$ is equal to or greater than $2$, namely
\[ (e,f) = \begin{cases} (1,0), & b=d \\ (a-c,b-d), & b>d \\ (c-a,d-b), & d<b. \end{cases} \]
Thus either $(a,b)=(c+e,d+f)$ or $(c,d)=(a+e,b+f)$, as claimed; and we have
\[ fc-ed = \pm\left((b-d)c - (a-c)d\right) = \pm(bc-ad) = \pm1 \]
and similarly $fa-eb = \pm1$, so $e$ and $f$ are relatively prime as well.

To bound the values of $\frac{a}{b}$, $\frac{c}{d}$, and $\frac{e}{f}$, we write $N = \lfloor \frac{p}{q} \rfloor$.  Then we have
\[ N \leq \frac{p}{q}-\frac{1}{q} < \frac{p}{q}+\frac{1}{q} \leq N+1, \]
so the relations $qc-pd=1$ and $pb-qa=1$ tell us that
\[ N \leq \frac{p}{q}-\frac{1}{bq} = \frac{a}{b} < \frac{p}{q} < \frac{c}{d} = \frac{p}{q}+\frac{1}{dq} \leq N+1 \]
with equality at the left or right end only if $b=1$ or $d=1$, respectively.  If either $\frac{a}{b}$ or $\frac{c}{d}$ is not an integer then it must also lie strictly between $N$ and $N+1$, and the same argument with $fa-eb=\pm1$ or $fc-ed=\pm1$ respectively says that $\frac{e}{f}$ lies between $N$ and $N+1$ inclusive.  Thus if $\frac{e}{f} \not\in [N,N+1]$ then we must have $b=d=1$ and $\frac{e}{f}=\frac{1}{0}$.

Finally, we consider the composite
\[ I^\#(S^3_{a/b}(K)) \xrightarrow{G} I^\#(S^3_{c/d}(K)) \xrightarrow{F} I^\#(S^3_{a/b}(K)) \]
corresponding to the cobordism $X$ in which we attach a pair of $0$-framed $2$-handles to $S^3_{a/b}(K) \times [0,1]$ along $m\times\{1\}$ and $m'\times\{1\}$.  Just as in Lemma~\ref{lem:cancel-2-handles}, we find an embedded 2-sphere $S \subset X$ of self-intersection $0$ as the union of the cocore of the $m$ handle and the core of the $m'$ handle.  The core of the $m$ handle has boundary a knot which generates $H_1(S^3_{a/b}(K))$, so if $a\neq 0$ (which we have seen is true unless $\frac{p}{q} = \frac{1}{d+1}$) then we can build a surface $F \subset X$ with $S \cdot F > 0$ by taking $|a|$ parallel cores of the $m$ handle and gluing them to a Seifert surface for their boundary in $S^3_{a/b}(K)$.  In this case Proposition~\ref{prop:adjunction-inequality} says that $F \circ G = 0$, as desired.

An identical argument applies to the composition
\[ I^\#(S^3_{c/d}(K)) \xrightarrow{F} I^\#(S^3_{a/b}(K)) \xrightarrow{G} I^\#(S^3_{c/d}(K), \]
which can be realized by attaching 0-framed 2-handles to $S^3_{c/d}(K) \times [0,1]$ along $m' \times \{1\}$ and a meridian of $m' \times \{1\}$.  Here we can conclude that $G \circ F = 0$ unless $\frac{c}{d}=0$, in which case $\frac{c}{d} = [a_0,\dots,a_{n-1}]$ implies that $n=1$ and $a_0=0$, and so $\frac{p}{q} = [0,a_1] = -\frac{1}{a_1}$.
\end{proof}

We can now compute $I^\#(S^3_{p/q}(K))$ for most $\frac{p}{q}$ by checking that the exact triangles of Proposition~\ref{prop:two-triangles} are split. The main result of this section, Theorem \ref{thm:rational-surgeries}, follows from the two preliminary results below.

\begin{lemma} \label{lem:compute-1/n}
If $\cinvt(K) \leq 0$, then we have
\[ \dim I^\#(S^3_{p/q}(K)) = p + q \cdot \delta(K) \]
for any $\frac{p}{q}$ of the form $\frac{1}{n}$ with $n \geq 1$.  If $\cinvt(K) \leq -1$, then the same holds for $\frac{p}{q}=\frac{0}{1}$ and all $\frac{p}{q} = \frac{-1}{n}$ with $n \geq 1$ as well.
\end{lemma}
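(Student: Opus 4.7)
The approach is induction on $n \geq 1$ using the surgery exact triangles of Proposition~\ref{prop:two-triangles}. The base case $n = 1$ is immediate from Theorem~\ref{thm:conc-invt}: since $\cinvt(K) \leq 0 < 1$,
\[ \dim I^\#(S^3_1(K)) = r_0(K) + (1 - \cinvt(K)) = 1 + \delta(K). \]

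For the inductive step $n \geq 2$, apply Proposition~\ref{prop:two-triangles} to $p/q = 1/n$. The continued fraction $1/n = [1, 2, \dots, 2]$ (with $n - 1$ twos) yields $(a,b) = (0,1)$, $(c,d) = (1, n-1)$, and $(e,f) = (1, n-2)$, with the convention $1/0 = \infty$ (i.e., $S^3$) when $n=2$. This produces exact triangles
\[ I^\#(S^3_0(K)) \to I^\#(S^3_{1/n}(K)) \to I^\#(S^3_{1/(n-1)}(K)) \xrightarrow{F} I^\#(S^3_0(K)), \]
\[ I^\#(S^3_{1/(n-1)}(K)) \to I^\#(S^3_{1/(n-2)}(K)) \to I^\#(S^3_0(K)) \xrightarrow{G} I^\#(S^3_{1/(n-1)}(K)), \]
in which $G \circ F = 0$ (since $1/n \neq -1/q$). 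By the inductive hypothesis, $\dim I^\#(S^3_{1/(n-1)}(K)) = 1 + (n-1)\delta(K)$ and $\dim I^\#(S^3_{1/(n-2)}(K)) = 1 + (n-2)\delta(K)$ (matching $\dim I^\#(S^3) = 1$ at $n = 2$); and Theorem~\ref{thm:conc-invt} supplies $\dim I^\#(S^3_0(K)) = r_0(K) - \cinvt(K) = \delta(K)$ whenever $K$ is V-shaped. The standard alternating-sum identity applied to the second triangle then forces $\operatorname{rank}(G) = \delta(K) = \dim I^\#(S^3_0(K))$, so $G$ is injective; combined with $G \circ F = 0$, this gives $F = 0$. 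Consequently the first triangle splits into a short exact sequence and
\[ \dim I^\#(S^3_{1/n}(K)) = \delta(K) + \bigl(1 + (n-1)\delta(K)\bigr) = 1 + n\delta(K), \]
as desired.

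For the second claim, with $\cinvt(K) \leq -1$, the slope $p/q = 0/1$ is covered by Theorem~\ref{thm:conc-invt} directly (since $K$ is then V-shaped), yielding $\dim I^\#(S^3_0(K)) = \delta(K) = 0 + 1 \cdot \delta(K)$. The slopes $p/q = -1/n$ are treated by a symmetric induction using Proposition~\ref{prop:two-triangles} applied to $-1/n = [0,n]$; there the roles of the two connecting maps are interchanged and the relevant vanishing becomes $F \circ G = 0$ (since $-1/n \neq 1/q$), which by the dual argument forces the analogous $F$ to vanish and the analogous triangle to split, producing $\dim I^\#(S^3_{-1/n}(K)) = -1 + n\delta(K)$.

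The main obstacle is arranging $\dim I^\#(S^3_0(K)) = \delta(K)$ so that $F$ is forced to vanish. This is automatic for V-shaped $K$, hence immediate whenever $\cinvt(K) < 0$ or $K$ is V-shaped with $\cinvt(K) = 0$. The remaining subcase of W-shaped $K$ with $\cinvt(K) = 0$ is more delicate, since there $\dim I^\#(S^3_0(K),0) = \delta(K) + 2$ and the argument above bounds $\operatorname{rank}(F) \leq 1$ without pinning it to the value $1$ required by the formula; handling this subcase requires either choosing a suitable meridian multicurve on $S^3_0(K)$ in the exact triangles (so that the effective dimension becomes $\delta(K)$ again) or a separate verification that $\operatorname{rank}(F) = 1$, after which the same induction delivers $1 + n\delta(K)$ uniformly.
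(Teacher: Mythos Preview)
Your approach is essentially the paper's: induct on $n$ via the pair of triangles from Proposition~\ref{prop:two-triangles}, use the second triangle plus the inductive hypothesis to control $G$, combine with $G\circ F=0$ (or $F\circ G=0$) to force $F=0$, and split the first triangle.  For the $1/n$ case your diagnosis of the W-shaped subcase is exactly right, and your suggested fix---replace $I^\#(S^3_0(K),0)$ by $I^\#(S^3_0(K),\mu)$ so that the $0$-surgery group has dimension $\delta(K)$---is precisely what the paper does from the outset; one just needs to note that the proof of $G\circ F=0$ in Proposition~\ref{prop:two-triangles} is insensitive to the bundle.

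There is, however, a genuine gap in the $-1/n$ case at $n=2$.  Here $\cinvt(K)\leq -1$, so $K$ is V-shaped and $\dim I^\#(S^3_0(K))=\delta(K)$; but the second triangle now involves $e/f=1/0$, and a dimension count gives
\[
\operatorname{rank}(G)=\tfrac{1}{2}\bigl(\delta(K)+(\delta(K)-1)-1\bigr)=\delta(K)-1,
\]
so $G$ is \emph{injective} rather than surjective.  Thus $F\circ G=0$ does not by itself force $F=0$: the image of $G$ misses one dimension of $I^\#(S^3_0(K))$, and you have no control of $F$ there.  The paper resolves this by placing the meridian bundle $\mu$ on $S^3_0(K)$ in the second triangle (while keeping the trivial bundle in the first) and then arguing, exactly as in the proof of Proposition~\ref{prop:unimodal-rank}, that the composite
\[
I^\#(S^3_0(K),0)\xrightarrow{F_2} I^\#(S^3_{-1}(K))\xrightarrow{G_2} I^\#(S^3_0(K),\mu)
\]
vanishes because the relevant cobordism contains a $2$-sphere of self-intersection zero on which the $SO(3)$-bundle is nontrivial.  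Since $\dim I^\#(S^3_0(K),\mu)=\dim I^\#(S^3_0(K),0)=\delta(K)$ (Proposition~\ref{prop:unimodal-rank}, as $\cinvt(K)\neq 0$), the same dimension count shows $G_2$ is injective, and now $G_2\circ F_2=0$ gives $F_2=0$.  Once the $n=2$ case is in hand, your induction for $n\geq 3$ goes through as written (there $G$ really is surjective and $F\circ G=0$ suffices).
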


\begin{proof}
We take $\lambda \in H_1(S^3_0(K);\Z/2\Z)$ to be $0$ if $K$ is V-shaped and the class $[\mu]$ of a meridian if $K$ is W-shaped.  If $K$ is V-shaped then
\begin{align*}
\dim I^\#(S^3_0(K)) &= \dim I^\#(S^3_{\cinvt(K)}(K)) + |\cinvt(K)|\\
&= r_0(K) - \cinvt(K) = \delta(K)
\end{align*}
by \eqref{eq:r_0} and the assumption that $\cinvt(K) \leq 0$, whereas if $K$ is W-shaped then $\cinvt(K)=0$ and
\[ \delta(K) = r_0(K) - \cinvt(K) = \dim I^\#(S^3_0(K),\mu). \]
Thus $\delta(K) = \dim I^\#(S^3_0(K),\lambda)$ in both cases.  In particular, if $\cinvt(K) < 0$ then we have
\[ \dim I^\#(S^3_{0/1}(K)) = 0 + 1 \cdot \delta(K), \]
so from now on we can assume that $\frac{p}{q} \neq 0$.

For the case $\frac{p}{q} = \frac{1}{n}$ with $n \geq 1$, we first observe that $\frac{1}{n} = [1,2,\dots,2]$, as a continued fraction of total length $n$.  Thus if $\frac{p}{q}=\frac{1}{n}$ with $n \geq 2$, then Proposition~\ref{prop:two-triangles} gives us a pair of exact triangles
\begin{align*}
\dots \to I^\#(S^3_{0/1}(K),\lambda) &\to I^\#(S^3_{1/n}(K)) \to I^\#(S^3_{1/(n-1)}(K)) \xrightarrow{F_n} \dots \\
\dots \to I^\#(S^3_{1/(n-1)}(K)) &\to I^\#(S^3_{1/(n-2)}(K)) \to I^\#(S^3_{0/1}(K),\lambda) \xrightarrow{G_n} \dots
\end{align*}
with $G_n \circ F_n = 0$.  (The possibility that $\lambda \neq 0$ does not change anything, since the proof that $G_n \circ F_n = 0$ is insensitive to the choice of bundles over the cobordisms.)

We claim that $G_n$ is injective and $F_n = 0$ for $n \geq 2$; it suffices to prove that each $G_n$ is injective, since then $G_n \circ F_n = 0$ implies that $F_n = 0$.  When $n=2$, we have
\[ \dim I^\#(S^3_1(K)) = \dim I^\#(S^3_0(K),\lambda) + \dim I^\#(S^3_\infty(K)) = \delta(K) + 1 \] (since $\cinvt(K)\leq 0$),
so $G_2$ is injective.  Now if $G_n$ is injective for some $n \geq 2$ then we have $F_n=0$, hence
\[ \dim I^\#(S^3_{1/n}(K)) = \dim I^\#(S^3_{1/(n-1)}(K)) + \dim I^\#(S^3_0(K),\lambda), \]
and this guarantees by exactness that $G_{n+1}$ must be injective as well.  The claim follows by induction, and similarly this last equation implies by induction that
\begin{align*}
\dim I^\#(S^3_{1/n}(K)) &= \dim I^\#(S^3_1(K)) + (n-1)\cdot \dim I^\#(S^3_0(K),\lambda) \\
&= (\delta(K)+1) + (n-1) \cdot\delta(K) = 1 + n\cdot \delta(K)
\end{align*}
for all $n \geq 1$, proving the lemma for all $\frac{p}{q} = \frac{1}{n}$ with $n \geq 1$.

Assuming now that $\cinvt(K) \leq -1$ and hence that $\lambda=0$, the case $\frac{p}{q} = -\frac{1}{n}$ with $n \geq 2$ is similar.  Here we have $-\frac{1}{n} = [0,n]$, so the exact triangles from Proposition~\ref{prop:two-triangles} have the form
\begin{align*}
\dots \to I^\#(S^3_{-1/(n-1)}(K)) \to I^\#(S^3_{-1/n}(K)) &\to I^\#(S^3_{0/1}(K)) \xrightarrow{F_n} \dots \\
\dots \to I^\#(S^3_{0/1}(K),\mu) \to I^\#(S^3_{-1/(n-2)}(K)) &\to I^\#(S^3_{-1/(n-1)}(K)) \xrightarrow{G_n} \dots
\end{align*}
with $F_n \circ G_n = 0$; note that this time we have explicitly placed different bundles on $S^3_0(K)$ in each triangle, but that $\dim I^\#(S^3_0(K),\mu) = \dim I^\#(S^3_0(K),0)$ since $\cinvt(K) \neq 0$, by Proposition \ref{prop:unimodal-rank}.  Now the composition
\[ I^\#(S^3_{0/1}(K)) \xrightarrow{F_2} I^\#(S^3_{-1}(K)) \xrightarrow{G_2} I^\#(S^3_{0/1}(K),\mu) \]
is zero because it comes from a cobordism with an embedded 2-sphere of self-intersection zero on which the associated $SO(3)$-bundle is nontrivial, exactly as in the proof of Proposition~\ref{prop:unimodal-rank}.  From $\cinvt(K) \leq -1$ we deduce that
\[ \dim I^\#(S^3_0(K)) = \dim I^\#(S^3_{-1}(K)) + \dim I^\#(S^3_\infty(K)), \]
so $G_2$ is injective and thus $F_2$ must be zero.  Now if $F_n$ is zero then by exactness $G_{n+1}$ must be surjective, so we use the fact that $F_{n+1}\circ G_{n+1} = 0$ to deduce that $F_{n+1}=0$ as well.  We induct to conclude that $F_n = 0$ for all $n \geq 2$, and then that
\begin{align*}
\dim I^\#(S^3_{-1/n}(K)) &= (n-1)\dim I^\#(S^3_0(K)) + \dim I^\#(S^3_{-1}(K)) \\
&= (n-1)\delta(K) + (\delta(K) - 1),
\end{align*}
which is equal to $-1 + n\cdot\delta(K)$, as desired.
\end{proof}

\begin{proposition} \label{prop:rational-rank-above-nu}
Fix a rational number $\frac{p}{q} \geq \cinvt(K)$, with $p$ and $q$ relatively prime and $q \geq 1$.  Then
\[ \dim I^\#(S^3_{p/q}(K)) = p + q\cdot\delta(K), \]
except if $\frac{p}{q}=\frac{0}{1}$ and $K$ is W-shaped, in which case we have $\dim I^\#(S^3_0(K),\mu) = \delta(K)$ instead.
\end{proposition}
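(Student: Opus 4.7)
The plan is by strong induction on the denominator $q$.

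The base case $q=1$ is immediate from Theorem~\ref{thm:conc-invt} and~\eqref{eq:delta-expression}: for any integer $p\geq\cinvt(K)$,
\[
\dim I^\#(S^3_p(K)) = r_0(K) + (p-\cinvt(K)) = p+\delta(K),
\]
and at $p=\cinvt(K)=0$ for $K$ W-shaped the definition of $r_0(K)$ in \eqref{eq:r_0} gives $\dim I^\#(S^3_0(K),\mu)=\delta(K)$. For $q\geq 2$, dispose first of the slopes $p/q=\pm 1/q$: since $\cinvt(K)\in\Z$, the hypothesis $p/q\geq\cinvt(K)$ forces $\cinvt(K)\leq 0$ in the former case and $\cinvt(K)\leq -1$ in the latter, so Lemma~\ref{lem:compute-1/n} yields the formula directly.

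For all other $p/q$, apply Proposition~\ref{prop:two-triangles} to obtain auxiliary slopes $a/b$, $c/d$, $e/f$ with $(p,q)=(a+c,b+d)$ and with either $(a,b)=(c+e,d+f)$ or $(c,d)=(a+e,b+f)$. Since $b,d\geq 1$ with $b+d=q$ and $f\leq|b-d|$, all three auxiliary denominators are strictly less than $q$. The inclusion $a/b, c/d\in[\lfloor p/q\rfloor,\lceil p/q\rceil]$, combined with $\lfloor p/q\rfloor\geq\cinvt(K)$ (as $\cinvt(K)\in\Z$), gives $a/b,c/d\geq\cinvt(K)$, and the same holds for $e/f$ except when $e/f=1/0=\infty$, in which case $\dim I^\#(S^3_\infty(K))=1=e+f\delta(K)$. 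By induction we therefore know
\[
\dim I^\#(S^3_{a/b}(K)) = a+b\delta(K), \quad \dim I^\#(S^3_{c/d}(K)) = c+d\delta(K), \quad \dim I^\#(S^3_{e/f}(K)) = e+f\delta(K).
\]

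The remaining task is to show that the connecting map $F$ in the first exact triangle of Proposition~\ref{prop:two-triangles} vanishes; since in any three-periodic exact triangle one has $\dim I^\#(S^3_{p/q}(K)) = \dim I^\#(S^3_{a/b}(K)) + \dim I^\#(S^3_{c/d}(K)) - 2\rank F$, combining this with the inductive values yields $\dim I^\#(S^3_{p/q}(K))=p+q\delta(K)$. To force $F=0$, use the second exact triangle: in the sub-case $(a,b)=(c+e,d+f)$, a rank-nullity count based on $\dim I^\#(S^3_{e/f}(K))=(a-c)+(b-d)\delta(K)$ shows $\rank G=\dim I^\#(S^3_{c/d}(K))$, so $G$ is surjective; combined with $F\circ G=0$ (which holds since $p/q\neq 1/q$), this forces $F=0$. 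In the alternative sub-case $(c,d)=(a+e,b+f)$, the analogous count makes $G$ injective, and $G\circ F=0$ (valid since $p/q\neq -1/q$) again gives $F=0$.

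The main obstacle is bookkeeping of bundles on any $0$-surgery that appears among the auxiliary slopes when $K$ is W-shaped: the inductive hypothesis at $0/1$ delivers $\dim I^\#(S^3_0(K),\mu)=\delta(K)$ rather than $\dim I^\#(S^3_0(K),0)$, so one must verify that the multicurve data underlying the exact triangles of Proposition~\ref{prop:two-triangles} can be arranged to use the meridional bundle on the relevant $0$-surgery terms. Subject to that, the remainder of the argument is purely a dimension count.
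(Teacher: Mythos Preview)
Your proof follows essentially the same approach as the paper's: induction on $q$, base case via the integer formula, the cases $p/q=\pm 1/q$ via Lemma~\ref{lem:compute-1/n}, and the inductive step via the two exact triangles of Proposition~\ref{prop:two-triangles}, using the second triangle to force $G$ injective or surjective and then one of the relations $F\circ G=0$, $G\circ F=0$ to kill $F$.

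The bundle bookkeeping you flag at the end is the only point where your write-up is incomplete, and the paper resolves it by fiat: it simply declares at the outset that ``$I^\#(S^3_0(K))$'' is to be read as $I^\#(S^3_0(K),\mu)$ whenever $K$ is W-shaped. The justification for why this is harmless already appeared in the proof of Lemma~\ref{lem:compute-1/n}: the argument that $F\circ G=0$ (and likewise $G\circ F=0$) in Proposition~\ref{prop:two-triangles} goes through a $2$-sphere of self-intersection zero and is insensitive to which bundle you place on the cobordism, so you are free to run the exact triangles with the meridional bundle on any $0$-surgery term. With that convention in place your argument is complete and matches the paper's.
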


\begin{proof}
Throughout this proof we will always interpret ``$I^\#(S^3_0(K))$'' as $I^\#(S^3_0(K),\mu)$ if $K$ is W-shaped, and $I^\#(S^3_0(K),0)$ otherwise.

In the case $q=1$, we recall from the definition of $\delta(K)$ that
\[ \dim I^\#(S^3_p(K)) = (\delta(K) + \cinvt(K)) + |p - \cinvt(K)| = p + \delta(K), \]
since by assumption $p \geq \cinvt(K)$.  We also note that the desired formula holds for $\frac{p}{q}=\frac{1}{0}$, since $\dim I^\#(S^3) = 1$; and if $p=\pm 1$ then the formula follows from Lemma~\ref{lem:compute-1/n}.

Now suppose that $q \geq 2$ and $|p| \geq 2$, and that we have proved the proposition for all $\frac{r}{s}$ with $1 \leq s < q$.  We consider the pair of exact triangles
\begin{align*}
\dots \to I^\#(S^3_{a/b}(K)) &\to I^\#(S^3_{p/q}(K)) \to I^\#(S^3_{c/d}(K)) \xrightarrow{F} \dots \\
\dots \to I^\#(S^3_{c/d}(K)) &\to I^\#(S^3_{e/f}(K)) \to I^\#(S^3_{a/b}(K)) \xrightarrow{G} \dots
\end{align*}
provided by Proposition~\ref{prop:two-triangles}; since $\frac{p}{q} \neq \pm\frac{1}{q}$ we have $F\circ G = G\circ F = 0$.  We note that $\frac{a}{b}$ and $\frac{c}{d}$ are both at least $\lfloor\frac{p}{q}\rfloor \geq \cinvt(K)$, and that $\frac{e}{f}$ satisfies the same inequality unless $\frac{e}{f} = \frac{1}{0}$.

We first claim that the map $G$ is either injective or surjective.  Since $\frac{a}{b}$, $\frac{c}{d}$, and $\frac{e}{f}$ are all at least $\cinvt(K)$ or equal to $\frac{1}{0}$, and since $0 \leq b,d,f < q$, we already know by hypothesis that
\begin{align*}
\dim I^\#(S^3_{c/d}(K)) &= c+d\cdot\delta(K), \\
\dim I^\#(S^3_{e/f}(K)) &= e+f\cdot\delta(K), \\
\dim I^\#(S^3_{a/b}(K)) &= a+b\cdot\delta(K).
\end{align*}
If $(a,b) = (c+e,d+f)$ then it follows that
\[ \dim I^\#(S^3_{a/b}(K)) = \dim I^\#(S^3_{c/d}(K)) + \dim I^\#(S^3_{e/f}(K)) \]
and so the map $G$ must be surjective.  Otherwise we have $(c,d) = (a+e,b+f)$, and then $G$ is injective.

If $G$ is surjective, then $F \circ G = 0$ implies that $F = 0$.  Otherwise we use $G \circ F = 0$ and the injectivity of $G$ to conclude that $F = 0$.  In any case, the first exact triangle above splits and we have
\begin{align*}
\dim I^\#(S^3_{p/q}(K)) &= \dim I^\#(S^3_{a/b}(K)) + \dim I^\#(S^3_{c/d}(K)) \\
&= (a+b\cdot\delta(K)) + (c+d\cdot\delta(K)) \\
&= p + q\cdot\delta(K),
\end{align*}
the last row making use of the facts that $p=a+c$ and $q=b+d$.  Thus the proposition holds for $\frac{p}{q}$, and so it follows in general by induction on $q$.
\end{proof}

We now prove the main theorem of this section, which implies Theorem \ref{thm:main-surgery}:

\begin{theorem} \label{thm:rational-surgeries}
Let $p$ and $q$ be relatively prime integers with $q \geq 1$.  If $K$ is W-shaped, then suppose in addition that $p\neq 0$.  Then
\[ \dim I^\#(S^3_{p/q}(K)) = q\cdot r_0(K) + |p-q\cinvt(K)|, \]
where $r_0(K)$ is the integer defined in \eqref{eq:r_0}.
\end{theorem}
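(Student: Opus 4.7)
The statement is essentially a consequence of Proposition~\ref{prop:rational-rank-above-nu} together with the orientation-reversal symmetry $S^3_{p/q}(K) \cong -S^3_{-p/q}(\mirror{K})$. My plan is to split into two cases according to the sign of $p - q\cinvt(K)$ and use mirror symmetry to reduce the negative case to the positive one.

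In the first case, assume $p/q \geq \cinvt(K)$, so that $p - q\cinvt(K) \geq 0$. Apply Proposition~\ref{prop:rational-rank-above-nu} directly: unless $(p,q)=(0,1)$ and $K$ is W-shaped (which is precisely the case excluded by the hypothesis), we get
\[ \dim I^\#(S^3_{p/q}(K)) = p + q\cdot\delta(K). \]
Using $\delta(K) = r_0(K) - \cinvt(K)$ from \eqref{eq:delta-expression}, the right-hand side equals
\[ q\cdot r_0(K) + (p - q\cinvt(K)) = q\cdot r_0(K) + |p - q\cinvt(K)|, \]
as desired.

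In the second case, assume $p/q < \cinvt(K)$. Here I would use that $\dim I^\#$ is invariant under orientation reversal (because $I^\#(-Y)$ is the $\C$-linear dual of $I^\#(Y)$), together with the diffeomorphism $S^3_{p/q}(K) \cong -S^3_{-p/q}(\mirror{K})$. Since $\cinvt(\mirror{K})=-\cinvt(K)$ by Theorem~\ref{thm:conc-invt} and $r_0(\mirror{K})=r_0(K)$ by Definition~\ref{def:shape}, the slope $-p/q$ for $\mirror{K}$ satisfies $-p/q > \cinvt(\mirror{K})$, so the first case applies to $\mirror{K}$ and gives
\[ \dim I^\#(S^3_{p/q}(K)) = \dim I^\#(S^3_{-p/q}(\mirror{K})) = q\cdot r_0(K) + |{-p} - q\cinvt(\mirror{K})|, \]
and the absolute value equals $|p - q\cinvt(K)|$ by the same sign relations.

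The only real obstacle is bookkeeping around the excluded case: I must check that the one edge case where Proposition~\ref{prop:rational-rank-above-nu} fails does not sneak into the second case. It would arise only if $(-p,q)=(0,1)$ and $\mirror{K}$ is W-shaped, i.e.\ $p=0$ and $\cinvt(K)=0$ with $K$ W-shaped, but this is precisely the configuration excluded by the hypothesis of the theorem. Once this is verified, the two cases together cover all admissible $(p,q)$ and the formula holds in every case.
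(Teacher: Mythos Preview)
Your proof is correct and follows essentially the same approach as the paper: both split into the cases $p/q \geq \cinvt(K)$ and $p/q < \cinvt(K)$, invoke Proposition~\ref{prop:rational-rank-above-nu} in the first, and reduce the second to the first via the mirror identity $S^3_{p/q}(K)\cong -S^3_{-p/q}(\mirror{K})$. Your version is marginally slicker in that you invoke $r_0(\mirror{K})=r_0(K)$ directly from Definition~\ref{def:shape}, whereas the paper reaches the same conclusion by computing $\delta(K)-\delta(\mirror{K})=-2\cinvt(K)$.
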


\begin{proof}
Equation~\eqref{eq:delta-expression} says that $r_0(K) = \cinvt(K) + \delta(K)$.  Thus if $\frac{p}{q} \geq \cinvt(K)$ then we wish to show that
\[ \dim I^\#(S^3_{p/q}(K)) = p+q\cdot\delta(K), \]
which is already the content of Proposition~\ref{prop:rational-rank-above-nu}.  We may therefore assume from now on that $\frac{p}{q} < \cinvt(K)$, in which case we must prove that
\[ \dim I^\#(S^3_{p/q}(K)) = (-p + q\cdot\delta(K)) + 2q\cdot\cinvt(K). \]

Since $-\frac{p}{q} > -\cinvt(K) = \cinvt(\mirror{K})$, we apply Proposition~\ref{prop:rational-rank-above-nu} to $-\frac{p}{q}$-surgery on $\mirror{K}$ to get
\[ \dim I^\#(S^3_{p/q}(K)) = \dim I^\#(S^3_{-p/q}(\mirror{K})) = -p + q\cdot\delta(\mirror{K}). \]
Using the definition of $\delta$, we compute that
\begin{align*}
\delta(K) - \delta(\mirror{K}) &= \left(\dim I^\#(S^3_1(K)) - \cinvt(K) - |1-\cinvt(K)|\right) \\&\qquad{} - \left(\dim I^\#(S^3_{-1}(\mirror{K})) - \cinvt(\mirror{K}) - |{-}1-\cinvt(\mirror{K})|\right) \\
&= \left(\cinvt(\mirror{K}) - \cinvt(K)\right) + \left(|{-1}-\cinvt(\mirror{K})| - |1-\cinvt(K)|\right) \\
&= -2\cinvt(K),
\end{align*}
so we conclude that
\[ \dim I^\#(S^3_{p/q}(K)) = -p + q\left(\delta(K) + 2\cinvt(K)\right). \]
This simplifies to the desired expression.
\end{proof}

In Example~\ref{ex:unknot-plot} we only needed the fact that $\dim I^\#(S^3) = 1$ to show that the unknot satisfies $\cinvt(U) = r_0(U) = 0$.  In conjunction with Theorem~\ref{thm:rational-surgeries}, this implies the following.

\begin{corollary} \label{cor:lens-spaces}
For any relatively prime integers $p > q \geq 1$ we have
\[ \dim I^\#(L(p,q)) = \dim I^\#(S^3_{p/q}(U)) = p. \]
In other words, all lens spaces are instanton L-spaces.
\end{corollary}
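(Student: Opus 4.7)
The plan is to apply Theorem~\ref{thm:rational-surgeries} directly to the unknot $U$, after first pinning down the values of $\cinvt(U)$ and $r_0(U)$. Since $S^3_{p/q}(U) = L(p,q)$ by a standard slam-dunk (or simply by the definition of lens spaces as surgeries on the unknot), the content of the corollary is the formula $\dim I^\#(S^3_{p/q}(U)) = p$.

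First I would determine $\cinvt(U)$ and $r_0(U)$ using only the input $\dim I^\# (S^3) = 1$. Since $(\pm 1)$-surgery on the unknot is $S^3$, we have
\[ \dim I^\#(S^3_{1}(U)) = \dim I^\#(S^3_{-1}(U)) = 1. \]
By Theorem~\ref{thm:conc-invt}, for any nonzero integer $n$,
\[ \dim I^\#(S^3_n(U)) = r_0(U) + |n - \cinvt(U)|. \]
Plugging in $n = \pm 1$ gives $|1 - \cinvt(U)| = |-1 - \cinvt(U)|$, forcing $\cinvt(U) = 0$, and then $r_0(U) = 0$. (This is consistent with Example~\ref{ex:unknot-plot}, which shows $U$ is in fact W-shaped.)

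Now I would invoke Theorem~\ref{thm:rational-surgeries} with $K = U$: for relatively prime integers $p,q$ with $q \geq 1$, and with $p \neq 0$ to accommodate the W-shaped caveat, we obtain
\[ \dim I^\#(S^3_{p/q}(U)) = q \cdot r_0(U) + |p - q\cinvt(U)| = q \cdot 0 + |p - 0| = |p|. \]
Since we assume $p > q \geq 1$, in particular $p > 0$, so $|p| = p$, yielding the claimed equality $\dim I^\#(L(p,q)) = p$.

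There is no serious obstacle here: once one unpacks the definition of $\cinvt(U)$ and $r_0(U)$ from the $(\pm 1)$-surgery computations, everything reduces to a direct application of Theorem~\ref{thm:rational-surgeries}. The only point meriting a brief mention is the W-shaped exception of that theorem, which is precisely why one needs $p \neq 0$; here this is guaranteed by the hypothesis $p > q \geq 1$.
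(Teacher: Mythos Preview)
Your proof is correct and follows essentially the same approach as the paper: you compute $\cinvt(U)=r_0(U)=0$ from $\dim I^\#(S^3_{\pm 1}(U))=\dim I^\#(S^3)=1$ (the paper packages this in Example~\ref{ex:unknot-plot}) and then apply Theorem~\ref{thm:rational-surgeries}. The only cosmetic difference is that the paper gets $\cinvt(U)=0$ from amphichirality rather than from equating the $n=\pm 1$ dimensions, but the content is identical.
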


We can now complete the proof of Theorem \ref{thm:nu-l-space}, effectively  begun in Proposition \ref{prop:nu-l-space}.

\begin{proof}[Proof of Theorem \ref{thm:nu-l-space}]
Suppose $K\subset S^3$ is an instanton L-space knot. Proposition \ref{prop:nu-l-space} says that \[\cinvt(K)=r_0(K)=2g(K)-1.\] Since $K$ is nontrivial by definition, $\cinvt(K)>0$, which means that $K$ is V-shaped. Therefore, Theorem \ref{thm:rational-surgeries} says that \[ \dim I^\#(S^3_{p/q}(K)) = q(2g(K)-1) + |p-q(2g(K)-1)| \] for all relatively prime integers $p$ and $q$ with $q\geq 1$, which immediately yields the expression claimed in Theorem \ref{thm:nu-l-space}. \end{proof}

We now turn to the proof of Theorem \ref{thm:l-space-cable}, regarding when cables are instanton L-space knots.
Let $K_{p,q}$ denote the $(p,q)$-cable of $K$, where $q \geq 2$ and $\gcd(p,q)=1$; this is the knot represented by a curve in $\partial N(K)$ in the homotopy class $\mu^p\lambda^q$.  We have
\[ g(K_{p,q}) = \frac{(|p|-1)(q-1)}{2} + q\cdot g(K) \]
\cite[\S21, Satz 1]{schubert}, or equivalently
\begin{equation} \label{eq:cable-genus}
2g(K_{p,q})-1 = |p|q + q\left(2g(K)-1 - \frac{|p|}{q}\right).
\end{equation}

\begin{lemma} \label{lem:cinvt-cable}
If $\cinvt(K) < \frac{p}{q}$ then $\cinvt(K_{p,q}) \leq pq-1$.
\end{lemma}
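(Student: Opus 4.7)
The plan is to show $N(K_{p,q}) \leq pq-1$, whence $\cinvt(K_{p,q}) \leq N(K_{p,q}) \leq pq-1$ since $N(\mirror{K_{p,q}}) \geq 0$ by definition.  The strategy rests on the classical reducibility $S^3_{pq}(K_{p,q}) \cong S^3_{p/q}(K) \# L(q,p)$ (assuming $p>0$; the case $p<0$ is handled analogously after mirroring), coming from the cable annulus $A = T^2 \setminus \nu(K_{p,q}) \subset \partial N(K)$: its two boundary components on $\partial N(K_{p,q})$ lie in the class $\lambda_{K_{p,q}} + pq\mu_{K_{p,q}}$, so they bound meridian disks in the new solid torus glued in by $pq$-surgery, producing a reducing $2$-sphere.

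Combining this reducibility with the Künneth formula for $I^\#$, the lens space calculation $\dim I^\#(L(q,p)) = q$ from Corollary \ref{cor:lens-spaces}, and Theorem \ref{thm:rational-surgeries} applied to $p/q$-surgery on $K$ (where the hypothesis $\cinvt(K) < p/q$ ensures $|p - q\cinvt(K)| = p - q\cinvt(K)$), we obtain
\[ \dim I^\#(S^3_{pq}(K_{p,q})) = q \cdot (p + q\delta(K)), \]
where $\delta(K) = r_0(K) - \cinvt(K) \geq 0$.  Pushing the reducing sphere slightly into the interior of the $2$-handle cobordism $X_{pq}(K_{p,q}):S^3 \to S^3_{pq}(K_{p,q})$ produces a smoothly embedded $2$-sphere of self-intersection zero that intersects the capped-off Seifert surface of $K_{p,q}$ with algebraic intersection number $q \neq 0$.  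Proposition \ref{prop:adjunction-inequality} then forces $F_{pq}^{K_{p,q}} = 0$, giving the weaker bound $\cinvt(K_{p,q}) \leq pq$.

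To sharpen this bound by one, I would analyze the surgery exact triangle
\[ I^\#(S^3) \xrightarrow{F_{pq-1}^{K_{p,q}}} I^\#(S^3_{pq-1}(K_{p,q})) \to I^\#(S^3_{pq}(K_{p,q})) \xrightarrow{G_{pq}} I^\#(S^3), \]
in which exactness and $\dim I^\#(S^3) = 1$ imply that $F_{pq-1}^{K_{p,q}} = 0$ if and only if $G_{pq}$ is surjective (equivalently, nonzero).  The cobordism inducing $G_{pq}$ is a $0$-framed $2$-handle attached along a meridian $\mu$ of $K_{p,q}$ in $S^3_{pq}(K_{p,q})$.  This meridian meets the reducing sphere transversely in two points and generates $H_1(S^3_{pq}(K_{p,q})) \cong \Z/p \oplus \Z/q$; performing handle slides across the reducing sphere, one should be able to split the cobordism (possibly up to blowups) in a form compatible with the Künneth decomposition, after which the nontriviality of $G_{pq}$ reduces to the nonvanishing of a corresponding map in the rational surgery triangle for the unknot that produces $L(q,p)$, which follows from Corollary \ref{cor:lens-spaces} and the integer surgery exact triangles for $U$.

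The main obstacle is carrying out the Künneth-compatible decomposition of $G_{pq}$ rigorously; because $\mu$ crosses the reducing sphere, the cobordism does not split directly along the connect sum, and one must use handle slides or blowups to make the splitting apparent.  Once the map factors in this way, its nonvanishing follows, yielding $F_{pq-1}^{K_{p,q}} = 0$ and thus $\cinvt(K_{p,q}) \leq pq - 1$, as desired.
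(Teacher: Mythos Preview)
Your argument has a genuine error in step 3 and, as you acknowledge, a gap in step 4; together these leave the proof incomplete.

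For step 3: the reducing sphere $S$, once pushed into $X_{pq}(K_{p,q})$, is \emph{nullhomologous}.  Indeed, $S$ lives in the boundary component $S^3_{pq}(K_{p,q})$, which is a rational homology sphere, so $[S]=0$ in $H_2(S^3_{pq}(K_{p,q}))$ and hence in $H_2(X_{pq}(K_{p,q}))$.  In particular $[S]\cdot[\hat\Sigma]=0$ for every closed surface $\hat\Sigma$, so Proposition~\ref{prop:adjunction-inequality} does not apply.  (Your claimed intersection number $q$ may be a geometric count, but the algebraic count is zero.)  It is true that $F_{pq}=0$, but this needs a different justification.

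The paper avoids both difficulties by using Gordon's identities
\[
S^3_{pq-1}(K_{p,q}) \cong S^3_{(pq-1)/q^2}(K), \qquad
S^3_{pq+1}(K_{p,q}) \cong S^3_{(pq+1)/q^2}(K),
\]
rather than the reducible $pq$-surgery.  Since $\cinvt(K)<p/q$ forces $(pq\pm1)/q^2>\cinvt(K)$, Theorem~\ref{thm:rational-surgeries} computes both dimensions directly, giving
\[
\dim I^\#(S^3_{pq+1}(K_{p,q})) - \dim I^\#(S^3_{pq-1}(K_{p,q})) = 2.
\]
Comparing with the formula $\dim I^\#(S^3_n(K_{p,q})) = r_0(K_{p,q}) + |n-\cinvt(K_{p,q})|$ immediately forces $\cinvt(K_{p,q})\leq pq-1$.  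This sidesteps any analysis of cobordism maps or K\"unneth compatibility, and you should use it instead.
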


\begin{proof}
We use the identities
\begin{align*}
S^3_{(pq-1)/q^2}(K) &\cong S^3_{pq-1}(K_{p,q}), &
S^3_{(pq+1)/q^2}(K) &\cong S^3_{pq+1}(K_{p,q})
\end{align*}
from \cite[Corollary~7.3]{gordon}.  If $\cinvt(K) < \frac{p}{q}$ then
\[ \frac{pq \pm 1}{q^2} > \frac{p-1}{q} \geq \cinvt(K), \]
so that $pq \pm 1 \geq q^2\cinvt(K)$, and hence by Theorem~\ref{thm:rational-surgeries} we have
\begin{align*}
\dim I^\#(S^3_{(pq-1)/q^2}(K)) &= q^2r_0(K) + \big((pq-1) - q^2\cinvt(K)\big) \\
\dim I^\#(S^3_{(pq+1)/q^2}(K)) &= q^2r_0(K) + \big((pq+1) - q^2\cinvt(K)\big).
\end{align*}
Since $\dim I^\#(S^3_{pq+1}(K_{p,q})) = \dim I^\#(S^3_{pq-1}(K_{p,q})) + 2$, we must have $\cinvt(K_{p,q}) \leq pq-1$.
%
\end{proof}

\begin{proposition} \label{prop:lspace-cabling-slope}
If $K_{p,q}$ is an instanton L-space knot, then $\frac{p}{q} > 2g(K)-1$.
\end{proposition}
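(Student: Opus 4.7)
The plan is to assume $K_{p,q}$ is an instanton L-space knot and split into two cases according to whether $\cinvt(K) < p/q$ or $\cinvt(K) > p/q$; equality is impossible since $\cinvt(K)$ is an integer while $p/q$ is not. By Proposition~\ref{prop:nu-l-space} the L-space hypothesis gives $\cinvt(K_{p,q}) = 2g(K_{p,q})-1 > 0$.

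In the case $\cinvt(K) < p/q$, I would invoke Lemma~\ref{lem:cinvt-cable} to get $\cinvt(K_{p,q}) \leq pq-1$; combined with $\cinvt(K_{p,q}) = 2g(K_{p,q})-1$ and the genus formula \eqref{eq:cable-genus}, a short rearrangement yields $p/q > 2g(K)-1$ directly (and incidentally forces $p > 0$, since otherwise $pq-1 < 0 \leq \cinvt(K_{p,q})$ would already contradict the lemma).

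The case $\cinvt(K) > p/q$ is the substance of the argument, and my plan is to rule it out by comparing dimensions through the classical identifications $S^3_{(pq\pm 1)/q^2}(K) \cong S^3_{pq\pm 1}(K_{p,q})$. Since $\cinvt(K) > p/q$ with $\cinvt(K) \in \Z$ and $p/q \notin \Z$, we have $q\cinvt(K) \geq p+1$, and hence $q^2\cinvt(K) \geq pq+q > pq+1$, so both slopes $(pq\pm 1)/q^2$ lie strictly below $\cinvt(K)$ and Theorem~\ref{thm:rational-surgeries} gives
\[ \dim I^\#\bigl(S^3_{(pq\pm 1)/q^2}(K)\bigr) = q^2 r_0(K) + q^2\cinvt(K) - (pq \pm 1). \]
Subtracting shows that the two dimensions on the $K_{p,q}$ side differ by $-2$, which by Theorem~\ref{thm:nu-l-space} is only possible when $pq+1 \leq 2g(K_{p,q})-1$; the individual $(pq+1)$-equation then collapses to
\[ q^2\bigl(r_0(K) + \cinvt(K)\bigr) = 2\bigl(2g(K_{p,q})-1\bigr) = 2|p|q - 2|p| + 2q(2g(K)-1), \]
using \eqref{eq:cable-genus}. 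Reducing modulo $q$ gives $q \mid 2|p|$, so $q = 2$ since $\gcd(|p|,q) = 1$ and $q \geq 2$; substituting $q=2$ and dividing by $2$ yields $2(r_0(K)+\cinvt(K)) = |p| + 2(2g(K)-1)$, whose left side is even while the right side has the parity of the odd number $|p|$, a contradiction. I expect this divisibility-plus-parity endgame to be the main obstacle, since it relies on choosing precisely the two surgeries $pq \pm 1$ so that the resulting identity admits this clean modular argument.
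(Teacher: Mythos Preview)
Your proof is correct and follows essentially the same approach as the paper: both arrive at the identity $q^2(r_0(K)+\cinvt(K)) = 2|p|(q-1) + 2q(2g(K)-1)$ and finish with the identical divisibility/parity contradiction. The only organizational difference is that you split into cases on $\cinvt(K)$ versus $p/q$ and use both $(pq\pm 1)$-surgeries (the difference pins down the regime $pq+1 \le 2g(K_{p,q})-1$), whereas the paper assumes $p/q < 2g(K)-1$ directly, reads off that regime from the genus formula, and works with the single $(pq-1)$-surgery; the substance is the same.
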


\begin{proof}
Supposing otherwise, we have $\frac{p}{q} < 2g(K)-1$; equality cannot hold since we have assumed $q \geq 2$.  Since $K_{p,q}$ is an instanton L-space knot, we know that $\cinvt(K_{p,q}) = r_0(K_{p,q}) = 2g(K_{p,q})-1$.  Now $pq < \cinvt(K_{p,q})$ holds automatically if $p<0$, and it is implied by \eqref{eq:cable-genus} if $p>0$, so $pq-1 < \cinvt(K_{p,q})$ as well and we have
\begin{align*}
\dim I^\#(S^3_{(pq-1)/q^2}(K)) &= \dim I^\#(S^3_{pq-1}(K_{p,q})) \\
&= (2g(K_{p,q})-1) + \big((2g(K_{p,q})-1) - (pq-1)\big) \\
&= 2|p|q - pq + 2q(2g(K)-1) - 2|p| + 1
\end{align*}
by Theorem~\ref{thm:rational-surgeries}.  On the other hand, if $\frac{p}{q} > \cinvt(K)$ then by Lemma~\ref{lem:cinvt-cable} we would have $\cinvt(K_{p,q}) < pq$, and we have seen that this is false, so in fact $\frac{pq-1}{q^2} < \frac{p}{q} < \cinvt(K)$; and now by Theorem~\ref{thm:rational-surgeries} we have
\begin{align*}
\dim I^\#(S^3_{(pq-1)/q^2}(K)) &= q^2r_0(K) + \big(q^2\cinvt(K) - (pq-1)\big) \\
&= q^2(r_0(K) + \cinvt(K)) - pq + 1.
\end{align*}
Combining these two expressions for $\dim I^\#(S^3_{(pq-1)/q^2}(K))$, we have
\[ q^2(r_0(K) + \cinvt(K)) - pq + 1 = 2|p|q - pq + 2q(2g(K)-1) - 2|p| + 1, \]
which after some rearranging becomes
\[ q^2(r_0(K) + \cinvt(K)) = 2|p|(q-1) + 2q(2g(K)-1). \]
Reducing modulo $q$ gives $2|p| \equiv 0 \pmod{q}$, and since $\gcd(p,q)=1$ and $q\geq 2$ this implies that $q=2$; then we have $4(r_0(K)+\cinvt(K)) = 2|p| + 4(2g(K)-1)$, and we reduce this modulo $4$ to see that $p$ is even as well, contradicting $\gcd(p,q)=1$.  We conclude that $\frac{p}{q} > 2g(K)-1$, as claimed.
\end{proof}

We may now prove Theorem \ref{thm:l-space-cable}:

\begin{proof}[Proof of Theorem \ref{thm:l-space-cable}]
If $K_{p,q}$ is an instanton L-space knot, then Proposition~\ref{prop:lspace-cabling-slope} says that $\frac{p}{q} > 2g(K)-1$ and so
\[ pq > 2g(K_{p,q})-1 = \cinvt(K_{p,q}) \]
by \eqref{eq:cable-genus}.  It follows that $S^3_{pq+1}(K_{p,q})$ is an instanton L-space, since $pq+1 \geq \cinvt(K_{p,q})$ as well, and then
\[ S^3_{(pq+1)/q^2}(K) \cong S^3_{pq+1}(K_{p,q}) \]
is an instanton L-space surgery of positive rational slope on $K$.  Thus
\[ pq+1 = q^2 r_0(K) + |(pq+1) - q^2\cinvt(K)| \]
by Theorem~\ref{thm:rational-surgeries}.  If $pq+1 \leq q^2\cinvt(K)$ then this becomes $2(pq+1) = q^2(r_0(K)+\cinvt(K))$, and since $\gcd(pq+1,q^2)=1$ we must have $q^2 \mid 2$, which is impossible; thus $pq+1 > q^2\cinvt(K)$ and this equation simplifies to $r_0(K) = \cinvt(K)$, hence $K$ has positive integral instanton L-space surgeries.

In the other direction, if $K$ is an instanton L-space knot and $\frac{p}{q} > 2g(K)-1$ then
\[ \frac{pq+1}{q^2} > \frac{p}{q} > \cinvt(K) \]
and so $S^3_{(pq+1)/q^2}(K)$ is an instanton L-space.  This is homeomorphic to $S^3_{pq+1}(K_{p,q})$, which is therefore a positive integral instanton L-space surgery on $K_{p,q}$.
\end{proof}

\section{The concordance invariant $\cinvt$ is a quasi-morphism} \label{sec:quasi-morphism}

The goal of this section is to prove the following theorem and derive some consequences, one of which will be our definition of the concordance homomorphism $\chominvt$.

\begin{theorem} \label{thm:nu-quasi-hom}
The smooth concordance invariant $\cinvt$ satisfies
\[ |\cinvt(K\#L) - \cinvt(K) - \cinvt(L)| \leq 1 \]
for all knots $K,L \subset S^3$.
\end{theorem}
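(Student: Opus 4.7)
The plan is to combine a cobordism factorization with the concordance invariance of $\cinvt$ (Theorem~\ref{thm:conc-invt}) and the shape constraint of Proposition~\ref{prop:unimodal-rank}. The central tool is the following Kirby calculus identity: for any integers $n,m$, the boundary connected sum $X_n(K) \natural X_m(L)$, viewed as a cobordism $S^3 \to S^3_n(K) \# S^3_m(L)$, is obtained from $X_{n+m}(K\#L)$ by attaching a single additional $2$-handle $h$. Indeed, in the Kirby diagram for the boundary connected sum the two $2$-handles are attached along a split link $K \sqcup L$ with framings $n$ and $m$; sliding the $K$-handle over the $L$-handle along a band connecting the two components replaces the diagram by $2$-handles attached along $K\#L$ (framing $n+m$, since the linking number is zero) and along $L$ (framing $m$), and attaching the first of these $2$-handles alone produces $X_{n+m}(K\#L)$.

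Combining this identity with Scaduto's connected sum formula
\[ I^\#(Y_1 \# Y_2;\C) \;\cong\; I^\#(Y_1;\C) \otimes I^\#(Y_2;\C) \]
and the analogous tensor product formula for boundary-connect-sum cobordisms produces, for suitable choices of $\nu$-surfaces, an identity of cobordism maps
\[ F_{K,n} \otimes F_{L,m} \;=\; \pm\, I^\#(h) \circ F_{K\#L,\,n+m} \]
(under the standard identification $I^\#(S^3 \# S^3) \cong I^\#(S^3)$ on the source). Choosing $n = N(K)-1$ and $m = N(L)-1$ makes both factors on the left nonzero by definition of $N(\cdot)$; since we work over $\C$ their tensor product is nonzero, so the right-hand side is nonzero, forcing $F_{K\#L,\,n+m} \neq 0$. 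This gives $N(K\#L) \geq N(K) + N(L) - 1$, and the same argument applied to the mirrors yields $N(\mirror{K\#L}) \geq N(\mirror{K}) + N(\mirror{L}) - 1$.

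To convert these $N$-inequalities into the quasi-morphism bound, I would exploit that $L\#\mirror{L}$ is smoothly slice, so $K\#L\#\mirror{L}$ is smoothly concordant to $K$. Applying the lower bound to the pair $(K\#L, \mirror{L})$ and invoking the concordance invariance of $N$ gives
\[ N(K) \;=\; N(K\#L\#\mirror{L}) \;\geq\; N(K\#L) + N(\mirror{L}) - 1, \]
which rearranges to the upper bound $N(K\#L) \leq N(K) - N(\mirror{L}) + 1$, with a symmetric upper bound for $N(\mirror{K\#L})$. Combining the two lower bounds, the two upper bounds, and the shape constraint $\min(N(X), N(\mirror{X})) \leq 1$ from Proposition~\ref{prop:unimodal-rank}, a short case analysis on the shape (V-shape with positive, zero, or negative $\cinvt$, or W-shape) of each of $K$ and $L$ yields $|\cinvt(K\#L) - \cinvt(K) - \cinvt(L)| \leq 1$. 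The main obstacle is this final case analysis: borderline configurations such as $\cinvt(K) = \cinvt(L) = 1$ with $K\#L$ possibly W-shaped are formally allowed by the raw $N$-inequalities but would violate the bound, so ruling them out requires careful use of the shape constraints, possibly in tandem with the factorization identity applied at several choices of $(n,m)$.
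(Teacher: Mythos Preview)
Your approach is the same as the paper's at its core: the factorization $F_{K,n}\otimes F_{L,m}$ through $F_{K\#L,n+m}$ via the boundary connected sum, together with the concordance trick using $L\#\mirror{L}$, is exactly Lemma~\ref{lem:connected-sum-surgery} and the argument of Lemma~\ref{lem:cinvt-plus-w}. The differences are organizational, and your version has two genuine gaps.

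First, your inequality $N(K\#L)\geq N(K)+N(L)-1$ is only derived when $N(K),N(L)\geq 1$: if $N(K)=0$ then $F_{K,N(K)-1}=F_{K,-1}$ need not be nonzero (for a V-shaped knot with $\cinvt(K)<0$ it vanishes). The paper sidesteps this by working with $\cinvt$ rather than $N$: for V-shaped $K$ one has $F_{K,n}$ injective precisely for $n<\cinvt(K)$, so one applies the factorization at $n=\cinvt(K)-1$, $m=\cinvt(L)-1$ to get $\cinvt(K\#L)\geq \cinvt(K)+\cinvt(L)-1$ directly, with no parity issue. Second, the case analysis you flag as the obstacle is resolved in the paper by first isolating the W-shaped case as Lemma~\ref{lem:cinvt-plus-w}: if $L$ is W-shaped then $F_{L,0}$ is injective, and the factorization at $m=0$ plus the concordance trick gives $\cinvt(K\#L)=\cinvt(K)$ exactly. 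With that lemma in hand, one reduces to the case where $K$, $L$, and $K\#L$ are all V-shaped (if $K\#L$ alone is W-shaped, apply the lemma to $K\cong (K\#L)\#\mirror{L}$), and then the $\cinvt$-inequality above and its mirror finish the proof. Your anticipated ``factorization identity applied at several choices of $(n,m)$'' is precisely this use of $(n,m)=(\,\cdot\,,0)$ for the W-shaped summand.
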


Theorem~\ref{thm:conc-invt} tells us that if $K$ is not W-shaped then
\begin{equation} \label{eq:cinvt-rank}
\dim I^\#(S^3_n(K)) = \dim I^\#(S^3_{\cinvt(K)}(K)) + |\cinvt(K) - n|
\end{equation}
for all $n\in\Z$.  In these cases the 2-handle cobordism map
\[ F_{K,n} = I^\#(X_n(K),\nu_n): I^\#(S^3) \to I^\#(S^3_n(K)) \]
is injective for all $n < \cinvt(K)$ and zero for all $n \geq \cinvt(K)$.

If instead $K$ is W-shaped, then we have $\cinvt(K)=0$, and \eqref{eq:cinvt-rank} fails because
\[ \dim I^\#(S^3_{\pm1}(K)) = \dim I^\#(S^3_0(K)) - 1. \]
In this case $F_{K,n}$ is injective for all $n \leq -2$ and $n=0$, and zero otherwise.

\begin{lemma} \label{lem:connected-sum-surgery}
Let $K,L \subset S^3$ be knots, and fix $a,b\in\Z$.  If $F_{K,a}$ and $F_{L,b}$ are both injective, then so is $F_{K\#L,a+b}$.  Equivalently, if $F_{K\#L,a+b} = 0$, then either $F_{K,a}=0$ or $F_{L,b} = 0$.
\end{lemma}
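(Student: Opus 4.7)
The plan is to exhibit the cobordism map $F_{K\#L,a+b}$ as a factor of the tensor product $F_{K,a}\otimes F_{L,b}$, so that nonvanishing of both factors (equivalent to injectivity, since $I^\#(S^3)\cong\C$) forces that of $F_{K\#L,a+b}$.  Place $K$ and $L$ in disjoint $3$-balls of $S^3$ and form $K\#L$ via a band sum joining them.  Consider the boundary-connected-sum cobordism
$$ W := X_a(K)\natural X_b(L) : S^3 \to S^3_a(K)\#S^3_b(L), $$
built from $S^3\times[0,1]$ by attaching two disjoint $2$-handles along $K$ and $L$ with framings $a$ and $b$.  Sliding the $L$-handle over the $K$-handle along the band replaces its attaching circle with $K\#L$, now framed $a+b$, which exhibits
$$ W \;=\; X_{a+b}(K\#L)\cup V, $$
where $V:S^3_{a+b}(K\#L)\to S^3_a(K)\#S^3_b(L)$ is a single $2$-handle cobordism attached along a push-off of $K$ in $S^3\setminus N(K\#L)$ with framing $a$.

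The next step is to equip these cobordisms with compatible surface decorations.  Let $\Sigma_a\subset X_a(K)$ and $\Sigma_b\subset X_b(L)$ be the capped Seifert surfaces for $K$ and $L$, so that $\Sigma_a\natural\Sigma_b$ is a capped Seifert surface for $K\#L$ inside $X_{a+b}(K\#L)\subset W$.  Choose surfaces $\nu_a,\nu_b$ defining $F_{K,a}$ and $F_{L,b}$, with mod $2$ intersections $a$ and $b$ against $\Sigma_a$ and $\Sigma_b$ as in \eqref{eq:nu-parity}.  Then $\nu_a\cup\nu_b\subset W$ pairs mod $2$ to $a+b$ against $\Sigma_a\natural\Sigma_b$, so this class restricts to a valid $\nu_{a+b}$ in $X_{a+b}(K\#L)$, with the remainder decorating $V$.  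Applying Scaduto's K\"unneth isomorphism
$$ I^\#(S^3_a(K)\#S^3_b(L)) \;\cong\; I^\#(S^3_a(K))\otimes_\C I^\#(S^3_b(L)) $$
together with the multiplicativity of $I^\#$ under boundary-connected-sum of cobordisms decorated by disjoint surfaces, we obtain that $I^\#(W)$ corresponds under this identification to $\pm(F_{K,a}\otimes F_{L,b})$.  The decomposition $W=X_{a+b}(K\#L)\cup V$ then yields
$$ I^\#(V)\circ F_{K\#L,a+b} \;=\; \pm(F_{K,a}\otimes F_{L,b}). $$
If both $F_{K,a}$ and $F_{L,b}$ are nonzero, so is their tensor product, forcing $F_{K\#L,a+b}$ to be nonzero, which is the contrapositive formulation of the lemma.

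The main obstacle is the topological verification underlying the decomposition $W=X_{a+b}(K\#L)\cup V$: carrying out the handle slide cleanly, producing a suitable push-off of $K$ disjoint from $N(K\#L)$, and checking that the mod $2$ homology classes of the surface decorations match across the interface $S^3_{a+b}(K\#L)$.  Once these details are in place, the Floer-theoretic step is a standard application of the connected sum formula for framed instanton homology over $\C$.
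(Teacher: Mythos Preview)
Your proof is correct and follows essentially the same approach as the paper: both arguments exhibit an embedding $X_{a+b}(K\#L)\hookrightarrow X_a(K)\natural X_b(L)$ differing by a single $2$-handle, verify that the surface decorations $\nu_{K,a}\sqcup\nu_{L,b}$ restrict appropriately via intersection numbers with the capped Seifert surfaces, and then use the K\"unneth formula for split cobordisms to conclude that $F_{K,a}\otimes F_{L,b}$ factors through $F_{K\#L,a+b}$.  The only cosmetic difference is that the paper presents the embedding via a Kirby-calculus picture involving an auxiliary $1$-handle (dotted circle) rather than a direct handle slide, but the underlying decomposition is the same.
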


\begin{proof}
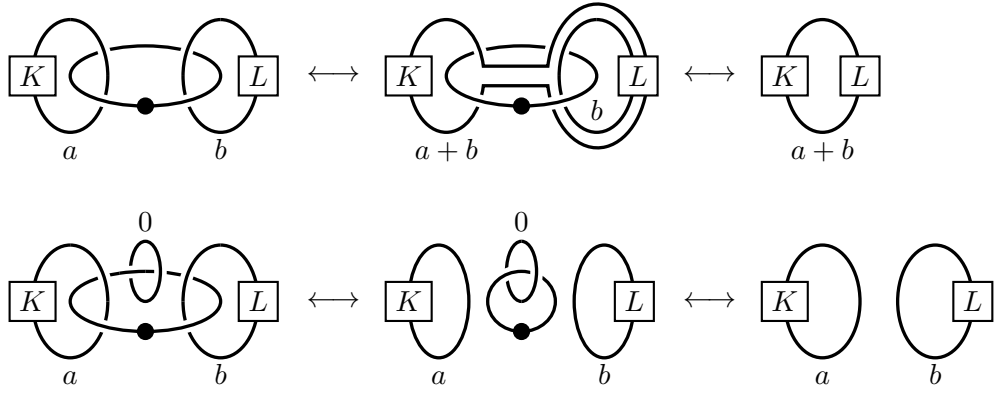
\begin{figure}
\begin{tikzpicture}
\begin{scope}
\coordinate (K1) at (0,0);
\coordinate (L1) at (3,0);
\draw[link] (K1) arc (180:-180:0.5 and 0.75) node[midway, inner sep=0,outer sep=0] (K1m) {} node[black,pos=0.75,inner sep=0,below] {$\strut a$};
\node[draw,fill=white,rectangle] at (K1) {$K$};
\draw[link] (L1) arc (0:360:0.5 and 0.75) node[midway, inner sep=0,outer sep=0] (L1m) {} node[black,pos=0.75,inner sep=0, below] {$\strut b$};
\node[draw,fill=white,rectangle] at (L1) {$L$};
\draw[link] let \p{KL} = ($(L1m)-(K1m)$) in ($(K1m)!0.5!(L1m)$) ellipse ({\x{KL}} and 0.4cm);
\node[draw,fill=black,circle,minimum size=0.2cm, inner sep=0] at ($(K1m)!0.5!(L1m) + (0,-0.4)$) {};
\begin{scope}
\clip (K1) rectangle ($(L1)+(0,1)$);
\draw[link] (K1m) arc (0:90: 0.5 and 0.75);
\draw[link] (L1m) arc (180:90: 0.5 and 0.75);
\end{scope}
\end{scope}
\node at (4,0) {$\longleftrightarrow$};
\begin{scope}[xshift=5cm]
\coordinate (K1) at (0,0);
\coordinate (L1) at (3,0);
\path[link] (K1) arc (180:-180:0.5 and 0.75) node[midway, inner sep=0,outer sep=0] (K1m) {} node[black,pos=0.75,inner sep=0,below] {$\strut a+b$};
\draw[link] (L1) arc (0:360:0.5 and 0.75) node[midway, inner sep=0,outer sep=0] (L1m) {} node[black,pos=0.75,inner sep=0, above] {$\strut b$};
\draw[link] (K1) arc (180:350:0.5 and 0.75) coordinate (K1b) -- ($(K1b -| L1m)+(-0.15,0)$) arc(190:360:0.67 and 1);
\draw[link] let \p{KL} = ($(L1m)-(K1m)$) in ($(K1m)!0.5!(L1m)$) ellipse ({\x{KL}} and 0.4cm);
\node[draw,fill=black,circle,minimum size=0.2cm, inner sep=0] at ($(K1m)!0.5!(L1m) + (0,-0.4)$) {};
\draw[link] (K1) arc (180:10:0.5 and 0.75) coordinate (K1t) -- ($(K1t -| L1m)+(-0.15,0)$) arc(170:0:0.67 and 1);
\node[draw,fill=white,rectangle] at (K1) {$K$};
\node[draw,fill=white,rectangle] at ($(L1)+(0.05,0)$) {$L$};
\begin{scope}
\clip (K1) rectangle ($(L1)+(0,1)$);
\draw[link] (L1m) arc (180:90: 0.5 and 0.75);
\end{scope}
\end{scope}
\node at (9,0) {$\longleftrightarrow$};
\begin{scope}[xshift=10cm]
\coordinate (K1) at (0,0);
\draw[link] (K1) arc (180:-180:0.5 and 0.75) node[midway] (L1) {} node[black,pos=0.75,inner sep=0,below] {$\strut a+b$};
\node[draw,fill=white,rectangle] at (K1) {$K$};
\node[draw,fill=white,rectangle] at (L1) {$L$};
\end{scope}

\begin{scope}[yshift=-3cm]
\coordinate (K1) at (0,0);
\coordinate (L1) at (3,0);
\draw[link] (K1) arc (180:-180:0.5 and 0.75) node[midway, inner sep=0,outer sep=0] (K1m) {} node[black,pos=0.75,inner sep=0,below] {$\strut a$};
\node[draw,fill=white,rectangle] at (K1) {$K$};
\draw[link] (L1) arc (0:360:0.5 and 0.75) node[midway, inner sep=0,outer sep=0] (L1m) {} node[black,pos=0.75,inner sep=0, below] {$\strut b$};
\node[draw,fill=white,rectangle] at (L1) {$L$};
\draw[link] let \p{KL} = ($(L1m)-(K1m)$) in ($(K1m)!0.5!(L1m)$) ellipse ({\x{KL}} and 0.4cm);
\node[draw,fill=black,circle,minimum size=0.2cm, inner sep=0] at ($(K1m)!0.5!(L1m) + (0,-0.4)$) {};
\coordinate (M) at ($(K1m)!0.5!(L1m) + (0,0.4)$);
\draw[link] (M) ellipse (0.2 and 0.4);
\node[above] at ($(M)+(0,0.4)$) {$0$};
\draw[link] let \p{KL} = ($(L1m)-(K1m)$) in (M) arc (90:110:{\x{KL}} and 0.4cm);
\begin{scope}
\clip (K1) rectangle ($(L1)+(0,1)$);
\draw[link] (K1m) arc (0:90: 0.5 and 0.75);
\draw[link] (L1m) arc (180:90: 0.5 and 0.75);
\end{scope}
\end{scope}
\node at (4,-3) {$\longleftrightarrow$};
\begin{scope}[xshift=5cm,yshift=-3cm]
\coordinate (K1) at (0,0);
\coordinate (L1) at (3,0);
\draw[link] (K1) arc (180:-180:0.4 and 0.75) node[black, pos=0.75, inner sep=0, below] {$\strut a$};
\node[draw,fill=white,rectangle] at (K1) {$K$};
\draw[link] (L1) arc (0:360:0.4 and 0.75) node[black, pos=0.75, inner sep=0, below] {$\strut b$};
\node[draw,fill=white,rectangle] at (L1) {$L$};
\coordinate (M) at ($(K1)!0.5!(L1) + (0,0.4)$);
\draw[link] (M) ellipse (0.2 and 0.4);
\node[above] at ($(M)+(0,0.4)$) {$0$};
\draw[link] let \p{KL} = ($(L1)-(K1)$) in ($(K1)!0.5!(L1)$) ellipse ({\x{KL}*0.15} and 0.4cm);
\node[draw,fill=black,circle,minimum size=0.2cm, inner sep=0] at ($(K1)!0.5!(L1) + (0,-0.4)$) {};
\draw[link] ($(M)-(0,0.4)$) arc (-90:90:0.2 and 0.4);
\end{scope}
\node at (9,-3) {$\longleftrightarrow$};
\begin{scope}[xshift=10cm,yshift=-3cm]
\coordinate (K1) at (0,0);
\coordinate (L1) at (2.5,0);
\draw[link] (K1) arc (180:-180:0.5 and 0.75) node[midway, inner sep=0,outer sep=0] (K1m) {} node[black,pos=0.75,inner sep=0,below] {$\strut a$};
\node[draw,fill=white,rectangle] at (K1) {$K$};
\draw[link] (L1) arc (0:360:0.5 and 0.75) node[midway, inner sep=0,outer sep=0] (L1m) {} node[black,pos=0.75,inner sep=0, below] {$\strut b$};
\node[draw,fill=white,rectangle] at (L1) {$L$};
\end{scope}
\end{tikzpicture}
\caption{Top, a handleslide and a cancellation produces $X_{a+b}(K\#L)$.  Bottom, sliding the $K$ and $L$ handles over the $0$-framed meridian and then cancelling produces $X_a(K) \natural X_b(L)$.  The dotted circles indicate 1-handles, as in \cite[\S5.4]{gompf-stipsicz}.}
\label{fig:cinvt-additive}
\end{figure}
Let $X_n(K): S^3 \to S^3_n(K)$ denote the cobordism built by attaching an $n$-framed $2$-handle to $S^3\times [0,1]$ along $K\times\{1\}$.  Letting ``$\natural$'' denote the boundary connected sum of two compact 4-manifolds with boundary, Figure~\ref{fig:cinvt-additive} gives an embedding
\[ X_{a+b}(K\#L) \hookrightarrow X_a(K) \natural X_b(L) \]
for any integers $a,b$ and knots $K,L \subset S^3$, in which we attach a single 2-handle to the former to obtain the latter.  The usual surface \[\nu_{K\#L,a+b} \subset X_{a+b}(K\#L),\] viewed as a subdomain of $X_a(K) \natural X_b(L)$, is also homologous mod 2 to $\nu_{K,a} \sqcup \nu_{L,b}$, as can be seen by computing its intersection with cocores of the handles along $K$ and $L$ and applying \eqref{eq:nu-parity}.  Thus the map
\[ I^\#(X_a(K) \natural X_b(L), \nu_{K,a} \sqcup \nu_{L,b}) \]
factors through $F_{K\#L,a+b}$.

Since we are working over a field, we have a K{\"u}nneth isomorphism
\[ I^\#(S^3_a(K) \# S^3_b(L)) \xrightarrow{\sim} I^\#(S^3_a(K)) \otimes I^\#(S^3_b(L)) \]
which is natural with respect to split cobordisms \cite[\S 7.7]{scaduto}, in the sense that the diagram
\[ \xymatrix{
I^\#(S^3 \# S^3) \ar[rrrr]^-{I^\#(X_a(K) \natural X_b(L),\nu_{K,a}\sqcup\nu_{K,b})} \ar[d]_{\cong} &&&& I^\#(S^3_a(K) \# S^3_b(L)) \ar[d]^{\cong} \\
I^\#(S^3) \otimes I^\#(S^3) \ar[rrrr]^-{F_{K,a}\otimes F_{L,b}} &&&& I^\#(S^3_a(K)) \otimes I^\#(S^3_b(L))
} \]
commutes.  Since the top arrow factors through $F_{K\#L,a+b}$, it follows that if $F_{K\#L,a+b} = 0$ then either $F_{K,a}=0$ or $F_{L,b}=0$.  Since each of these maps is injective if and only if it is nonzero, we have proved, equivalently, that if $F_{K,a}$ and $F_{L,b}$ are both injective, then so is $F_{K\#L,a+b}$.
\end{proof}

\begin{lemma} \label{lem:cinvt-plus-w}
Let $K$ and $L$ be knots in $S^3$.  If $L$ is W-shaped, then $\cinvt(K\#L) = \cinvt(K)$.
\end{lemma}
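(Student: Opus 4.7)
The plan is to show directly that $\rank F_{K\#L,n} = \rank F_{K,n}$ for every $n\in\Z$.  This immediately forces $N(K\#L) = N(K)$ and, by applying the same argument to $\mirror{K}\#\mirror{L}=\mirror{K\#L}$, also $N(\mirror{K\#L}) = N(\mirror{K})$, whence $\cinvt(K\#L) = \cinvt(K)$.

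For one direction, $L$ being W-shaped means $N(L)=1$, so $F_{L,0}$ is injective.  Lemma~\ref{lem:connected-sum-surgery} applied with $a=n$ and $b=0$ then yields the implication \emph{if $F_{K,n}$ is injective, then so is $F_{K\#L,n}$}.  The apparent obstacle is that Lemma~\ref{lem:connected-sum-surgery} is one-sided, so on its own this only shows $N(K\#L)\geq N(K)$ and cannot rule out $N(K\#L) > N(K)$.

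To close the gap I would exploit mirror symmetry.  The knot $\mirror{L}$ is also W-shaped (since $\cinvt(\mirror{L})=0$ and $N(\mirror{L}) = N(L)=1$ by Proposition~\ref{prop:unimodal-rank}), so the same argument applied to $\mirror{K}\#\mirror{L}$ yields: \emph{if $F_{\mirror{K},m}$ is injective, then so is $F_{\mirror{K\#L},m}$}.  To translate this into a statement about the original maps I would establish the general mirror equivalence
\[
F_{\mirror{X},m}\text{ is injective}\iff F_{X,-m-1}=0
\]
valid for every knot $X\subset S^3$.  This drops out of a short rank count: the surgery triangle \eqref{eq:triangles} together with $\dim I^\#(S^3)=1$ gives $\dim I^\#(S^3_{n+1}(X)) - \dim I^\#(S^3_n(X)) = 1 - 2\rank F_{X,n}$, and combining this with the orientation symmetry $\dim I^\#(S^3_n(X)) = \dim I^\#(S^3_{-n}(\mirror{X}))$ immediately gives the stated equivalence.

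Feeding this equivalence into the mirror implication converts it into \emph{if $F_{K,n}=0$ then $F_{K\#L,n}=0$}.  Combined with the first implication this forces $\rank F_{K\#L,n} = \rank F_{K,n}$ for every $n$, finishing the proof.  The only nontrivial ingredient is the mirror equivalence; once that is in hand, the lemma is just a parallel application of Lemma~\ref{lem:connected-sum-surgery} to $K$ and to $\mirror{K}$.
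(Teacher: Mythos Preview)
Your argument is correct and is in fact cleaner than the paper's.  The identity $\rank F_{X,n}+\rank F_{\mirror X,-n-1}=1$, which you extract from the exact triangle and the orientation symmetry $\dim I^\#(S^3_n(X))=\dim I^\#(S^3_{-n}(\mirror X))$, is the decisive observation; it makes the two applications of Lemma~\ref{lem:connected-sum-surgery} (to $(K,L)$ and to $(\mirror K,\mirror L)$) into complementary inequalities on $\rank F_{K\#L,n}$, yielding $\rank F_{K\#L,n}=\rank F_{K,n}$ for every $n$ in one stroke.

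The paper proceeds differently: it never isolates that mirror identity and instead argues case by case according to whether $K$ and $K\#L$ are V-shaped or W-shaped, using the smooth concordance $K\sim (K\#L)\#\mirror L$ to transfer information back and forth.  Your route avoids both the case split and the concordance trick, at the cost of proving one extra elementary fact about the maps $F_{X,n}$.  Either way the only substantive input is Lemma~\ref{lem:connected-sum-surgery} together with the W-shaped hypothesis (which gives $F_{L,0}$ and $F_{\mirror L,0}$ injective); your packaging simply makes the two-sidedness of the conclusion more transparent.
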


\begin{proof}
We will use two observations repeatedly: first, if a knot is W-shaped then so is its mirror image; and second, that
\[ \cinvt(K\#L\#\mirror{L}) = \cinvt(K) \]
since $L\#\mirror{L}$ is smoothly slice and $\cinvt$ is a smooth concordance invariant.

Suppose first that $K$ is W-shaped.  Then $F_{K,0}$ and $F_{L,0}$ are both injective, hence so is $F_{K\#L,0}$ by Lemma~\ref{lem:connected-sum-surgery}, and this implies that $\cinvt(K\#L) \geq 0$ whether or not $K\#L$ is also W-shaped.  On the other hand, both $\mirror{K}$ and $\mirror{L}$ are W-shaped since $K$ and $L$ are, and so the same argument says that
\[ -\cinvt(K\#L) = \cinvt(\mirror{K\#L}) = \cinvt(\mirror{K} \# \mirror{L}) \geq 0, \]
hence we must have $\cinvt(K\#L) = 0$.  This is equal to $\cinvt(K)$ by the assumption that $K$ is W-shaped.

Now we suppose instead that $K$ is not W-shaped but that $K\#L$ is.  Then $\mirror{L}$ is also W-shaped, and so the case we have already proved gives
\[ \cinvt\big((K\#L) \# \mirror{L}\big) = \cinvt(K\#L). \]
We conclude by the second observation above that $\cinvt(K) = \cinvt(K\#L)$.

Finally, we may assume that neither $K$ nor $K\#L$ is W-shaped.  Then $F_{K,a}$ is injective for all $a < \cinvt(K)$, and $F_{L,0}$ is also injective, so $F_{K\#L,a}$ is injective for all $a < \cinvt(K)$ by Lemma~\ref{lem:connected-sum-surgery}.  This implies that $\cinvt(K\#L) \geq \cinvt(K)$.  Now $\mirror{L}$ is again W-shaped but neither $K\#L$ nor $K\#L\#\mirror{L}$ is, the latter because $K\#L\#\mirror{L}$ is concordant to the V-shaped $K$, so the same argument says that
\[ \cinvt(K) = \cinvt\big((K\#L)\#\mirror{L}\big) \geq \cinvt(K\#L) \]
and hence the left and right sides are equal.
\end{proof}

\begin{proof}[Proof of Theorem~\ref{thm:nu-quasi-hom}]
The case where at least one of $K$ and $L$ is W-shaped is Lemma~\ref{lem:cinvt-plus-w}.  Similarly, suppose that neither $K$ nor $L$ is W-shaped, but that $K\#L$ is.  Then we have
\[ \cinvt(K) = \cinvt(\mirror{L}\#(K\#L)) = \cinvt(\mirror{L}) = -\cinvt(L) \]
by Theorem~\ref{thm:conc-invt} and the fact that $K$ is concordant to $K\#(L\#\mirror{L})$, Lemma~\ref{lem:cinvt-plus-w}, and Theorem~\ref{thm:conc-invt} again.  This implies that
\[ \cinvt(K) + \cinvt(L) = 0 = \cinvt(K\#L) \]
by the fact that W-shaped knots have $\cinvt = 0$.  Thus $\cinvt$ is additive as long as at least one of the summands or the connected sum itself is W-shaped.

Finally, we suppose that none of $K$, $L$, and $K\#L$ are W-shaped.  Then $F_{K,\cinvt(K)-1}$ and $F_{L,\cinvt(L)-1}$ are both injective, hence
\[ F_{K\#L, \cinvt(K)+\cinvt(L)-2} \]
is injective as well by Lemma~\ref{lem:connected-sum-surgery}, and this says that
\begin{equation} \label{eq:quasi-bound-1}
\cinvt(K\#L) \geq \cinvt(K) + \cinvt(L) - 1.
\end{equation}
(This last assertion requires $K\#L$ to be V-shaped, so that $F_{K\#L,n}$ is injective precisely when $n < \cinvt(K\#L)$.)  Moreover, none of $\mirror{K}$, $\mirror{L}$, and $\mirror{K\#L} = \mirror{K}\#\mirror{L}$ are W-shaped either, so the same argument gives
\[ \cinvt(\mirror{K\#L}) \geq \cinvt(\mirror{K}) + \cinvt(\mirror{L}) - 1. \]
Using $\cinvt(\mirror{K}) = -\cinvt(K)$ and so on, we rearrange to get
\[ \cinvt(K\#L) \leq \cinvt(K) + \cinvt(L) + 1, \]
which together with \eqref{eq:quasi-bound-1} completes the proof.
\end{proof}

Theorem~\ref{thm:nu-quasi-hom} says that $\cinvt$ defines a \emph{quasi-morphism} from the smooth concordance group $\cC$ to $\Z$.  As with any quasi-morphism of groups, we can define its homogenization, which up to a factor of $\frac{1}{2}$ is
\begin{equation*}\label{eq:tau-def} \chominvt(K) := \frac{1}{2}\lim_{n\to\infty} \frac{\cinvt(nK)}{n}. \end{equation*}
To see that the limit exists, one first proves by induction (using Theorem~\ref{thm:nu-quasi-hom} and the triangle inequality) that
\begin{equation} \label{eq:cinvt-nK}
|\cinvt(nK) - n\cinvt(K)| \leq n-1
\end{equation}
for all integers $n \geq 1$, and then one combines the resulting inequalities
\begin{align*}
|\cinvt(mnK) - m\cinvt(nK)| &\leq m-1, & |n\cinvt(mK) - \cinvt(mnK)| &\leq n-1
\end{align*}
using the triangle inequality and divides the result by $mn$ to deduce that
\[ \left| \frac{\cinvt(mK)}{m} - \frac{\cinvt(nK)}{n} \right| \leq \frac{m+n-2}{mn}. \]
It follows that the sequence $\{\frac{1}{n}\cinvt(nK)\}_{n\geq 1}$ is Cauchy and hence convergent.

\begin{proposition} \label{prop:tau-invariant}
One-half the homogenization of $\cinvt$ defines a group homomorphism
\[ \chominvt: \cC \to \R. \]
It satisfies $|2\chominvt(K)-\cinvt(K)| \leq 1$ and $|\chominvt(K)| \leq g_s(K)$ for all knots $K \subset S^3$.
\end{proposition}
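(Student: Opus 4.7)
The existence of the limit defining $\chominvt$ is already established by the Cauchy argument immediately preceding the proposition, so the task splits into three parts: (i) check that $\chominvt$ descends to a homomorphism on $\cC$, (ii) establish $|2\chominvt(K) - \cinvt(K)| \leq 1$, and (iii) establish $|\chominvt(K)| \leq g_s(K)$.

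For (i), the concordance invariance of $\cinvt$ (Theorem~\ref{thm:conc-invt}) immediately yields that $\chominvt$ is a concordance invariant, since if $K$ is concordant to $K'$ then $\#^n K$ is concordant to $\#^n K'$ for every $n$. To see additivity, I would apply Theorem~\ref{thm:nu-quasi-hom} to $\#^n K$ and $\#^n L$ to obtain
\[ \bigl|\cinvt(\#^n(K\#L)) - \cinvt(\#^n K) - \cinvt(\#^n L)\bigr| \leq 1, \]
divide by $2n$, and take $n\to\infty$ to conclude $\chominvt(K\#L) = \chominvt(K) + \chominvt(L)$. Since $\mirror{K}$ represents the inverse of $K$ in $\cC$ and Theorem~\ref{thm:conc-invt} gives $\cinvt(\mirror{K}) = -\cinvt(K)$, it follows directly from the definition that $\chominvt(\mirror{K}) = -\chominvt(K)$, so $\chominvt: \cC \to \R$ is a genuine homomorphism.

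For (ii), the inequality $|\cinvt(\#^n K) - n\cinvt(K)| \leq n-1$ in \eqref{eq:cinvt-nK} (established just before the proposition by induction using the quasi-morphism property) gives, upon dividing by $n$,
\[ \left| \frac{\cinvt(\#^n K)}{n} - \cinvt(K) \right| \leq \frac{n-1}{n}, \]
and taking $n\to\infty$ yields $|2\chominvt(K) - \cinvt(K)| \leq 1$.

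For (iii), observe that $g_s(\#^n K) \leq n\, g_s(K)$, so Theorem~\ref{thm:conc-invt} supplies
\[ |\cinvt(\#^n K)| \leq \max(2 g_s(\#^n K) - 1,\, 0) \leq \max(2n\, g_s(K) - 1,\, 0). \]
Dividing by $2n$ and letting $n\to\infty$ gives $|\chominvt(K)| \leq g_s(K)$; the case $g_s(K)=0$ is automatic since any slice $K$ has $\cinvt(\#^n K) = 0$ for all $n$ (as $\#^n K$ is also slice). There is no real obstacle here—this is the standard homogenization argument for quasi-morphisms, and every needed inequality for $\cinvt$ has been proved earlier. The only mild subtlety is the bookkeeping in (i) to ensure that the $\pm 1$ error in the quasi-morphism identity washes out in the limit, which is precisely why the division by $n$ (rather than by a bounded quantity) is essential.
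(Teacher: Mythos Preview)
Your proof is correct and follows essentially the same route as the paper's. The only cosmetic difference is in part (iii): the paper combines the already-proved bound $|2\chominvt(K)-\cinvt(K)|\le 1$ with $|\cinvt(K)|\le 2g_s(K)-1$ via the triangle inequality, whereas you apply the slice-genus bound directly to $\#^n K$ and pass to the limit; both arguments are equally short and valid.
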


\begin{proof}
For any $n \geq 1$, Theorem~\ref{thm:nu-quasi-hom} and the fact that $\cC$ is abelian, and in particular that $n(K\#L) \cong nK \# nL$, combine to give
\[ \left| \frac{\cinvt(n(K\#L))}{2n} - \frac{\cinvt(nK)}{2n} - \frac{\cinvt(nL)}{2n} \right| \leq \frac{1}{2n}. \]
Taking limits as $n\to\infty$, it follows easily that $\chominvt(K\#L) = \chominvt(K) + \chominvt(L)$.

The claim that $|2\chominvt(K)-\cinvt(K)|\leq 1$ is proved by dividing \eqref{eq:cinvt-nK} by $n$ and letting $n\to\infty$.  This plus Theorem~\ref{thm:conc-invt} implies the slice genus bound when $K$ is not slice, since then
\begin{equation} \label{eq:nu-plus-tau}
|2\chominvt(K)| \leq |2\chominvt(K)-\cinvt(K)| + |\cinvt(K)| \leq 1 + (2g_s(K)-1) = 2g_s(K).
\end{equation}
But if $K$ is slice, then so is $nK$ for every $n \geq 1$, hence $\chominvt(K) = 0 = g_s(K)$ anyway.
\end{proof}

\begin{corollary} \label{cor:tau-maximal}
If $|\chominvt(K)| = g_s(K) > 0$, then $|\cinvt(K)| = 2g_s(K)-1$ and $\cinvt(K)$ has the same sign as $\chominvt(K)$.
\end{corollary}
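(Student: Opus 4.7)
The plan is to extract both conclusions directly from the two inequalities already in Proposition~\ref{prop:tau-invariant}, namely $|2\chominvt(K) - \cinvt(K)| \leq 1$ and $|\cinvt(K)| \leq \max(2g_s(K)-1, 0)$, together with the hypothesis $|\chominvt(K)| = g_s(K) > 0$. Since $g_s(K) \geq 1$, the slice genus bound simplifies to $|\cinvt(K)| \leq 2g_s(K)-1$, so only a matching lower bound needs to be produced.

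First I would apply the reverse triangle inequality to the quasi-morphism estimate:
\[ 2|\chominvt(K)| - |\cinvt(K)| \leq |2\chominvt(K) - \cinvt(K)| \leq 1, \]
which rearranges to $|\cinvt(K)| \geq 2|\chominvt(K)| - 1 = 2g_s(K) - 1$. Combined with the upper bound $|\cinvt(K)| \leq 2g_s(K)-1$ this forces $|\cinvt(K)| = 2g_s(K) - 1$, giving the first claim.

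For the sign statement, suppose for instance that $\chominvt(K) = g_s(K) > 0$. The first part tells us that $\cinvt(K) = \pm(2g_s(K)-1)$. If we had $\cinvt(K) = -(2g_s(K)-1)$, then
\[ |2\chominvt(K) - \cinvt(K)| = |2g_s(K) + (2g_s(K)-1)| = 4g_s(K) - 1 \geq 3, \]
which contradicts $|2\chominvt(K) - \cinvt(K)| \leq 1$. Hence $\cinvt(K) = 2g_s(K) - 1 > 0$, matching the sign of $\chominvt(K)$. The case $\chominvt(K) = -g_s(K) < 0$ is symmetric (or follows by applying the above to $\mirror{K}$, using $\cinvt(\mirror{K}) = -\cinvt(K)$ and $\chominvt(\mirror{K}) = -\chominvt(K)$, with $g_s(\mirror{K}) = g_s(K)$).

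There is no real obstacle here; the statement is a formal consequence of the two inequalities of Proposition~\ref{prop:tau-invariant} and the fact that under the hypothesis the upper bound $|\cinvt(K)| \leq 2g_s(K)-1$ is sharp. The only subtlety is to use $g_s(K) > 0$ to rule out the degenerate case of the slice genus bound and, in the sign argument, to ensure that the bad sign produces a gap strictly larger than $1$.
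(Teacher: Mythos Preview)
Your proof is correct and essentially the same as the paper's, which also squeezes $|\cinvt(K)|$ between $2g_s(K)-1$ and $2g_s(K)-1$ using the triangle inequality and then rules out the wrong sign via the bound $|2\chominvt(K)-\cinvt(K)|\leq 1$. One small citation point: the inequality $|\cinvt(K)| \leq \max(2g_s(K)-1,0)$ is stated in Theorem~\ref{thm:conc-invt}, not Proposition~\ref{prop:tau-invariant} (though it is used implicitly in the proof of the latter via \eqref{eq:nu-plus-tau}).
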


\begin{proof}
By assumption equality must hold at all points in equation~\eqref{eq:nu-plus-tau}, so in particular $|\cinvt(K)|=2g_s(K)-1$ and $|2\chominvt(K)-\cinvt(K)|=1$.  Since $|2\chominvt(K)| \geq 2$, it follows that $\cinvt(K)$ must have the same sign as $2\chominvt(K)$, and hence as $\chominvt(K)$ itself.
\end{proof}

The reason for the factor of $\frac{1}{2}$ in the definition of $\chominvt$  is so that it behaves  like the $\tau$ invariant in Heegaard Floer homology. In fact, we will see in Section \ref{sec:tools} that these two concordance homomorphisms agree for homogeneous knots and quasipositive knots, as claimed in the introduction.  (See \cite[pp.~76--77]{lewark} for the definition of homogeneous knots.  A knot is quasipositive if for some $n$ it is the closure of a quasipositive $n$-braid, which is defined as a product of conjugates of the positive generators $\sigma_1,\dots,\sigma_{n-1}$ of the braid group $B_n$.)
We conjecture the following:

\begin{conjecture}
$\chominvt(K)=\tau(K)$ for every knot $K\subset S^3$.
\end{conjecture}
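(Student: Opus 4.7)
The natural plan is to exploit Proposition~\ref{prop:tau-tau}, which reduces the conjecture to showing
\[ \dim_\C I^\#(Y;\C) = \dim_\F \hfhat(Y;\F) \]
for every $Y$ obtained by integer Dehn surgery on a knot in $S^3$. This is a weak form of Kronheimer--Mrowka's conjectured isomorphism \eqref{eqn:conj-iso} restricted to integer surgery manifolds, so the main task is to try to establish it in this restricted setting without proving the full isomorphism.

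First, I would replay the constructions of \S\ref{sec:nu-sharp} inside Heegaard Floer homology to define a concordance invariant $\hat\nu(K) := N^{\mathit{HF}}(K) - N^{\mathit{HF}}(\mirror{K})$, where $N^{\mathit{HF}}(K)$ is the least nonnegative integer $n$ such that the $2$-handle cobordism map $\hfhat(S^3;\F) \to \hfhat(S^3_n(K);\F)$ vanishes. All the structural ingredients used to develop $\cinvt$---the surgery exact triangle, the vanishing of $2$-handle compositions attached along meridians (via the Heegaard Floer adjunction inequality in place of Proposition~\ref{prop:adjunction-inequality}), and the K\"unneth factorization through boundary connected sums---are known to hold for $\hfhat$, so the arguments of \S\ref{sec:nu-sharp} and \S\ref{sec:quasi-morphism} carry over to show that $\hat\nu$ is a concordance quasi-morphism with $\frac{1}{2}\hat\nu$ homogenizing to $\tau$ (as already sketched in the proof of Proposition~\ref{prop:mutation}).

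Next, I would try to prove $\cinvt(K)=\hat\nu(K)$ directly by induction on the surgery coefficient, matching the splittings of the surgery exact triangles in the two theories. Both theories have the same Euler characteristic, and in both the triangle $I^\#(S^3) \to I^\#(S^3_n(K)) \to I^\#(S^3_{n+1}(K))$ (and its $\hfhat$ analog) splits precisely when the corresponding $2$-handle cobordism map vanishes. The problem reduces to showing that these vanishing patterns coincide for all knots, equivalently that $N(K) = N^{\mathit{HF}}(K)$ universally. One might then try to exploit the slice-torus property established in Theorem~\ref{thm:slice-torus}, together with the Ghosh--Li--Wong reinterpretation of $\chominvt$, to compare $\cinvt$ with $\hat\nu$ on generators of natural subgroups of $\cC$.

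The hard part will be precisely this last step: matching $N(K)$ with $N^{\mathit{HF}}(K)$ requires input beyond the formal structural properties shared by both Floer theories, since the splittings of the exact triangles carry genuine geometric information that a priori could differ between the two constructions. Absent a proof of the Kronheimer--Mrowka isomorphism, the realistic routes are either (a) to extend Lewark's slice-torus framework past the class of homogeneous knots, or (b) to attack the problem in the negative by computing $\chominvt$ on a knot where $\tau$ and the Rasmussen $s$ invariant diverge, such as the $2$-twisted positive Whitehead double of $T_{2,3}$ flagged after Proposition~\ref{prop:tau-tau}. Either direction would require substantial new computational input, which is why the statement appears here as a conjecture rather than a theorem.
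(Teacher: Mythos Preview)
The statement is a \emph{conjecture}, not a theorem: the paper does not prove it, and explicitly presents it as open. Immediately after stating it, the paper only shows (Proposition~\ref{prop:tau-tau}) that the conjecture would follow from the dimension equality $\dim_\C I^\#(Y;\C) = \dim_\F \hfhat(Y;\F)$ for all integer surgeries on knots in $S^3$, which is itself a consequence of the unproved Kronheimer--Mrowka isomorphism~\eqref{eqn:conj-iso}. There is therefore no ``paper's own proof'' to compare against.

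Your proposal correctly identifies this situation: you reduce to Proposition~\ref{prop:tau-tau}, attempt to match $N(K)$ with its Heegaard Floer analogue, and then acknowledge in your final paragraph that the matching step requires input beyond the shared formal structure of the two theories---which is exactly why the paper leaves it as a conjecture. Your description of the Heegaard Floer side is in line with what the paper actually does in \S\ref{sec:comparison} (defining $\hat\nu$ and showing $\tau$ is half its homogenization). So your proposal is not a proof, but rather an accurate diagnosis of the obstruction, and it agrees with the paper's own assessment.
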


In Section \ref{sec:comparison}, we will show that this conjecture follows from a special case of Kronheimer--Mrowka's conjectured isomorphism relating framed instanton homology and Heegaard Floer homology, proving Proposition \ref{prop:tau-tau}.

\section{Two times $\chominvt$ is a slice-torus invariant} \label{sec:tools}
The main goal of this section is to prove Theorem \ref{thm:slice-torus}, which states that the concordance homomorphism $2\chominvt$ is a slice-torus invariant. Our proof relies on the results of the previous section, together with a $\maxtb$-bound proved using our contact invariant from \cite{bs-instanton}. 

\subsection{Slice-torus invariants} \label{ssec:slice-torus}

Lewark \cite{lewark} defined a \emph{slice-torus} knot invariant to be a smooth concordance homomorphism $\phi: \cC \to \R$ which satisfies:
\begin{itemize}
\item  $\phi(K) \leq 2g_s(K)$ for all knots $K \subset S^3$, and 
\item $\phi(T_{p,q}) = 2g_s(T_{p,q}) = (p-1)(q-1)$
for all positive torus knots $T_{p,q}$.
\end{itemize}  He proved that these properties uniquely determine $\phi(K)$ for a large class of knots $K$.  We  show that they are satisfied by $2\chominvt(K)$, per Theorem \ref{thm:slice-torus}, restated here:

\begin{reptheorem}{thm:slice-torus}
The concordance homomorphism $2\chominvt(K)$ is a slice-torus invariant.
\end{reptheorem}

We have already shown in Proposition~\ref{prop:tau-invariant} that $2\chominvt(K)$ satisfies the required slice genus bound. The sharpness for positive torus knots is stated in Proposition~\ref{prop:torus-tau-bound}. We will prove this in \S\ref{ssec:tb-bound}, thereby completing the proof of Theorem \ref{thm:slice-torus}, by establishing an inequality relating the Thurston--Bennequin number $\maxtb(K)$ to $\cinvt(K)$ for positive torus knots, and indeed for any knot with $\maxtb(K) = 2g(K)-1$ and sufficiently positive genus.

In the meantime, we deduce some of the consequences of Theorem~\ref{thm:slice-torus} described in the introduction.  The first of these is an immediate strengthening of the inequality alluded to above, replacing Thurston--Bennequin number with self-linking number:

\begin{theorem} \label{thm:sl-bound}
Let $\maxsl(K)$ denote the maximum self-linking number $\lsl(\cT)$ over all transverse knots $\cT \subset (S^3,\xi_{\mathrm{std}})$ which are smoothly isotopic to $K$.  Then
\[ \maxsl(K) \leq 2\chominvt(K)-1 \leq \cinvt(K). \]
\end{theorem}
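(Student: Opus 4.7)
The right-hand inequality $2\chominvt(K) - 1 \leq \cinvt(K)$ is immediate from Proposition \ref{prop:tau-invariant}, which gives $|2\chominvt(K) - \cinvt(K)| \leq 1$.

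For the left-hand inequality $\maxsl(K) \leq 2\chominvt(K) - 1$, the plan is to establish the general slice-Bennequin inequality $\maxsl(K) \leq \phi(K) - 1$ for any slice-torus invariant $\phi$, and apply it to $\phi = 2\chominvt$ via Theorem \ref{thm:slice-torus}. Fix a transverse representative $\cT$ of $K$ with $\lsl(\cT) = \maxsl(K) =: M$. By the transverse Markov theorem, realize $\cT$ as the closure of a braid $\beta \in B_n$ with writhe $w(\beta) = M + n$; let $c_-$ denote the number of negative generators in a chosen braid word for $\beta$, and let $\beta^+ \in B_n$ be the positive braid obtained by replacing each $\sigma_i^{-1}$ in that word with $\sigma_i$. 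Then $\beta^+$ has writhe $M + n + 2c_-$, and its closure $K^+ = \widehat{\beta^+}$ is a positive braid closure, and hence a homogeneous knot.

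Since $2\tau$ is also a slice-torus invariant, Lewark's theorem \cite{lewark} that any two slice-torus invariants agree on homogeneous knots gives $2\chominvt(K^+) = 2\tau(K^+)$. Using the classical identity $\tau(K^+) = g_s(K^+) = g(K^+) = \tfrac{1}{2}(M + 2c_- + 1)$ for positive braid closures (the genus formula for positive braids, together with Ozsv{\'a}th--Szab{\'o}'s computation of $\tau$), we conclude $2\chominvt(K^+) = M + 2c_- + 1$. Independently, the $c_-$ crossing changes from $\beta$ to $\beta^+$ yield a smooth cobordism from $K$ to $K^+$ in $S^3 \times [0,1]$ of genus $c_-$, and the standard rotation trick turns this cobordism into a slice surface for $K \# \mirror{K^+}$ of genus at most $c_-$. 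Combining the concordance-homomorphism property $\chominvt(K) - \chominvt(K^+) = \chominvt(K \# \mirror{K^+})$ with the slice-genus bound $|\chominvt(K \# \mirror{K^+})| \leq g_s(K \# \mirror{K^+}) \leq c_-$ from Proposition \ref{prop:tau-invariant} yields
\[
2\chominvt(K) \geq 2\chominvt(K^+) - 2c_- = M + 1,
\]
that is, $\maxsl(K) = M \leq 2\chominvt(K) - 1$, as required.

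The main obstacle is one of bookkeeping rather than depth: one must verify that the identification $2\chominvt(K^+) = M + 2c_- + 1$ does not rely on Corollary \ref{cor:chom-qp} (whose derivation, as noted in the introduction, depends on Theorem \ref{thm:sl-bound} itself). Routing this identity through Lewark's theorem on homogeneous knots and the a priori equality $\tau = g_s$ for positive braid closures avoids any circularity.
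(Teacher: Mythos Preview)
Your proof is correct, but it takes a somewhat different route from the paper's. The paper applies Lewark's Seifert--graph lower bound \cite[Theorem~5]{lewark} directly to $K$: for any braid closure $\hat\beta$ of $K$ with writhe $w$ on $n$ strands, that theorem gives $2\chominvt(K) \geq w - n + 2O^+ - 1$, where $O^+ \geq 1$ counts the connected components of the positive Seifert graph; since $\maxsl(K) = w - n$, the inequality $\maxsl(K) \leq 2\chominvt(K) - 1$ is immediate. Your argument instead passes to the positive braid $K^+$, computes $2\chominvt(K^+)$ via Lewark's consequence that slice-torus invariants agree on homogeneous knots together with the Heegaard Floer computation $\tau(K^+) = g(K^+)$, and then descends to $K$ by a genus-$c_-$ cobordism. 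This is essentially the classical Rudolph--Rasmussen style derivation of slice-Bennequin, and it is valid here; it is more hands-on but introduces an extraneous dependence on $\tau$ that the paper's one-line citation avoids. (You could remove that dependence by observing that Lewark's Theorem~5 itself already gives $2\chominvt(K^+) = 2g(K^+)$ for positive braid closures, since $O^+ = 1$ in that case; at which point your argument collapses into the paper's.) Your care about circularity with Corollary~\ref{cor:chom-qp} is warranted and correctly handled: both the homogeneous-knot comparison and the identity $\tau = g$ for positive braid closures are established independently of the present theorem.
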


\begin{proof}
Let $\beta \in B_n$ denote a braid on $n$ strands whose closure $\hat\beta$ is a transverse representative of $K$, and such that
\[ \lsl(\hat\beta) = \maxsl(K). \]
If $\beta$ has algebraic length $w$, then we know that $\lsl(\hat\beta) = w-n$.  We also define a graph $\Gamma^+(\hat\beta)$ whose vertices are the circles in a Seifert resolution of $\hat\beta$, and which has one edge per positive crossing of $\hat\beta$, connecting the corresponding pair of Seifert circles; let $O^+ \geq 1$ denote the number of connected components of $\Gamma^+(\hat\beta)$.

Lewark \cite[Theorem~5]{lewark} proves that since $2\chominvt$ is a slice-torus invariant, we have
\[ -1 + w - n + 2O^+ \leq 2\chominvt(K), \]
or equivalently
\[ \maxsl(K) = w-n \leq 2\chominvt(K) - (2O^+-1). \]
But $O^+ \geq 1$, so we have $\maxsl(K) \leq 2\chominvt(K) - 1$.  The remaining inequality $2\chominvt(K)-1 \leq \cinvt(K)$ is an immediate consequence of Proposition~\ref{prop:tau-invariant}.
\end{proof}

As described in the introduction, Corollary~\ref{cor:chom-qp}, which states that $\chominvt = g_s$ for quasipositive knots, and even this more general proposition both follow readily.

\begin{proposition} \label{prop:slice-bennequin-positive}
If $\maxsl(K) = 2g_s(K)-1$, then $\chominvt(K) = g_s(K)$.  In particular, if $K$ is a nontrivial positive knot, then $\chominvt(K)=g(K)$ and $\cinvt(K) = 2g(K)-1$.
\end{proposition}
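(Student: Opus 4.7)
The plan is to deduce both assertions by assembling results already in hand: Theorem~\ref{thm:sl-bound}, Proposition~\ref{prop:tau-invariant}, and Corollary~\ref{cor:tau-maximal}. No further Floer-theoretic input is needed; the entire argument is a matter of chaining inequalities.

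First I would dispose of the main statement. Assuming $\maxsl(K) = 2g_s(K) - 1$, apply Theorem~\ref{thm:sl-bound} to obtain
\[ 2g_s(K) - 1 = \maxsl(K) \leq 2\chominvt(K) - 1, \]
so $\chominvt(K) \geq g_s(K)$. Combined with the slice genus bound $|\chominvt(K)| \leq g_s(K)$ supplied by Proposition~\ref{prop:tau-invariant}, this forces $\chominvt(K) = g_s(K)$.

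Next I would handle the "in particular" clause. For any nontrivial positive knot $K$, two classical facts suffice: (i) a positive diagram braids to a positive braid whose transverse closure has self-linking number $2g(K)-1$, while Bennequin's inequality $\maxsl(K) \leq 2g_s(K)-1$ combined with $g_s(K) \leq g(K)$ shows this representative is extremal; (ii) positive knots satisfy $g(K) = g_s(K)$, e.g.\ via Rudolph's theorem that they are (strongly) quasipositive together with the quasipositive slice-Bennequin equality. Together these give $\maxsl(K) = 2g(K)-1 = 2g_s(K)-1$, so the first part of the proposition applies and yields $\chominvt(K) = g_s(K) = g(K)$.

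Finally, to compute $\cinvt(K)$ for positive $K$, I would invoke Corollary~\ref{cor:tau-maximal}: since $K$ is nontrivial and positive, $\chominvt(K) = g(K) > 0$, so the corollary gives $|\cinvt(K)| = 2g_s(K)-1 = 2g(K)-1$ with the sign of $\chominvt(K)$, namely positive, yielding $\cinvt(K) = 2g(K)-1$. There is no real obstacle in this argument — the only step that requires citing external results rather than results from this paper is the identification $\maxsl(K) = 2g(K)-1$ for nontrivial positive knots, which is standard.
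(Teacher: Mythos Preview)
Your proof follows essentially the same route as the paper's: chain Theorem~\ref{thm:sl-bound} with Proposition~\ref{prop:tau-invariant} to pin down $\chominvt(K)$, then invoke Corollary~\ref{cor:tau-maximal} for $\cinvt(K)$. One inaccuracy: your claim (i) that a positive diagram ``braids to a positive braid'' is false in general --- for instance $5_2$ is a positive knot but not a positive braid closure (positive braid closures are fibered, and $5_2$ is not). The paper instead cites Tanaka's theorem that $\maxtb(K)=2g(K)-1=2g_s(K)-1$ for positive knots, then uses $\maxtb\le\maxsl\le 2g(K)-1$. Your route via Rudolph (positive $\Rightarrow$ strongly quasipositive $\Rightarrow$ $\maxsl(K)=2g_s(K)-1$, together with $g=g_s$) is also correct and would suffice on its own, so the slip in (i) does not actually break your argument.
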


\begin{proof}
Theorem~\ref{thm:sl-bound} and Proposition~\ref{prop:tau-invariant} combine to give
\[ 2g_s(K) - 1 = \maxsl(K) \leq 2\chominvt(K)-1 \leq 2g_s(K)-1 \]
and equality must hold throughout.  For positive knots $K$, Tanaka \cite{tanaka} proved that $\maxtb(K) = 2g(K)-1 = 2g_s(K)-1$; since $\maxtb(K) \leq \maxsl(K) \leq 2g(K)-1$, the above implies that $\chominvt(K)=g(K)$, and then Corollary~\ref{cor:tau-maximal} determines the value of $\cinvt(K)$.
\end{proof}

\begin{proof}[Proof of Corollary \ref{cor:chom-qp}]
Quasipositive knots satisfy $\maxsl(K) = 2g_s(K)-1$ \cite{rudolph}, so we apply Proposition~\ref{prop:slice-bennequin-positive}.  See also \cite[Proposition~5.6]{lewark}.
\end{proof}

Likewise, we may prove Corollary~\ref{cor:alternating}, which says that $\chominvt=-\sigma/2$ for alternating knots, with respect to the convention that $\sigma(T_{2,3}) = -2$:

\begin{proof}[Proof of Corollary~\ref{cor:alternating}]
 Since $2\chominvt$ is slice-torus, Lewark proves in \cite[Corollary~5.9]{lewark} that $2\chominvt(K) = -\sigma(K)$ for any alternating knot $K$.  (He states that slice-torus invariants are equal to  signature for alternating knots, but his convention is that $\sigma(T_{2,3}) = +2$.)
\end{proof}

As mentioned in the introduction, another implication of Theorem~\ref{thm:slice-torus} is the following version of Corollary~\ref{cor:homogeneous}, which also justifies Remark~\ref{rmk:tau-tau2}.

\begin{corollary} \label{cor:homogeneous2}
If $K$ is a homogeneous knot then $\chominvt(K)$ is equal to the tau invariant $\tau(K)$ in Heegaard Floer homology.  In particular, $\chominvt = \tau$ for all prime knots through 9 crossings, except possibly for $9_{42}$, $9_{44}$,  $9_{48}$.
\end{corollary}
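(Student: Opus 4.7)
The first statement would follow directly from combining Theorem \ref{thm:slice-torus} with Lewark's result \cite{lewark} that any two slice-torus invariants coincide on every homogeneous knot. Specifically, I would note that $2\chominvt$ is a slice-torus invariant by Theorem \ref{thm:slice-torus} while $2\tau$ is one by a theorem of Livingston \cite{livingston-tau}, so on any homogeneous knot $K$ we get $2\chominvt(K) = 2\tau(K)$, hence $\chominvt(K) = \tau(K)$ after dividing by two. This part is essentially a one-line corollary of the main theorem together with the existing literature.

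For the ``in particular'' claim, my plan is to reduce the non-homogeneous cases to Corollary \ref{cor:chom-qp}. Every alternating knot is homogeneous, so only the non-alternating prime knots through $9$ crossings require attention, which is a small finite list I would enumerate using KnotInfo \cite{knotinfo}. For each non-homogeneous knot $K$ on that list, I would test whether $K$ or $\mirror{K}$ is quasipositive: if so, Corollary \ref{cor:chom-qp} gives $\chominvt(K) = g_s(K)$, and the corresponding Heegaard Floer fact that $\tau = g_s$ for quasipositive knots (due to Hedden) yields $\chominvt(K) = \tau(K)$. Knots that are smoothly slice are trivially handled since both invariants vanish.

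Carrying out this finite check against the tabulated data of homogeneity and quasipositivity in KnotInfo, one finds that the only prime knots with at most $9$ crossings that are simultaneously non-homogeneous, non-quasipositive (for either orientation), and non-slice are precisely $9_{42}$, $9_{44}$, and $9_{48}$. For every other knot in the table the argument above gives $\chominvt = \tau$. The main obstacle is really just the bookkeeping of cross-referencing homogeneity and quasipositivity data for the handful of non-alternating knots; the three exceptional knots elude both the slice-torus machinery and Corollary \ref{cor:chom-qp}, so I would not expect this particular approach to determine $\chominvt$ versus $\tau$ for them.
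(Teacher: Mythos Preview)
Your proposal is correct and follows essentially the same route as the paper: both use Lewark's uniqueness result for slice-torus invariants on homogeneous knots, then dispatch the finitely many non-homogeneous prime knots through 9 crossings by checking quasipositivity of $K$ or $\mirror{K}$. The only cosmetic difference is that the paper cites Cromwell \cite{cromwell} for the explicit list of non-homogeneous knots ($8_{20}$, $8_{21}$, $9_{42}$, $9_{44}$, $9_{45}$, $9_{46}$, $9_{48}$) rather than extracting it from KnotInfo, and your separate remark about slice knots is redundant since the slice cases $8_{20}$ and $9_{46}$ are already covered by quasipositivity.
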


\begin{proof}
Both $2\chominvt$ and $2\tau$ are slice-torus invariants, and \cite[Theorem~5]{lewark} implies (see \cite[p.\ 77]{lewark}) that the values of slice-torus invariants are uniquely determined for homogeneous knots.  Cromwell \cite{cromwell} found that $8_{20}$, $8_{21}$, $9_{42}$, $9_{44}$, $9_{45}$, $9_{46}$, $9_{48}$ are the only non-homogeneous prime knots through 9 crossings. But each of $8_{20}$, $8_{21}$,  $9_{45}$, $9_{46}$ (or their mirrors) is quasipositive, according to KnotInfo \cite{knotinfo}, so $2\chominvt = 2\tau = 2g_s$ for these.
\end{proof}

Finally, we deduce the  following crossing change inequality, which we will use for some computations in \S\ref{sec:ss}. It is stated as \cite[Proposition~5.3]{lewark} but is essentially due to Livingston \cite{livingston-tau}.

\begin{proposition} \label{prop:tau-change}
Suppose that we obtain the knot $K_-$ from a diagram of $K_+$ by changing a single positive crossing to a negative one.  Then
\[ 0 \leq \chominvt(K_+) - \chominvt(K_-) \leq 1. \]
\end{proposition}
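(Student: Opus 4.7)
The plan is to deduce this directly from Theorem \ref{thm:slice-torus}. The main content of the proposition is Lewark's \cite[Proposition~5.3]{lewark}, which asserts that any slice-torus invariant $\phi \colon \cC \to \R$ satisfies $0 \leq \phi(K_+) - \phi(K_-) \leq 2$ whenever $K_-$ is obtained from $K_+$ by changing a single positive crossing to a negative one. Since $2\chominvt$ is a slice-torus invariant by Theorem \ref{thm:slice-torus}, applying this with $\phi = 2\chominvt$ and dividing by $2$ yields exactly the stated bounds.

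For the reader's orientation, the geometric input underlying Lewark's inequality is as follows. The upper bound $\chominvt(K_+) - \chominvt(K_-) \leq 1$ is the easier half: a positive-to-negative crossing change can be realized by a pair of oriented saddle moves through a two-component link, which assembles into a smooth genus-$1$ cobordism from $K_+$ to $K_-$ in $S^3 \times [0,1]$. Equivalently, $K_+ \# \mirror{K_-}$ has slice genus at most $1$, so the slice-genus bound on $\chominvt$ from Proposition \ref{prop:tau-invariant} combined with the homomorphism property gives $|\chominvt(K_+) - \chominvt(K_-)| \leq 1$. The one-sided lower bound $\chominvt(K_+) \geq \chominvt(K_-)$, due originally to Livingston \cite{livingston-tau} in the Heegaard Floer setting, is subtler: it exploits the asymmetric nature of the crossing-change cobordism (for instance, that $K_+$ can be realized as a band sum involving $K_-$ and a positive Hopf band) together with the sharpness of any slice-torus invariant on positive torus knots.

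Since all of these ingredients are already in place once Theorem \ref{thm:slice-torus} is established, there is nothing further to verify here. The only obstacle would have been producing a slice-torus invariant in the framed instanton setting, which is precisely what Theorem \ref{thm:slice-torus} accomplishes; the crossing-change inequality then follows as a formal corollary.
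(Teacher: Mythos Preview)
Your proposal is correct and takes essentially the same approach as the paper: both invoke Theorem~\ref{thm:slice-torus} and then appeal to the crossing-change inequality for slice-torus invariants, which the paper itself attributes to \cite[Proposition~5.3]{lewark} following Livingston. The only difference is cosmetic---the paper spells out Livingston's two genus-one cobordisms (from $K_+$ to $K_-$ and from $K_+ \# T_{-2,3}$ to $K_-$) explicitly rather than citing Lewark's packaging, but your sketch of the geometric input matches this.
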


\begin{proof}
Livingston \cite[Corollary~3]{livingston-tau} exhibits a pair of genus-one cobordisms from $K_+$ to $K_-$ and from $K_+ \# T_{-2,3}$ to $K_-$, so that
\begin{align*}
|\chominvt(K_+) - \chominvt(K_-)| &\leq g_s(K_+\#\mirror{K_-}) \leq 1, \\
|\chominvt(K_+) - \chominvt(T_{2,3}) - \chominvt(K_-)| &\leq g(K_+\#T_{-2,3}\#\mirror{K_-}) \leq 1
\end{align*}
by Proposition~\ref{prop:tau-invariant}.  Since $2\chominvt$ is a slice-torus invariant we have $\chominvt(T_{2,3}) = 1$, so the second inequality implies that $\chominvt(K_+) - \chominvt(K_-) \geq \chominvt(T_{2,3})-1 = 0$, completing the proof.
\end{proof}

\subsection{A weak Thurston--Bennequin bound} \label{ssec:tb-bound}

Let $\maxtb(K)$ denote the maximum Thurston--Bennequin invariant $\ltb(\Lambda)$ among all Legendrian knots $\Lambda \subset (S^3,\xi_{\mathrm{std}})$ which are smoothly isotopic to $K$.  This subsection is devoted to proving the following bound on $\cinvt$. As we shall see, this implies Proposition \ref{prop:slice-bennequin-positive}, which is then used to finish the proof of Theorem \ref{thm:slice-torus}.

\begin{proposition} \label{prop:tb-bound}
Let $K \subset S^3$ be a knot of genus $g(K) \geq 2$ such that $\maxtb(K) = 2g(K)-1$.  Then $\cinvt(K) = 2g(K)-1$.
\end{proposition}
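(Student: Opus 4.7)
The plan is to show that $N(K) = 2g(K)-1$ and then apply Proposition~\ref{prop:unimodal-rank}. Recall that $N(K) \leq \max(2g_s(K)-1,1) \leq 2g(K)-1$ by Proposition~\ref{prop:unimodal-rank-part-1}, so the task reduces to showing that the cobordism map
\[ F_n = I^\#(X_n,\nu_n): I^\#(S^3) \to I^\#(S^3_n(K)) \]
is nonzero (equivalently, injective) for every integer $n$ with $0 \leq n \leq 2g(K)-2$. Once this is established, $N(K) = 2g(K)-1 \geq 3 > 1$, and then Proposition~\ref{prop:unimodal-rank} forces $N(\mirror{K})=0$, yielding $\cinvt(K) = N(K) - N(\mirror{K}) = 2g(K)-1$ as required.

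To produce the nonvanishing of $F_n$, I will exploit the hypothesis $\maxtb(K) = 2g(K)-1$ via the contact invariant of \cite{bs-instanton}. Fix a Legendrian representative $\Lambda_{\max}$ of $K$ with $\ltb(\Lambda_{\max}) = 2g(K)-1$, and stabilize it $(2g(K)-2-n)$ times to obtain a Legendrian representative $\Lambda_n$ with $\ltb(\Lambda_n) = n+1$. Legendrian (i.e.\ contact $(-1)$-) surgery on $\Lambda_n$ yields $S^3_n(K)$ equipped with a Stein-fillable contact structure $\xi_n$, and the trace is a Stein cobordism $W_n: (S^3,\xi_{\mathrm{std}}) \to (S^3_n(K), \xi_n)$ which is smoothly diffeomorphic to $X_n(K)$.

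The functoriality of the contact invariant $\fiinvt$ under Weinstein cobordisms (established in \cite{bs-instanton}, or its extension in \cite{bs-stein}) provides a map
\[ I^\#(\overline{W_n}): I^\#(-S^3_n(K)) \to I^\#(-S^3) \]
carrying $\fiinvt(\xi_n)$ to $\fiinvt(\xi_{\mathrm{std}})$, up to sign. Since $\xi_n$ is Stein-fillable, $\fiinvt(\xi_n) \neq 0$, and since $\fiinvt(\xi_{\mathrm{std}}) \neq 0$ generates $I^\#(-S^3) \cong \C$, this cobordism map is nonzero. Dualizing and reversing orientations shows that the corresponding map $I^\#(S^3) \to I^\#(S^3_n(K))$ induced by $X_n(K)$, with the bundle data coming from the Stein structure, is nonzero.

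The main subtlety — and the step I expect to be the true obstacle — is verifying that this nonzero cobordism map is the same as $F_n$, i.e.\ that the bundle choice implicit in the Stein cobordism map agrees, as a class in $H_2(X_n,\partial X_n;\Z/2)$, with the class $[\nu_n]$ characterized by the parity relation \eqref{eq:nu-parity}. In the Legendrian surgery picture, the natural closed surface in $X_n$ is the union of a Seifert surface for $\Lambda_n$ and the core of the $2$-handle, and its intersection with the bundle class is determined by the framing $n$ modulo $2$; this should match the parity requirement for $\nu_n$. Granting this identification, which is essentially a bookkeeping exercise in the conventions of \cite{bs-instanton,bs-stein}, the argument is complete: $F_n \neq 0$ for all $0 \leq n \leq 2g(K)-2$, so $N(K) = 2g(K)-1$, and the conclusion $\cinvt(K) = 2g(K)-1$ follows from Proposition~\ref{prop:unimodal-rank} as described above.
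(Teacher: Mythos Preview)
Your overall strategy matches the paper's: use the contact invariant from \cite{bs-instanton} to show that the trace map $F_n$ is injective for some $n\geq 2g(K)-2$, and then deduce $\cinvt(K)=2g(K)-1$ from the structure results on $N(K)$.  The reduction at the end (once $N(K)=2g(K)-1\geq 3$, Proposition~\ref{prop:unimodal-rank} forces $N(\mirror{K})=0$) is correct, and in fact you only need injectivity at the single value $n=2g(K)-2$: Lemma~\ref{lem:cancel-2-handles} and the proof of Proposition~\ref{prop:unimodal-rank-part-1} already propagate injectivity of $F_n$ down to $F_{n-1},\dots,F_0$ once $n>0$.  So stabilizing to hit every $n$ separately is unnecessary work.

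The genuine gap is the step you flag as ``essentially a bookkeeping exercise.''  It is not.  What the contact-invariant argument actually produces---after one unwinds the excision that passes from $\SHItfun$ to $I^\#$---is nonvanishing of $I^\#(X_n(K),0)$, i.e.\ the map with the \emph{trivial} bundle.  But $F_n=I^\#(X_n(K),\nu_n)$ with $[\nu_n]\cdot[\Sigma_n]\equiv n\pmod 2$, so for $n$ odd these are a priori different maps, and there is no parity-matching that identifies them.  The paper resolves this by invoking the adjunction decomposition of \cite[Theorem~1.16]{bs-lspace}: writing each cobordism map as a sum over homomorphisms $s:H_2(X_n)\to\Z$, the inequality $|s(\Sigma_n)|+\Sigma_n\cdot\Sigma_n\leq 2g(\Sigma_n)-2$ together with $\Sigma_n\cdot\Sigma_n=n=2g(K)-2=2g(\Sigma_n)-2$ forces $s=0$ to be the only contributing term, and then the sign relation in that theorem gives $I^\#(X_n(K),\nu_n)=\pm I^\#(X_n(K),0)$.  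This argument hinges on $n$ being exactly $2g(K)-2$ so that the adjunction inequality is saturated; for smaller $n$ multiple $s$-values can contribute and may cancel differently against the two bundle classes, so your proposed verification for general $0\leq n\leq 2g(K)-2$ would not go through by this route.
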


We call this a \emph{weak} Thurston--Bennequin bound because it does not apply to most knots, and because in principle we should actually prove a more general inequality $\maxtb(K) \leq \cinvt(K)$.   However, once we use it below to prove that $2\chominvt$ is a slice-torus invariant, the inequality will immediately follow for all knots in $S^3$ with the left-hand side improved to $\maxsl(K)$, per Theorem~\ref{thm:sl-bound}.  Before proving Proposition~\ref{prop:tb-bound}, we show how it implies that $\chominvt = g_s$ for positive torus knots, which is the last step to proving that $2\chominvt$ is indeed a slice-torus invariant.

\begin{proposition} \label{prop:torus-tau-bound}
Let $K$ be a positive torus knot.  Then $\chominvt(K) = g(K)$.
\end{proposition}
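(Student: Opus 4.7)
My plan is to bootstrap Proposition~\ref{prop:tb-bound} by applying it not to $T_{p,q}$ itself but to its iterated connected powers. Write $g := g(T_{p,q}) = (p-1)(q-1)/2$ for a positive torus knot $T_{p,q}$ with $p,q \geq 2$ coprime. I will use two standard facts: $T_{p,q}$ realizes its slice genus, $g_s(T_{p,q}) = g$ (it is a positive knot), and $\maxtb(T_{p,q}) = pq-p-q = 2g-1$, obtained by exhibiting an explicit Legendrian braid representative and combining with the Bennequin inequality $\maxtb(K) \leq 2g_s(K)-1$.

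Next, I would invoke the Etnyre--Honda additivity formula $\maxtb(K_1 \# K_2) = \maxtb(K_1) + \maxtb(K_2) + 1$ together with additivity of the Seifert genus under connected sum to obtain
\[ g(\#^n T_{p,q}) = ng, \qquad \maxtb(\#^n T_{p,q}) = n(2g-1) + (n-1) = 2(ng)-1 \]
for every $n \geq 1$. Thus each connected power $\#^n T_{p,q}$ still satisfies $\maxtb = 2\cdot\mathrm{genus}-1$, but now with genus $ng$. For $n$ large enough that $ng \geq 2$, Proposition~\ref{prop:tb-bound} then forces
\[ \cinvt(\#^n T_{p,q}) = 2ng - 1. \]

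To finish, I would substitute this into the definition of $\chominvt$ from \S\ref{sec:quasi-morphism}:
\[ \chominvt(T_{p,q}) = \tfrac{1}{2}\lim_{n\to\infty} \frac{\cinvt(\#^n T_{p,q})}{n} = \tfrac{1}{2}\lim_{n\to\infty} \frac{2ng - 1}{n} = g = g(T_{p,q}). \]
The only step requiring care is the Legendrian connect-sum identity $\maxtb(K_1\#K_2) = \maxtb(K_1) + \maxtb(K_2) + 1$, but this is classical. With that in hand there is no real obstacle: the substantive input is Proposition~\ref{prop:tb-bound}, and the bootstrap to the torus knot case is essentially automatic. Note that taking $n\to\infty$ is crucial, since applying Proposition~\ref{prop:tb-bound} directly to $T_{p,q}$ (when $g \geq 2$) combined with Proposition~\ref{prop:tau-invariant} only gives $g-1 \leq \chominvt(T_{p,q}) \leq g$, which is not quite enough to conclude equality.
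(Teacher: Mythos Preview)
Your proposal is correct and follows essentially the same approach as the paper's own proof: both apply Proposition~\ref{prop:tb-bound} to $\#^n K$ for $n \geq 2$ (where the genus hypothesis is automatically satisfied), obtain $\cinvt(\#^n K) = 2ng-1$, and then pass to the limit in the definition of $\chominvt$. The paper is slightly terser, simply asserting $\maxtb(nK) = 2g(nK)-1$ without naming the Etnyre--Honda connect-sum formula, but the argument is the same.
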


\begin{proof}
A connected sum of $n \geq 2$ copies of $K$ satisfies $g(nK) \geq 2$ and $\maxtb(nK) = 2g(nK)-1$, so by Proposition~\ref{prop:tb-bound}, we have
\[ \cinvt(nK) = 2g(nK) - 1 = 2n\cdot g(K)-1 \]
for all integers $n \geq 2$.  Dividing by $2n$ gives us
\[ \frac{\cinvt(nK)}{2n} = g(K) - \frac{1}{2n}, \]
and taking limits as $n\to\infty$ gives us the desired $\chominvt(K) = g(K)$.
%
\end{proof}

We may now complete the proof of Theorem \ref{thm:slice-torus}:

\begin{proof}[Proof of Theorem \ref{thm:slice-torus}]
We have $2\chominvt(K)\leq 2g_s(K)$ by Proposition \ref{prop:tau-invariant}. The sharpness of this bound for positive torus knots follows from Proposition~\ref{prop:torus-tau-bound}.
\end{proof}

It remains to prove Proposition~\ref{prop:tb-bound}.  This proposition follows almost immediately from:

\begin{proposition} \label{prop:legendrian-injective}
If $n = \maxtb(K)-1 = 2g(K)-2$ is positive, then the cobordism map
\begin{equation} \label{eq:F_n-tb-bound}
F_n = I^\#(X_n(K),\nu_n): I^\#(S^3) \to I^\#(S^3_n(K))
\end{equation}
appearing in the exact triangle \eqref{eq:triangles} is injective.
\end{proposition}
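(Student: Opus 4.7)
The plan is to realize the trace cobordism $X_n(K): S^3 \to S^3_n(K)$ as a Weinstein $2$-handle cobordism and to exploit the nonvanishing of the contact invariant in framed instanton homology developed in our earlier paper \cite{bs-instanton}. Since $\maxtb(K) = 2g(K)-1$, there is a Legendrian representative $\Lambda$ of $K$ with $\ltb(\Lambda) = 2g(K)-1$. Contact $(-1)$-surgery on $\Lambda$ has smooth framing $\ltb(\Lambda)-1 = 2g(K)-2 = n$, and so is realized by attaching a Weinstein $2$-handle to the standard Stein ball $(B^4,J_{\mathrm{std}})$. This identifies $X_n(K)$ with a Weinstein cobordism from $(S^3,\xi_{\mathrm{std}})$ to $(S^3_n(K),\xi_\Lambda)$, where $\xi_\Lambda$ is a Stein-fillable contact structure.

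Next I would invoke the contact class $\fiinvt(Y,\xi)$ constructed in \cite{bs-instanton}, which is nonzero whenever $\xi$ admits a Stein filling. In particular, $\fiinvt(S^3,\xi_{\mathrm{std}})$ is a generator of the one-dimensional space where it lives, and $\fiinvt(S^3_n(K),\xi_\Lambda)$ is nonzero. The key functoriality statement is that a Weinstein $2$-handle cobordism, equipped with a surface representing (mod $2$) the canonical class of the Stein structure, induces a map on framed instanton homology carrying the contact class of the concave end to that of the convex end, up to sign. Applied to $X_n(K)$, this yields a surface $\nu \subset X_n(K)$ such that $I^\#(X_n(K),\nu)$ is nonzero on $\fiinvt(S^3,\xi_{\mathrm{std}})$.

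To identify this with the specific map $F_n = I^\#(X_n(K),\nu_n)$ from the exact triangle \eqref{eq:triangles}, I would check that $[\nu]$ and $[\nu_n]$ agree in $H_2(X_n(K),\partial X_n(K);\Z/2)$. Since $H^2(X_n(K);\Z)$ is generated by the Poincaré dual of a cocore, this reduces to a single mod-$2$ intersection computation against $[\Sigma_n]$, where $\Sigma_n$ is the closed surface built from a Seifert surface for $K$ capped off by the core of the $2$-handle. The parity condition \eqref{eq:nu-parity} requires $[\nu_n]\cdot[\Sigma_n] \equiv n \pmod 2$, while the pairing $c_1(X_n(K),J)\cdot[\Sigma_n]$ is computable from the standard formula for $c_1$ of a Weinstein handle attachment in terms of $\mathrm{rot}(\Lambda)$ and the Euler characteristic of $\Sigma$. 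Verifying that this pairing has the correct parity mod $2$ is the main technical step; I expect it to follow by a straightforward parity argument once one combines $\ltb(\Lambda)=2g(K)-1=n+1$ with the fact that $\mathrm{rot}(\Lambda)+\ltb(\Lambda)$ is odd for any Legendrian knot. Once this matching is established, $F_n$ is nonzero, and since $I^\#(S^3)\cong\C$ is one-dimensional, $F_n$ is injective, as required.
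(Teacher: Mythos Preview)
Your overall strategy matches the paper's: realize $X_n(K)$ as a Weinstein cobordism via a Legendrian $\Lambda$ with $\ltb(\Lambda)=n+1$ and use nonvanishing of the contact invariant to show the induced map is nonzero. The gap is in what you call the ``key functoriality statement.'' The reference \cite{bs-instanton} constructs the contact class $\iinvt(\xi)$ in \emph{sutured} instanton homology $\SHItfun(-M,-\Gamma)$, with naturality under Legendrian surgery at that level; it does not directly hand you a contact class in $I^\#(Y)$ together with the Weinstein-cobordism functoriality you assert, nor does it identify the relevant bundle as Poincar\'e dual to the canonical class. This is exactly the content the paper supplies: it takes the sutured map $F_\Lambda: \SHItfun(-Y(1)) \to \SHItfun(-S^3(1))$, which is nonzero because $F_\Lambda(\iinvt(\xi))=\iinvt(\xi_{\mathrm{std}})\neq 0$, interprets it as the dual of a cobordism map on instanton homology of a closure $Y\#(R\times_h S^1)$, and then runs a chain of excision isomorphisms (replacing $R\times_h S^1$ by $R\times S^1$, then $R$ by $T^2$, then removing the $\eta$ curve) to identify that map with $I^\#(X_n(K),0)$ --- carrying the \emph{trivial} bundle. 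That excision argument is the substantive work here, and your proposal has not replaced it with anything.

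Your final step, matching the bundle to $\nu_n$, is actually simpler than the paper's. Since $n=2g(K)-2$ is even, \eqref{eq:nu-parity} already forces $[\nu_n]=0$ in $H_2(X_n,\partial X_n;\Z/2)$, so $I^\#(X_n,\nu_n)=\pm I^\#(X_n,0)$ immediately, and injectivity follows. (Your rotation-number computation shows the canonical class has the same parity, so the discrepancy over which bundle you started from is harmless in this instance.) The paper instead decomposes both maps along homomorphisms $s:H_2(X_n;\Z)\to\Z$, uses the adjunction inequality from \cite{bs-lspace} --- which requires $\Sigma_n\cdot\Sigma_n=n>0$ --- to kill every summand except $s_0\equiv 0$, and then applies a sign relation to compare the surviving terms. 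Both routes work here; yours is shorter, but only because of the parity of $n$.
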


The proof of Proposition~\ref{prop:legendrian-injective} relies on  the contact invariant we defined in \cite{bs-instanton}. Given a contact structure $\xi$ on a 3-manifold $M$ with nonempty convex boundary, and dividing set $\Gamma \subset \partial M$, this invariant consists of an element of sutured instanton homology \cite{km-excision, bs-naturality},
\[ \iinvt(\xi) \in \SHItfun(-M,-\Gamma).\footnote{We will view $\SHItfun(-M,-\Gamma)$ as a $\C$-module, though it is technically  a \emph{projectively transitive system of $\C$-modules}, and is denoted by $\SHItfunn(-M,-\Gamma)$ in  \cite{bs-instanton,bs-naturality}.} \]
This element  is zero if $\xi$ is overtwisted and nonzero if $(M,\xi)$ embeds into a Stein fillable contact manifold.  It furthermore behaves naturally with respect to Legendrian surgery \cite[Proposition~4.6]{bs-instanton}, in the sense that if $(M',\Gamma',\xi')$ is obtained from $(M,\Gamma,\xi)$ by Legendrian surgery along a Legendrian knot $\Lambda \subset M$, then there is a 2-handle cobordism map
\[ F_\Lambda: \SHItfun(-M',-\Gamma') \to \SHItfun(-M,-\Gamma) \]
sending $\iinvt(\xi')$ to $\iinvt(\xi)$.

We will not give a full definition of $\SHItfun$ here, but remark that if $Y$ is a closed, orientable 3-manifold, and $Y(1)$ is  the sutured manifold
\[ Y(1) = (Y \setminus \inr(B^3), S^1), \]
then
\begin{equation} \label{eq:defn-shi}
\SHItfun(Y(1)) \cong I_*(Y \# (R\times_h S^1) | R)_{\alpha\sqcup\eta}.
\end{equation}
Here, $R$ is a closed surface of genus at least 1; $R\times_h S^1$ is the mapping torus of an orientation-preserving diffeomorphism $h: R\to R$; the closed curves $\eta$ and $\alpha$ lie in the $R\times_h S^1$ summand, with $\eta$ nonseparating inside an $R$ fiber and $\alpha\cdot R=1$; and the $\alpha\sqcup\eta$ subscript means that we use a line bundle $w$ with $c_1(w)$ Poincar\'e dual to $\alpha+\eta$. Finally, the ``$|R\,$" in the notation indicates that the $\C$-module in \eqref{eq:defn-shi} is the simultaneous generalized $(2g(R)-2,2)$-eigenspace of the commuting operators \[\mu(R),\mu(\pt):I_*(Y \# (R\times_h S^1))_{\alpha\sqcup\eta}\to I_*(Y \# (R\times_h S^1))_{\alpha\sqcup\eta}.\] With this as background, we may now prove Proposition \ref{prop:legendrian-injective}.

\begin{proof}[Proof of Proposition~\ref{prop:legendrian-injective}]
We will first explain how to prove that the map $I^\#(X_n(K),0)$ is injective, and then modify this argument to prove the same for $F_n = I^\#(X_n(K),\nu_n)$.

For simplicity of notation, let \[Y:=S^3_n(K) \quad \textrm{and} \quad X:=X_n(K):S^3\to S^3_n(K)\] for the rest of this proof. Since $n \leq \maxtb(K)-1$, we can find a Legendrian representative $\Lambda$ of  $K$ in $(S^3,\xi_{\mathrm{std}})$, with $\ltb(\Lambda) = n+1$. Legendrian surgery on $\Lambda$ then gives a contact structure $\xi$ on $Y$.  By \cite[Proposition~4.6]{bs-instanton} we have a 2-handle cobordism map
\[ F_\Lambda: \SHItfun(-Y(1)) \to \SHItfun(-S^3(1)) \]
such that $F_\Lambda(\iinvt(\xi)) = \iinvt(\xi_{\mathrm{std}})$, which is nonzero.  The map $F_\Lambda$ comes from a smooth cobordism of the form
\[ X_{R,h}^{\bowtie} := X \bowtie ((R\times_h S^1)\times[0,1]): S^3 \# (R\times_h S^1) \to Y \# (R\times_h S^1) \]
for some $R$ and $h$. Here, we are thinking of  $X=X_n(K)$ as obtained from $S^3\times[0,1]$ by attaching an $n$-framed $2$-handle along $K\times \{1\}$, and $X_{R,h}^{\bowtie}$ as obtained by gluing the complement of an arc $\{\pt\}\times[0,1]\subset X$ to the complement of an arc $\{\pt\}\times[0,1]\subset (R\times_h S^1)\times[0,1]$, via the product framings. 
With respect to the identification \eqref{eq:defn-shi}, the map $F_\Lambda$ is then the dual of the cobordism map on instanton Floer homology,
\begin{equation}\label{eqn:i-map-nonzero} I_*(X_{R,h}^{\bowtie})_{(\alpha\sqcup\eta)\times[0,1]}: I_*(S^3 \# (R \times_h S^1)|R)_{\alpha\sqcup\eta} \to I_*(Y \#(R \times_h S^1)|R)_{\alpha\sqcup\eta} \end{equation} for some curves $\alpha$ and $\eta$ as described above, so this map must be nonzero as $F_\Lambda$ is.

Meanwhile, recall  that the  map $I^\#(X_n(K),0)$ is induced by the 2-handle cobordism 
\[ X^\# = X_{T^2,\mathrm{Id}}^{\bowtie}=X \bowtie (T^3\times [0,1]): S^3 \# T^3 \to Y \# T^3 \]  defined in \eqref{eq:Xsharp},
in which the line bundle $w$ over $T^3 \times [0,1]$ is represented by a curve $\alpha$ dual to a $T^2$ fiber, with no $\eta$ present.   By a standard  excision  argument \cite[Theorem~7.7]{km-excision}, we have an isomorphism
\[ I_*(Z\#(R\times_h S^1)|R)_{\alpha\sqcup\eta} \xrightarrow{\sim} I_*(Z\#(R\times_{\mathrm{Id}} S^1)|R)_{\alpha'\sqcup\eta'} \]
for any $Z$, where $\alpha' \subset R\times S^1$ has the form $\{\pt\}\times S^1$.  This is induced by a cobordism\[ (Z\times [0,1]) \bowtie W: Z \# (R\times_h S^1) \to Z \# (R\times S^1) \] for some fixed cobordism (independent of $Z$) \[W: R\times_h S^1 \to R\times S^1\] and some framed arc in $W$ connecting the boundary components. The diagram
\[ \xymatrix{
I_*(S^3 \#(R\times_h S^1)|R)_{\alpha\sqcup\eta} \ar[rrr]^-{I_*(X_{R,h}^{\bowtie})_{(\alpha\sqcup\eta)\times[0,1]}} \ar[d]_{\cong} &&& I_*(Y \#(R\times_h S^1)|R)_{\alpha\sqcup\eta} \ar[d]^{\cong} \\
I_*(S^3 \#(R\times S^1)|R)_{\alpha'+\eta'} \ar[rrr]^-{I_*(X_{R,\mathrm{Id}}^{\bowtie})_{(\alpha'\sqcup\eta')\times[0,1]}} &&& I_*(Y \#(R\times S^1)|R)_{\alpha'+\eta'}
} \]
then commutes, since the vertical cobordisms defined as above commute with the horizontal ones obtained by attaching $n$-framed 2-handles along $K$.  The top map is nonzero, as argued above, so we may conclude that the bottom map is as well.

We apply a similar excision argument to replace $R$ with $T^2$, and hence $X_{R,h}^{\bowtie}$ with the cobordism $X^\#=X_{T^2,\mathrm{Id}}^{\bowtie}$ above, and then another to replace $\alpha\sqcup\eta$ with $\alpha$, exactly as in \cite[\S7.4]{km-excision}, where the invariance of $\SHItfun$ is proved up to isomorphism.  The ultimate conclusion is that the map
\[ I_*(X^\#)_{\alpha\times[0,1]}: I_*(S^3\# T^3|T^2)_{\alpha} \to I_*(Y\#T^3|T^2)_{\alpha}, \]
which, by definition, is precisely the map \[I^\#(X_n(K),0): I^\#(S^3) \to I^\#(S^3_n(K)),\] is also nonzero.  Since $I^\#(S^3) \cong \C$, this map must then be injective.

We have shown that $I^\#(X_n(K),0)$ is injective, but we recall that we are actually interested in the map
\[ F_n = I^\#(X_n(K),\nu_n), \]
which a priori may be different because the product bundle on $X_n(K)$ may not be the one which appears in the surgery exact triangle \eqref{eq:triangles}.  However, we can write the maps in question as finite sums
\[ I^\#(X_n(K),0) = \sum_{s:H_2(X_n(K);\Z)\to\Z} I^\#(X_n(K),0; s) \]
and
\[ I^\#(X_n(K),\nu_n) = \sum_{s:H_2(X_n(K);\Z)\to\Z} I^\#(X_n(K),\nu_n; s) \]
using the decomposition of \cite[Theorem~1.16]{bs-lspace}.  Letting $\Sigma_n \subset X_n(K)$ be the union of a genus-minimizing Seifert surface for $K$ and a core of the $n$-framed 2-handle, we have $g(\Sigma_n) = g(K) > 0$ and
\[ \Sigma_n\cdot\Sigma_n = n = 2g(K)-2 > 0, \]
so the adjunction inequality of \cite[Theorem~1.16]{bs-lspace} says that for any closed surface $\alpha \subset X_n(K)$,
\begin{align*}
I^\#(X_n(K),\alpha;s) \neq 0 \ &\Longrightarrow\ |s(\Sigma_n)| + \Sigma_n\cdot\Sigma_n \leq 2g(\Sigma_n)-2 \\
&\Longrightarrow\ |s(\Sigma_n)| \leq 0.
\end{align*}
Thus each $I^\#(X_n(K),\alpha;s)$ is zero, except possibly when $s$ is the unique homomorphism $s_0: H_2(X_n(K);\Z) \to \Z$ with $s_0(\Sigma_n) = 0$.  In other words, we have shown that
\[ I^\#(X_n(K),\alpha) = I^\#(X_n(K),\alpha; s_0), \]
where $s_0$ is identically zero since $\Sigma_n$ generates $H_2(X_n(K);\Z)$.  But \cite[Theorem~1.16]{bs-lspace} also comes with a sign relation which implies that
\begin{align*}
I^\#(X_n(K),\nu_n;s_0) &= (-1)^{\frac{1}{2}(s_0(\nu_n)+\nu_n\cdot\nu_n)+0\cdot\nu_n} I^\#(X_n(K),0;s_0) \\
&= (-1)^{\frac{1}{2}(\nu_n\cdot\nu_n)} I^\#(X_n(K),0;s_0),
\end{align*}
so we conclude that $F_n = I^\#(X_n(K),\nu_n)$ is equal to $\pm I^\#(X_n(K),0)$.  Thus the injectivity of $F_n$ follows from that of $I^\#(X_n(K),0)$.
\end{proof}

\begin{proof}[Proof of Proposition~\ref{prop:tb-bound}]
Proposition~\ref{prop:legendrian-injective} says that $F_n$ is injective for $n = 2g(K)-2$, which is positive.  If $K$ were W-shaped then $F_m$ would not be injective for any $m>0$, so $K$ must be V-shaped.  But then $F_m$ is injective precisely for all $m < \cinvt(K)$, so now we have
\[ 2g(K) - 2 < \cinvt(K) \leq 2g(K)-1, \]
where the right side comes from Theorem~\ref{thm:conc-invt}, and it follows that $\cinvt(K) = 2g(K)-1$.
\end{proof}

\section{Computations of framed instanton homology} \label{sec:computations-dim}

In this section, we compute the framed instanton homology of surgeries on infinite families of twist and pretzel knots, and then on most of the prime knots through 8 crossings, proving Theorems \ref{thm:main-twist}, \ref{thm:main-pretzels}, and \ref{thm:8-crossings}. A strategy  we employ repeatedly is to find homeomorphisms between  surgeries on different knots and then apply Theorem~\ref{thm:rational-surgeries} to determine the values of $r_0$ and $\cinvt$ of one of the knots in terms of those of the other. Indeed, this strategy, together with the properties of $\chominvt$ and $\cinvt$ we proved in \S\ref{sec:quasi-morphism} and \S\ref{sec:tools}, suffices for almost all of the knots listed above.

\subsection{Torus knots}
We record the following as it will be helpful for later calculations:
\begin{lemma} \label{lem:t2q-surgery}
If $K$ is the $(p,q)$-torus knot for some integers $p,q > 0$, then \[\nu^\#(K)=r_0(K) = pq-p-q.\]
\end{lemma}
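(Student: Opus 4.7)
The plan is to reduce this to Proposition~\ref{prop:nu-l-space} (equivalently, Theorem~\ref{thm:nu-l-space}) by showing that the positive torus knot $T_{p,q}$ is an instanton L-space knot. First I would note that we may assume $p, q \geq 2$, since otherwise $T_{p,q}$ is the unknot and the formula makes no sense; and in the nontrivial case we have
\[ 2g(T_{p,q}) - 1 = (p-1)(q-1) - 1 = pq - p - q, \]
which is precisely the target value.

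Next, I would invoke Moser's theorem: $pq$-surgery on $T_{p,q}$ is a lens space. Since $pq > pq - p - q \geq 0$, Corollary~\ref{cor:lens-spaces} gives
\[ \dim_\C I^\#(S^3_{pq}(T_{p,q})) = pq, \]
so $T_{p,q}$ satisfies the defining property of an instanton L-space knot with witness $n = pq$.

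Finally, I would apply Proposition~\ref{prop:nu-l-space}, which states that every instanton L-space knot $K$ satisfies $\cinvt(K) = r_0(K) = 2g(K) - 1$. Combining this with the genus computation above yields the desired equality
\[ \cinvt(T_{p,q}) = r_0(T_{p,q}) = 2g(T_{p,q}) - 1 = pq - p - q. \]
There is no real obstacle here; the only minor point to be careful about is ensuring that the nontriviality hypothesis $p, q \geq 2$ is met so that Proposition~\ref{prop:nu-l-space} applies, but this is automatic whenever $pq - p - q \geq 0$.
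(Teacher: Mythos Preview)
Your approach is the same as the paper's, but there is a factual slip in your invocation of Moser's theorem: $pq$-surgery on $T_{p,q}$ is \emph{not} a lens space. By Moser's classification it is the reducible manifold $L(p,q')\#L(q,p')$, so Corollary~\ref{cor:lens-spaces} does not apply to it directly. The paper uses $(pq-1)$-surgery instead, which \emph{is} a lens space by Moser, and since $pq-1>0$ this exhibits $T_{p,q}$ as an instanton L-space knot; the rest of your argument then goes through verbatim. (Alternatively one could salvage the $pq$-surgery by appealing to the K\"unneth formula for $I^\#$ of connected sums, but there is no reason to do so when $(pq-1)$-surgery works immediately.)
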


\begin{proof}
This follows immediately from Proposition \ref{prop:nu-l-space}, since $(pq-1)$-surgery on $K=T_{p,q}$ is a lens space \cite{moser} and hence (by Corollary~\ref{cor:lens-spaces}) an instanton L-space surgery of positive slope, and since $2g(K)-1 = pq-p-q$.
\end{proof}

\subsection{Twist knots} We will prove Theorem \ref{thm:main-twist}  from a combination of Propositions \ref{prop:twist-2n+2-surgery} and \ref{prop:twist-2k-1-surgery}, which treat the twist knots with an even and odd number of half-twists, respectively.
In order to prove these propositions, we first find homeomorphisms between surgeries on different two-bridge knots, as follows:

\begin{proposition} \label{prop:two-bridge-surgeries}
For  integers $a$ and $b$ which are not both odd, let $K(a,b)$ denote the two-bridge knot with two twist regions having $a$ and $b$ crossings (counted with signs), respectively, as in Figure~\ref{fig:two-bridge-surgeries}.  Then we have
\begin{align}
S^3_{4n-1}(K(2m+1,2n)) &\cong S^3_{(4n-1)/n}(K(2,2m+1)) \label{eq:surgery-2-bridge-odd} \\
S^3_{4n+1}(K(2m+1,2n)) &\cong S^3_{-(4n+1)/n}(K(-2,2m+1)) \label{eq:surgery-2-bridge-odd-2} \\
S^3_{\pm 1}(K(2m,2n)) &\cong S^3_{-1/n}(K(\mp 2,2m)) \label{eq:surgery-2-bridge-even}
\end{align}
for all $m$ and $n$.
\end{proposition}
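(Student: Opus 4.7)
The plan is to prove each of the three homeomorphisms directly by Kirby calculus, with the Rolfsen twist as the main tool: a region of $2n$ half-twists between two parallel strands of a surgery diagram can be inserted or removed by the addition or deletion of a $\pm 1/n$-framed meridional unknot encircling those strands, where simultaneously every other surgery framing in the diagram is shifted by $\mp n$ times the square of its linking number with the new meridian.

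For identity \eqref{eq:surgery-2-bridge-odd}, I would start from the surgery diagram of $K(2m+1,2n)$ with framing $4n-1$ and untwist the $2n$-half-twist region by introducing a Rolfsen meridian $C$ around it with an appropriate slope of the form $\pm 1/n$. After this substitution, the knotted component loses its $2n$ twists, its framing is shifted by $\mp 4n$ because its linking number with $C$ equals $2$, and the shift is arranged so that this component becomes a $\pm 1$-framed unknot $U$ in a two-component link with $C$. Blowing $U$ down removes $U$ and alters $C$ by a single full twist across the disk bounded by $U$; the final step is to recognize the resulting diagram of $C$ as the standard diagram of $K(2,2m+1)$, and to apply the blow-down slope rule to check that $C$'s framing becomes $(4n-1)/n$.

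Identities \eqref{eq:surgery-2-bridge-odd-2} and \eqref{eq:surgery-2-bridge-even} are handled by the same template. The opposite sign of the framing $4n+1$ on the left of \eqref{eq:surgery-2-bridge-odd-2} forces the intermediate unknot to have framing $+1$ rather than $-1$, so the blow-down introduces a twist of opposite handedness, producing the mirror $K(-2,2m+1)$ and negating the resulting slope. For \eqref{eq:surgery-2-bridge-even}, both twist regions have an even number of crossings and the $\pm 1$-framing on $K(2m,2n)$ already makes the underlying component a $\pm 1$-framed unknot after the Rolfsen twist and framing shift, so a single blow-down directly produces $K(\mp 2, 2m)$ with slope $-1/n$.

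The main obstacle is the bookkeeping of framings under the successive Kirby moves. There is very little margin for sign errors: every coefficient in the statement ($4n\pm 1$, $\pm 1$, $(4n\pm 1)/n$, $-1/n$) is fixed by the interplay of a single Rolfsen slope of the form $\pm 1/n$, the linking numbers between the affected components, and the blow-down of a $\pm 1$-framed unknot. The hypothesis that $a$ and $b$ are not both odd ensures that $K(a,b)$ is a knot rather than a two-component link and that the relevant linking numbers have their expected even values, which is exactly what makes the framing arithmetic work out to integer-plus-$1/n$ slopes on the right.
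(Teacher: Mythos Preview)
Your proposal is correct and follows essentially the same approach as the paper: introduce a $-1/n$-framed meridian $c$ around the $2n$-twist region via a Rolfsen twist, observe that $K$ becomes a $\pm 1$-framed unknot, then blow it down to obtain the claimed knot and slope. The paper's proof is exactly this, accompanied by a figure tracking the isotopy; the only detail you leave slightly implicit is that in case \eqref{eq:surgery-2-bridge-even} the linking number between $c$ and $K$ is zero (not just even), which is why neither the framing on $K$ nor the slope on $c$ changes under the Rolfsen twist and blow-down respectively.
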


\begin{proof}
The proof for $(4n-1)$-surgery on $K=K(2m+1,2n)$ is given in Figure~\ref{fig:two-bridge-surgeries}.  After performing a Rolfsen twist (see \cite[p.~162]{gompf-stipsicz}) along a curve $c$ which links $K$ twice, $K$ becomes a $-1$-framed unknot and we blow it down to turn $c$ into a $K(2,2m+1)$ with surgery coefficient $-\frac{1}{n}+4 = \frac{4n-1}{n}$. If instead we do $(4n+1)$-surgery, then the Rolfsen twist changes the framing on $K$ to $+1$, and blowing it down produces a $K(-2,2m+1)$ with coefficient $-\frac{1}{n}-4$.

The proof for $\pm 1$-surgery on $K=K(2m,2n)$ is identical except for the coefficients.  In this case $c$ and $K$ have linking number zero, so the Rolfsen twist and blow-down preserve the coefficients of $K$ and of $c$, respectively, and we end up with $-\frac{1}{n}$-surgery.  The resulting knot is $K(\mp 2,2m)$ because we have blown down a $\pm 1$-framed unknot.
\begin{figure}
\begin{tikzpicture}
\begin{scope}[xshift=-0.6cm] 
\draw[link] (0,0) to[out=180,in=180] ++(0,0.4) ++(0,0.4) to[out=180,in=180] ++(0,0.4) to[out=0,in=180] ++(2.6,0);
\draw[link,looseness=1] (0,0.4) to[out=0,in=180] ++(0.4,0.4) ++(0,-0.4) to[out=0,in=180] ++(0.4,0.4) ++(0,-0.4) to[out=0,in=180] ++(0.4,0.4) ++(0.4,-0.4) to[out=0,in=180] ++(0.4,0.4) ++(0,-0.4) to[out=0,in=180] ++(0.4,0.4);
\node at (1.4,0.6) {\tiny $\cdots$};
\draw[link,looseness=1] (0,0.8) to[out=0,in=180] ++(0.4,-0.4) ++(0,0.4) to[out=0,in=180] ++(0.4,-0.4) ++(0,0.4) to[out=0,in=180] ++(0.4,-0.4) ++(0.4,0.4) to[out=0,in=180] ++(0.4,-0.4) ++(0,0.4) to[out=0,in=180] ++(0.4,-0.4) ++(0,0.4) to[out=0,in=180] ++(0.2,0);
\draw[link,looseness=1] (2.6,0.8) to[out=0,in=180] ++(0.4,0.4) ++(0,-0.4) to[out=0,in=180] ++(0.4,0.4) ++(0.4,-0.4) to[out=0,in=180] ++(0.4,0.4) ++(0,-0.4) to[out=0,in=180] ++(0.4,0.4) ++(0,-0.4);
\draw[link,looseness=1] (2.6,1.2) to[out=0,in=180] ++(0.4,-0.4) ++(0,0.4) to[out=0,in=180] ++(0.4,-0.4) ++(0.4,0.4) to[out=0,in=180] ++(0.4,-0.4) ++(0,0.4) to[out=0,in=180] ++(0.4,-0.4) ++(0,0.4);
\node at (3.6,1) {\tiny $\cdots$};
\draw[semithick] (5,1.4) arc (90:270:0.2 and 0.4);
\draw[link] (4.6,0.8) -- ++(0.6,0) to[out=0,in=0] ++(0,-0.4) to[out=180,in=0] (2.4,0.4);
\draw[link] (4.6,1.2) -- ++(0.6,0) to[out=0,in=0] ++(0,-1.2) to[out=180,in=0] (0,0);
\draw[line width=2.4pt,white] (5,1.4) arc (90:-90:0.2 and 0.4);
\draw[semithick] (5,1.4) node[above right,inner sep=2pt] {\tiny$c$} arc (90:-90:0.2 and 0.4);
\node at (1.2,1.5) {$4n-1$};
\draw[|-|] (0,-0.3) to node[midway,below] {\tiny $2m+1$} ++(2.4,0);
\draw[|-|] (2.6,1.5) to node[midway,above] {\tiny $2n$} ++(2,0);
\end{scope}

\node at (5.65,0.6) {\Large $\rightarrow$};

\begin{scope}[xshift=6.25cm]
\draw[link] (0,0) to[out=180,in=180] ++(0,0.4) ++(0,0.4) to[out=180,in=180] ++(0,0.4) to[out=0,in=180] ++(1.2,0);
\draw[link,looseness=1] (0,0.4) to[out=0,in=180] ++(0.4,0.4);
\draw[link,looseness=1] (0,0.8) to[out=0,in=180] ++(0.4,-0.4);
\draw[link] (0,0) to[out=180,in=180] (0,0.4);
\draw[link] (0,1.2) to[out=180,in=180] (0,0.8);
\node at (0.6,0.6) {\tiny $\cdots$};
\draw[link,looseness=1] (0.8,0.4) to[out=0,in=180] ++(0.4,0.4);
\draw[link,looseness=1] (0.8,0.8) to[out=0,in=180] ++(0.4,-0.4);
\draw[link] (1.6,0.6) arc (270:90:0.2 and 0.4);
\draw[link] (1.2,0.4) to[out=0,in=180] ++(0.8,0) (1.2,0.8) to[out=0,in=180] ++(0.8,0);
\draw[link] (1.2,1.2) to[out=0,in=180] ++(0.8,0);
\draw[link] (1.6,0.6) arc (-90:90:0.2 and 0.4);
\draw[link] (2,0.8) to[out=0,in=0] ++(0,-0.4);
\draw[link] (2,1.2) to[out=0,in=0] ++(0,-1.2) to[out=180,in=0] (0,0);
\node at (0.6,1.5) {$-1$};
\draw[|-|] (0,-0.3) to node[midway,below] {\tiny $2m+1$} ++(1.2,0);
\node at (1.95,1.5) {\tiny $-1/n$};
\end{scope}

\node at (9.3,0.6) {\Large $\cong$};

\begin{scope}[xshift=10cm]
\draw[link,looseness=1.5] (0,0.6) to[out=270,in=270] (1.6,0.6);
\draw[link] (0.2,0.4) to[out=180,in=0] (-0.1,0.4) to[out=180,in=180] (-0.1,0.8) to[out=0,in=180] (0.2,0.8);
\draw[link,looseness=1] (0.2,0.4) to[out=0,in=180] ++(0.4,0.4) ++(0.4,-0.4) to[out=0,in=180] ++(0.4,0.4);
\draw[link,looseness=1] (0.2,0.8) to[out=0,in=180] ++(0.4,-0.4) ++(0.4,0.4) to[out=0,in=180] ++(0.4,-0.4);
\node at (0.8,0.6) {\tiny $\cdots$};
\draw[link] (1.4,0.4) to[out=0,in=180] ++(0.3,0) to[out=0,in=0] ++(0,0.4) to[out=180,in=0] (1.4,0.8);
\draw[link,looseness=1.5] (0,0.6) to[out=90,in=90] (1.6,0.6);
\draw[|-|] (0.2,-0.3) to node[midway,below] {\tiny $2m+1$} ++(1.2,0);
\node at (2.05,0.25) {\tiny $-1/n$};
\node at (0.8,1.6) {$-1$};
\end{scope}

\node at (3.15,-2.4) {\Large $\rightarrow$};

\begin{scope}[xshift=2cm,yshift=-3cm] 
\draw[link,looseness=1] (2.2,0.8) to[out=0,in=180] ++(0.4,0.4) ++(0,-0.4) to[out=0,in=180] ++(0.4,0.4) ++(0,-0.4) to[out=0,in=180] ++(0.4,0.4) ++(0.4,-0.4) to[out=0,in=180] ++(0.4,0.4) ++(0,-0.4) to[out=0,in=180] ++(0.4,0.4) ++(0,-0.4);
\draw[link,looseness=1] (2.2,1.2) to[out=0,in=180] ++(0.4,-0.4) ++(0,0.4) to[out=0,in=180] ++(0.4,-0.4) ++(0,0.4) to[out=0,in=180] ++(0.4,-0.4) ++(0.4,0.4) to[out=0,in=180] ++(0.4,-0.4) ++(0,0.4) to[out=0,in=180] ++(0.4,-0.4) ++(0,0.4);
\node at (3.6,1) {\tiny $\cdots$};
\draw[link] (2.2,0.8) to[out=180,in=180] ++(0,-0.4) to[out=0,in=180] ++(1,0);
\draw[link] (2.2,1.2) to[out=180,in=180] ++(0,-1.2) to[out=0,in=180] ++(1,0);
\draw[link] (4.6,0.8) to[out=0,in=0] ++(0,-0.4) to[out=180,in=0] ++(-1,0);
\draw[link] (4.6,1.2) to[out=0,in=0] ++(0,-1.2) to[out=180,in=0] ++(-1,0);
\draw[link,looseness=2.5] (3.6,0.4) to[out=180,in=180] ++(0,-0.4);
\draw[link,looseness=2.5] (3.2,0.4) to[out=0,in=0] ++(0,-0.4);
\begin{scope} 
\clip (3.1,0.2) rectangle (3.7,0.6);
\draw[link,looseness=2.5] (3.6,0.4) to[out=180,in=180] ++(0,-0.4);
\end{scope}
\node at (3.4,-0.3) {$(4n-1)/n$};
\draw[|-|] (2.2,1.5) to node[midway,above] {\tiny $2m+1$} ++(2.4,0);
\end{scope}

\node at (7.6,-2.4) {\Large $\cong$};

\begin{scope}[xshift=7cm,yshift=-3cm] 
\draw[link] (1.2,0) to[out=180,in=180] ++(0,0.4) ++(0,0.4) to[out=180,in=180] ++(0,0.4) to[out=0,in=180] ++(1,0);
\draw[link,looseness=1] (1.2,0.4) to[out=0,in=180] ++(0.4,0.4) ++(0,-0.4) to[out=0,in=180] ++(0.4,0.4) to[out=0,in=180] ++(0.2,0);
\draw[link,looseness=1] (1.2,0.8) to[out=0,in=180] ++(0.4,-0.4) ++(0,0.4) to[out=0,in=180] ++(0.4,-0.4) to[out=0,in=180] ++(0.2,0);
\draw[link,looseness=1] (2.2,0.8) to[out=0,in=180] ++(0.4,0.4) ++(0,-0.4) to[out=0,in=180] ++(0.4,0.4) ++(0,-0.4) to[out=0,in=180] ++(0.4,0.4) ++(0.4,-0.4) to[out=0,in=180] ++(0.4,0.4) ++(0,-0.4) to[out=0,in=180] ++(0.4,0.4) ++(0,-0.4);
\draw[link,looseness=1] (2.2,1.2) to[out=0,in=180] ++(0.4,-0.4) ++(0,0.4) to[out=0,in=180] ++(0.4,-0.4) ++(0,0.4) to[out=0,in=180] ++(0.4,-0.4) ++(0.4,0.4) to[out=0,in=180] ++(0.4,-0.4) ++(0,0.4) to[out=0,in=180] ++(0.4,-0.4) ++(0,0.4);
\node at (3.6,1) {\tiny $\cdots$};
\draw[link] (4.6,0.8) to[out=0,in=0] ++(0,-0.4) to[out=180,in=0] (2.2,0.4);
\draw[link] (4.6,1.2) to[out=0,in=0] ++(0,-1.2) to[out=180,in=0] (1.2,0);
\node at (3,-0.3) {$(4n-1)/n$};
\draw[|-|] (2.2,1.5) to node[midway,above] {\tiny $2m+1$} ++(2.4,0);
\end{scope}

\end{tikzpicture}
\caption{Comparing $(4n-1)$-surgery on the two-bridge knot $K(2m+1,2n)$ to $\frac{4n-1}{n}$-surgery on $K(2,2m+1)$.}
\label{fig:two-bridge-surgeries}
\end{figure}
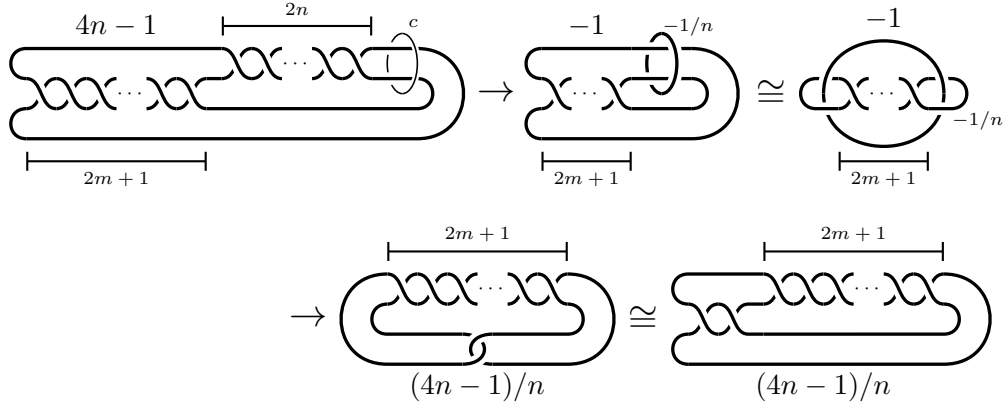
\end{proof}

\begin{lemma} \label{lem:figure-eight}
The figure eight knot $4_1$ satisfies $\cinvt(4_1) = 0$ and $r_0(4_1) = 2$.
\end{lemma}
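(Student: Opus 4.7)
The plan has two independent parts, one for each assertion.

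First, to see that $\cinvt(4_1)=0$, I would invoke the amphichirality of $4_1$. Since $\mirror{4_1}$ is isotopic to $4_1$, Theorem~\ref{thm:conc-invt} gives $\cinvt(4_1)=\cinvt(\mirror{4_1})=-\cinvt(4_1)$, which forces $\cinvt(4_1)=0$.

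Second, to compute $r_0(4_1)$, the plan is to use Proposition~\ref{prop:two-bridge-surgeries} to identify a $+1$-surgery on $4_1$ with an integer surgery on a trefoil, whose framed instanton homology is pinned down by Lemma~\ref{lem:t2q-surgery}. Concretely, $4_1$ is the two-bridge knot $K(2,2)$ in the notation of Figure~\ref{fig:two-bridge-surgeries} (taking $m=n=1$), so equation~\eqref{eq:surgery-2-bridge-even} specializes to
\[ S^3_1(4_1) \;\cong\; S^3_{-1}(K(-2,2)). \]
A short diagrammatic check (equivalently, reading off the Conway rational tangle fraction $-3/2$) identifies $K(-2,2)$ with the right-handed trefoil $T_{2,3}$. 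Lemma~\ref{lem:t2q-surgery} then gives $\cinvt(T_{2,3})=r_0(T_{2,3})=1$, and Theorem~\ref{thm:conc-invt} yields
\[ \dim I^\#(S^3_{-1}(T_{2,3})) \;=\; r_0(T_{2,3}) + \bigl|{-}1-\cinvt(T_{2,3})\bigr| \;=\; 1+2 \;=\; 3. \]
Combining this with the homeomorphism above, I obtain $\dim I^\#(S^3_1(4_1))=3$. A final application of Theorem~\ref{thm:conc-invt}, using $\cinvt(4_1)=0$ and the fact that the formula $\dim I^\#(S^3_n(K))=r_0(K)+|n-\cinvt(K)|$ holds at $n=1$ whether $4_1$ is V-shaped or W-shaped, then gives $r_0(4_1)+1=3$, i.e.\ $r_0(4_1)=2$.

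The main obstacle is the orientation bookkeeping inside Proposition~\ref{prop:two-bridge-surgeries}: one must correctly identify $K(-2,2)$ as the trefoil with $\cinvt=+1$ rather than its mirror, since the opposite choice would instead produce $\dim I^\#(S^3_{-1}(T_{2,-3}))=1$ and hence the incorrect value $r_0(4_1)=0$. This is a straightforward but not entirely automatic check that reduces to a careful reading of the diagram in Figure~\ref{fig:two-bridge-surgeries}. Importantly, the argument does not need to determine whether $4_1$ is V-shaped or W-shaped, since $r_0$ is defined in both cases and the computation above pins down its value either way.
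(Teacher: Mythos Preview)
Your overall strategy is exactly the paper's: use amphichirality for $\cinvt(4_1)=0$, then invoke Proposition~\ref{prop:two-bridge-surgeries} to identify $S^3_1(4_1)$ with $S^3_{-1}(T_{2,3})$ and read off $r_0(4_1)=2$ from Lemma~\ref{lem:t2q-surgery}. However, your two-bridge identifications are swapped, and this is not merely a chirality issue.

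In the conventions of Figure~\ref{fig:two-bridge-surgeries}, the knot $K(2,2)$ is the \emph{left-handed trefoil} $3_1$, not the figure eight (this is stated explicitly in the proof of Proposition~\ref{prop:twist-2k-1-surgery}). Likewise $K(-2,2)$ is the figure eight $4_1$ itself, not any trefoil (see the proof of Proposition~\ref{prop:twist-2n+2-surgery}, where $K_{2n}=K(-2,2n)$ and $K_2=4_1$). So your application of \eqref{eq:surgery-2-bridge-even} with $m=n=1$ actually yields
\[ S^3_1(3_1) \;\cong\; S^3_{-1}(4_1), \]
not the claimed $S^3_1(4_1)\cong S^3_{-1}(T_{2,3})$. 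Your ``diagrammatic check'' that $K(-2,2)$ has rational fraction $-3/2$ cannot succeed, since $K(-2,2)=4_1$ has determinant $5$, not $3$.

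The fix is to follow the paper and take $(m,n)=(-1,1)$ in \eqref{eq:surgery-2-bridge-even}, using $4_1=K(-2,2)$ and $T_{2,3}=\mirror{3_1}=K(-2,-2)$; this gives $S^3_1(K(-2,2))\cong S^3_{-1}(K(-2,-2))$, which is precisely $S^3_1(4_1)\cong S^3_{-1}(T_{2,3})$. The rest of your argument then goes through verbatim.
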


\begin{proof}
We have $\cinvt(4_1)=0$ because $4_1$ is amphichiral.  In the notation of Proposition~\ref{prop:two-bridge-surgeries}, we have
\begin{align*}
4_1 &= K(-2,2), &
T_{2,3} = \mirror{3_1} = K(-2,-2),
\end{align*}
and thus the case $(m,n)=(-1,1)$ of \eqref{eq:surgery-2-bridge-even} says that
\[ S^3_1(4_1) = S^3_1(K(-2,2)) \cong S^3_{-1}(K(-2,-2)) = S^3_{-1}(T_{2,3}). \]
We have $\cinvt(T_{2,3}) = r_0(T_{2,3}) = 1$ by Lemma~\ref{lem:t2q-surgery}, so 
\[ \dim I^\#(S^3_1(4_1)) = \dim I^\#(S^3_{-1}(T_{2,3})) = 3 \]
by Theorem~\ref{thm:rational-surgeries}, and now this together with $\cinvt(4_1)=0$ implies that $r_0(4_1)=2$.
\end{proof}

We now compute $r_0$ and $\cinvt$ for the twist knots in Theorem \ref{thm:main-twist}. Recall that $K_n$ denotes the twist knot with a positive clasp and $n\geq 1$ positive half-twists, as shown in Figure~\ref{fig:main-twist}.

\begin{proposition} \label{prop:twist-2n+2-surgery}
For $K=K_{2n}$ with $n\geq 1$, we have  \[\cinvt(K) = 0\quad\textrm{and}\quad r_0(K)=2n,\] which implies that 
\[\dim I^\#(S^3_{p/q}(K)) = 2nq+|p|\] for all coprime integers $p,q$ with $p\neq 0$ and $q\geq 1$, by Theorem \ref{thm:rational-surgeries}.
\end{proposition}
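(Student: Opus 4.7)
The plan is to pin down $\cinvt(K_{2n})$ and $r_0(K_{2n})$ by computing $\dim I^\#(S^3_{\pm 1}(K_{2n}))$ through the homeomorphisms of Proposition~\ref{prop:two-bridge-surgeries}, and then extracting the two invariants from the unimodal structure provided by Theorem~\ref{thm:conc-invt}.

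Concretely, I would first identify $K_{2n}$ with the two-bridge knot $K(-2, 2n)$ in the notation of Proposition~\ref{prop:two-bridge-surgeries}; this matches the base case $K_2 = K(-2,2) = 4_1$ used in Lemma~\ref{lem:figure-eight}. Applying \eqref{eq:surgery-2-bridge-even} with $m = -1$ then gives the homeomorphisms
\begin{align*}
S^3_{+1}(K_{2n}) &\cong S^3_{-1/n}(K(-2,-2)) = S^3_{-1/n}(T_{2,3}), \\
S^3_{-1}(K_{2n}) &\cong S^3_{-1/n}(K(2,-2)) = S^3_{-1/n}(4_1),
\end{align*}
where in the second line we use that $4_1$ is amphichiral. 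Feeding $\cinvt(T_{2,3}) = r_0(T_{2,3}) = 1$ (Lemma~\ref{lem:t2q-surgery}) and $\cinvt(4_1) = 0$, $r_0(4_1) = 2$ (Lemma~\ref{lem:figure-eight}) into Theorem~\ref{thm:rational-surgeries} shows that both right-hand sides have dimension $2n+1$, hence $\dim I^\#(S^3_{\pm 1}(K_{2n})) = 2n+1$.

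Once this symmetry is in hand, Theorem~\ref{thm:conc-invt} forces $\cinvt(K_{2n}) = 0$: a nonzero value $c$ would yield $|1-c| \neq |{-}1-c|$ and hence unequal surgery dimensions at $n = \pm 1$. Whether $K_{2n}$ is V-shaped or W-shaped, Theorem~\ref{thm:conc-invt} (or Remark~\ref{rem:N-equals-1-1} in the W-shaped case) then reads off $r_0(K_{2n}) = (2n+1) - 1 = 2n$. The surgery formula $\dim I^\#(S^3_{p/q}(K_{2n})) = 2nq + |p|$ for all coprime $p,q$ with $p \neq 0$ and $q \geq 1$ then follows directly from Theorem~\ref{thm:rational-surgeries}.

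The one step requiring care is the chirality match identifying $K_{2n}$ with $K(-2, 2n)$ rather than its mirror, i.e., confirming from the diagrams that it is the clasp, not the twist region, in Figure~\ref{fig:main-twist} that corresponds to the $-2$ factor in Proposition~\ref{prop:two-bridge-surgeries}; the $n=1$ consistency with Lemma~\ref{lem:figure-eight} is a useful sanity check. Apart from this bookkeeping, the argument is essentially formal: one application of Proposition~\ref{prop:two-bridge-surgeries} followed by two invocations of Theorem~\ref{thm:rational-surgeries}.
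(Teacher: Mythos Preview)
Your proposal is correct and follows essentially the same approach as the paper's proof: identify $K_{2n}=K(-2,2n)$, apply \eqref{eq:surgery-2-bridge-even} to realize $S^3_{\pm1}(K_{2n})$ as $-\frac{1}{n}$-surgeries on $T_{2,3}$ and $4_1$, compute both dimensions as $2n+1$ via Theorem~\ref{thm:rational-surgeries}, and read off $\cinvt=0$ and $r_0=2n$. The paper treats the case $n=1$ separately by citing Lemma~\ref{lem:figure-eight} directly, but your uniform argument covers it as well.
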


\begin{proof}
When $n=1$, $K_{2n} = K_2$ is the figure eight knot $4_1$, and we have already computed $\cinvt(4_1)$ and $r_0(4_1)$ in Lemma~\ref{lem:figure-eight}.  We may now assume that $n \geq 2$.

The knot $K$ is the two-bridge knot $K(-2,2n)$ in the notation of Proposition~\ref{prop:two-bridge-surgeries}, so two applications of \eqref{eq:surgery-2-bridge-even} give
\begin{align*}
S^3_1(K) &\cong S^3_{-1/n}(K(-2,-2)), &
S^3_{-1}(K) &\cong S^3_{-1/n}(K(2,-2)).
\end{align*}
But $K(-2,-2)$ is the right-handed trefoil $T_{2,3}=\mirror{3_1}$, which has $\cinvt(\mirror{3_1}) = r_0(\mirror{3_1}) = 1$, by Lemma~\ref{lem:t2q-surgery}. Therefore, $I^\#(S^3_1(K))$ has dimension
\[ \dim I^\#(S^3_{-1/n}(\mirror{3_1})) = n\cdot r_0(\mirror{3_1}) + |(-1) - n\cinvt(\mirror{3_1})| = 2n+1 \]
by Theorem~\ref{thm:rational-surgeries}. 
Similarly, $K(2,-2)$ is the figure eight $4_1$, so we have
\[ \dim I^\#(S^3_{-1}(K)) = \dim I^\#(S^3_{-1/n}(4_1)) = 2n+1, \]
as determined above.  Since $\dim I^\#(S^3_1(K)) = \dim I^\#(S^3_{-1}(K))$, we must have $\cinvt(K)=0$, and then the fact that these dimensions are both $2n+1$ says that $r_0(K) = 2n$, as desired.
\end{proof}

\begin{proposition} \label{prop:twist-2k-1-surgery}
For $K = K_{2n-1}$ with $n\geq 1$, we have \[\cinvt(K)=-1\quad \textrm{and}\quad r_0(K) = 2n-1,\] which implies that \[ \dim I^\#(S^3_{p/q}(K)) = (2n-1)q + |p+q| \]
for all coprime integers $p,q$ with $q \geq 1$, by Theorem \ref{thm:rational-surgeries}.
\end{proposition}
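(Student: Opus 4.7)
The plan is to rewrite $K_{2n-1}$ as a two-bridge knot with \emph{even} twist parameters, so that I can apply equation~\eqref{eq:surgery-2-bridge-even} directly---just as in Proposition~\ref{prop:twist-2n+2-surgery}---to relate $\pm 1$-surgeries on $K_{2n-1}$ to $-1/n$-surgeries on $4_1$ and $3_1$.

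First I observe that $K_{2n-1}$ admits the alternative two-bridge presentation $K_{2n-1} = K(2,2n)$. One can verify this by comparing oriented 2-fold branched covers: the standard presentation $K(-2,2n-1)$ has rational $-(4n-1)/(2n-1)$, which normalizes to the oriented lens space $L(4n-1,2n)$, and this is also the cover of $K(2,2n)$, whose rational is $(4n-1)/(2n)$.  Applying equation~\eqref{eq:surgery-2-bridge-even} with $(2m,2\ell) = (2,2n)$ then yields the homeomorphisms
\[ S^3_{+1}(K_{2n-1}) \cong S^3_{-1/n}(K(-2,2)) = S^3_{-1/n}(4_1), \]
\[ S^3_{-1}(K_{2n-1}) \cong S^3_{-1/n}(K(+2,2)) = S^3_{-1/n}(3_1), \]
where the identifications $K(-2,2) = 4_1$ and $K(2,2) = 3_1$ again follow by matching oriented 2-fold covers ($L(5,2)$ and $L(3,2)$ respectively).

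Using $\cinvt(4_1) = 0$ and $r_0(4_1) = 2$ from Lemma~\ref{lem:figure-eight}, together with $\cinvt(3_1) = -1$ and $r_0(3_1) = 1$ (via $\mirror{3_1} = T_{2,3}$ and Lemma~\ref{lem:t2q-surgery}), Theorem~\ref{thm:rational-surgeries} gives
\[ \dim I^\#(S^3_{-1/n}(4_1)) = 2n+1, \qquad \dim I^\#(S^3_{-1/n}(3_1)) = n + |n-1| = 2n-1. \]
Writing $\cinvt(K_{2n-1}) = c$ and $r_0(K_{2n-1}) = r$, Theorem~\ref{thm:conc-invt} then produces the system $r + |1 - c| = 2n+1$ and $r + |-1 - c| = 2n-1$, whose difference forces $c = -1$, and substituting back gives $r = 2n-1$. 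A final application of Theorem~\ref{thm:rational-surgeries} produces the claimed formula for all nonzero $p/q$ with $q \geq 1$. The only delicate technical point is the identification $K_{2n-1} = K(2,2n)$ of oriented two-bridge knots; once this is in hand, the computation parallels that of Proposition~\ref{prop:twist-2n+2-surgery}.
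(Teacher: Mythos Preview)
Your approach is essentially the same as the paper's---identify $K_{2n-1}=K(2,2n)$ and apply \eqref{eq:surgery-2-bridge-even}---but there is a genuine gap in your deduction of $\cinvt(K_{2n-1})$.

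The system $r+|1-c|=2n+1$, $r+|1+c|=2n-1$ does \emph{not} force $c=-1$. Taking the difference gives $|1-c|-|1+c|=2$, and the function $c\mapsto |1-c|-|1+c|$ equals $2$ for \emph{every} $c\leq -1$: in that range both equations reduce to $r=2n+c$, so $(c,r)=(-1,2n-1),(-2,2n-2),\dots$ are all formally consistent. To single out $c=-1$ you must invoke the slice-genus bound $|\cinvt(K)|\leq\max(2g_s(K)-1,0)$ from Theorem~\ref{thm:conc-invt}: since $K_{2n-1}$ has Seifert genus~$1$, this gives $|\cinvt(K_{2n-1})|\leq 1$, and combined with $c\leq -1$ you get $c=-1$. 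Once you add this one sentence, your argument is complete.

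By contrast, the paper first pins down $\cinvt(K)=-1$ directly---observing that $\mirror{K_{2n-1}}$ is a positive knot of genus~$1$ and applying Proposition~\ref{prop:slice-bennequin-positive}---and then needs only the single identification $S^3_{-1}(K)\cong S^3_{-1/n}(3_1)$ to read off $r_0(K)=2n-1$. Your route trades that appeal to positivity for an extra surgery computation (the $4_1$ side), which is a reasonable exchange; you just have to close the gap with the genus bound.
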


\begin{proof}
By inspection, $K$ has Seifert genus 1, and its mirror is positive, so $\cinvt(K)=-1$ by Proposition~\ref{prop:slice-bennequin-positive}.  Thus, $r_0(K)=\dim I^\#(S^3_{-1}(K)).$ To compute this dimension, note that we can identify $K$ with the two-bridge knot $K(2,2n)$ in the notation of Proposition~\ref{prop:two-bridge-surgeries}. Since $K(2,2)$ is the left-handed trefoil $3_1$, we have
\[ S^3_{-1}(K) = S^3_{-1}(K(2,2n)) \cong S^3_{-1/n}(K(2,2)) = S^3_{-1/n}(3_1) \]
by \eqref{eq:surgery-2-bridge-even}.  But $\cinvt(3_1)=-1$ and $r_0(3_1)=1$, so
\[ r_0(K)=\dim I^\#(S^3_{-1/n}(3_1)) = n\cdot r_0(3_1) + |(-1) - n\cinvt(3_1)| = 2n-1, \]
as claimed.
\end{proof}

\begin{proof}[Proof of Theorem \ref{thm:main-twist}]
This follows immediately from Propositions \ref{prop:twist-2n+2-surgery} and \ref{prop:twist-2k-1-surgery}.
\end{proof}

\subsection{Pretzel knots} \label{ssec:pretzels}

We prove Theorem \ref{thm:main-pretzels} below, which computes the framed instanton homology of nonzero surgeries on the pretzel knots $P(n,3,-3)$. Note that each these pretzels is smoothly slice, since we can build it by attaching an oriented band to the two-component unlink $P(3,-3)$. This  implies that $\cinvt=0$ for each of these knots, so we just need to show that $r_0=4$ for all such knots, as claimed in the theorem. To show that, we use the following proposition, which was discovered experimentally in SnapPy:

\begin{proposition} \label{prop:pretzel-homeo}
There is a homeomorphism
\[ S^3_{-2}(P(n,3,-3)) \cong S^3_2(P(n+3,3,-3)) \]
for all integers $n$.
\end{proposition}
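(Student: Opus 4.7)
The proof will be a Kirby calculus argument, in the spirit of Figure~\ref{fig:two-bridge-surgeries} and the rest of this section. The plan is to start with the surgery diagram for $S^3_{-2}(P(n,3,-3))$ — that is, the pretzel knot in $S^3$ drawn in standard form with three twist boxes containing $n$, $3$, and $-3$ half-twists, decorated by surgery coefficient $-2$ — and to produce an explicit sequence of handle slides, Rolfsen twists, and blow-ups/blow-downs transforming it into the surgery diagram for $S^3_2(P(n+3,3,-3))$.

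Concretely, I would first introduce an auxiliary component, most naturally a $0$-framed meridian $\mu$ of the pretzel knot, together with a canceling $\pm 1$-framed unknot linking $\mu$. This converts the one-component surgery diagram into a link surgery presentation on which a wider range of moves is available. Next, I would identify a second auxiliary unknot $c$ positioned so that it either encircles a pair of twist regions or runs through the $3$- and $(-3)$-twist regions in a controlled way; sliding the pretzel knot over $c$ (or sliding $c$ over the pretzel) propagates copies of $c$ through the twist boxes and is the mechanism by which the twist count $n$ will change. Finally, one or two Rolfsen twists should eliminate the auxiliary curves, leaving a one-component diagram that can be isotoped to $P(n+3,3,-3)$ with framing $+2$.

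The main obstacle is the parity issue: the two strands running through any pretzel twist region are oppositely oriented, so a Rolfsen twist on a $0$-framed unknot encircling a single band changes the twist count by an even number and leaves the surgery framing fixed. Therefore the asymmetric change $(-2,n)\mapsto(+2,n+3)$ cannot come from a single Rolfsen twist, and we must exploit the near-cancellation of the $3$- and $(-3)$-bands. The expected mechanism is that, after the slides described above, the $3$- and $(-3)$-twist regions combine with a half-twist introduced by sliding the knot over $c$ to produce a net shift of $+3$ half-twists into the $n$-band, while the framing change on the pretzel is accounted for by the framings of $\mu$ and its canceling partner.

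The remaining work is combinatorial bookkeeping. Each slide of the pretzel over $c$ creates parallel copies of $c$ running through several twist regions, and these must be isotoped back into a canonical pretzel form with the crossings and framings carefully tallied. The identification of the correct auxiliary curves and slide sequence is the creative part of the proof; once they are fixed, verifying at each intermediate stage that the diagram is as claimed is a standard, if tedious, exercise of the sort that is greatly aided by experimental evidence from SnapPy, as the authors acknowledge.
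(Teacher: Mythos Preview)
Your proposal is not a proof but a plan: you outline a Kirby-calculus strategy and correctly identify the parity obstruction, but you never actually produce the auxiliary curves or the slide sequence. Phrases like ``the expected mechanism is\ldots'' and ``the identification of the correct auxiliary curves and slide sequence is the creative part of the proof'' concede that the key step has not been carried out. As written, nothing has been verified, and there is no guarantee that the particular combination of moves you describe (a $0$-framed meridian plus a cancelling $\pm1$-unknot, followed by slides that push three half-twists into the $n$-band while changing the framing by $4$) can be made to work.

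The paper's proof takes a completely different route and avoids Kirby calculus altogether. It first reduces to even $n=2k$ by a mirroring trick, and then exhibits both $S^3_{-2}(P(2k,3,-3))$ and $S^3_2(P(2k+3,3,-3))$ as the branched double cover of the \emph{same} 2-component link $L\subset S^3$, via the Montesinos construction. The two sides use two different symmetries: $P(2k,3,-3)$ is strongly invertible, so its $(-2)$-surgery is $\dcover$ of a link obtained by filling a rational tangle into the quotient tangle; while $P(2k+3,3,-3)$ is periodic of period $2$ with unknotted quotient, so its $2$-surgery is $\dcover$ of a link in $S^3_1(\text{unknot})\cong S^3$. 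One then checks by explicit tangle manipulations (Figures~\ref{fig:tangle-quotients}--\ref{fig:P-2k3-dcover}) that the two links coincide. This is the ``creative part'' you were missing, and it does not pass through the handle-slide bookkeeping you anticipated at all.
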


We will prove Proposition~\ref{prop:pretzel-homeo} at the end of this subsection. In the meantime, let us use it to complete the proof of   Theorem~\ref{thm:main-pretzels}, restated here:

\begin{reptheorem}{thm:main-pretzels}
For all integers $n$, we have \[\cinvt(P(n,3,-3)) = 0 \quad\textrm{and}\quad r_0(P(n,3,-3)) = 4, \] 
which implies that \[ \dim I^\#(S^3_{p/q}(P(n,3,-3))) = 4q + |p|\] for all coprime integers $p,q$ with $p\neq 0$ and $q\geq 1$, by Theorem \ref{thm:rational-surgeries}.
\end{reptheorem}

\begin{proof}
Since $\cinvt(P(n,3,-3)) = 0$ as noted above (these pretzels are slice), we have that
\[\dim I^\#(S^3_{k}(P(n,3,-3))) = r_0(P(n,3,-3)) + |k|\]
for all nonzero integers $k$, by Theorem \ref{thm:rational-surgeries}. In particular, we record the fact that
\begin{align}
\label{eq:equivs}
r_0(P(n,3,-3))=4&\iff\dim I^\#(S^3_{\pm 1}(P(n,3,-3))) = 5\\
& \iff \dim I^\#(S^3_{\pm 2}(P(n,3,-3))) = 6.
\end{align}

Let us start by showing that \begin{equation*}\label{eq:P1}r_0(P(-1,3,-3)) = r_0(P(1,3,-3)) = 4.\end{equation*} Since these two knots are mirror to one another, it suffices to show that the second equality holds. For this, note that $P(1,3,-3)$ is the twist knot $K_{4}=6_1$. Therefore, Proposition \ref{prop:twist-2n+2-surgery} tells us that $r_0(P(1,3,-3)) =4,$ as desired. As above, this implies that \[\dim I^\#(S^3_{\pm 2}(P(-1,3,-3))) =6=\dim I^\#(S^3_{\pm 2}(P(1,3,-3))).\] Together with Proposition \ref{prop:pretzel-homeo}, the first equality  implies that 
\[\dim I^\#(S^3_{\pm 2}(P(n,3,-3))) =6\iff r_0(P(n,3,-3))=4\] for all $n \equiv -1 \pmod{3}$, while the second equality implies that \[\dim I^\#(S^3_{\pm 2}(P(n,3,-3))) =6\iff r_0(P(n,3,-3))=4\] for all $n \equiv 1 \pmod{3}$. 
It  remains to address the case $n \equiv 0 \pmod{3}$. 

Meier \cite{meier} proved that 1-surgery on each $P(n,3,-3)$ for $2 \leq n \leq 6$ is a small Seifert-fibered manifold, and in particular 
\[ S^3_1(P(3,3,-3)) \cong -S^3_1(P(4,3,-3)) \cong S^2(1/2,-1/5,-2/7), \]
where the notation on the right describes the Seifert invariants of the manifold in question. We therefore have that \[\dim I^\#(S^3_{1}(P(3,3,-3)))=\dim I^\#(S^3_{1}(P(4,3,-3))) = 5,\] where the second equality follows from our calculation that $r_0(P(4,3,-3))=4$ above. By \eqref{eq:equivs}, this  implies that \[\dim I^\#(S^3_{\pm 2}(P(3,3,-3))) =6\iff r_0(P(3,3,-3))=4,\] which, together with Proposition \ref{prop:pretzel-homeo},  shows that \[\dim I^\#(S^3_{\pm 2}(P(n,3,-3))) =6\iff r_0(P(n,3,-3))=4\] for all $n \equiv 0 \pmod{3}$.
\end{proof}

We now turn to the proof of Proposition~\ref{prop:pretzel-homeo}, which was provided to us by Ken Baker.  In preparation, we recall how to realize Dehn surgery on a strongly invertible knot $K$ as the branched double cover of a link in $S^3$, following Montesinos \cite{montesinos}.  By definition, there is an involution $\iota$ of $S^3$ with $\iota(K)=K$, having fixed set an unknot $U$ which intersects $K$ in two points.  In the quotient $S^3/\iota \cong S^3$, we remove a neighborhood of the arc $K/\iota$, and what remains of the unknot $U/\iota$ is a tangle with four endpoints, whose branched double cover is the complement of $K$.  We can thus fill this tangle in with a rational tangle to get a link $L \subset S^3$ whose branched double cover is any fixed Dehn surgery on $K$.

Similarly, suppose that $K$ is a periodic knot of period $2$; now there is an involution $\iota: S^3 \to S^3$ with $\iota(K)=K$ whose fixed set is an unknot $U$ disjoint from $K$.  If we do Dehn surgery of slope $p/q$ on $K$, then $\iota$ extends across the surgery torus, where it acts freely if $p$ is odd and fixes the core if $p$ is even.  Taking quotients by $\iota$, we see that $S^3_{p/q}(K)$ is the branched double cover of a link $L \subset S^3_{p/2q}(K/\iota)$, where $L$ consists of the image $U/\iota$ if $p$ is odd and $U/\iota \cup K/\iota$ if $p$ is even.  We note that $L \subset S^3$ if $K/\iota$ is unknotted and $p$ is $\pm1$ or $\pm2$, though this need not hold in general.

\begin{proof}[Proof of Proposition~\ref{prop:pretzel-homeo}]
We wish to show for fixed $n \in \Z$ that
\[ S^3_{-2}(P(n,3,-3)) \cong S^3_2(P(n+3,3,-3)). \]
In fact, it will suffice to prove this for $n$ even: assuming we have done so, then since $P(-m,3,-3)$ is the mirror of $P(m,3,-3)$ for all $m$, we have
\begin{align*}
S^3_{-2}(P(2k+1,3,-3)) &\cong -S^3_2(P(-2k-1,3,-3)) \\
&\cong -S^3_{-2}(P(-2k-4,3,-3)) \\
&\cong S^3_2(P(2k+4,3,-3)),
\end{align*}
where the second homeomorphism comes from the case $n=-2k-4$ of the proposition, establishing the result for odd $n=2k+1$ as well.  We therefore wish to prove for arbitrary $k\in\Z$ that
\[ S^3_{-2}(P(2k,3,-3)) \cong S^3_2(2k+3,3,-3), \]
and we will do so by exhibiting both of these as branched double covers of the same 2-component link.

\begin{figure}
\tikzset{twistregion/.style={draw, fill=white, thick, minimum width=0.6cm}}
\newcommand{\AxisRotator}{\tikz [x=0.10cm,y=0.25cm,line width=.2ex,-stealth] \draw[thin] (0,0) arc (-165:165:1 and 1);}
\begin{tikzpicture}

\begin{scope}[yshift=5cm] 
\begin{scope}[xshift=-4cm]
\draw[dotted] (0,0) circle (1);
\clip (0,0) circle (1);
\draw[link] (-0.2,-1) -- ++(0,2) ++(0.4,0) -- ++(0,-2);
\node[twistregion] at (0,0.5) {\tiny$k$};
\node[twistregion] at (0,-0.5) {\tiny$k$};
\draw[semithick] (-1,0) -- ++(2,0);
\node at (-0.6,0) {\AxisRotator};
\end{scope}
\node at (-2,-0.025) {\small$\xrightarrow{\mathrm{quotient}}$};
\begin{scope}[xshift=0cm]
\draw[dotted] (0,0) circle (1);
\clip (0,0) circle (1);
\draw[link] (-0.2,1) -- ++(0,-1) ++(0.4,0) -- ++(0,1);
\node[twistregion] at (0,0.5) {\tiny$k$};
\draw[semithick] (-0.2,-1) -- ++(0,0.25) to[out=90,in=180,looseness=1.25] ++(0,0.75) -- ++(0.4,0) to[out=0,in=90,looseness=1.25] ++(0,-0.75) -- ++(0,-0.25);
\end{scope}
\node at (2,-0.025) {\small$\xrightarrow{\mathrm{isotopy}}$};
\begin{scope}[xshift=4cm]
\draw[dotted] (0,0) circle (1);
\clip (0,0) circle (1);
\draw[link] (-0.2,1) -- ++(0,-0.5) ++(0.4,0) -- ++(0,0.5);
\draw[semithick] (-0.2,-1) -- ++(0,0.75) to[out=90,in=180,looseness=1.25] ++(0,0.75) -- ++(0.4,0) to[out=0,in=90,looseness=1.25] ++(0,-0.75) -- ++(0,-0.75);
\node[twistregion,semithick] at (0,-0.5) {\tiny$k$};
\end{scope}
\end{scope}

\begin{scope}[yshift=2.5cm] 
\begin{scope}[xshift=-4cm]
\draw[dotted] (0,0) circle (1);
\clip (0,0) circle (1);
\draw[semithick] (-1,0) -- ++(2,0);
\node at (-0.6,0) {\AxisRotator};
\draw[link,looseness=1] (-0.2,1) -- ++(0,-0.55) to[out=270,in=90] ++(0.4,-0.3) ++(-0.4,0) to[out=270,in=90] ++(0.4,-0.3) ++(-0.4,0) to[out=270,in=90] ++(0.4,-0.3) -- ++(0,-0.55);
\draw[link,looseness=1] (0.2,1) -- ++(0,-0.55) to[out=270,in=90] ++(-0.4,-0.3) ++(0.4,0) to[out=270,in=90] ++(-0.4,-0.3) ++(0.4,0) to[out=270,in=90] ++(-0.4,-0.3) -- ++(0,-0.55);
\node[twistregion] at (0,0.65) {\tiny$k$};
\node[twistregion] at (0,-0.65) {\tiny$k$};
\end{scope}
\node at (-2,-0.025) {\small$\xrightarrow{\mathrm{quotient}}$};
\begin{scope}[xshift=0cm]
\draw[dotted] (0,0) circle (1);
\clip (0,0) circle (1);
\draw[link,looseness=1] (-0.2,1) -- ++(0,-0.55) to[out=270,in=90] ++(0.4,-0.4) to[out=270,in=270,looseness=2] ++(-0.4,0);
\draw[link,looseness=1] (-0.2,0.05) to[out=90,in=270] ++(0.4,0.4) -- ++(0,0.55);
\node[twistregion] at (0,0.65) {\tiny$k$};
\draw[line width=3pt,white] (-0.2,-1) -- ++(0,0.75) to[out=90,in=90,looseness=2] ++(0.4,0) -- ++(0,-0.75);
\draw[semithick] (-0.2,-1) -- ++(0,0.75) to[out=90,in=90,looseness=2] ++(0.4,0) -- ++(0,-0.75);
\begin{scope} 
\clip (0,-0.5) rectangle (0.4,0.5);
\draw[link] (0.2,0.05) to[out=270,in=270,looseness=2] ++(-0.4,0);
\end{scope}
\end{scope}
\node at (2,-0.025) {\small$\xrightarrow{\mathrm{isotopy}}$};
\begin{scope}[xshift=4cm]
\draw[dotted] (0,0) circle (1);
\clip (0,0) circle (1);
\draw[link] (-0.2,1) -- ++(0,-0.5) to[out=270,in=270,looseness=2] ++(0.4,0) -- ++(0,0.5);
\draw[semithick] (0.2,-1) -- ++(0,0.15) to[out=90,in=270,looseness=1] ++(-0.4,0.4) -- ++(0,0.65);
\draw[line width=3pt,white] (-0.2,0.2) to[out=90,in=90,looseness=2] ++(0.4,0) -- ++(0,-0.65) to[out=270,in=90,looseness=1] ++(-0.4,-0.4) -- ++(0,-0.15);
\draw[semithick] (-0.2,0.2) to[out=90,in=90,looseness=2] ++(0.4,0) -- ++(0,-0.65) to[out=270,in=90,looseness=1] ++(-0.4,-0.4) -- ++(0,-0.15);
\node[twistregion,semithick] at (0,-0.2) {\tiny$k$};
\begin{scope} 
\clip (0,0) rectangle (0.4,0.5);
\draw[link] (-0.2,0.5) to[out=270,in=270,looseness=2] ++(0.4,0);
\end{scope}
\end{scope}
\end{scope}

\begin{scope} 
\begin{scope}[xshift=-4cm]
\draw[dotted] (0,0) circle (1);
\clip (0,0) circle (1);
\draw[semithick] (-1,0) -- ++(2,0);
\node at (-0.78,0) {\AxisRotator};
\draw[link,looseness=1] (-0.6,1) -- ++(0,-0.25) to[out=270,in=90] ++(0.4,-0.5) ++(-0.4,0) to[out=270,in=90] ++(0.4,-0.5) ++(-0.4,0) to[out=270,in=90] ++(0.4,-0.5) to[out=270,in=270,looseness=1.5] ++(0.4,0) to[out=90,in=270] ++(0.4,0.5) ++(-0.4,0) to[out=90,in=270] ++(0.4,0.5) ++(-0.4,0) to[out=90,in=270] ++(0.4,0.5) -- ++(0,0.25);
\draw[link,looseness=1] (0.6,-1) -- ++ (0,0.25) to[out=90,in=270] ++(-0.4,0.5) ++(0.4,0) to[out=90,in=270] ++(-0.4,0.5) ++(0.4,0) to[out=90,in=270] ++(-0.4,0.5) to[out=90,in=90,looseness=1.5] ++(-0.4,0) to[out=270,in=90] ++(-0.4,-0.5) ++(0.4,0) to[out=270,in=90] ++(-0.4,-0.5) ++(0.4,0) to[out=270,in=90] ++(-0.4,-0.5) -- ++(0,-0.25);
\end{scope}
\node at (-2,-0.025) {\small$\xrightarrow{\mathrm{quotient}}$};
\begin{scope}[xshift=0cm]
\draw[dotted] (0,0) circle (1);
\clip (0,0) circle (1);
\draw[semithick] (-0.6,-1) -- ++(0,0.25) to[out=90,in=180,looseness=1.25] ++(0,0.75) -- ++(1.2,0) to[out=0,in=90,looseness=1.25] ++(0,-0.75) -- ++(0,-0.25);
\draw[link,looseness=1] (-0.6,1) -- ++(0,-0.25) to[out=270,in=90] ++(0.4,-0.5) to[out=270,in=0] ++(-0.2,-0.5) to[out=180,in=270] ++(-0.2,0.4) to[out=90,in=180] ++(0.6,0.4) to[out=0,in=90] ++(0.6,-0.4) to[out=270,in=0] ++(-0.2,-0.4) to[out=180,in=270] ++(-0.2,0.5) to[out=90,in=270] ++(0.4,0.5) -- ++(0,0.25);
\draw[link,looseness=1] (-0.6,0.15) to[out=90,in=180] ++(0.6,0.4) to[out=0,in=90] ++(0.6,-0.4);
\begin{scope} 
\draw[line width=3pt,white] (-0.6,-1) -- ++(0,0.25) to[out=90,in=180,looseness=1.25] ++(0,0.75) -- ++(0.2,0) ++(0.8,0) -- ++(0.2,0) to[out=0,in=90,looseness=1.25] ++(0,-0.75) -- ++(0,-0.25);
\draw[semithick] (-0.6,-1) -- ++(0,0.25) to[out=90,in=180,looseness=1.25] ++(0,0.75) -- ++(0.2,0) ++(0.8,0) -- ++(0.2,0) to[out=0,in=90,looseness=1.25] ++(0,-0.75) -- ++(0,-0.25);
\end{scope}
\end{scope}
\node at (2,-0.025) {\small$\xrightarrow{\mathrm{isotopy}}$};
\begin{scope}[xshift=4cm]
\draw[dotted] (0,0) circle (1);
\clip (0,0) circle (1);
\draw[link,looseness=1.5] (-0.6,1) -- ++(0,-0.25) to[out=270,in=270] ++(1.2,0) -- ++(0,0.25);
\draw[semithick] (0.6,-1) -- ++(0,0.25) to[out=90,in=270] ++(-0.5,0.5) -- ++(0,0.5) to[out=90,in=90,looseness=2] ++(0.25,0) to[out=270,in=0] (0,0);
\draw[semithick] (-0.6,-1) -- ++(0,0.25) to[out=90,in=270] ++(0.5,0.5) -- ++(0,0.5) to[out=90,in=90,looseness=2] ++(0.65,0) to[out=270,in=0] (0,-0.2);
\draw[semithick] (0,0) to[out=180,in=180,looseness=5] (0,-0.2);
\begin{scope} 
\draw[line width=3pt, white] (-0.1,-0.1) -- ++(0,0.2) ++(0.2,0) -- ++(0,-0.2);
\draw[semithick] (-0.1,-0.1) -- ++(0,0.2) ++(0.2,0) -- ++(0,-0.2);
\clip (-0.5,-0.3) rectangle (0.5,-0.1);
\draw[line width=3pt, white] (0,0) to[out=180,in=180,looseness=5] (0,-0.2) to[out=0,in=270] (0.55,0.25);
\draw[semithick] (0,0) to[out=180,in=180,looseness=5] (0,-0.2) to[out=0,in=270] (0.55,0.25);
\end{scope}
\begin{scope} 
\clip (-0.5,0) rectangle (0.25,0.75);
\draw[link,looseness=1.5] (-0.6,1) -- ++(0,-0.25) to[out=270,in=270] ++(1.2,0) -- ++(0,0.25);
\end{scope}
\begin{scope} \clip (0.25,0.25) rectangle (0.7,0.6);
\draw[line width=3pt,white] (0.55,0.25) to[out=90,in=90,looseness=2] ++(-0.65,0);
\draw[line width=3pt,white] (0.35,0.25) to[out=90,in=90,looseness=2] ++(-0.25,0);
\draw[semithick] (0.55,0.25) to[out=90,in=90,looseness=2] ++(-0.65,0);
\draw[semithick] (0.35,0.25) to[out=90,in=90,looseness=2] ++(-0.25,0);
\end{scope}
\end{scope}
\end{scope}

\end{tikzpicture}
\caption{Quotients of some tangles by a $180^\circ$ rotation about an axis of symmetry.  The topmost tangle intersects the axis in two points, while the other two are disjoint from it.}
\label{fig:tangle-quotients}
\end{figure}
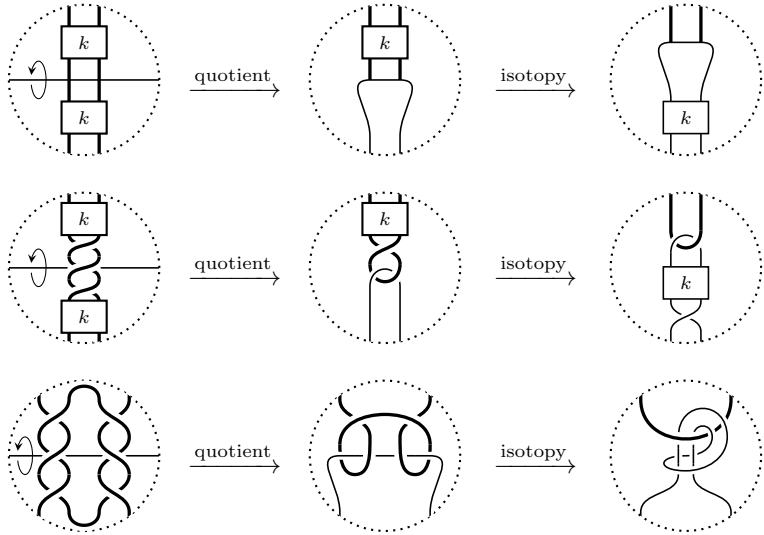
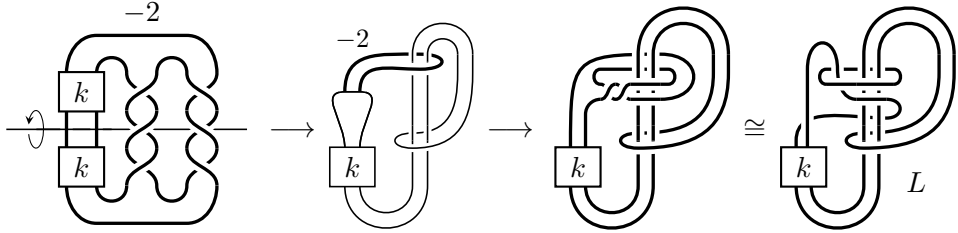
\begin{figure}
\tikzset{twistregion/.style={draw, fill=white, thick, minimum width=0.6cm}}
\tikzset{thinlink/.style = { white, double = black, line width = 1.2pt, double distance = 0.6pt, looseness=1.75 }}
\newcommand{\AxisRotator}{\tikz [x=0.10cm,y=0.25cm,line width=.2ex,-stealth] \draw[thin] (0,0) arc (-165:165:1 and 1);}
\begin{tikzpicture}
\begin{scope} 
\draw[semithick] (-1.6,0.75) -- (1.6,0.75); 
\draw[link,looseness=1] (0,1.5) to[out=270,in=90] ++(0.4,-0.5) ++(-0.4,0) to[out=270,in=90] ++(0.4,-0.5) ++(-0.4,0) to[out=270,in=90] ++(0.4,-0.5) to[out=270,in=270,looseness=1.75] ++(0.4,0) to[out=90,in=270] ++(0.4,0.5) ++(-0.4,0) to[out=90,in=270] ++(0.4,0.5) ++(-0.4,0) to[out=90,in=270] ++(0.4,0.5);
\draw[link,looseness=1] (1.2,0) to[out=90,in=270] ++(-0.4,0.5) ++(0.4,0) to[out=90,in=270] ++(-0.4,0.5) ++(0.4,0) to[out=90,in=270] ++(-0.4,0.5) to[out=90,in=90,looseness=1.75] ++(-0.4,0) to[out=270,in=90] ++(-0.4,-0.5) ++(0.4,0) to[out=270,in=90] ++(-0.4,-0.5) ++(0.4,0) to[out=270,in=90] ++(-0.4,-0.5);
\draw[link] (0,1.5) to[out=90,in=90] ++(-0.4,0) to[out=270,in=90] ++(0,-1.5) to[out=270,in=270] ++(0.4,0);
\draw[link,looseness=1] (1.2,1.5) to[out=90,in=0] ++(-0.4,0.5) to[out=180,in=0] node[midway,above,black] {$-2$} ++(-1.2,0) to[out=180,in=90] ++(-0.4,-0.5) to[out=270,in=90] ++(0,-1.5) to[out=270,in=180] ++(0.4,-0.5) to[out=0,in=180] ++(1.2,0) to[out=0,in=270] ++(0.4,0.5);
\node[twistregion] at (-0.6,1.25) {$k$};
\node[twistregion] at (-0.6,0.25) {$k$};
\draw[semithick] (-1.2,0.75) -- ++(1,0); 
\node at (-1.2,0.75) {\AxisRotator};
\end{scope}

\node at (2.2,0.725) {$\longrightarrow$};

\begin{scope}[xshift=3cm]
\draw[thinlink] (-0.1,0) -- ++(0,0.5) to[out=90,in=180,looseness=1.25] ++(0,0.75) -- ++(0.2,0) to[out=0,in=90,looseness=1.25] ++(0,-0.75) -- ++(0,-0.5);
\draw[thinlink] (0.1,0) to[out=270,in=270] (0.8,0) -- ++(0,1.75) to[out=90,in=90] ++(0.8,0);
\draw[thinlink] (-0.1,0) to[out=270,in=270] (1.0,0) -- ++(0,1.75) to[out=90,in=90] ++(0.4,0);
\draw[thinlink,looseness=1.25] (1.6,1.75) -- ++(0,-0.25) to[out=270,in=0] ++(-0.8,-1) to[out=180,in=180,looseness=4] ++(0,0.2) to[out=0,in=270] ++(0.6,0.8) -- ++(0,0.25);
\draw[thinlink] (0.8,0.6) -- ++(0,0.2) ++(0.2,-0.2) -- ++(0,0.2); 
\draw[link,looseness=1.25] (-0.1,1.25) to[out=90,in=180] ++(1,0.5) to[out=0,in=0,looseness=5] ++(0,-0.2) to[out=180,in=90] ++(-0.8,-0.3);
\draw[thinlink] (0.8,1.65) -- ++(0,0.1) to[out=90,in=90] ++(0.8,0); 
\draw[thinlink] (1.0,1.65) -- ++(0,0.1) to[out=90,in=90] ++(0.4,0);
\draw[semithick] (-0.1,0.5) to[out=90,in=180,looseness=1.25] ++(0,0.75) -- ++(0.2,0) to[out=0,in=90,looseness=1.25] ++(0,-0.75);
\node[twistregion,semithick] at (0,0.25) {$k$};
\node at (0,1.95) {\small$-2$};
\end{scope}

\node at (5.1,0.725) {$\longrightarrow$};

\begin{scope}[xshift=6cm]
\draw[link] (-0.1,0) -- ++(0,0.5) to[out=90,in=180] ++(1.2,1.25) to[out=0,in=0,looseness=2.5] ++(0,-0.6) -- ++(-0.4,0) to[out=180,in=0,looseness=1] ++(-0.2,0.2) to[out=180,in=0,looseness=1] ++(-0.2,-0.2) to[out=180,in=90,looseness=1] ++(-0.2,-0.2) -- (0.1,0);
\draw[link,looseness=1.5] (0.3,1.35) to[out=180,in=180] ++(0,0.2) -- ++(0.9,0) to[out=0,in=0] ++(0,-0.2) -- ++(-0.5,0) to[out=180,in=0,looseness=1] ++(-0.2,-0.2) to[out=180,in=0,looseness=1] ++(-0.2,0.2);
\draw[link] (0.1,0) to[out=270,in=270] (0.8,0) -- ++(0,1.75) to[out=90,in=90] ++(1.2,0);
\draw[link] (-0.1,0) to[out=270,in=270] (1.0,0) -- ++(0,1.75) to[out=90,in=90] ++(0.8,0);
\draw[link,looseness=1.25] (2,1.75) -- ++(0,-0.25) to[out=270,in=0] ++(-1.2,-1) to[out=180,in=180,looseness=4] ++(0,0.2) to[out=0,in=270] ++(1,0.8) -- ++(0,0.25);
\draw[link] (0.8,0.6) -- ++(0,0.2) ++(0.2,-0.2) -- ++(0,0.2); 
\draw[link] (0.3,1.15) to[out=0,in=180,looseness=1] ++(0.2,0.2) ++(0.2,0) -- ++(0.4,0) ++(0,-0.2) -- ++(-0.4,0);
\node[twistregion] at (0,0.25) {$k$};
\end{scope}

\node at (8.35,0.75) {$\cong$};

\begin{scope}[xshift=9cm]
\draw[link] (-0.1,0) -- ++(0,0.6) to[out=90,in=180,looseness=1] ++(1.2,0.3) to[out=0,in=0,looseness=2.5] ++(0,0.25) -- ++(-0.4,0);
\draw[link,looseness=1] (0.7,1.15) to[out=180,in=270] ++(-0.25,0.4) to[out=90,in=90,looseness=3] ++(-0.4,0) -- ++(0,-1.55);
\draw[link,looseness=1.5] (0.3,1.35) to[out=180,in=180] ++(0,0.2) -- ++(0.9,0) to[out=0,in=0] ++(0,-0.2) -- ++(-0.9,0);
\draw[link] (0.1,0) to[out=270,in=270] (0.8,0) -- ++(0,1.75) to[out=90,in=90] ++(1.2,0);
\draw[link] (-0.1,0) to[out=270,in=270] (1.0,0) -- ++(0,1.75) to[out=90,in=90] ++(0.8,0);
\draw[link,looseness=1.25] (2,1.75) -- ++(0,-0.25) to[out=270,in=0] ++(-1.2,-1) to[out=180,in=180,looseness=4] ++(0,0.2) to[out=0,in=270] ++(1,0.8) -- ++(0,0.25);
\draw[link] (0.8,0.6) -- ++(0,0.2) ++(0.2,-0.2) -- ++(0,0.2); 
\draw[link] (1.2,1.35) -- ++(-0.5,0) ++(0,-0.2) -- ++(0.4,0);
\begin{scope} \clip (0.35,1.45) rectangle (0.75,1.65);
\draw[link] (0.7,1.15) to[out=180,in=270,looseness=1] ++(-0.25,0.4) to[out=90,in=90,looseness=3] ++(-0.4,0);
\end{scope}
\node[twistregion] at (0,0.25) {$k$};
\node at (1.5,0.05) {$L$};
\end{scope}

\end{tikzpicture}
\caption{Realizing $S^3_{-2}(P(2k,3,-3))$ as a branched double cover.  We quotient by rotation around the indicated axis of symmetry, replace a neighborhood of the remaining arc with a rational tangle, and then isotope to get a link $L$ whose branched double cover is $S^3_{-2}(P(2k,3,-3))$.}
\label{fig:P-2k-dcover}
\end{figure}
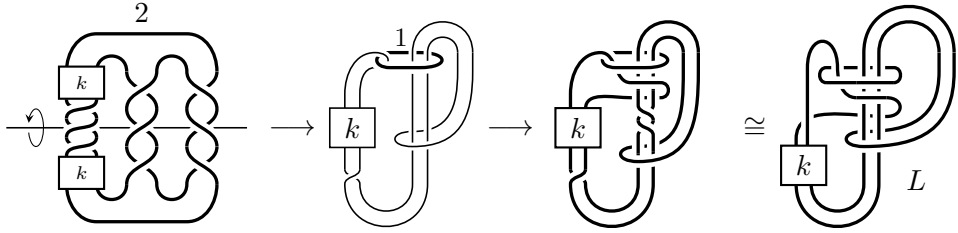
\begin{figure}
\tikzset{twistregion/.style={draw, fill=white, thick, minimum width=0.6cm}}
\tikzset{thinlink/.style = { white, double = black, line width = 1.2pt, double distance = 0.6pt, looseness=1.75 }}
\newcommand{\AxisRotator}{\tikz [x=0.10cm,y=0.25cm,line width=.2ex,-stealth] \draw[thin] (0,0) arc (-165:165:1 and 1);}
\begin{tikzpicture}
\begin{scope} 
\draw[semithick] (-1.6,0.75) -- (1.6,0.75); 
\draw[link,looseness=1] (0,1.5) to[out=270,in=90] ++(0.4,-0.5) ++(-0.4,0) to[out=270,in=90] ++(0.4,-0.5) ++(-0.4,0) to[out=270,in=90] ++(0.4,-0.5) to[out=270,in=270,looseness=1.75] ++(0.4,0) to[out=90,in=270] ++(0.4,0.5) ++(-0.4,0) to[out=90,in=270] ++(0.4,0.5) ++(-0.4,0) to[out=90,in=270] ++(0.4,0.5);
\draw[link,looseness=1] (1.2,0) to[out=90,in=270] ++(-0.4,0.5) ++(0.4,0) to[out=90,in=270] ++(-0.4,0.5) ++(0.4,0) to[out=90,in=270] ++(-0.4,0.5) to[out=90,in=90,looseness=1.75] ++(-0.4,0) to[out=270,in=90] ++(-0.4,-0.5) ++(0.4,0) to[out=270,in=90] ++(-0.4,-0.5) ++(0.4,0) to[out=270,in=90] ++(-0.4,-0.5);
\draw[link] (0,1.5) to[out=90,in=90] ++(-0.4,0) -- ++(0,-0.375) ++(0,-0.75) -- ++(0,-0.375) to[out=270,in=270] ++(0.4,0);
\draw[link,looseness=1] (1.2,1.5) to[out=90,in=0] ++(-0.4,0.5) to[out=180,in=0] node[midway,above,black] {$2$} ++(-1.2,0) to[out=180,in=90] ++(-0.4,-0.5) -- ++(0,-0.375) ++(0,-0.75) -- ++(0,-0.375) to[out=270,in=180] ++(0.4,-0.5) to[out=0,in=180] ++(1.2,0) to[out=0,in=270] ++(0.4,0.5);
\draw[link,looseness=1] (-0.4,0.375) to[out=90,in=270] ++(-0.4,0.25) ++(0.4,0) to[out=90,in=270] ++(-0.4,0.25) ++(0.4,0) to[out=90,in=270] ++(-0.4,0.25);
\draw[link,looseness=1] (-0.8,0.375) to[out=90,in=270] ++(0.4,0.25) ++(-0.4,0) to[out=90,in=270] ++(0.4,0.25) ++(-0.4,0) to[out=90,in=270] ++(0.4,0.25);
\node[twistregion] at (-0.6,1.35) {\tiny$k$};
\node[twistregion] at (-0.6,0.15) {\tiny$k$};
\node at (-1.2,0.75) {\AxisRotator};
\end{scope}

\node at (2.2,0.725) {$\longrightarrow$};

\begin{scope}[xshift=3cm]
\draw[thinlink] (-0.1,0.2) to[out=270,in=90,looseness=1] (0.1,0) to[out=270,in=270] (0.8,0) -- ++(0,1.75) to[out=90,in=90] ++(0.8,0);
\draw[thinlink] (0.1,0.2) to[out=270,in=90,looseness=1] (-0.1,0) to[out=270,in=270] (1.0,0) -- ++(0,1.75) to[out=90,in=90] ++(0.4,0);
\draw[thinlink] (-0.1,0.2) -- ++(0,1) to[out=90,in=180,looseness=1] (0.3,1.75) to[out=0,in=0,looseness=3] ++(0,-0.2) to[out=180,in=90,looseness=1] (0.1,1.2) -- ++(0,-1);
\draw[thinlink,looseness=1.25] (1.6,1.75) -- ++(0,-0.25) to[out=270,in=0] ++(-0.8,-1) to[out=180,in=180,looseness=4] ++(0,0.2) to[out=0,in=270] ++(0.6,0.8) -- ++(0,0.25);
\draw[thinlink] (0.8,0.6) -- ++(0,0.2) ++(0.2,-0.2) -- ++(0,0.2); 
\draw[link,looseness=1.25] (0.5,1.75) -- ++(0.4,0) to[out=0,in=0,looseness=5] ++(0,-0.2) -- ++(-0.4,0) to[out=180,in=180,looseness=3] ++(0,0.2);
\draw[thinlink] (0.8,1.65) -- ++(0,0.1) to[out=90,in=90] ++(0.8,0); 
\draw[thinlink] (1.0,1.65) -- ++(0,0.1) to[out=90,in=90] ++(0.4,0);
\begin{scope} 
\clip (0,1.65) rectangle (0.5,1.85);
\draw[thinlink] (-0.1,1.2) to[out=90,in=180,looseness=1] (0.3,1.75) to[out=0,in=0,looseness=3] ++(0,-0.2);
\end{scope}
\node[twistregion,semithick] at (0,0.75) {$k$};
\node at (0.65,1.95) {\small$1$};
\end{scope}

\node at (5.1,0.725) {$\longrightarrow$};

\begin{scope}[xshift=6cm]
\draw[link] (-0.1,0.2) to[out=270,in=90,looseness=1] (0.1,0) to[out=270,in=270] (0.8,0) -- ++(0,1.75) to[out=90,in=90] ++(0.8,0);
\draw[link] (0.1,0.2) to[out=270,in=90,looseness=1] (-0.1,0) to[out=270,in=270] (1.0,0) -- ++(0,1.75) to[out=90,in=90] ++(0.4,0);
\draw[link] (-0.1,0.2) -- ++(0,1) to[out=90,in=180,looseness=1] (0.3,1.75) to[out=0,in=180,looseness=1] ++(0.4,-0.4) -- ++(0.4,0) to[out=0,in=0] ++(0,-0.2) -- ++(-0.4,0) to[out=180,in=90,looseness=1] (0.1,1) -- ++(0,-0.8);
\draw[link,looseness=1.25] (1.6,1.75) -- ++(0,-0.25) to[out=270,in=0] ++(-0.8,-1.2) to[out=180,in=180,looseness=4] ++(0,0.2) to[out=0,in=270] ++(0.6,1.0) -- ++(0,0.25);
\draw[link,looseness=1] (0.8,0.41) -- ++(0,0.24) to[out=90,in=270] ++(0.2,0.2) ++(-0.2,0) to[out=90,in=270] ++(0.2,0.2) -- ++(0,0.2); 
\draw[link,looseness=1] (1.0,0.43) -- ++(0,0.22) to[out=90,in=270] ++(-0.2,0.2) ++(0.2,0) to[out=90,in=270] ++(-0.2,0.2) -- ++(0,0.2);
\draw[link,looseness=1.25] (0.5,1.75) -- ++(0.4,0) to[out=0,in=0,looseness=5] ++(0,-0.2) -- ++(-0.4,0) to[out=180,in=180,looseness=3] ++(0,0.2);
\draw[link] (0.8,1.65) -- ++(0,0.1) to[out=90,in=90] ++(0.8,0); 
\draw[link] (1.0,1.65) -- ++(0,0.1) to[out=90,in=90] ++(0.4,0);
\begin{scope} 
\clip (0,1.65) rectangle (0.6,1.85);
\draw[link] (-0.1,1.2) to[out=90,in=180,looseness=1] (0.3,1.75) to[out=0,in=180,looseness=1] ++(0.4,-0.4);
\end{scope}
\node[twistregion] at (0,0.75) {$k$};
\end{scope}

\node at (8.35,0.75) {$\cong$};

\begin{scope}[xshift=9cm]
\draw[link] (-0.1,0) -- ++(0,0.6) to[out=90,in=180,looseness=1] ++(1.2,0.3) to[out=0,in=0,looseness=2.5] ++(0,0.25) -- ++(-0.4,0);
\draw[link,looseness=1] (0.7,1.15) to[out=180,in=270] ++(-0.25,0.4) to[out=90,in=90,looseness=3] ++(-0.4,0) -- ++(0,-1.55);
\draw[link,looseness=1.5] (0.3,1.35) to[out=180,in=180] ++(0,0.2) -- ++(0.9,0) to[out=0,in=0] ++(0,-0.2) -- ++(-0.9,0);
\draw[link] (0.1,0) to[out=270,in=270] (0.8,0) -- ++(0,1.75) to[out=90,in=90] ++(1.2,0);
\draw[link] (-0.1,0) to[out=270,in=270] (1.0,0) -- ++(0,1.75) to[out=90,in=90] ++(0.8,0);
\draw[link,looseness=1.25] (2,1.75) -- ++(0,-0.25) to[out=270,in=0] ++(-1.2,-1) to[out=180,in=180,looseness=4] ++(0,0.2) to[out=0,in=270] ++(1,0.8) -- ++(0,0.25);
\draw[link] (0.8,0.6) -- ++(0,0.2) ++(0.2,-0.2) -- ++(0,0.2); 
\draw[link] (1.2,1.35) -- ++(-0.5,0) ++(0,-0.2) -- ++(0.4,0);
\begin{scope} \clip (0.35,1.45) rectangle (0.75,1.65);
\draw[link] (0.7,1.15) to[out=180,in=270,looseness=1] ++(-0.25,0.4) to[out=90,in=90,looseness=3] ++(-0.4,0);
\end{scope}
\node[twistregion] at (0,0.25) {$k$};
\node at (1.5,0.05) {$L$};
\end{scope}

\end{tikzpicture}
\caption{Realizing $S^3_2(P(2k+3,3,-3))$ as a branched double cover.  We quotient by rotation around the axis of symmetry, then add the core of the resulting 1-surgery torus to the branch locus and blow it down to get the same $L$ as in Figure~\ref{fig:P-2k-dcover}.}
\label{fig:P-2k3-dcover}
\end{figure}

First, the pretzel $P=P(2k,3,-3)$ is strongly invertible, as realized by a $180^\circ$ rotation around the axis shown on the left side of Figure~\ref{fig:P-2k-dcover}.  Thus in Figure~\ref{fig:P-2k-dcover} we construct a 2-component link $L$ with branched double cover $S^3_{-2}(P)$ by the process described above.  We use Figure~\ref{fig:tangle-quotients} to simplify our work: we can build $P$ by gluing the top left and bottom left tangles together, so the quotient is built out of the top right and bottom right tangles in Figure~\ref{fig:tangle-quotients}.

Next, the pretzel $P'=P(2k+3,3,-3)$ is periodic of period $2$, so we can describe $S^3_2(P')$ as the branched double cover of a link $L = U/\iota \cup P'/\iota$ in $S^3_1(P'/\iota)$ where $\iota$ is the corresponding involution and $U$ its axis of symmetry.  This is illustrated in Figure~\ref{fig:P-2k3-dcover}, using the middle and bottom rows of Figure~\ref{fig:tangle-quotients} to simplify.  Since $P'/\iota$ is unknotted, this is a 2-component link in $S^3$, and in fact it is the same link $L$ whose branched double cover was $S^3_{-2}(P)$, so the two 3-manifolds must be homeomorphic.
\end{proof}

\subsection{Low-crossing knots}

We  first determine the value of $\chominvt$ for all prime knots with up to eight crossings, and compute $\cinvt$ for all but two of these knots (see Table~\ref{table:small-values}). We then use these computations,   the results of the previous two subsections, and a bit more topology to compute $r_0$ for more than half of these knots (see Tables \ref{table:main} and \ref{table:isharp-values}), proving Theorem \ref{thm:8-crossings}.

The computations of $\chominvt$ require minimal effort.  Of the prime knots through eight crossings, all but $8_{19}$, $8_{20}$, and $8_{21}$ are alternating, so  $\chominvt = -\sigma/2$ for these knots by Corollary~\ref{cor:alternating}.  Moreover, each of $8_{19}$, $8_{20}$, and $\mirror{8_{21}}$ is quasipositive, so Corollary~\ref{cor:chom-qp} says that
\begin{align*}
\chominvt(8_{19}) &= g_s(8_{19}) = 3\\
\chominvt(8_{20}) &= g_s(8_{20}) = 0\\
\chominvt(8_{21}) &=-\chominvt(\mirror{8_{21}})  =-g_s(8_{21}) = -1.
\end{align*}

The computations of $\cinvt$ are slightly more involved.  In some cases,  Theorem~\ref{thm:conc-invt} suffices:\begin{itemize}
\item If $K$ is the unknot, $6_1$, $8_8$, $8_9$, or $8_{20}$, then $K$ is slice and so $\cinvt(K) = 0$.
\item If $K$ is $4_1$, $6_3$, $8_3$, $8_{12}$, $8_{17}$, or $8_{18}$ then $K$ is amphichiral and so $\cinvt(K) = 0$.
\end{itemize}
In most other cases, we have $|\chominvt(K)| = g_s(K) > 0$, which implies that $|\cinvt(K)| = 2g_s(K)-1$, with $\cinvt(K)$ and $\chominvt(K)$ having the same sign, by Corollary~\ref{cor:tau-maximal}.  In fact, this determines $\cinvt$ for all of the remaining knots  except for $7_7,8_1,8_{13}$, and we have  $\cinvt(8_1) = 0$ by Proposition~\ref{prop:twist-2n+2-surgery}, since $8_1$ is the twist knot $K_6$. The values of $\cinvt(7_7)$ and $\cinvt(8_{13})$ remain unknown, though by Proposition~\ref{prop:tau-invariant}  they must each be $-1$, $0$, or $1$. These results  are summarized in  Table~\ref{table:small-values}.

\begin{table}
\centering
\begin{tabular}{ccc}
$K$ & $\cinvt(K)$ & $\chominvt(K)$ \\
\hline
$0_1$ & $0$ & $0$  \\
$3_1$ & $-1$ & $-1$ \\
$4_1$ & $0$ & $0$  \\
$5_1$ & $-3$ & $-2$ \\
$5_2$ & $-1$ & $-1$ \\
$6_1$ & $0$ & $0$ \\
$6_2$ & $-1$ & $-1$ \\
$6_3$ & $0$ & $0$ \\
$7_1$ & $-5$ & $-3$ \\
$7_2$ & $-1$ & $-1$  \\
$7_3$ & $3$ & $2$ \\
$7_4$ & $1$ & $1$ \\
$7_5$ & $-3$ & $-2$ \\
$7_6$ & $-1$ & $-1$ \\
$7_7$ & & $0$ \\
$8_1$ & $0$ & $0$ \\
$8_2$ & $-3$ & $-2$ \\
$8_3$ & $0$ & $0$ \\
\hline
\end{tabular}
\hspace{0.5cm}
\begin{tabular}{ccc}
$K$ & $\cinvt(K)$ & $\chominvt(K)$ \\
\hline
$8_4$ & $-1$ & $-1$ \\
$8_5$ & $3$ & $2$ \\
$8_6$ & $-1$ & $-1$ \\
$8_7$ & $1$ & $1$ \\
$8_8$ & $0$ & $0$ \\
$8_9$ & $0$ & $0$ \\
$8_{10}$ & $1$ & $1$ \\
$8_{11}$ & $-1$ & $-1$ \\
$8_{12}$ & $0$ & $0$ \\
$8_{13}$ & & $0$ \\
$8_{14}$ & $-1$ & $-1$ \\
$8_{15}$ & $-3$ & $-2$ \\
$8_{16}$ & $-1$ & $-1$ \\
$8_{17}$ & $0$ & $0$ \\
$8_{18}$ & $0$ & $0$ \\
$8_{19}$ & $5$ & $3$ \\
$8_{20}$ & $0$ & $0$ \\
$8_{21}$ & $-1$ & $-1$ \\
\hline
\end{tabular}
\caption{The known values of $\cinvt(K)$ and $\chominvt(K)$ through eight crossings.} \label{table:small-values}
\end{table}

We next compute $r_0$ for 20 of the 35 nontrivial prime knots through eight crossings. These values are listed in Table \ref{table:main} from the introduction and again, with some further information, in Table \ref{table:isharp-values} here. Together with Theorem \ref{thm:rational-surgeries} and the values of $\cinvt$ we  computed, these values of $r_0$ allow us to compute the framed instanton homology of nonzero rational surgeries on any of these 20 knots (where $\cinvt$ is nonzero, we can also compute $I^\#$ of $0$-surgery). 

\begin{table}
\centering
\begin{tabular}{ccccclc}
$K$ & $n$ & $\dim I^\#(S^3_n(K))$ & $\cinvt(K)$ & $r_0(K)$ & Proof \\
\hline
$3_1$ & $-5$ & $5$ & $-1$ & $1$ & Lemma~\ref{lem:t2q-surgery} ($T_{-2,3}$) \\
$4_1$ & $1$ & $3$ & $0$ & $2$ & Lemma \ref{lem:figure-eight}
\\
$5_1$ & $-9$ & $9$ & $-3$ & $3$ & Lemma~\ref{lem:t2q-surgery} ($T_{-2,5}$) \\
$5_2$ & $-1$ & $3$ & $-1$ & $3$ & Proposition~\ref{prop:twist-2k-1-surgery} ($K_3$) \\
$6_1$ & $1$ & $5$ & $0$ & $4$ & Proposition~\ref{prop:twist-2n+2-surgery} ($K_4$) \\
$6_2$ & $-9$ & $13$ & $-1$ & $5$ & Proposition~\ref{prop:isharp-5_2-surgeries} \\
$6_3$ & $-1$ & $7$ & $0$ & $6$ & Proposition~\ref{prop:6_3-surgeries} \\
$7_1$ & $-13$ & $13$ & $-5$ & $5$ & Lemma~\ref{lem:t2q-surgery} ($T_{-2,7}$) \\
$7_2$ & $-1$ & $5$ & $-1$ & $5$ & Proposition~\ref{prop:twist-2k-1-surgery} ($K_5$) \\
$7_3$ & $7$ & $11$ & $3$ & $7$ & Proposition~\ref{prop:isharp-5_2-surgeries} \\
$7_4$ & $1$ & $7$ & $1$ & $7$ & Proposition~\ref{prop:isharp-5_2-surgeries} \\
$8_1$ & $1$ & $7$ & $0$ & $6$ & Proposition~\ref{prop:twist-2n+2-surgery} ($K_6$)\\
$8_2$ & $-13$ & $19$ & $-3$ & $9$ & Proposition~\ref{prop:dim-8_234} \\
$8_3$ & $1$ & $9$ & $0$ & $8$ & Proposition~\ref{prop:dim-8_234} \\
$8_4$ & $-7$ & $15$ & $-1$ & $9$ & Proposition~\ref{prop:dim-8_234} \\
$8_5$ & $7$ & $15$ & $3$ & $11$ & Theorem~\ref{thm:main-pretzels-2} ($P(3,3,2)$) \\
$8_6$ & $-1$ & $11$ & $-1$ & $11$ & Proposition~\ref{prop:6_3-surgeries} \\
$8_8$ & $-1$ & $13$ & $0$ & $12$ & Proposition~\ref{prop:6_3-surgeries} \\
$8_{19}$ & $11$ & $11$ & $5$ & $5$ & Lemma~\ref{lem:t2q-surgery} ($T_{3,4}$) \\
$8_{20}$ & $1$ & $5$ & $0$ & $4$ & Theorem~\ref{thm:main-pretzels} ($P(2,3,-3)$) \\
\hline
\end{tabular}
\caption{Some values of $\dim I^\#(S^3_n(K))$ which together with $\cinvt(K)$ determine $r_0(K)$ by Theorem~\ref{thm:rational-surgeries}.} \label{table:isharp-values}
\end{table}

As Table \ref{table:isharp-values} indicates, 10 of these 20 knots are either torus knots ($3_1,5_1,7_1,8_{19}$), or twist knots ($4_1,5_2,6_1,7_2,8_1$), or pretzels ($8_{20}$) of the form we have considered previously. The values of $r_0$ for these are thus computed in Lemma~\ref{lem:t2q-surgery} and Theorems~\ref{thm:main-twist} and \ref{thm:main-pretzels}. For the others, recall from Theorem~\ref{thm:rational-surgeries} that once we know $\cinvt(K)$, the value of $r_0(K)$ is determined by \[\dim I^\#(S^3_{r}(K))\] for any nonzero rational number $r$. The values of $r_0$ for the remaining 10 knots are therefore determined by Proposition~\ref{prop:isharp-5_2-surgeries} (for $6_2,7_3,7_4$), Proposition~\ref{prop:dim-8_234} (for $8_2,8_3,8_4$), Theorem~\ref{thm:main-pretzels-2} (proved below, for $8_5$), and Proposition~\ref{prop:6_3-surgeries} (for $6_3,8_6,8_8$) below. The rest of this section is devoted to proving these propositions.

\begin{proposition} \label{prop:isharp-5_2-surgeries}
We have
\begin{align*}
\dim I^\#(S^3_{-9}(6_2)) &= 13, & \dim I^\#(S^3_7(7_3)) &= 11, & \dim I^\#(S^3_1(7_4)) &= 7.
\end{align*}
\end{proposition}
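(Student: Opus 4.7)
The plan is to use Theorem~\ref{thm:rational-surgeries} as an anchor. Each of $K \in \{6_2, 7_3, 7_4\}$ has $\cinvt(K)$ already recorded in Table~\ref{table:small-values} ($-1$, $3$, and $1$ respectively), so by Theorem~\ref{thm:rational-surgeries} a single nonzero-rational-surgery computation pins down $r_0(K)$ and hence determines $\dim I^\#$ of every other nonzero rational surgery on $K$. I therefore need only verify one of the three claimed equalities per knot.

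My main tactic is to exhibit each of $S^3_{-9}(6_2)$, $S^3_7(7_3)$, $S^3_1(7_4)$ as a rational surgery on a knot whose framed instanton homology I have already determined. The natural targets are $5_2 = K_3$, for which Proposition~\ref{prop:twist-2k-1-surgery} gives $\cinvt(5_2) = -1$ and $r_0(5_2) = 3$ and hence $\dim I^\#(S^3_{p/q}(5_2)) = 3q+|p+q|$, together with its mirror and the torus knots $T_{\pm 2, 2k+1}$ covered by Lemma~\ref{lem:t2q-surgery} and Theorem~\ref{thm:nu-l-space}. The Kirby moves I will use are of exactly the type appearing in the proof of Proposition~\ref{prop:two-bridge-surgeries}: starting from a standard two-bridge surgery diagram for $K$, I attach a zero-framed unknot $c$ encircling an appropriate twist region, perform a Rolfsen twist along $c$ (this is what produces a rational denominator), and then blow down the resulting $\pm 1$-framed unknot, which changes the slope on the remaining knot by $\pm \mathrm{lk}(c,K)^2$. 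Choosing $c$ with $\mathrm{lk}(c,K) = 2$ and a single Rolfsen twist yields a half-integer slope on the target knot.

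Once such a homeomorphism is in hand for each knot, the claimed dimensions follow by direct substitution into Theorem~\ref{thm:rational-surgeries}. For example, an identification $S^3_{-9}(6_2) \cong S^3_{-9/2}(5_2)$ would yield $3 \cdot 2 + |{-}9+2| = 13$; $S^3_1(7_4) \cong S^3_{-1/2}(5_2)$ would yield $3 \cdot 2 + |{-}1+2| = 7$; and an analogous half-integral identification of $S^3_7(7_3)$ with a surgery on $\mirror{5_2}$ or on a torus knot would yield $11$. An alternative worth trying in parallel is to realize one of these $3$-manifolds as the branched double cover of a quasi-alternating link and apply Proposition~\ref{prop:ss-thin}; this would bypass any appeal to Theorem~\ref{thm:rational-surgeries} for that particular case.

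The main obstacle is finding the Kirby moves explicitly, since none of $6_2$, $7_3$, $7_4$ admits the two-twist-region shape $K(a,b)$ to which Proposition~\ref{prop:two-bridge-surgeries} directly applies. The search for the right sequence of handle slides (and possibly $1$-handle introductions) is necessarily ad hoc, guided by computer experimentation in SnapPy and Regina as noted in the acknowledgments. Once the three homeomorphisms are verified, the remaining arithmetic is routine.
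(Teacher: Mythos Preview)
Your overall strategy is exactly right, and the arithmetic you sketch is correct. The gap is in the sentence ``none of $6_2$, $7_3$, $7_4$ admits the two-twist-region shape $K(a,b)$ to which Proposition~\ref{prop:two-bridge-surgeries} directly applies.'' This is simply false: all three are two-bridge knots with length-two continued fractions, and in the paper's notation one has
\[
6_2 = K(-3,-4), \qquad 7_3 = K(-3,4), \qquad 7_4 = K(-4,-4),
\]
while $\mirror{5_2} = K(2,-3) = K(-2,-4)$. So Proposition~\ref{prop:two-bridge-surgeries} applies directly. Taking $(m,n)=(-2,-2)$ in \eqref{eq:surgery-2-bridge-odd} gives $S^3_{-9}(6_2) \cong S^3_{9/2}(\mirror{5_2})$; taking $(m,n)=(-2,2)$ in \eqref{eq:surgery-2-bridge-odd} gives $S^3_{7}(7_3) \cong S^3_{7/2}(\mirror{5_2})$; and taking $(m,n)=(-2,-2)$ in \eqref{eq:surgery-2-bridge-even} gives $S^3_{1}(7_4) \cong S^3_{1/2}(\mirror{5_2})$. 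Then $\dim I^\#(S^3_{p/q}(\mirror{5_2})) = 3q + |p-q|$ from Theorem~\ref{thm:rational-surgeries} gives $13$, $11$, $7$ immediately.

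Because you believed the proposition did not apply, your proposal never actually produces the homeomorphisms; it only promises to find them by ad hoc Kirby calculus or computer search. That is not a proof. The fix is one line: recognize the $K(a,b)$ presentations above, and the rest of your argument goes through without any SnapPy experimentation.
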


\begin{proof}
In the notation of Proposition~\ref{prop:two-bridge-surgeries}, we have
\begin{align*}
\mirror{5_2} &= K(2,-3) = K(-2,-4) &
6_2 &= K(-3,-4) \\
7_3 &= K(-3,4) &
7_4 &= K(-4,-4).
\end{align*}
Thus two applications of \eqref{eq:surgery-2-bridge-odd} and one of \eqref{eq:surgery-2-bridge-even}, respectively, give
\begin{align} \label{eq:mirror-5_2-surgeries}
S^3_{-9}(6_2) &\cong S^3_{9/2}(\mirror{5_2}), &
S^3_{7}(7_3) &\cong S^3_{7/2}(\mirror{5_2}), &
S^3_{1}(7_4) &\cong S^3_{1/2}(\mirror{5_2}).
\end{align}
Since $\cinvt(\mirror{5_2}) = 1$ and $r_0(\mirror{5_2}) =  3$, as follows from   Table \ref{table:isharp-values}, Theorem~\ref{thm:rational-surgeries} says that
\begin{equation} \label{eq:5_2-surgeries}
\dim I^\#(S^3_{p/q}(\mirror{5_2})) = 3q + |p-q|
\end{equation}
for all rational $\frac{p}{q}$, so we apply \eqref{eq:mirror-5_2-surgeries} to get
\begin{align*}
\dim I^\#(S^3_{-9}(6_2)) &= \dim I^\#(S^3_{9/2}(\mirror{5_2})) = 13 \\
\dim I^\#(S^3_7(7_3)) &= \dim I^\#(S^3_{7/2}(\mirror{5_2})) = 11 \\
\dim I^\#(S^3_1(7_4)) &= \dim I^\#(S^3_{1/2}(\mirror{5_2})) = 7.\qedhere
\end{align*} \end{proof}

The next proposition is proved almost identically.

\begin{proposition} \label{prop:dim-8_234}
We have
\begin{align*}
\dim I^\#(S^3_{-13}(8_2)) &= 19, &
\dim I^\#(S^3_1(8_3)) &= 9, &
\dim I^\#(S^3_{-7}(8_4)) &= 15.
\end{align*}
\end{proposition}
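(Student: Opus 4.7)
The plan is to mimic the proof of Proposition~\ref{prop:isharp-5_2-surgeries}: for each of $8_2$, $8_3$, and $8_4$, I write the knot in the two-bridge form $K(a,b)$, then use Proposition~\ref{prop:two-bridge-surgeries} to recognize the stated integer surgery on it as a rational surgery on a knot whose $\cinvt$ and $r_0$ are already known, and finally apply Theorem~\ref{thm:rational-surgeries} (or its specialization \eqref{eq:5_2-surgeries}).

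The first task is to find the two-bridge presentations.  Matching the continued fractions $[a,b] = a - \tfrac{1}{b}$ against the two-bridge fractions $17/6$, $17/4$, and $19/5$, and pinning down chiralities against the signs of $\cinvt$ in Table~\ref{table:small-values}, one obtains
\[
8_2 = K(-3,-6), \qquad 8_3 = K(-4,4), \qquad 8_4 = K(-5,-4).
\]
Each of the three target surgery slopes is then one of those covered by Proposition~\ref{prop:two-bridge-surgeries}: writing $8_2 = K(2m+1,2n)$ with $(m,n) = (-2,-3)$, the slope $-13 = 4n-1$ is handled by~\eqref{eq:surgery-2-bridge-odd}; writing $8_3 = K(2m,2n)$ with $(m,n)=(-2,2)$, the slope $1$ is handled by~\eqref{eq:surgery-2-bridge-even}; and writing $8_4 = K(2m+1,2n)$ with $(m,n) = (-3,-2)$, the slope $-7 = 4n+1$ is handled by~\eqref{eq:surgery-2-bridge-odd-2}.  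The resulting homeomorphisms are
\begin{align*}
S^3_{-13}(8_2) &\cong S^3_{13/3}(K(2,-3)) = S^3_{13/3}(\mirror{5_2}), \\
S^3_{1}(8_3)   &\cong S^3_{-1/2}(K(-2,-4)) = S^3_{-1/2}(\mirror{5_2}), \\
S^3_{-7}(8_4)  &\cong S^3_{-7/2}(K(-2,-5)).
\end{align*}
For the third line, a further continued-fraction calculation identifies $K(-2,-5)$ with $\mirror{6_1}$, since both carry two-bridge fraction $9/4$ up to the standard equivalence $q \sim q^{-1} \pmod{9}$.

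Finally, I apply \eqref{eq:5_2-surgeries} for $\mirror{5_2}$ (which has $\cinvt = 1$ and $r_0 = 3$) and Theorem~\ref{thm:rational-surgeries} for $\mirror{6_1}$ (which has $\cinvt = 0$ and $r_0 = 4$) to evaluate the three dimensions as $3(3) + |13-3| = 19$, $2(3) + |{-1}-2| = 9$, and $2(4) + |{-7}-0| = 15$, respectively.  The only nontrivial obstacle is the first step: choosing the chirality of each identification $K \cong K(a,b)$ so that the specific slope predicted by Proposition~\ref{prop:two-bridge-surgeries} lines up with the slope appearing in the statement; once this is arranged, each dimension is computed by a single substitution.
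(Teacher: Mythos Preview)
Your proof is correct and follows essentially the same route as the paper: identify each of $8_2,8_3,8_4$ in the two-bridge form $K(a,b)$, apply Proposition~\ref{prop:two-bridge-surgeries} to convert the given integer surgery into a rational surgery on $\mirror{5_2}$ or $\mirror{6_1}$, and evaluate via Theorem~\ref{thm:rational-surgeries}. The only cosmetic difference is that for $8_4$ the paper records $S^3_{-7}(8_4)\cong S^3_{7/2}(\mirror{6_1})$ whereas your application of \eqref{eq:surgery-2-bridge-odd-2} gives $S^3_{-7/2}(\mirror{6_1})$; since $\cinvt(\mirror{6_1})=0$ the dimension formula $4q+|p|$ is insensitive to this sign, and both yield $15$.
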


\begin{proof}
In the notation of Proposition~\ref{prop:two-bridge-surgeries}, we have
\begin{align*}
8_2 &= K(-3,-6), &
8_3 &= K(-4,4), &
8_4 &= K(-5,-4).
\end{align*} Noting that \begin{align*}
\mirror{5_2} &= K(2,-3) =K(-2,-4) &
4_1 &= K(-2,-3) \\
\mirror{6_1} &= K(2,-4)=K(-2,-5) &
\mirror{7_2} &= K(2,-5), &
\end{align*}
 Proposition~\ref{prop:two-bridge-surgeries} tells us that
\begin{align*}
S^3_{-13}(8_2) &\cong S^3_{13/3}(\mirror{5_2}), &
S^3_{-11}(8_2) &\cong S^3_{11/3}(4_1), \\
S^3_1(8_3) &\cong S^3_{-1/2}(\mirror{5_2}), &
S^3_{-1}(8_3) &\cong S^3_{-1/2}(\mirror{6_1}), \\
S^3_{-9}(8_4) &\cong S^3_{9/2}(\mirror{7_2}), &
S^3_{-7}(8_4) &\cong S^3_{7/2}(\mirror{6_1}).
\end{align*}
Two applications of \eqref{eq:5_2-surgeries} yield
\begin{align*}
\dim I^\#(S^3_{-13}(8_2)) &= \dim I^\#(S^3_{13/3}(\mirror{5_2})) = 19, \\
\dim I^\#(S^3_1(8_3)) &= \dim I^\#(S^3_{-1/2}(\mirror{5_2})) = 9.
\end{align*}
Similarly, Theorem~\ref{thm:rational-surgeries} says that
\[ \dim I^\#(S^3_{p/q}(\mirror{6_1})) = 4q + |p| \]
for $\frac{p}{q} \neq 0$ since $\cinvt(\mirror{6_1}) = 0$ and $r_0(\mirror{6_1}) = 4$, so
\[ \dim I^\#(S^3_{-7}(8_4)) = \dim I^\#(S^3_{7/2}(\mirror{6_1})) = 15. \qedhere \]
\end{proof}

The computation of $r_0(6_2)$ from Proposition~\ref{prop:isharp-5_2-surgeries} also allows us to prove Theorem~\ref{thm:main-pretzels-2}, which asserts that
\begin{align*}
\cinvt(P(2n-1,3,2)) &= 2n-1, &
r_0(P(2n-1,3,2)) &= 6n-1
\end{align*}
for all $n \geq 1$.  In particular, this determines the value of $r_0$ for $8_5 \cong P(3,3,2)$.

\begin{proof}[Proof of Theorem~\ref{thm:main-pretzels-2}]
Let $K_n = P(2n-1,3,2)$; we note that $K_0 = P(-1,3,2)$ is the unknot and that $K_1 = \mirror{6_2}$ and $K_2 = 8_5$.  For all $n\geq 1$, the diagram of $K_n$ on the left side of Figure~\ref{fig:P-2n-1-3-2} is alternating, and we can compute its signature to be $\sigma(K_n) = -2n$ by the method of Gordon and Litherland \cite{gordon-litherland}.  This tells us that $\chominvt(K_n) = n$ by Corollary~\ref{cor:alternating}.

For any $n \geq 1$, we can change $n$ of the crossings in the $(2n-1)$-twist region of $K_n$ from positive to negative and thus turn $K_n$ into a knot isotopic to $K_0$.  This provides a genus-$n$ cobordism from $K_n$ to the unknot, so
\[ g_s(K_n) \leq n = \chominvt(K_n) \leq g_s(K_n), \]
where the last inequality comes from Proposition~\ref{prop:tau-invariant}.  But then $\chominvt(K_n) = g_s(K_n) = n$, and so
\[ \cinvt(K_n) = 2n-1 \textrm{ for all } n \geq 1\]
by Corollary~\ref{cor:tau-maximal}.

Figure~\ref{fig:P-2n-1-3-2} shows that there is a knot $Q \subset S^3$ such that
\begin{equation} \label{eq:P-2n-1-3-2-identity}
S^3_{4n-1}(K_n) \cong S^3_{(4n-1)/n}(Q)
\end{equation}
for all $n \in \Z$.
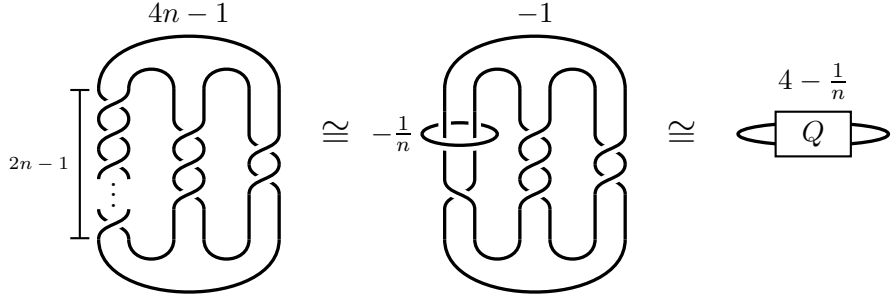
\begin{figure}
\begin{tikzpicture}[link/.append style = { looseness=1 }]
\begin{scope}
\draw[link] (0,2) to[out=270,in=90] ++(0.4,-0.4) ++(-0.4,0) to[out=270,in=90] ++(0.4,-0.4) ++(-0.4,0) to[out=270,in=90] ++(0.4,-0.4) ++(0,-0.4) ++(-0.4,0) to[out=270,in=90] ++(0.4,-0.4) ++(-0.4,0);
\draw[link] (0.4,2) to[out=270,in=90] ++(-0.4,-0.4) ++(0.4,0) to[out=270,in=90] ++(-0.4,-0.4) ++(0.4,0) to[out=270,in=90] ++(-0.4,-0.4) ++(0,-0.4) ++(0.4,0) to[out=270,in=90] ++(-0.4,-0.4) ++(0.4,0);
\node at (0.2,0.7) {$\vdots$};
\draw[link] (1.0,2) -- ++(0,-0.4) to[out=270,in=90] ++(0.4,-0.4) ++(-0.4,0) to[out=270,in=90] ++(0.4,-0.4) ++(-0.4,0) to[out=270,in=90] ++(0.4,-0.4) ++(-0.4,0) -- ++(0,-0.4);
\draw[link] (1.4,2) -- ++(0,-0.4) to[out=270,in=90] ++(-0.4,-0.4) ++(0.4,0) to[out=270,in=90] ++(-0.4,-0.4) ++(0.4,0) to[out=270,in=90] ++(-0.4,-0.4) ++(0.4,0) -- ++(0,-0.4);
\draw[link] (2.0,2) -- ++(0,-0.6) to[out=270,in=90] ++(0.4,-0.4) ++(-0.4,0) to[out=270,in=90] ++(0.4,-0.4) ++(-0.4,0) -- ++(0,-0.6);
\draw[link] (2.4,2) -- ++(0,-0.6) to[out=270,in=90] ++(-0.4,-0.4) ++(0.4,0) to[out=270,in=90] ++(-0.4,-0.4) ++(0.4,0) -- ++(0,-0.6);
\draw[link,looseness=1.5] (0.4,2) to[out=90,in=90] ++(0.6,0) ++(0.4,0) to[out=90,in=90] ++(0.6,0);
\draw[link,looseness=1.5] (0.4,0) to[out=270,in=270] ++(0.6,0) ++(0.4,0) to[out=270,in=270] ++(0.6,0);
\draw[link] (0,2) to[out=90,in=90] ++(2.4,0);
\draw[link] (0,0) to[out=270,in=270] ++(2.4,0);
\draw[|-|] (-0.25,0) to node[midway,left] {\tiny $2n-1$} ++(0,2);
\node at (1.2,3) {$4n-1$};
\end{scope}

\node at (3.15,1.4) {\Large $\cong$};

\begin{scope}[xshift=4.6cm]
\draw[link] (0.4,2) -- ++(0,-1.2) to[out=270,in=90] ++(-0.4,-0.4) -- ++(0,-0.4);
\draw[link] (0,2) -- ++(0,-1.2) to[out=270,in=90] ++(0.4,-0.4) -- ++(0,-0.4);
\draw[link] (0.2,1.4) ellipse (0.5 and 0.15);
\draw[link] (0,1.4) -- ++(0,0.3) (0.4,1.4) -- ++(0,0.3);
\node at (-0.7,1.4) {$-\frac{1}{n}$};

\draw[link] (1.0,2) -- ++(0,-0.4) to[out=270,in=90] ++(0.4,-0.4) ++(-0.4,0) to[out=270,in=90] ++(0.4,-0.4) ++(-0.4,0) to[out=270,in=90] ++(0.4,-0.4) ++(-0.4,0) -- ++(0,-0.4);
\draw[link] (1.4,2) -- ++(0,-0.4) to[out=270,in=90] ++(-0.4,-0.4) ++(0.4,0) to[out=270,in=90] ++(-0.4,-0.4) ++(0.4,0) to[out=270,in=90] ++(-0.4,-0.4) ++(0.4,0) -- ++(0,-0.4);
\draw[link] (2.0,2) -- ++(0,-0.6) to[out=270,in=90] ++(0.4,-0.4) ++(-0.4,0) to[out=270,in=90] ++(0.4,-0.4) ++(-0.4,0) -- ++(0,-0.6);
\draw[link] (2.4,2) -- ++(0,-0.6) to[out=270,in=90] ++(-0.4,-0.4) ++(0.4,0) to[out=270,in=90] ++(-0.4,-0.4) ++(0.4,0) -- ++(0,-0.6);
\draw[link,looseness=1.5] (0.4,2) to[out=90,in=90] ++(0.6,0) ++(0.4,0) to[out=90,in=90] ++(0.6,0);
\draw[link,looseness=1.5] (0.4,0) to[out=270,in=270] ++(0.6,0) ++(0.4,0) to[out=270,in=270] ++(0.6,0);
\draw[link] (0,2) to[out=90,in=90] ++(2.4,0);
\draw[link] (0,0) to[out=270,in=270] ++(2.4,0);
\node at (1.2,3) {$-1$};
\end{scope}

\node at (7.75,1.4) {\Large $\cong$};

\begin{scope}[xshift=9.3cm]
\draw[link] (0.2,1.4) ellipse (1 and 0.15);
\node at (0.2,2.1) {$4-\frac{1}{n}$};
\node[draw,fill=white,thick,minimum width=1cm] at (0.2,1.4){$Q$};
\end{scope}

\end{tikzpicture}
\caption{Comparing $(4n-1)$-surgery on $P(2n-1,3,2)$ to $\frac{4n-1}{n}$-surgery on a knot $Q$, obtained by blowing down the $(-1)$-framed unknot $P(-1,3,2)$ in the middle.  (We make no effort to identify $Q$, except to note that it does not depend on $n \in \Z$.)}
\label{fig:P-2n-1-3-2}
\end{figure}
We therefore have
\begin{align*}
n=1: & \dim I^\#(S^3_3(Q)) = \dim I^\#(S^3_3(\mirror{6_2})) = 7 \\
n=-1: & \dim I^\#(S^3_5(Q)) = \dim I^\#(S^3_{-5}(P(2,3,-3)) = 9,
\end{align*}
since we know that $(\cinvt(\mirror{6_2}), r_0(\mirror{6_2})) = (1,5)$ and $(\cinvt(P(2,3,-3)), r_0(P(2,3,-3))) = (0,4)$ by Proposition~\ref{prop:isharp-5_2-surgeries} and Theorem~\ref{thm:main-pretzels} respectively.  These computations do not suffice to determine $\cinvt(Q)$, but they do ensure that $\cinvt(Q) \leq 3$, and hence (by the $n=1$ case) that
\[ 7 = r_0(Q) + (3-\cinvt(Q)) \quad\Longrightarrow\quad r_0(Q) - \cinvt(Q) = 4 \]
by Theorem~\ref{thm:rational-surgeries}.  For all integers $n \geq 1$ we now have
\[ \frac{4n-1}{n} \geq 3 \geq \cinvt(Q) \quad\Longrightarrow\quad (4n-1) - n\cinvt(Q) \geq 0, \]
and so we apply \eqref{eq:P-2n-1-3-2-identity} and Theorem~\ref{thm:rational-surgeries} to get
\begin{align*}
\dim I^\#(S^3_{4n-1}(K_n)) &= \dim I^\#(S^3_{(4n-1)/n}(Q)) \\
&= n\cdot r_0(Q) + \big((4n-1) - n\cinvt(Q)\big) \\
&= n(r_0(Q) - \cinvt(Q)) + (4n-1) = 8n-1.
\end{align*}
Since $\cinvt(K_n) = 2n-1$, it now follows that
\[ r_0(K_n) = \dim I^\#(S^3_{2n-1}(K_n)) = \dim I^\#(S^3_{4n-1}(K_n))-2n = 6n-1, \]
as claimed.
\end{proof}

Our proof of Proposition \ref{prop:6_3-surgeries} requires the following, which combines work of Lim \cite{lim} with Fukaya's connected sum theorem for instanton homology, as applied in \cite[\S9.8]{scaduto}.
\begin{proposition} \label{prop:alexander-lower-bound}
Let $K \subset S^3$ be a knot of genus $g(K) \leq 2$, with Alexander polynomial
\[ \Delta_K(t) = a_2t^2 + a_1t + a_0 + a_1t^{-1} + a_2t^{-2}. \]
Then $\dim I^\#(S^3_0(K),\mu) = 4|a_2| + 2|a_1+2a_2| + 4k$ for some integer $k \geq 0$.
\end{proposition}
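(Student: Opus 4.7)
The plan is to translate the computation into a statement about the ordinary instanton Floer homology $I_*(S^3_0(K))_\mu$ (with the non-trivial $SO(3)$-bundle dual to the meridian) and then invoke Lim's theorem relating that group to the Alexander polynomial for knots of genus at most 2.

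Unwinding the definition of framed instanton homology, we have
\[ I^\#(S^3_0(K),\mu) \;=\; I_*\!\big((S^3_0(K))\#T^3\big)_w^{\mathrm{inv}}, \]
where $w$ is the line bundle with $c_1(w)$ Poincar\'e dual to $[\mu]+[S^1]$ and ``$\mathrm{inv}$'' denotes the $+2$-eigenspace of $\tfrac{1}{2}\mu(\mathrm{pt})$. Fukaya's connected sum theorem, in the form used by Scaduto in \cite[\S9.8]{scaduto}, decomposes this right-hand side as a product of $I_*(S^3_0(K))_\mu$ with the appropriate piece of $I_*(T^3)_{S^1}$. Since the flat $SO(3)$-connections on $T^3$ compatible with this bundle are isolated and non-degenerate, and exactly two of them survive after passing to the $\tfrac{1}{2}\mu(\mathrm{pt})$-fixed set, one obtains a multiplicative identity of the form
\[ \dim_\C I^\#(S^3_0(K),\mu) \;=\; 2\,\dim_\C I_*(S^3_0(K))_\mu. \]

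I would then apply Lim's theorem \cite{lim}, which for a knot of genus at most 2 with Alexander polynomial $\Delta_K(t)=a_2 t^2+a_1 t+a_0+a_1 t^{-1}+a_2 t^{-2}$ asserts that
\[ \dim_\C I_*(S^3_0(K))_\mu \;=\; 2|a_2|+|a_1+2a_2|+2k \]
for some integer $k\geq 0$.  The geometric content is that $|a_1+2a_2|$ counts generators produced by the genus-1 piece of the Casson--Lin-type invariant, $2|a_2|$ accounts for the genus-2 correction, and $2k$ is the excess from pairs of chain-level generators that need not cancel under the differential. Multiplying by the connected-sum factor of $2$ immediately gives the formula in the proposition, with the correction term $4k$ automatically divisible by $4$ thanks to the $\Z/4\Z$-grading on $I^\#$ together with the vanishing $\chi(I^\#(S^3_0(K),\mu))=0$, which holds because $b_1(S^3_0(K))=1>0$ by \eqref{eq:framed-chi}.

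The main technical obstacle is carefully combining the two inputs: one must verify that the Fukaya splitting across the $T^3$ connected sum really yields a dimension equality (rather than only a bound through the associated spectral sequence), which hinges on the Morse-Bott non-degeneracy of the flat moduli on $T^3$; and one must confirm Lim's formula in the genus-2 case (the genus-1 case being the heart of \cite{lim}), where an additional band of perturbations must be analyzed. Once these pieces are in place, the parity statement and the lower bound fall out directly.
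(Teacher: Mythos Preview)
Your high-level strategy---combine Fukaya's connected sum theorem with Lim's result---is exactly what the paper does, but both of your intermediate assertions are incorrect, and it is only by luck that the two errors cancel.

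First, Fukaya's theorem in the form used here gives
\[
I^\#(S^3_0(K),\mu)\otimes H_*(S^4)\;\cong\; I_*(S^3_0(K))_w\otimes H_*(S^3),
\]
so the factor relating the two dimensions is $1$, not $2$; your picture of ``two flat connections on $T^3$ surviving'' is not how this isomorphism arises.

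Second, Lim's theorem does not assert a closed formula for $\dim I_*(S^3_0(K))_\mu$.  What Lim proves is that the \emph{Euler characteristics} of the generalized $(2j,2)$-eigenspaces of $(\mu(\hat\Sigma),\mu(\pt))$ are the coefficients of $\tfrac{\Delta_K(t)-1}{t-2+t^{-1}}$, which for $g(K)\le 2$ equals $a_2 t+(a_1+2a_2)+a_2 t^{-1}$.  To turn this into a total dimension one needs the Kronheimer--Mrowka/Mu\~noz eigenspace structure: the generalized $(\lambda_{\hat\Sigma},\lambda_{\pt})$-eigenspace is isomorphic to the $(i\lambda_{\hat\Sigma},-\lambda_{\pt})$-eigenspace, giving four copies of the $(2,2)$-eigenspace and two copies of the $(0,2)$-eigenspace.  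This yields
\[
\dim I_*(S^3_0(K))_w \;=\; 4\bigl(|a_2|+2k_1\bigr)+2\bigl(|a_1+2a_2|+2k_0\bigr)
\;=\; 4|a_2|+2|a_1+2a_2|+4(2k_1+k_0),
\]
and then $\dim I^\#(S^3_0(K),\mu)$ equals this same quantity by the Fukaya isomorphism.  Your narrative about ``genus-1 piece'' and ``genus-2 correction'' does not reflect what Lim actually proves, and there is no separate genus-2 analysis to carry out; the restriction $g(K)\le 2$ enters only in the application of Fukaya's theorem as in \cite[\S9.8]{scaduto}.
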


\begin{proof}
Let $I_*(S^3_0(K))_w$ be the instanton Floer homology of $S^3_0(K)$, with coefficients in  $\C$, where $w \to S^3_0(K)$ is a line bundle with  $c_1(w)=\PD(\mu)$.  If $\hat\Sigma \subset S^3_0(K)$ is a capped-off Seifert surface for $K$, having genus $g=g(K)$, then there are commuting operators $\mu(\hat\Sigma)$ and $\mu(\pt)$ acting on the relatively $\Z/8\Z$-graded $I_*(S^3_0(K))_w$ with degrees $-2$ and $-4$ respectively.  Kronheimer and Mrowka proved in \cite{km-excision}, based on work of Mu\~noz \cite{munoz-ring}, that their simultaneous eigenvalues are a subset of
\[ (2k\cdot i^r, (-2)^r), \quad 0 \leq k \leq g-1,\ 0 \leq r \leq 3 \]
and observed that for grading reasons the generalized $(\lambda_{\hat\Sigma}, \lambda_{\pt})$-eigenspace is always isomorphic to the generalized $(i\lambda_{\hat\Sigma},-\lambda_{\pt})$-eigenspace.

Let $I_*(S^3_0(K),\hat\Sigma,j)_w$ denote the generalized $(2j,2)$-eigenspace of $(\mu(\hat\Sigma),\mu(\pt))$, for $1-g \leq j \leq g-1$.  Each of these inherits an absolute $\Z/2\Z$ grading from $I_*(S^3_0(K))_w$, and Lim \cite[Corollary~1.2]{lim} proved that
\[ \frac{\Delta_K(t)-1}{t-2+t^{-1}} = \sum_{j=1-g}^{g-1} \chi(I_*(S^3_0(K),\hat\Sigma,j)_w) t^j. \]
In our situation, where $g\leq 2$ and $\Delta_K(t)$ is as above, the left hand side is equal to
\[ a_2t + (a_1+2a_2) + a_2t^{-1}, \]
so the generalized $(2,2)$- and $(0,2)$-eigenspaces of $(\mu(\hat\Sigma),\mu(\pt))$ have dimension
\[ |a_2| + 2k_1 \quad\mathrm{and}\quad |a_1+2a_2|+2k_0 \]
for some nonnegative integers $k_1$ and $k_0$, respectively.  There are four $(2i^r,(-2)^r)$-eigenspaces, all of them isomorphic to each other, and likewise two isomorphic $(0,\pm2)$-eigenspaces, so in total we have
\[ \dim I_*(S^3_0(K))_w = 4|a_2|+2|a_1+2a_2| +4(2k_1+k_0). \]

We now apply Fukaya's connected sum theorem, using the fact that $g(\hat\Sigma) = g(K) \leq 2$, to conclude that there is an isomorphism
\[ I^\#(S^3_0(K),\mu) \otimes H_*(S^4) \cong I_*(S^3_0(K))_w \otimes H_*(S^3) \]
with coefficients in $\C$ (see \cite[\S9.8]{scaduto}).  Thus
\[ \dim I^\#(S^3_0(K),\mu) = \dim I_*(S^3_0(K))_w = 4|a_2|+2|a_1+2a_2| + 4k, \]
where $k = 2k_1+k_0 \geq 0$, exactly as claimed.
\end{proof}

We can now prove Proposition \ref{prop:6_3-surgeries}. We remark that the main new calculation needed for the proof boils down to showing $\dim I^\#(S^3_0(8_8))\geq 14$, which is enabled by Proposition \ref{prop:alexander-lower-bound} (in fact, we prove that equality holds).

\begin{proposition} \label{prop:6_3-surgeries}
We have
\begin{align*}
\dim I^\#(S^3_{-1}(6_3)) &= 7, &
\dim I^\#(S^3_{-1}(8_6)) &= 11, &
\dim I^\#(S^3_{-1}(8_8)) &= 13.
\end{align*}
\end{proposition}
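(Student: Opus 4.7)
The plan is to reduce each claim to a computation of $r_0(K)$ and then bracket $r_0(K)$ between a lower bound from Proposition~\ref{prop:alexander-lower-bound} and an upper bound from Kirby calculus.  Since $\cinvt(6_3)=0$, $\cinvt(8_6)=-1$, and $\cinvt(8_8)=0$ by Table~\ref{table:small-values}, Theorem~\ref{thm:rational-surgeries} shows that the three claimed dimensions $7$, $11$, $13$ are equivalent to $r_0(6_3)=6$, $r_0(8_6)=11$, and $r_0(8_8)=12$, respectively.

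For the lower bound, each of $6_3$, $8_6$, $8_8$ has Seifert genus $2$, so Proposition~\ref{prop:alexander-lower-bound} applies; substituting the Alexander coefficients yields $\dim I^\#(S^3_0(6_3),\mu)\geq 6$, $\dim I^\#(S^3_0(8_6),\mu)\geq 12$, and $\dim I^\#(S^3_0(8_8),\mu)\geq 12$.  Since $\cinvt(8_6)=-1\neq 0$, Proposition~\ref{prop:unimodal-rank} identifies $\dim I^\#(S^3_0(8_6),\mu)$ with $r_0(8_6)+1$, giving $r_0(8_6)\geq 11$.  For the amphichiral $6_3$ and $8_8$ one must additionally distinguish the W-shaped and V-shaped alternatives of Definition~\ref{def:shape}: in the W-shaped case the Alexander bound is directly $r_0(K)$, producing $r_0(6_3)\geq 6$ and $r_0(8_8)\geq 12$.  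The remaining issue for $8_8$ is to exclude the V-shaped alternative, which, as flagged in the paragraph preceding Proposition~\ref{prop:alexander-lower-bound}, amounts to upgrading the estimate to $\dim I^\#(S^3_0(8_8))\geq 14$; Remark~\ref{rem:N-equals-1-1} then forces the W-shape, confirming $r_0(8_8)=12$.

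For the matching upper bounds I would exhibit, in each case, a homeomorphism realizing $S^3_{-1}(K)$ as a rational surgery on a knot $K'$ whose $\cinvt$ and $r_0$ are already tabulated, so that Theorem~\ref{thm:rational-surgeries} pins down the dimension on the nose.  This is precisely the template used earlier in the section (Propositions~\ref{prop:isharp-5_2-surgeries} and~\ref{prop:dim-8_234}, and the proof of Theorem~\ref{thm:main-pretzels-2}), with natural candidates drawn from torus knots, twist knots, and two-bridge knots of the form $K(a,b)$.  The principal obstacle is locating these Kirby identifications: none of $6_3$, $8_6$, $8_8$ belongs to the two-bridge family governed by Proposition~\ref{prop:two-bridge-surgeries}, so the required handle slides and blow-downs must be discovered experimentally via SnapPy (as with Proposition~\ref{prop:pretzel-homeo}) and then verified by an explicit sequence of moves; once each identification is in hand, the comparison with the Alexander lower bound is routine bookkeeping.
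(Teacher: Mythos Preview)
Your proposal is a plan rather than a proof: the central step---finding Kirby identifications of $S^3_{-1}(6_3)$, $S^3_{-1}(8_6)$, $S^3_{-1}(8_8)$ with rational surgeries on already-tabulated knots---is not carried out, and you acknowledge this as ``the principal obstacle.''  If such identifications existed, your Alexander lower bounds would in fact be unnecessary, since Theorem~\ref{thm:rational-surgeries} would compute each dimension exactly.  Separately, your handling of the lower bound for $6_3$ has a genuine gap: $6_3$ is amphichiral but not slice, so you cannot appeal to Theorem~\ref{thm:conc-invt} to force the W-shape, and in the V-shaped case with $\cinvt=0$ the Alexander bound on $\dim I^\#(S^3_0(6_3),\mu)$ does not directly bound $r_0(6_3)=\dim I^\#(S^3_0(6_3),0)$.

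The paper's argument is structurally different.  Rather than three separate identifications, it exhibits a \emph{single} Kirby calculus picture showing that $S^3_{-1}(K(-2,2n,2,2))\cong S^3_{-1/n}(P)$ for a fixed knot $P$ (the pretzel $P(5,5,-3)$) and all $n$; the cases $n=1,-1,2,-2$ are $6_2,6_3,8_6,8_8$.  The knot $P$ is \emph{not} tabulated, but the already-known value $r_0(6_2)=5$ pins down $\dim I^\#(S^3_{-1}(P))=5$, and two surgery exact triangles then give $\dim I^\#(S^3_0(P))\leq 6$ and $\dim I^\#(S^3_1(P))\leq 7$.  The Alexander bound is invoked only once, for $8_8$ (which is slice, hence W-shaped), yielding $\dim I^\#(S^3_{1/2}(P))=\dim I^\#(S^3_{-1}(8_8))\geq 13$; comparing with the exact triangle relating $S^3_0(P),S^3_{1/2}(P),S^3_1(P)$ forces equality everywhere.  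The value for $8_6$ then falls out of Theorem~\ref{thm:rational-surgeries} applied to $P$ once one observes $\cinvt(P)\leq -1$ and $r_0(P)-\cinvt(P)=6$.  The key idea you are missing is this \emph{linkage}: rather than hoping for three independent matches with known knots, the paper routes all four two-bridge knots through a common auxiliary knot and plays their surgeries off against one another.
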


\begin{proof}
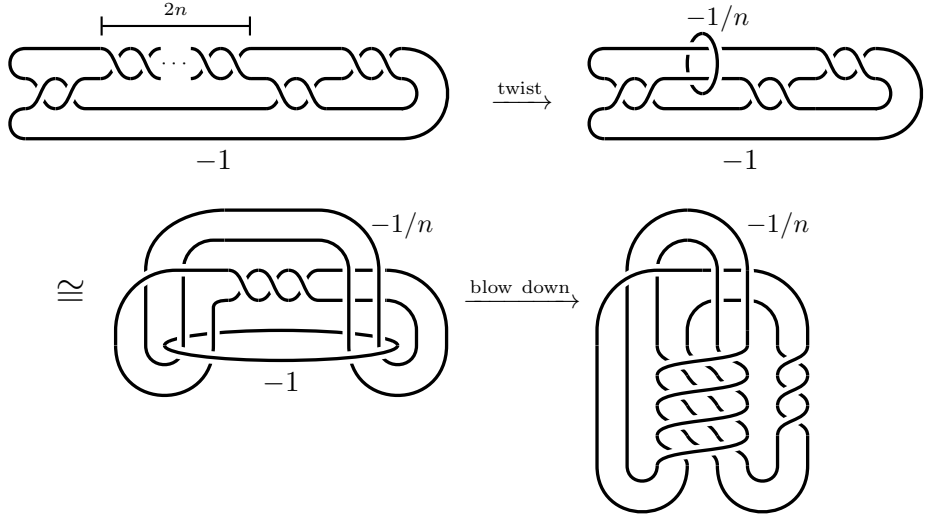
\begin{figure}
\begin{tikzpicture}
\begin{scope}
\draw[link] (1.2,0) to[out=180,in=180] ++(0,0.4) ++(0,0.4) to[out=180,in=180] ++(0,0.4) to[out=0,in=180] ++(1,0);
\draw[link,looseness=1] (1.2,0.8) to[out=0,in=180] ++(0.4,-0.4) ++(0,0.4) to[out=0,in=180] ++(0.4,-0.4) to[out=0,in=180] ++(2.4,0);
\draw[link,looseness=1] (1.2,0.4) to[out=0,in=180] ++(0.4,0.4) ++(0,-0.4) to[out=0,in=180] ++(0.4,0.4) to[out=0,in=180] ++(0.2,0);
\draw[link,looseness=1] (2.2,0.8) to[out=0,in=180] ++(0.4,0.4) ++(0,-0.4) to[out=0,in=180] ++(0.4,0.4) ++(0.4,-0.4) to[out=0,in=180] ++(0.4,0.4) ++(0,-0.4) to[out=0,in=180] ++(0.4,0.4) ++(0,-0.4)  to[out=0,in=180] ++(0.2,0);
\draw[link,looseness=1] (2.2,1.2) to[out=0,in=180] ++(0.4,-0.4) ++(0,0.4) to[out=0,in=180] ++(0.4,-0.4) ++(0.4,0.4) to[out=0,in=180] ++(0.4,-0.4) ++(0,0.4) to[out=0,in=180] ++(0.4,-0.4) ++(0,0.4)  to[out=0,in=180] ++(1.2,0);
\node at (3.2,1) {\tiny $\cdots$};
\draw[link,looseness=1] (4.4,0.4) to[out=0,in=180] ++(0.4,0.4) ++(0,-0.4) to[out=0,in=180] ++(0.4,0.4) to[out=0,in=180] ++(0.2,0);
\draw[link,looseness=1] (4.4,0.8) to[out=0,in=180] ++(0.4,-0.4) ++(0,0.4) to[out=0,in=180] ++(0.4,-0.4) to[out=0,in=180] ++(0.2,0);
\draw[link,looseness=1] (5.4,0.8) to[out=0,in=180] ++(0.4,0.4) ++(0,-0.4) to[out=0,in=180] ++(0.4,0.4);
\draw[link,looseness=1] (5.4,1.2) to[out=0,in=180] ++(0.4,-0.4) ++(0,0.4) to[out=0,in=180] ++(0.4,-0.4);
\draw[link] (6.2,0.8) to[out=0,in=0] ++(0,-0.4) to[out=180,in=0] (6.2,0.4) to[out=180,in=0] (5.2,0.4);
\draw[link] (6.2,1.2) to[out=0,in=0] ++(0,-1.2) to[out=180,in=0] (1.2,0);
\node at (3.7,-0.3) {$-1$};
\draw[|-|] (2.2,1.5) to node[midway,above] {\tiny $2n$} ++(2,0);
\end{scope}

\node at (7.8,0.6) {\small $\xrightarrow{\mathrm{twist}}$};

\begin{scope}[xshift=7.7cm]
\draw[link] (2.5,0.6) arc (270:90:0.2 and 0.4);
\draw[link] (1.2,0) to[out=180,in=180] ++(0,0.4) ++(0,0.4) to[out=180,in=180] ++(0,0.4) to[out=0,in=180] ++(2.8,0);
\draw[link,looseness=1] (1.2,0.8) to[out=0,in=180] ++(0.4,-0.4) ++(0,0.4) to[out=0,in=180] ++(0.4,-0.4) to[out=0,in=180] ++(1,0);
\draw[link,looseness=1] (1.2,0.4) to[out=0,in=180] ++(0.4,0.4) ++(0,-0.4) to[out=0,in=180] ++(0.4,0.4) to[out=0,in=180] ++(1,0);
\draw[link] (2.5,0.6) arc (270:450:0.2 and 0.4);
\draw[link,looseness=1] (3,0.4) to[out=0,in=180] ++(0.4,0.4) ++(0,-0.4) to[out=0,in=180] ++(0.4,0.4) to[out=0,in=180] ++(0.2,0);
\draw[link,looseness=1] (3,0.8) to[out=0,in=180] ++(0.4,-0.4) ++(0,0.4) to[out=0,in=180] ++(0.4,-0.4) to[out=0,in=180] ++(0.2,0);
\draw[link,looseness=1] (4,0.8) to[out=0,in=180] ++(0.4,0.4) ++(0,-0.4) to[out=0,in=180] ++(0.4,0.4);
\draw[link,looseness=1] (4,1.2) to[out=0,in=180] ++(0.4,-0.4) ++(0,0.4) to[out=0,in=180] ++(0.4,-0.4);
\draw[link] (4.8,0.8) to[out=0,in=0] ++(0,-0.4) to[out=180,in=0] (4.8,0.4) to[out=180,in=0] (4,0.4);
\draw[link] (4.8,1.2) to[out=0,in=0] ++(0,-1.2) to[out=180,in=0] (1.2,0);
\node at (3,-0.3) {$-1$};
\node at (2.7,1.6) {\small $-1/n$};
\end{scope}

\begin{scope}[xshift=3.2cm,yshift=-2.75cm]
\draw[link] (-0.15,0) arc (180:0:1.55 and 0.2);
\draw[link] (0.65,1.4) to[out=180,in=0] (0.5,1.4) to[out=180,in=90,looseness=1] (0.1,1) to[out=270,in=90] ++(0,-1) to[out=270,in=270] (-0.4,0) to[out=90,in=270] ++(0,1) to[out=90,in=180,looseness=1] (0.65,1.8);
\draw[link] (0.7,0.6) to[out=180,in=90,looseness=1] (0.5,0.5) to[out=270,in=90] ++(0,-0.5) to[out=270,in=270] (-0.8,0);
\draw[link] (-0.8,0) to[out=90,in=270] ++(0,0.2) to[out=90,in=180,looseness=1] ++(0.8,0.8) to[out=0,in=180] (0.7,1);
\draw[link,looseness=1] (0.7,0.6) to[out=0,in=180] ++(0.4,0.4) ++(0,-0.4) to[out=0,in=180] ++(0.4,0.4) ++(0,-0.4) to[out=0,in=180] ++(0.4,0.4) to[out=0,in=180] ++(0.9,0);
\draw[link,looseness=1] (0.7,1) to[out=0,in=180] ++(0.4,-0.4) ++(0,0.4) to[out=0,in=180] ++(0.4,-0.4) ++(0,0.4) to[out=0,in=180] ++(0.4,-0.4) to[out=0,in=180] ++(0.9,0);
\draw[link] (0.65,1.4) to[out=0,in=180] (1.9,1.4) to[out=0,in=90,looseness=1] ++(0.4,-0.4) to[out=270,in=90] ++(0,-1) to[out=270,in=270] ++(1.3,0) to[out=90,in=270] ++(0,0.2) to[out=90,in=0,looseness=1] ++(-0.8,0.8);
\draw[link] (0.65,1.8) to[out=0,in=180] (1.9,1.8) to[out=0,in=90,looseness=1] ++(0.8,-0.8) to[out=270,in=90] ++(0,-1) to[out=270,in=270] ++(0.5,0) to[out=90,in=270] ++(0,0.2) to[out=90,in=0,looseness=1] ++(-0.4,0.4);
\draw[link] (-0.15,0) arc (180:360:1.55 and 0.2);
\node at (1.4,-0.5) {$-1$};
\node at (3,1.6) {\small $-1/n$};
\node at (-1.4,0.7) {\Large $\cong$};
\end{scope}

\begin{scope}[xshift=9.5cm,yshift=-2.75cm]
\draw[link] (0.5,1.4) to[out=180,in=90,looseness=1] ++(-0.4,-0.4) to[out=270,in=90] ++(0,-1) ++(0,-1.6) to[out=270,in=270] ++(-0.4,0) to[out=90,in=270] ++(0,2.6) to[out=90,in=180,looseness=1] (0.5,1.8);
\draw[link] (0.9,0.6) to[out=180,in=90,looseness=1] ++(-0.4,-0.4) to[out=270,in=90] ++(0,-0.2) ++(0,-1.6) to[out=270,in=270] ++(-1.2,0) to[out=90,in=270] ++(0,1.6);
\draw[link] (-0.7,0) to[out=90,in=270] ++(0,0.2) to[out=90,in=180,looseness=1] ++(0.8,0.8) to[out=0,in=180] (0.7,1);
\draw[link] (0.9,0.6) to[out=0,in=180] ++(0.4,0) (0.7,1) to[out=0,in=180] ++(0.6,0);
\draw[link] (0.5,1.4) to[out=0,in=90,looseness=1] ++(0.4,-0.4) to[out=270,in=90] ++(0,-1) ++(0,-1.6) to[out=270,in=270] ++(1.2,0) to[out=90,in=270] ++(0,0.4) ++(0,1.2) to[out=90,in=270] ++(0,0.2) to[out=90,in=0,looseness=1] ++(-0.8,0.8);
\draw[link] (0.5,1.8) to[out=0,in=90,looseness=1] ++(0.8,-0.8) to[out=270,in=90] ++(0,-1) ++(0,-1.6) to[out=270,in=270] ++(0.4,0) to[out=90,in=270] ++(0,0.4) ++(0,1.2) to[out=90,in=270] ++(0,0.2) to[out=90,in=0,looseness=1] ++(-0.4,0.4);
\draw[link,looseness=1] (2.1,-1.2) to[out=90,in=270] ++(-0.4,0.4) ++(0.4,0) to[out=90,in=270] ++(-0.4,0.4) ++(0.4,0) to[out=90,in=270] ++(-0.4,0.4);
\draw[link,looseness=1] (1.7,-1.2) to[out=90,in=270] ++(0.4,0.4) ++(-0.4,0) to[out=90,in=270] ++(0.4,0.4) ++(-0.4,0) to[out=90,in=270] ++(0.4,0.4);
\draw[link] (1.3,1) to[out=270,in=90] ++(0,-0.8); 
\draw[link,looseness=1] (0.1,0) to[out=270,in=120] ++(0.4,-0.4) ++(0,0.4) to[out=270,in=120] ++(0.4,-0.4) ++(0,0.4) to[out=270,in=90] ++(0.4,-0.4);
\draw[link,looseness=1] (0.1,-0.4) to[out=270,in=120] ++(0.4,-0.4) ++(0,0.4) to[out=300,in=120] ++(0.4,-0.4) ++(0,0.4) to[out=300,in=90] ++(0.4,-0.4);
\draw[link,looseness=1] (0.1,-0.8) to[out=270,in=120] ++(0.4,-0.4) ++(0,0.4) to[out=300,in=120] ++(0.4,-0.4) ++(0,0.4) to[out=300,in=90] ++(0.4,-0.4);
\draw[link,looseness=1] (0.1,-1.2) to[out=270,in=90] ++(0.4,-0.4) ++(0,0.4) to[out=300,in=90] ++(0.4,-0.4) ++(0,0.4) to[out=300,in=90] ++(0.4,-0.4);
\draw[link,looseness=0.5] (1.3,0) to[out=270,in=90] ++(-1.2,-0.4) ++(1.2,0) to[out=270,in=90] ++(-1.2,-0.4) ++(1.2,0) to[out=270,in=90] ++(-1.2,-0.4) ++(1.2,0) to[out=270,in=90] ++(-1.2,-0.4);
\node at (1.7,1.6) {\small $-1/n$};
\node at (-1.7,0.7) {\small $\xrightarrow{\mathrm{blow\ down}}$};
\end{scope}
\end{tikzpicture}
\caption{$(-1)$-surgery on the two-bridge knot $K(-2,2n,2,2)$ with continued fraction $[-2,2n,2,2] = -\frac{12n-1}{6n-2}$ is homeomorphic to $-\frac{1}{n}$-surgery on the knot $P$, which is the pretzel knot $P(5,5,-3)$.}
\label{fig:13n3596-surgeries}
\end{figure} The knots $6_2,6_3,8_6,8_8$ are equal to   the two-bridge knot $K(-2,2n,2,2)$ of Figure~\ref{fig:13n3596-surgeries}, for $n=1,-1,2,-2$, respectively.
The figure shows that $-1$-surgery on $K(-2,2n,2,2)$ is homeomorphic to $-\frac{1}{n}$-surgery on a fixed knot $P$ for all integers $n$.  (In fact $P$ is isotopic to the pretzel knot $P(5,5,-3)$, though we do not need this.) Therefore,
\begin{align*}
S^3_{-1}(6_2) &\cong S^3_{-1}(P), &
S^3_{-1}(6_3) &\cong S^3_1(P), \\
S^3_{-1}(8_6) &\cong S^3_{-1/2}(P), &
S^3_{-1}(8_8) &\cong S^3_{1/2}(P).
\end{align*}
As listed in Table \ref{table:isharp-values}, we have  $\cinvt(6_2)=-1$ and $r_0(6_2)=5$, where the latter followed from  Proposition~\ref{prop:isharp-5_2-surgeries}. Theorem \ref{thm:rational-surgeries}, combined with the first identification above, then says that\[ \dim I^\#(S^3_{-1}(P)) = \dim I^\#(S^3_{-1}(6_2)) = 5. \]
From the surgery exact triangles 
\begin{align*}
\dots \to I^\#(S^3) &\to I^\#(S^3_{-1}(P)) \to I^\#(S^3_{0}(P)) \to \dots\\
\dots \to I^\#(S^3) &\to I^\#(S^3_{0}(P)) \to I^\#(S^3_{1}(P)) \to \dots,
\end{align*} we therefore have the inequalities
\begin{align}
\dim I^\#(S^3_0(P)) &\leq 6, &
\dim I^\#(S^3_1(P)) &\leq 7. \label{eq:13n3596-0-1}
\end{align}

We next note that $8_8$  has Seifert genus $2$ and Alexander polynomial \[\Delta_{8_8}(t) = 2t^2-6t+9-6t^{-1}+2t^{-2}.\]  Moreover, $8_8$ is slice and thus W-shaped by Theorem~\ref{thm:conc-invt}, so we have  \begin{align*}
\dim I^\#(S^3_0(8_8)) &= \dim I^\#(S^3_0(8_8),\mu) + 2\\
&=(12+4k)+2 \\
&= 14+4k
\end{align*} for some $k\geq 0$,
where the second equality  is by Proposition~\ref{prop:alexander-lower-bound}. Since $I^\#(S^3_{-1}(8_8))$ fits into a surgery  exact triangle with $I^\#(S^3))$ and $I^\#(S^3_0(8_8))$, we  have \begin{align*}\dim I^\#(S^3_{1/2}(P))&=\dim I^\#(S^3_{-1}(8_8))\\
& \geq \dim I^\#(S^3_0(8_8)) - 1\\
&=13+4k.
\end{align*} 
The fact that $I^\#(S^3_{1/2}(P))$ fits into a surgery  exact triangle with $I^\#(S^3_0(P))$ and $I^\#(S^3_1(P))$, combined with the inequalities \eqref{eq:13n3596-0-1}, then forces $k=0$ and these inequalities to be strict. That is, we have
\begin{align*}
\dim I^\#(S^3_{0}(P)) &= 6, &
\dim I^\#(S^3_{1}(P)) &= 7, &
\dim I^\#(S^3_{1/2}(P)) &= 13.
\end{align*}
We conclude that
\begin{align*}\dim I^\#(S^3_{-1}(6_3)) &= \dim I^\#(S^3_1(P)) = 7\\
\dim I^\#(S^3_{-1}(8_8)) &= \dim I^\#(S^3_{1/2}(P)) = 13, \end{align*}
as claimed. It remains to complete the computation for $8_6$.

For this, first note that $\cinvt(P)\leq -1 $ since $\dim I^\#(S^3_{-1}(P)) < \dim I^\#(S^3_{0}(P))$. By Theorem \ref{thm:rational-surgeries} and the calculations above, we therefore have \[\dim I^\#(S^3_{0}(P)) = r_0(P) + |{-}\cinvt(P)|=r_0(P)-\cinvt(P)=6.\] Applying Theorem \ref{thm:rational-surgeries} again, we then find that 
 \begin{align*}\dim I^\#(S^3_{-1}(8_6))&=\dim I^\#(S^3_{-1/2}(P)) \\
 &= 2r_0(P) + |{-}1-2\cinvt(P)|\\
 &=2r_0(P)-2\cinvt(P)-1\\
 &=2\cdot 6-1=11,\end{align*} as claimed.
\end{proof}

\section{On the Khovanov to framed instanton spectral sequence} \label{sec:ss}
Recall from Theorem~\ref{thm:kh-to-i-ss} that there is a spectral sequence
\begin{equation*} \label{eq:ss}
\Khoddr(K) \Rightarrow I^\#(-\dcover(K)),
\end{equation*}
whose $E^2$ page is the reduced odd Khovanov homology of $K$. We will study the corresponding spectral sequence with coefficients in $\C$,
\begin{equation} \label{eq:ssC}
\Khoddr(K;\C) \Rightarrow I^\#(-\dcover(K);\C).
\end{equation}
By Proposition~\ref{prop:ss-thin}, this spectral sequence collapses at  the $E^2$ page  whenever $\Khoddr(K;\C)$ is thin. In \cite[\S5]{orsz}, Ozsv\'ath, Rasmussen, and Szab\'o computed $\Khoddr(K)$ for prime knots of at most 10 crossings and found that there are exactly seven knots $K$ for which $\Khoddr(K;\Q)$ and hence $\Khoddr(K;\C)$ is not thin:
\begin{equation}\label{eq:sevenknots} 10_{124}, 10_{139}, 10_{145}, 10_{152}, 10_{153}, 10_{154}, 10_{161}. \end{equation}
In this section, we  show for each of these $K$ apart from $10_{152}$ that \begin{equation}\label{eq:dimI}\dim \Khoddr(K;\C)>\dim I^\#(\dcover(K);\C),\end{equation} and therefore that the spectral sequence in \eqref{eq:ssC} does not collapse at  the $E^2$ page,  proving Theorem \ref{thm:main-ss}. To compute the framed instanton homology groups in \eqref{eq:dimI}, we identify  each of the relevant branched double covers as surgery on a knot, and then apply our  machinery for computing $I^\#$ of surgeries. Our results are summarized in Table \ref{table:ss-table}.

\begin{table}
\bgroup
\def\arraystretch{1.2}
\begin{tabular}{c|cc|cc}
$K$ & $\det(K)$ & $\dim \Khoddr(K;\C)$ & $\dcover(K)$ & $\dim I^\#(-\dcover(K);\C)$ \\ \hline
$10_{124}$ & $1$ & $3$ & $S^3_{-1}(3_1)$ & $1$ \\
$10_{139}$ & $3$ & $7$ & $S^3_{-3}(4_1)$ & $5$ \\
$10_{145}$ & $3$ & $7$ & $S^3_{-3}(5_2)$ & $5$ \\
$10_{152}$ & $11$ & $15$ && \\
$10_{153}$ & $1$ & $9$ & $S^3_1(P(7,3,-3))$ & $5$ \\
$10_{154}$ & $13$ & $17$ & $S^3_{13}(10_{139})$ & $13$ or $15$ \\
$10_{161}$ & $5$ & $9$ & $S^3_5(4_1)$ & $7$ \\
\end{tabular}
\egroup
\caption{The $10$-crossing knots with non-thin $\Khoddr(K;\C)$.}
\label{table:ss-table}
\end{table}

The rest of this subsection is devoted to justifying the entries in Table \ref{table:ss-table}. The reduced odd Khovanov calculations are from \cite[\S5]{orsz}. The other entries are from Lemmas \ref{lem:dcover-124}, \ref{lem:dcover-139-161}, \ref{lem:dcover-145}, \ref{lem:dcover-153}, and \ref{lem:dcover-154}. All calculations of framed instanton homology below are with coefficients in $\C$, as usual.

\begin{lemma} \label{lem:dcover-124}
We have $\dcover(10_{124}) \cong \dcover(T_{3,5})\cong S^3_{-1}(3_1)$, which implies that \[\dim I^\#(\dcover(10_{124})) = 1.\]
\end{lemma}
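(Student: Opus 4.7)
The plan is to establish the two claimed homeomorphisms and then invoke our surgery formula for the resulting $I^\#$ computation.

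For the topological identifications, I would first note the classical fact that the knot $10_{124}$ is the $(3,5)$-torus knot $T_{3,5}$; this is standard and can be verified by drawing $T_{3,5}$ and reducing to the tabulated diagram of $10_{124}$. Since the branched double cover of a torus knot $T_{p,q}$ is the Brieskorn homology sphere $\Sigma(2,p,q)$, we obtain $\dcover(T_{3,5}) \cong \Sigma(2,3,5)$, the Poincar\'e homology sphere. The identification $\Sigma(2,3,5) \cong S^3_{-1}(3_1)$ is a classical Kirby calculus exercise: writing $\Sigma(2,3,5)$ as the boundary of the $-E_8$ plumbing (or equivalently as $+1$-surgery on $T_{-2,3}$) and performing elementary blow-downs yields $-1$-surgery on the left-handed trefoil $3_1 = T_{-2,3}$, consistent with the conventions fixed in \S\ref{ssec:conventions}.

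For the dimension computation, Lemma~\ref{lem:t2q-surgery} applied to $T_{2,3} = \mirror{3_1}$ gives $\cinvt(\mirror{3_1}) = r_0(\mirror{3_1}) = 1$, from which $\cinvt(3_1) = -1$ and $r_0(3_1) = 1$ by Definition~\ref{def:shape} and Theorem~\ref{thm:conc-invt}. Applying Theorem~\ref{thm:rational-surgeries} with $(p,q) = (-1,1)$ then yields
\[ \dim I^\#(S^3_{-1}(3_1)) = 1 \cdot r_0(3_1) + |{-}1 - 1 \cdot \cinvt(3_1)| = 1 + |{-}1 + 1| = 1, \]
completing the proof.

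There is no real obstacle here: the whole argument reduces to identifying $10_{124}$ as a torus knot and applying the previously established surgery formula. The only mild subtlety is keeping track of chiralities, since $3_1$ in the paper's convention is the left-handed trefoil while Lemma~\ref{lem:t2q-surgery} is stated for the right-handed $T_{p,q}$; the correct signs are forced by passing to the mirror. Alternatively, one could bypass Theorem~\ref{thm:rational-surgeries} entirely and simply quote Example~\ref{ex:trefoil-plot}, which already records $\dim I^\#(S^3_{-1}(T_{2,3})) = 1$, together with $\dim I^\#(Y) = \dim I^\#(-Y)$.
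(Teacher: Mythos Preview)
Your proof is correct and follows essentially the same approach as the paper: identify $10_{124}$ with $T_{3,5}$, recognize its branched double cover as the Poincar\'e sphere $\Sigma(2,3,5) \cong S^3_{-1}(3_1)$, and compute the dimension via Lemma~\ref{lem:t2q-surgery} (the paper simply cites that lemma, while you unwind it through Theorem~\ref{thm:rational-surgeries}). One small slip in your parenthetical: you describe $\Sigma(2,3,5)$ as $+1$-surgery on $T_{-2,3}$ and then say blow-downs yield $-1$-surgery on $3_1 = T_{-2,3}$, which is internally inconsistent; in the paper's convention (cf.\ Example~\ref{ex:trefoil-plot}, where $S^3_1(T_{2,3}) = -\Sigma(2,3,5)$) the correct identification is directly $\Sigma(2,3,5) \cong S^3_{-1}(3_1)$, which is where you end up anyway.
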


\begin{proof}
This is simply the fact that $10_{124} = T_{3,5}$, whose branched double cover is well-known to be the Poincar{\'e} homology sphere $\Sigma(2,3,5) \cong S^3_{-1}(3_1).$ The framed instanton homology calculation then  follows from Lemma \ref{lem:t2q-surgery}.
\end{proof}

\begin{lemma} \label{lem:dcover-139-161}
We have $\dcover(10_{139}) \cong S^3_{-3}(4_1)$ and $\dcover(10_{161}) \cong S^3_5(4_1),$ which implies that \[\dim I^\#(\dcover(10_{139})) = 5\quad \textrm{and} \quad \dim I^\#(\dcover(10_{161}))=7.\]
\end{lemma}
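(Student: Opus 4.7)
The plan is to separate the statement into its two homeomorphism claims and the resulting dimension computations, since the latter follow immediately once the former are in hand.  By Lemma~\ref{lem:figure-eight} we have $\cinvt(4_1)=0$ and $r_0(4_1)=2$.  Theorem~\ref{thm:rational-surgeries} applies at every nonzero integer slope regardless of whether $4_1$ turns out to be V-shaped or W-shaped (see Example~\ref{ex:figure-eight-plot}), and yields
\[\dim I^\#(S^3_n(4_1)) = r_0(4_1) + |n-\cinvt(4_1)| = 2+|n|\]
for all nonzero integers $n$.  Specializing to $n=-3$ and $n=5$ gives the claimed values $5$ and $7$, respectively.

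For the two homeomorphisms, my plan is to run the Montesinos-trick argument of Proposition~\ref{prop:pretzel-homeo}.  The figure-eight knot is strongly invertible, with an unknotted axis of symmetry meeting $4_1$ transversely in two points; quotienting by this involution and filling the remaining arc with a rational tangle of slope determined by $n$ realizes $S^3_n(4_1)$ as the branched double cover of a knot $L_n\subset S^3$.  The task then reduces to recognizing $L_{-3}$ as $10_{139}$ and $L_5$ as $10_{161}$ by an explicit planar isotopy from the diagram produced by the filling to a standard diagram in the Rolfsen table.  As a preliminary sanity check, the orders of first homology match: $|H_1(S^3_{-3}(4_1);\Z)|=3=\det(10_{139})$ and $|H_1(S^3_{5}(4_1);\Z)|=5=\det(10_{161})$.

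The main obstacle will be the planar isotopy itself, since the target diagrams have ten crossings and several minus-signed Reidemeister-II and III moves will be needed before the pictures match.  In practice I would carry out the rational-tangle fillings on paper, apply a sequence of Reidemeister moves guided by the output of SnapPy, and confirm the final identification by comparing triangulations in Regina (analogous to how the homeomorphism in Proposition~\ref{prop:pretzel-homeo} was discovered and verified).  Once the homeomorphisms $\dcover(10_{139})\cong S^3_{-3}(4_1)$ and $\dcover(10_{161})\cong S^3_{5}(4_1)$ are established, the first paragraph delivers the two dimension claims and completes the proof.
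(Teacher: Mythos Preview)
Your dimension computations are correct and match the paper's argument exactly: once the homeomorphisms are known, Theorem~\ref{thm:rational-surgeries} with $\cinvt(4_1)=0$ and $r_0(4_1)=2$ gives $\dim I^\#(S^3_n(4_1))=2+|n|$.

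For the homeomorphisms your strategy is the same as the paper's (Montesinos trick on the strongly invertible $4_1$), but two points deserve comment.  First, the paper does not build the tangle quotient from scratch: it cites Montesinos--Whitten \cite{montesinos-whitten}, who already produced for each slope $p/q$ an explicit diagram of a knot $K_{p/q}$ with $\dcover(K_{p/q})\cong S^3_{p/q}(4_1)$.  This saves the tedious isotopy you anticipate.  Second, and more importantly, your plan does not address chirality.  Comparing triangulations in SnapPy or Regina only determines the knot type up to mirroring, and the ten-crossing knots in question are chiral.  The paper handles this by crossing-change arguments and Proposition~\ref{prop:tau-change}: for instance, a single crossing change turns $K_{-5}$ into $K_{-3}=10_{139}$ (a positive knot of genus $4$, hence $\chominvt=4$), forcing $\chominvt(K_{-5})\in\{3,4\}$; a separate crossing change on the Rolfsen diagram of $10_{161}$ gives $\chominvt(10_{161})\le -3$, so $K_{-5}$ must be $\mirror{10_{161}}$, and amphichirality of $4_1$ then yields $\dcover(10_{161})\cong S^3_5(4_1)$.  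Your ``explicit planar isotopy to a standard diagram'' would in principle pin down chirality, but as written the proposal is a plan rather than a proof, and the chirality step is where it is most likely to go wrong.
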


\begin{proof}
Montesinos and Whitten \cite[pp. 426--427]{montesinos-whitten} gave for each rational $\frac{p}{q}$ an explicit diagram of a knot $K_{p/q}$ such that $ \dcover(K_{p/q})=S^3_{p/q}(4_1)$.  For $\frac{p}{q} = -n$, where $n$ is a positive integer, we take the diagram on the left side of \cite[Figure~17]{montesinos-whitten} and add $n$ left-handed half twists in the region labeled ``$\gamma/\delta$''.  The case $n=3$ is a knot shown explicitly in \cite[Figure~19]{montesinos-whitten}, and SnapPy recognizes this as $10_{139}$; for the chirality, note that both the knot shown in \cite{montesinos-whitten} and $10_{139}$ in \cite{rolfsen} are positive knots.

Similarly, SnapPy identifies the knot $K_{-5}$ as $10_{161}$ or its mirror, and changing one crossing among the left-handed half twists from negative to positive produces $K_{-3} = 10_{139}$, which is a positive knot of genus 4, so that
\[ 0 \leq \chominvt(10_{139}) - \chominvt(K_{-5}) \leq 1 \qquad\Longleftrightarrow\qquad 3 \leq \chominvt(K_{-5}) \leq 4, \]
by Propositions~\ref{prop:tau-change} and \ref{prop:slice-bennequin-positive}.  Similarly, the diagram of $10_{161}$ in \cite{rolfsen} has a single positive crossing, and changing this to negative produces $\mirror{10_{139}}$, so we have
\[ 0 \leq \chominvt(10_{161}) - \chominvt(\mirror{10_{139}}) \leq 1 \qquad\Longleftrightarrow\qquad -4 \leq \chominvt(10_{161}) \leq -3. \]
It follows that $K_{-5}$ must be $\mirror{10_{161}}$.  Now since $4_1$ is amphichiral, we observe that
\[ \dcover(10_{161}) \cong -\dcover(\mirror{10_{161}}) \cong -S^3_{-5}(4_1) \cong S^3_5(\mirror{4_1}) \cong S^3_5(4_1).\] The claimed framed instanton homology calculations then follow from Theorem \ref{thm:rational-surgeries} together with the fact that $\cinvt(4_1)=0$ and $r_0(4_1) =2$, as in Table \ref{table:isharp-values}. 
\end{proof}

\begin{lemma} \label{lem:dcover-145}
We have $\dcover(10_{145})\cong \dcover(T_{3,10}) \cong S^3_{-3}(5_2)$, which implies that \[\dim I^\#(\dcover(10_{145}))=5.\]
\end{lemma}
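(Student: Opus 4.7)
The proof will mirror that of Lemma~\ref{lem:dcover-139-161}: the content consists of establishing the topological identification $\dcover(10_{145}) \cong \dcover(T_{3,10}) \cong S^3_{-3}(5_2)$, after which the framed instanton computation follows immediately from Theorem~\ref{thm:rational-surgeries}.

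For the identification $\dcover(10_{145}) \cong S^3_{-3}(5_2)$, I would adapt the Montesinos--Whitten strategy used in Lemma~\ref{lem:dcover-139-161}. Since $5_2$ is a two-bridge knot it is strongly invertible, so the Montesinos trick realizes each Dehn surgery $S^3_{p/q}(5_2)$ as the branched double cover of an explicit link in $S^3$; in particular, applying this to the slope $-3$ produces a specific knot $K$ whose diagram can be simplified via Reidemeister moves, and the plan is to verify (with the aid of SnapPy, as acknowledged in the paper) that $K$ is isotopic to $10_{145}$. In parallel, the identification with $\dcover(T_{3,10})$ would proceed by recognizing that $T_{3,10}$ is strongly invertible with $\dcover(T_{3,10}) \cong \Sigma(2,3,10)$ a small Seifert fibered space with $|H_1|=3$, then matching Seifert invariants with $S^3_{-3}(5_2)$ (which also has $|H_1|=3$). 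Alternatively, the Moser-type identification $S^3_{pq-1}(T_{p,q}) \cong L(pq-1, q^2)$ and related surgery descriptions of branched double covers of torus knots give a direct Kirby-calculus route between $\dcover(T_{3,10})$ and a surgery description that can be reduced to $S^3_{-3}(5_2)$.

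Given these homeomorphisms, the framed instanton homology is computed immediately: from Table~\ref{table:isharp-values} we have $\cinvt(5_2) = -1$ and $r_0(5_2) = 3$, with $5_2$ being V-shaped, so Theorem~\ref{thm:rational-surgeries} applied with $(p,q) = (-3,1)$ yields
\[ \dim I^\#(\dcover(10_{145})) = \dim I^\#(S^3_{-3}(5_2)) = r_0(5_2) + |{-3} - \cinvt(5_2)| = 3 + 2 = 5. \]

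The main obstacle is the topological identification of a ten-crossing knot's branched double cover with a Dehn surgery on a five-crossing knot. While discovering the identification is made tractable by SnapPy and Regina, rigorously justifying it requires either an explicit sequence of Kirby moves (as exhibited in Figures such as \ref{fig:13n3596-surgeries} in the preceding section) or a careful matching of Seifert invariants for both descriptions; the instanton-theoretic input is entirely formal once the homeomorphism is in hand.
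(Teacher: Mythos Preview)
Your outline matches the paper's approach: Montesinos trick on the strongly invertible $5_2$, identification via Seifert invariants of $\Sigma(2,3,10)$, and the final dimension count from Theorem~\ref{thm:rational-surgeries} with $(\cinvt(5_2),r_0(5_2))=(-1,3)$.  The paper fills in several specifics you leave implicit.  First, the rational-tangle filling corresponding to the integral slope is recognized \emph{directly} as $T_{3,10}$ by rewriting the resulting $3$-braid as $(\sigma_2\sigma_1)^{10}$; this is what ties together your two separate threads (Montesinos quotient of $5_2$ and $\dcover(T_{3,10})$).  Second, to pin down the sign ($-3$ versus $+3$), the paper invokes the Brittenham--Wu classification of exceptional surgeries on two-bridge knots: $\dcover(T_{3,10})$ is Seifert fibered, and only $-3$-surgery on $5_2$ is non-hyperbolic.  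Third, for chirality --- which SnapPy does not resolve --- the paper uses Neumann--Raymond to write $\Sigma(2,3,10)$ as $M(0;(3,1),(3,1),(5,-3))$, draws the corresponding Montesinos knot, and then distinguishes $10_{145}$ from its mirror by a crossing-change argument bounding $\chominvt$ (changing a crossing yields $\mirror{9_{10}}$, forcing $\chominvt(K)\leq -1$).  Your plan would work, but these are the concrete steps needed to make it rigorous beyond a SnapPy check.
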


\begin{proof}
The knot $5_2$ is strongly invertible, so its complement is the branched double cover of a tangle as shown in Figure~\ref{fig:5_2-surgery}, again following Montesinos \cite{montesinos}.  Filling in a rational tangle and taking branched double covers then gives a Dehn surgery on $5_2$, though it is not immediately clear which tangles correspond to which slopes because we have not been careful to follow the framing through the isotopy.
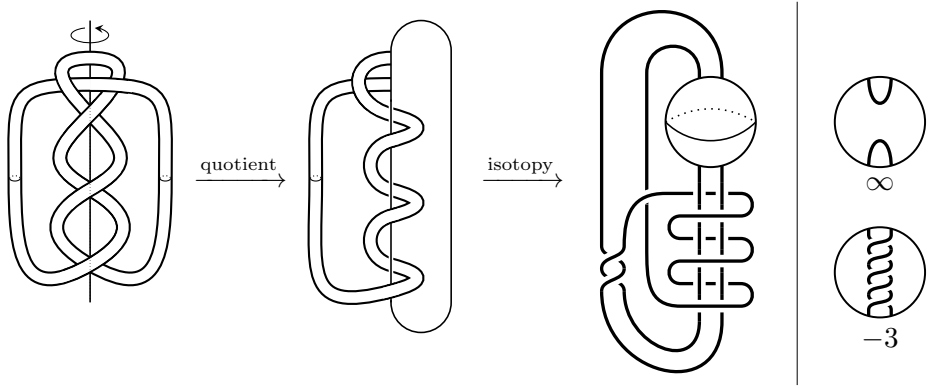
\begin{figure}
\tikzset{tunnel/.style = { black, double = white, thick, double distance = 4pt, looseness=1.75 }}
\tikzset{tunnel two/.style = { white, line width=4pt, looseness=1.75 }}
\newcommand{\AxisRotator}{\tikz [x=0.25cm,y=0.10cm,line width=.2ex,-stealth] \draw[thin] (0,0) arc (105:435:1 and 1);}
\begin{tikzpicture}

\begin{scope} 
\draw[semithick] (0,-1.25) -- (0,2.5);
\draw[tunnel] (-1,0.4) to[out=270,in=210] (0,-0.75) to[out=30,in=330] (0,0.25) to[out=150,in=225] (0,1.25) to[out=45,in=0,looseness=2] (0,2) to[out=180,in=135,looseness=2] (0,1.25) to[out=315,in=30] (0,0.25) to[out=210, in=150] (0,-0.75) to[out=330,in=270] (1,0.4) to[out=90, in=0] (0,1.65) to[out=180,in=90] (-1,0.4);
\draw[thin] ($(-1,0.4)+(-2pt,0)$) arc(180:360:2pt and 1pt);
\draw[thin,densely dotted] ($(-1,0.4)+(-2pt,0)$) arc(180:0:2pt and 1pt);
\draw[thin] ($(1,0.4)+(-2pt,0)$) arc(180:360:2pt and 1pt);
\draw[thin,densely dotted] ($(1,0.4)+(-2pt,0)$) arc(180:0:2pt and 1pt);
\begin{scope} 
\clip (-0.75,1.45) rectangle (0.55,1.85);
\begin{scope}
\clip (-0.75,1.5) rectangle (0.5,1.8);
\draw[tunnel] (0,2) to[out=180,in=135,looseness=2] (0,1.25);
\draw[tunnel] (1,0) to[out=90, in=0] (0,1.65);
\end{scope}
\draw[tunnel two] (0,2) to[out=180,in=135,looseness=2] (0,1.25);
\draw[tunnel two] (1,0) to[out=90, in=0] (0,1.65);
\end{scope}
\begin{scope} 
\clip (-0.4,-1) rectangle (0.4,1.5);
\begin{scope} \clip (-0.35,-0.95) rectangle (0.35,1.45);
\draw[tunnel] (-1,0) to[out=270,in=210] (0,-0.75) to[out=30,in=330] (0,0.25) to[out=150,in=225] (0,1.25) (0,1.25) to[out=45,in=0,looseness=2] (0,2);
\end{scope}
\draw[tunnel two] (-1,0) to[out=270,in=210] (0,-0.75) to[out=30,in=330] (0,0.25) to[out=150,in=225] (0,1.25) (0,1.25) to[out=45,in=0,looseness=2] (0,2);
\end{scope}
\begin{scope} 
\clip (-0.4,0) rectangle (0.4,0.5);
\begin{scope} \clip (-0.35,0.05) rectangle (0.35,0.45);
\draw[tunnel] (0,1.25) to[out=315,in=30] (0,0.25) to[out=210, in=150] (0,-0.75);
\end{scope}
\draw[tunnel two] (0,1.25) to[out=315,in=30] (0,0.25) to[out=210, in=150] (0,-0.75);
\end{scope}
\draw[very thin,densely dotted] (0,-1.2) -- (0,1.5);
\node at (0,2.3) {\AxisRotator};
\end{scope}

\node at (2,0.5) {\small$\xrightarrow{\mathrm{quotient}}$};

\begin{scope}[xshift=4cm] 
\draw[tunnel] (0,2) to[out=180,in=165,looseness=2] (0,1.25) to[out=345,in=15,looseness=3.5] (0,0.9) to[out=195,in=165,looseness=1.5] (0,0.25) to[out=345,in=15,looseness=3.5] (0,-0.1) to[out=195,in=165,looseness=1.5] (0,-0.75) to[out=345,in=15,looseness=3.5] (0,-1.1) to[out=195,in=270] (-1,0.4) to[out=90,in=180] (0,1.65);
\draw[thin] ($(-1,0.4)+(-2pt,0)$) arc(180:360:2pt and 1pt);
\draw[thin,densely dotted] ($(-1,0.4)+(-2pt,0)$) arc(180:0:2pt and 1pt);
\begin{scope} 
\clip (-0.75,1.25) rectangle (-0.15,1.85);
\begin{scope} \clip (-0.7,1.3) rectangle (-0.2,1.8);
\draw[tunnel] (0,2) to[out=180,in=165,looseness=2] (0,1.25);
\end{scope}
\draw[tunnel two] (0,2) to[out=180,in=165,looseness=2] (0,1.25);
\end{scope}
\draw[semithick] (0,2.1) -- (0,-1.25) arc (180:360:0.4) -- (0.8,2.1) arc(0:180:0.4); 
\draw[line width=1.75pt,white] (0,1.42) -- (0,-1.22);
\draw[semithick] (0,1.45) -- (0,-1.25);
\begin{scope} 
\clip (-0.1, 0.65) rectangle (0.1,1.1);
\begin{scope} \clip (-0.05,0.7) rectangle (0.05,1.05);
\draw[tunnel] (0,1.25) to[out=345,in=15,looseness=3.5] (0,0.9) to[out=195,in=165,looseness=1.5] (0,0.25);
\end{scope}
\draw[tunnel two] (0,1.25) to[out=345,in=15,looseness=3.5] (0,0.9) to[out=195,in=165,looseness=1.5] (0,0.25);
\end{scope}
\begin{scope} 
\clip (-0.1, -0.35) rectangle (0.1,0.1);
\begin{scope} \clip (-0.05,-0.3) rectangle (0.05,0.05);
\draw[tunnel] (0,0.25) to[out=345,in=15,looseness=3.5] (0,-0.1) to[out=195,in=165,looseness=1.5] (0,-0.75);
\end{scope}
\draw[tunnel two] (0,0.25) to[out=345,in=15,looseness=3.5] (0,-0.1) to[out=195,in=165,looseness=1.5] (0,-0.75);
\end{scope}
\begin{scope} 
\clip (-0.1, -1.35) rectangle (0.1,-0.9);
\begin{scope} \clip (-0.05,-1.3) rectangle (0.05,-0.95);
\draw[tunnel] (0,-0.75) to[out=345,in=15,looseness=3.5] (0,-1.1) to[out=195,in=270] (-1,0);
\end{scope}
\draw[tunnel two] (0,-0.75) to[out=345,in=15,looseness=3.5] (0,-1.1) to[out=195,in=270] (-1,0);
\end{scope}
\end{scope}

\node at (5.75,0.5) {\small$\xrightarrow{\mathrm{isotopy}}$};

\begin{scope}[xshift=8.25cm,yshift=0.75cm] 
\clip (-1.75,-3.1) rectangle (0.65,2); 
\draw[link] (-0.15,-2.25) to[looseness=0] (-0.15,1.05) to[out=90,in=90] (-0.85,1.05) to[looseness=0] (-0.85,-0.7);
\draw[link] (0.15,-2.25) to[looseness=0] (0.15,1.05) to[out=90,in=90] (-1.45,1.05) to[looseness=0] (-1.45,-1.25) to[out=270,in=90,looseness=1] (-1.15,-1.55) to[out=270,in=90,looseness=1] (-1.45,-1.85) to[out=270,in=270] (0.15,-2.25);
\draw[link] (-1.15,-1.85) to[out=90,in=270,looseness=1] (-1.45,-1.55) to[out=90,in=270,looseness=1] (-1.15,-1.25) to[out=90,in=180,looseness=1.25] (-0.6,-0.55) to[looseness=0] (0.4,-0.55) to[out=0,in=0] ++(0,-0.3) to[looseness=0] ++(-0.8,0) to[out=180,in=180] ++(0,-0.3) to[looseness=0] ++(0.8,0) to[out=0,in=0] ++(0,-0.3) to[looseness=0] ++(-0.8,0) to[out=180,in=180] ++(0,-0.3) to[looseness=0] ++(0.8,0) to[out=0,in=0] ++(0,-0.3) to[looseness=0] ++(-0.8,0) to[out=180,in=270,looseness=1.25] (-0.85,-1.25) to[looseness=0] (-0.85,-0.7);
\draw[link] (-1.15,-1.85) to[out=270,in=270] (-0.15,-2.25);
\draw[line width=0.75pt,fill=white] (0,0.4) circle (0.6); 
\begin{scope} \clip (0,0.4) circle (0.6);
\draw[semithick] (0,1) circle (0.8484);
\draw[semithick,dotted] (-0.6,0.4) to[out=30,in=150] (0.6,0.4);
\end{scope}
\begin{scope} \clip(-1.5,-1.55) rectangle (-1.1,-1.85);
\draw[link] (-1.15,-1.55) to[out=270,in=90,looseness=1] (-1.45,-1.85);
\end{scope}
\draw[link,looseness=0] (-0.15,-0.4) to ++(0,-0.3) ++(0,-0.3) to ++(0,-0.3) ++(0,-0.3) to ++(0,-0.3);
\draw[link,looseness=0] (0.15,-0.4) to ++(0,-0.3) ++(0,-0.3) to ++(0,-0.3) ++(0,-0.3) to ++(0,-0.3);
\end{scope}

\coordinate (divider) at (9.4,0);
\draw[very thin] (current bounding box.north -| divider) -- (current bounding box.south -| divider);

\begin{scope}[xshift=10.5cm,yshift=0.75cm]
\begin{scope}
\clip (0,0.4) circle (0.6);
\draw[link,looseness=4] (0.15,1) to[out=270,in=270] (-0.15,1) (0.15,-0.2) to[out=90,in=90] (-0.15,-0.2);
\end{scope}
\draw[line width=0.75pt] (0,0.4) circle (0.6);
\node[below] at (0,-0.2) {$\infty$};

\begin{scope}[link/.append style = { line width = 1.5pt, looseness=1 }]
\clip (0,-1.6) circle (0.6);
\draw[link] (-0.15,-1) to[looseness=0] (-0.15,-1.1) to[out=270,in=90] (0.15,-1.3) to[out=270,in=90] (-0.15,-1.5) to[out=270,in=90] (0.15,-1.7) to[out=270,in=90] (-0.15,-1.9) to[out=270,in=90] (0.15,-2.1) to[looseness=0] (0.15,-2.2);
\draw[link] (0.15,-1) to[looseness=0] (0.15,-1.1) to[out=270,in=90] (-0.15,-1.3) to[out=270,in=90] (0.15,-1.5) to[out=270,in=90] (-0.15,-1.7) to[out=270,in=90] (0.15,-1.9) to[out=270,in=90] (-0.15,-2.1) to[looseness=0] (-0.15,-2.2);
\draw[link] (0.15,-1.3) to[out=270,in=90] (-0.15,-1.5);
\draw[link] (0.15,-1.7) to[out=270,in=90] (-0.15,-1.9);
\end{scope}
\draw[line width=0.75pt] (0,-1.6) circle (0.6);
\node[below] at (0,-2.2) {$-3$};
\end{scope}
\end{tikzpicture}
\caption{Left, constructing a tangle $T$ whose branched double cover is $S^3 \setminus N(5_2)$.  Right, some tangles to plug into $T$ to get knots $K_\infty$ and $K_{-3}$.}
\label{fig:5_2-surgery}
\end{figure}

On the right side of Figure~\ref{fig:5_2-surgery}, we can insert the rational tangle labeled ``$\infty$'' into $T$ to get a knot $K_\infty$ which is unknotted.  Since $\dcover(K_\infty) \cong S^3$, this tangle corresponds to $\infty$-surgery on $5_2$.  If instead we use the ``$-3$'' tangle, consisting of five positive half-twists, then the branched double cover of the resulting knot $K_{-3}$ is an integral surgery on $5_2$, since it has distance $1$ from the $\infty$-surgery.

By inspection we see that $K_{-3}$ is isotopic to the closure of a 3-braid
\[ \beta = (\sigma_1\sigma_2\sigma_2\sigma_1)^3 \sigma_2^7 \sigma_1, \]
and $(\sigma_1\sigma_2\sigma_2\sigma_1)\sigma_2\sigma_2 = \sigma_1\sigma_2(\sigma_1\sigma_2\sigma_1)\sigma_2 = (\sigma_2\sigma_1\sigma_2)(\sigma_1\sigma_2\sigma_1)$ is the central element $(\sigma_2\sigma_1)^3$ of the braid group $B_3$, so it follows that
\[ \beta = ((\sigma_2\sigma_1)^3)^3 \cdot \sigma_2\sigma_1 = (\sigma_2\sigma_1)^{10}. \]
In other words, $K_{-3}$ is the $(3,10)$ torus knot.  If its branched double cover is $k$-surgery on $5_2$, where $k$ is an integer, then we have
\[ |k| = |H_1(\dcover(T_{3,10});\Z)| = \det T_{3,10} = 3, \]
so $\dcover(T_{3,10})$ is either $+3$-surgery or $-3$-surgery on $5_2$.  In fact, $\dcover(T_{3,10})$ is Seifert fibered, and $5_2$ is a two-bridge knot, so Brittenham--Wu \cite{brittenham-wu} determined which surgeries on it are not hyperbolic: in their notation we have $5_2 = [4,2]$, so $-3$-surgery is exceptional while $+3$-surgery is not.  Thus $\dcover(T_{3,10}) \cong S^3_{-3}(5_2)$, as claimed.

The Seifert invariants of the Brieskorn manifold $\dcover(T_{3,10}) \cong \Sigma(2,3,10)$ were computed by Neumann and Raymond \cite[Theorem~2.1]{neumann-raymond} to be 
\[ \Sigma(2,3,10) \cong M(0;(1,\beta_1),(3,\beta_2),(3,\beta_2),(5,\beta_3)) \]
for any integers $\beta_i$ satisfying $15\beta_1 + 10\beta_2 + 3\beta_3 = 1$.  Taking $(\beta_1,\beta_2,\beta_3)=(0,1,-3)$ and deleting the pair $(1,0)$ gives
\[ \Sigma(2,3,10) \cong M(0; (3,1),(3,1),(5,-3)). \]
This is the branched double cover of the Montesinos knot with three rational tangles corresponding to the continued fractions $\frac{3}{1} = [3]$,  $\frac{3}{1} = [3]$, and $-\frac{5}{3} = [-2,-3]$, as shown in Figure~\ref{fig:10_145-montesinos}, and we can identify this knot as $10_{145}$.
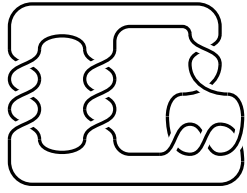
\begin{figure}
\begin{tikzpicture}[link/.append style = {looseness=1}]
\begin{scope} 
\foreach \x in {0,1cm}{
\begin{scope}[xshift=\x]
\draw[link] (0.4,0) to[out=90,in=270] ++(-0.4,0.4) to[out=90,in=270] ++(0.4,0.4) to[out=90,in=270] ++(-0.4,0.4);
\draw[link] (0,0) to[out=90,in=270] ++(0.4,0.4) to[out=90,in=270] ++(-0.4,0.4) to[out=90,in=270] ++(0.4,0.4);
\draw[link] (0,0.4) to[out=90,in=270] (0.4,0.8); 
\end{scope}
}
\draw[link] (0.4,1.2) to[out=90,in=90] (1,1.2) (0.4,0) to[out=270,in=270] (1,0);
\end{scope}
\begin{scope} 
\draw[link] (2.3,0.6) to[out=0,in=270] ++(0.5,0.4) to[out=90,in=270] ++(-0.4,0.4);
\draw[link] (2.9,0.6) to[out=180,in=270] ++(-0.5,0.4) to[out=90,in=270] ++(0.4,0.4);
\draw[link] (2.3,0.6) to[out=180,in=90] ++(-0.2,-0.3);
\draw[link] (2.9,0.6) to[out=0,in=90] ++(0.2,-0.3);
\draw[link] (2.1,0.3) to[out=270,in=180] ++(0.3,-0.5) to[out=0,in=180] ++(0.4,0.4) to[out=0,in=90] ++(0.3,-0.5);
\draw[link] (2,-0.2) to[out=0,in=180] ++(0.4,0.4) to[out=0,in=180] ++(0.4,-0.4) to[out=0,in=270] ++(0.3,0.5);
\draw[link] (2.4,-0.2) to[out=0,in=180] ++(0.4,0.4); 
\draw[link] (2.3,0.6) ++(0.5,0.4) to[out=90,in=270] ++(-0.4,0.4); 
\end{scope}
\draw[link] (1.4,0) to[out=270,in=180] (1.6,-0.2) to[out=0,in=180] (2,-0.2);
\draw[link] (2.4,1.4) to[out=90,in=0] ++(-0.1,0.1) to[out=180,in=0] (1.6,1.5) to[out=180,in=90] (1.4,1.3) to[out=270,in=90] (1.4,1.2);
\draw[link] (3.1,-0.3) to[out=270,in=0] ++(-0.3,-0.3) to[out=180,in=0] (0.3,-0.6) to[out=180,in=270] (0,-0.3) to[out=90,in=270] (0,0);
\draw[link] (2.8,1.4) to[out=90,in=270] ++(0,0.1) to[out=90,in=0] ++(-0.3,0.3) to[out=180,in=0] (0.3,1.8) to[out=180,in=90] (0,1.5) to[out=270,in=90] (0,1.2);
\end{tikzpicture}
\caption{A Montesinos knot with branched double cover $S^3_{-3}(5_2)$.}
\label{fig:10_145-montesinos}
\end{figure}

To sort out the chirality, we note that the knot $K$ in Figure~\ref{fig:10_145-montesinos} has only two positive crossings, namely the crossings in the $[-2]$ twist region of the $-\frac{5}{3}$ tangle, and changing one of them from positive to negative produces the negative knot $\mirror{9_{10}}$.  Since $9_{10}$ is positive of genus $2$, we have $\chominvt(\mirror{9_{10}}) = -2$, hence
\[ 0 \leq \chominvt(K) - \chominvt(\mirror{9_{10}}) \leq 1 \quad\Longleftrightarrow\quad -2 \leq \chominvt(K) \leq -1. \]
Meanwhile, the diagram of $10_{145}$ in \cite{rolfsen} has a single positive crossing, and changing it to negative produces the diagram $K_-$ of a negative knot which must be nontrivial, since $10_{145}$ has unknotting number 2.  Thus $\chominvt(K_-) \leq -1$ and we have
\[ \chominvt(10_{145}) \leq 1 + \chominvt(K_-) \leq 0. \]
But then $\chominvt(\mirror{10_{145}}) \geq 0 > \chominvt(K)$, so we conclude that $K$ must be isotopic to $10_{145}$. The  framed instanton homology calculation then follows from Theorem \ref{thm:rational-surgeries} together with the fact that $\cinvt(5_2)=-1$ and $r_0(5_2) =3$, as in Table \ref{table:isharp-values}. 
\end{proof}

\begin{lemma} \label{lem:dcover-153}
We have $\dcover(10_{153})\cong S^3_1(P(7,3,-3))$, which implies that \[\dim I^\#(\dcover(10_{153})) = 5.\]
\end{lemma}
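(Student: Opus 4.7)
The plan splits into two independent pieces: a Floer-theoretic dimension count that follows immediately from results already established, and a topological identification of the branched double cover that requires the real work.

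For the dimension count, I would note that Theorem~\ref{thm:main-pretzels} gives $\cinvt(P(7,3,-3)) = 0$ and $r_0(P(7,3,-3)) = 4$, so that Theorem~\ref{thm:rational-surgeries} applied with $(p,q) = (1,1)$ yields
\[ \dim I^\#(S^3_1(P(7,3,-3))) = r_0(P(7,3,-3)) + |1 - 0| = 5. \]
Thus the substantive content of the lemma is the homeomorphism $\dcover(10_{153}) \cong S^3_1(P(7,3,-3))$.

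For the homeomorphism I would use the Montesinos trick, just as in the proof of Proposition~\ref{prop:pretzel-homeo}. The pretzel $P(7,3,-3)$ is strongly invertible via the $\pi$-rotation $\iota$ whose fixed axis $U$ runs horizontally through its three twist regions, meeting the knot in two points. The quotient $(S^3, P(7,3,-3))/\iota$ produces a tangle $T \subset B^3$, obtained from the image of $U$ after removing a neighborhood of the image of $P(7,3,-3)$, whose branched double cover is the exterior of $P(7,3,-3)$. Filling in the rational tangle corresponding to the $+1$ slope then produces a knot $K \subset S^3$ with $\dcover(K) \cong S^3_1(P(7,3,-3))$. Using the tangle-quotient computations of Figure~\ref{fig:tangle-quotients} for the $(3,-3)$ twist regions and the analogous odd-twist quotient (a chain of half-twists with one arc of $U/\iota$ running through it) for the $7$-twist region, one obtains an explicit diagram of $K$ that can be isotoped to a standard form.

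The main obstacle will be the diagrammatic verification that the resulting knot $K$ is actually $10_{153}$ rather than some other $10$-crossing knot. This is essentially a bookkeeping exercise but a delicate one, and in practice I would confirm it using SnapPy \cite{snappy} or Regina \cite{regina} to compute an invariant of $K$ (Alexander polynomial, hyperbolic volume, knot group) and match it against $10_{153}$. Chirality is not a real concern here: since $\det(10_{153}) = 1$, both $\dcover(10_{153})$ and $\dcover(\mirror{10_{153}})$ are integer homology spheres, and we only claim an (unoriented) homeomorphism, so one simply chooses the sign of the Montesinos filling to match.
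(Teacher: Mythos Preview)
Your dimension count is correct and matches the paper exactly. The topological identification, however, contains a genuine error.

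The horizontal axis through the twist regions of $P(7,3,-3)$ does \emph{not} meet the knot: since all three parameters $7,3,-3$ are odd, the axis passes through the middle crossing of each region without touching either strand. It therefore realises a \emph{periodic} involution of period $2$, not a strong inversion. Indeed, the rows of Figure~\ref{fig:tangle-quotients} you invoke --- the bottom row for the $(3,-3)$ pair and the middle row for an odd twist region --- are both described there as quotients by an axis \emph{disjoint} from the tangle. The procedure you then outline, removing a neighbourhood of the arc $K/\iota$ and filling with a rational tangle, is the strongly-invertible recipe and does not apply: when the axis is disjoint from $K$, the image $K/\iota$ is a closed knot in the quotient $S^3$, not an arc whose neighbourhood one deletes.

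The paper accordingly follows the periodic-knot version of the Montesinos trick, exactly as in Figure~\ref{fig:P-2k3-dcover}: the $+1$-surgery on $P(7,3,-3)$ descends to $\tfrac12$-surgery on the quotient knot $K/\iota$, with branch locus $U/\iota$; since $K/\iota$ is unknotted one blows down the $\tfrac12$-framed circle, tracks $U/\iota$ through the resulting twists, and then isotopes explicitly onto the Rolfsen diagram of $10_{153}$. This surgery-and-blowdown step is entirely missing from your outline.
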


\begin{proof}
In Figure~\ref{fig:dcover-10_153}, we repeat the procedure described in \S\ref{ssec:pretzels} for surgeries on periodic knots to find a homeomorphism
\[ S^3_1(P(7,3,-3)) \cong \dcover(10_{153}). \]
The steps labeled ``$\cong$'' in this figure carry out an isotopy which turns the diagram resulting from this procedure into the $10_{153}$ diagram shown in \cite{rolfsen}. The  framed instanton homology calculation then follows from Theorem \ref{thm:rational-surgeries} together with the fact that $\cinvt(P(7,3,-3))=0$ and $r_0(P(7,3,-3)) =4$, per Theorem \ref{thm:main-pretzels}.
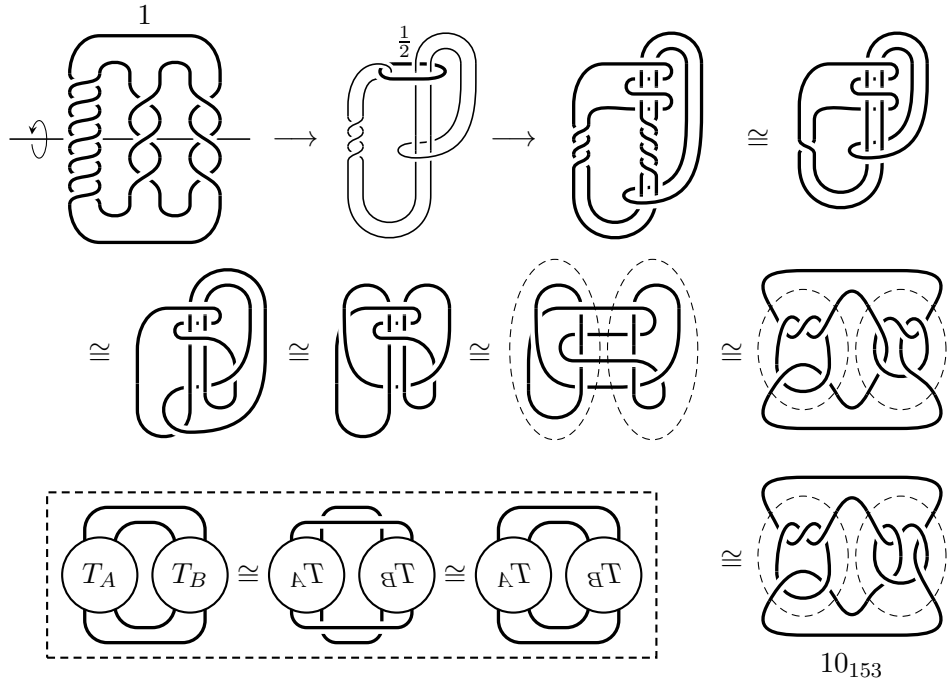
\begin{figure}
\tikzset{twistregion/.style={draw, fill=white, thick, minimum width=0.6cm}}
\tikzset{thinlink/.style = { white, double = black, line width = 1.2pt, double distance = 0.6pt, looseness=1.75 }}
\newcommand{\AxisRotator}{\tikz [x=0.10cm,y=0.25cm,line width=.2ex,-stealth] \draw[thin] (0,0) arc (-165:165:1 and 1);}
\begin{tikzpicture}
\begin{scope} 
\draw[semithick] (-1.6,0.75) -- (1.6,0.75); 
\draw[link,looseness=1] (0,1.5) to[out=270,in=90] ++(0.4,-0.5) ++(-0.4,0) to[out=270,in=90] ++(0.4,-0.5) ++(-0.4,0) to[out=270,in=90] ++(0.4,-0.5) -- ++(0,-0.125) to[out=270,in=270,looseness=1.25] ++(0.4,0) -- ++(0,0.125) to[out=90,in=270] ++(0.4,0.5) ++(-0.4,0) to[out=90,in=270] ++(0.4,0.5) ++(-0.4,0) to[out=90,in=270] ++(0.4,0.5);
\draw[link,looseness=1] (1.2,0) to[out=90,in=270] ++(-0.4,0.5) ++(0.4,0) to[out=90,in=270] ++(-0.4,0.5) ++(0.4,0) to[out=90,in=270] ++(-0.4,0.5) -- ++(0,0.125) to[out=90,in=90,looseness=1.25] ++(-0.4,0) -- ++(0,-0.125) to[out=270,in=90] ++(-0.4,-0.5) ++(0.4,0) to[out=270,in=90] ++(-0.4,-0.5) ++(0.4,0) to[out=270,in=90] ++(-0.4,-0.5);
\draw[link] (0,1.5) -- ++(0,0.125) to[out=90,in=90,looseness=1.25] ++(-0.4,0) ++(0,-1.75) to[out=270,in=270,looseness=1.25] ++(0.4,0) -- ++(0,0.125);
\draw[link,looseness=1] (1.2,1.5) -- ++(0,0.125) to[out=90,in=0] ++(-0.4,0.5) to[out=180,in=0] node[midway,above,black] {$1$} ++(-1.2,0) to[out=180,in=90] ++(-0.4,-0.5) -- ++(0,-1.75) to[out=270,in=180] ++(0.4,-0.5) to[out=0,in=180] ++(1.2,0) to[out=0,in=270] ++(0.4,0.5) -- ++(0,0.125);
\draw[link,looseness=1] (-0.4,-0.125) to[out=90,in=270] ++(-0.4,0.25) ++(0.4,0) to[out=90,in=270] ++(-0.4,0.25) ++(0.4,0) to[out=90,in=270] ++(-0.4,0.25) ++(0.4,0) to[out=90,in=270] ++(-0.4,0.25) ++(0.4,0) to[out=90,in=270] ++(-0.4,0.25) ++(0.4,0) to[out=90,in=270] ++(-0.4,0.25) ++(0.4,0) to[out=90,in=270] ++(-0.4,0.25);
\draw[link,looseness=1] (-0.8,-0.125) to[out=90,in=270] ++(0.4,0.25) ++(-0.4,0) to[out=90,in=270] ++(0.4,0.25) ++(-0.4,0) to[out=90,in=270] ++(0.4,0.25) ++(-0.4,0) to[out=90,in=270] ++(0.4,0.25) ++(-0.4,0) to[out=90,in=270] ++(0.4,0.25) ++(-0.4,0) to[out=90,in=270] ++(0.4,0.25) ++(-0.4,0) to[out=90,in=270] ++(0.4,0.25);
\node at (-1.2,0.75) {\AxisRotator};
\end{scope}

\node at (2.2,0.725) {$\longrightarrow$};

\begin{scope}[xshift=3cm]
\draw[thinlink] (-0.1,1.05) to[out=270,in=90,looseness=1] ++(0.2,-0.2) ++(-0.2,0) to[out=270,in=90,looseness=1] ++(0.2,-0.2) ++(-0.2,0) to[out=270,in=90,looseness=1] ++(0.2,-0.2) -- (0.1,0) to[out=270,in=270] (0.8,0) -- ++(0,1.75) to[out=90,in=90] ++(0.8,0);
\draw[thinlink] (0.1,1.05) to[out=270,in=90,looseness=1] ++(-0.2,-0.2) ++(0.2,0) to[out=270,in=90,looseness=1] ++(-0.2,-0.2) ++(0.2,0) to[out=270,in=90,looseness=1] ++(-0.2,-0.2) -- (-0.1,0) to[out=270,in=270] (1.0,0) -- ++(0,1.75) to[out=90,in=90] ++(0.4,0);
\draw[thinlink] (-0.1,1.05) -- (-0.1,1.2) to[out=90,in=180,looseness=1] (0.3,1.75) to[out=0,in=0,looseness=3] ++(0,-0.2) to[out=180,in=90,looseness=1] (0.1,1.2) -- (0.1,1.05);
\draw[thinlink,looseness=1.25] (1.6,1.75) -- ++(0,-0.25) to[out=270,in=0] ++(-0.8,-1) to[out=180,in=180,looseness=4] ++(0,0.2) to[out=0,in=270] ++(0.6,0.8) -- ++(0,0.25);
\draw[thinlink] (0.8,0.6) -- ++(0,0.2) ++(0.2,-0.2) -- ++(0,0.2); 
\draw[link,looseness=1.25] (0.5,1.75) -- ++(0.4,0) to[out=0,in=0,looseness=5] ++(0,-0.2) -- ++(-0.4,0) to[out=180,in=180,looseness=3] ++(0,0.2);
\draw[thinlink] (0.8,1.65) -- ++(0,0.1) to[out=90,in=90] ++(0.8,0); 
\draw[thinlink] (1.0,1.65) -- ++(0,0.1) to[out=90,in=90] ++(0.4,0);
\begin{scope} 
\clip (0,1.65) rectangle (0.5,1.85);
\draw[thinlink] (-0.1,1.2) to[out=90,in=180,looseness=1] (0.3,1.75) to[out=0,in=0,looseness=3] ++(0,-0.2);
\end{scope}
\node at (0.65,2.05) {\small$\tfrac{1}{2}$};
\end{scope}

\node at (5.1,0.725) {$\longrightarrow$};

\begin{scope}[xshift=6cm]
\draw[link] (-0.1,0.95) to[out=270,in=90,looseness=1] ++(0.2,-0.2) ++(-0.2,0) to[out=270,in=90,looseness=1] ++(0.2,-0.2) ++(-0.2,0) to[out=270,in=90,looseness=1] ++(0.2,-0.2) -- (0.1,0) to[out=270,in=270] (0.8,0) -- ++(0,1.75) to[out=90,in=90] ++(0.8,0);
\draw[link] (0.1,0.95) to[out=270,in=90,looseness=1] ++(-0.2,-0.2) ++(0.2,0) to[out=270,in=90,looseness=1] ++(-0.2,-0.2) ++(0.2,0) to[out=270,in=90,looseness=1] ++(-0.2,-0.2) -- (-0.1,0) to[out=270,in=270] (1.0,0) -- ++(0,1.75) to[out=90,in=90] ++(0.4,0);
\draw[link] (-0.1,0.95) -- (-0.1,1.2) to[out=90,in=180,looseness=1] (0.3,1.75) -- ++(0.8,0) to[out=0,in=0] ++(0,-0.2) -- ++(-0.4,0) to[out=180,in=180] ++(0,-0.2) -- ++(0.4,0) to[out=0,in=0] ++(0,-0.2) -- ++(-0.4,0) to[out=180,in=90,looseness=1] (0.1,1) -- (0.1,0.95);
\draw[link,looseness=1.25] (1.6,1.75) -- ++(0,-0.65) to[out=270,in=0] ++(-0.8,-1.2) to[out=180,in=180,looseness=4] ++(0,0.2) to[out=0,in=270] ++(0.6,1.0) -- ++(0,0.65);
\draw[link,looseness=1] (0.8,0.01) -- ++(0,0.24) to[out=90,in=270] ++(0.2,0.2) ++(-0.2,0) to[out=90,in=270] ++(0.2,0.2) ++(-0.2,0) to[out=90,in=270] ++(0.2,0.2) ++(-0.2,0) to[out=90,in=270] ++(0.2,0.2) -- ++(0,0.2) ++(0,0.2) -- ++(0,0.2); 
\draw[link,looseness=1] (1.0,0.03) -- ++(0,0.22) to[out=90,in=270] ++(-0.2,0.2) ++(0.2,0) to[out=90,in=270] ++(-0.2,0.2) ++(0.2,0) to[out=90,in=270] ++(-0.2,0.2) ++(0.2,0) to[out=90,in=270] ++(-0.2,0.2) -- ++(0,0.2) ++(0,0.2) -- ++(0,0.2);
\end{scope}

\node at (8.35,0.75) {$\cong$};

\begin{scope}[xshift=9cm]
\draw[link] (0.1,0.75) to[out=270,in=90,looseness=1] ++(-0.2,-0.2) -- (-0.1,0.4) to[out=270,in=270] (1.0,0.4) -- ++(0,1.35) to[out=90,in=90] ++(0.4,0);
\draw[link] (-0.1,0.75) to[out=270,in=90,looseness=1] ++(0.2,-0.2) -- (0.1,0.4) to[out=270,in=270] (0.8,0.4) -- ++(0,1.35) to[out=90,in=90] ++(0.8,0);
\draw[link] (-0.1,0.75) -- (-0.1,1.2) to[out=90,in=180,looseness=1] (0.3,1.75) -- ++(0.8,0) to[out=0,in=0] ++(0,-0.2) -- ++(-0.4,0) to[out=180,in=180] ++(0,-0.2) -- ++(0.4,0) to[out=0,in=0] ++(0,-0.2) -- ++(-0.4,0) to[out=180,in=90,looseness=1] (0.1,1) -- (0.1,0.75);
\draw[link,looseness=1.25] (1.6,1.75) -- ++(0,-0.25) to[out=270,in=0] ++(-0.8,-1) to[out=180,in=180,looseness=4] ++(0,0.2) to[out=0,in=270] ++(0.6,0.8) -- ++(0,0.25);
\draw[link,looseness=1] (0.8,0.61) -- ++(0,0.44) -- ++(0,0.2) ++(0,0.2) -- ++(0,0.2); 
\draw[link,looseness=1] (1.0,0.63) -- ++(0,0.42) -- ++(0,0.2) ++(0,0.2) -- ++(0,0.2);
\end{scope}

\node at (-0.4,-2.1) {$\cong$};

\begin{scope}[xshift=0cm,yshift=-3.25cm]
\draw[link] (1.0,1.65) -- ++(0,0.1) to[out=90,in=90] ++(0.6,0);
\draw[link] (0.1,0.75) -- (0.1,0.4) to[out=270,in=270] (0.8,0.4) -- ++(0,1.35) to[out=90,in=90] ++(1.0,0);
\draw[link] (0.1,0.75) -- (0.1,1.2) to[out=90,in=180,looseness=1] (0.5,1.75) -- ++(0.6,0) to[out=0,in=0] ++(0,-0.2) -- ++(-0.4,0) to[out=180,in=180] ++(0,-0.2) -- ++(0.3,0) to[out=0,in=90,looseness=1] ++(0.4,-0.4) -- ++(0,-0.35) to[out=270,in=270,looseness=1.5] ++(-0.4,0) -- ++(0,0.03);
\draw[link,looseness=1.25] (1.8,1.75) -- ++(0,-0.65) to[out=270,in=0] ++(-1.0,-1) to[out=180,in=180,looseness=2] ++(0,0.6) to[out=0,in=270] ++(0.8,0.8) -- ++(0,0.25);
\draw[link,looseness=1] (0.8,0.61) -- ++(0,0.44) -- ++(0,0.2) ++(0,0.2) -- ++(0,0.2); 
\draw[link,looseness=1] (1.0,0.63) -- ++(0,0.42) -- ++(0,0.2) ++(0,0.2) -- ++(0,0.2);
\end{scope}

\node at (2.25,-2.1) {$\cong$};

\begin{scope}[xshift=2.65cm,yshift=-3.25cm]
\draw[link] (1.0,1.65) -- ++(0,0.1) to[out=90,in=90] ++(0.6,0);
\draw[link] (0.1,0.75) -- (0.1,0.4) to[out=270,in=270] (0.8,0.4) -- ++(0,1.35) to[out=90,in=90] ++(-0.6,0);
\draw[link] (0.1,0.75) -- (0.1,1.2) to[out=90,in=180,looseness=1] (0.5,1.75) -- ++(0.6,0) to[out=0,in=0] ++(0,-0.2) -- ++(-0.4,0) to[out=180,in=180] ++(0,-0.2) -- ++(0.3,0) to[out=0,in=90,looseness=1] ++(0.4,-0.4) -- ++(0,-0.35) to[out=270,in=270,looseness=1.5] ++(-0.4,0) -- ++(0,0.03);
\draw[link,looseness=1.25] (0.2,1.75) to[out=270,in=180] (0.8,0.7) to[out=0,in=270] ++(0.8,0.8) -- ++(0,0.25);
\draw[link,looseness=1] (0.8,0.61) -- ++(0,0.44) -- ++(0,0.2) ++(0,0.2) -- ++(0,0.2); 
\draw[link,looseness=1] (1.0,0.63) -- ++(0,0.42) -- ++(0,0.2) ++(0,0.2) -- ++(0,0.2);
\end{scope}

\node at (4.65,-2.1) {$\cong$};

\begin{scope}[xshift=5.7cm,yshift=-3.25cm]
\draw[link] (1.0,0.65) -- (1.0,1.65) -- ++(0,0.1) to[out=90,in=90] ++(0.6,0);
\draw[link] (-0.4,0.75) -- (-0.4,0.65) to[out=270,in=270] (0.3,0.65) -- ++(0,1.1) to[out=90,in=90] ++(-0.6,0);
\draw[link] (-0.4,0.75) -- (-0.4,1) to[out=90,in=180,looseness=1] (0,1.75) -- ++(1.1,0) to[out=0,in=0] ++(0,-0.35) -- ++(-0.9,0) to[out=180,in=180] ++(0,-0.35) -- ++(0.8,0) to[out=0,in=90,looseness=1] ++(0.4,-0.4) -- ++(0,-0.05) to[out=270,in=270,looseness=1.5] ++(-0.4,0) -- ++(0,0.03);
\draw[link,looseness=1.25] (-0.3,1.75) to[out=270,in=180] (0.3,0.7) to[out=0,in=180] (0.8,0.7) to[out=0,in=270] ++(0.8,0.8) -- ++(0,0.25);
\draw[link] (-0.4,0.65) to[out=270,in=270] (0.3,0.65) -- ++(0,0.3) ++(0,0.3) -- ++(0,0.3); 
\draw[link,looseness=1] (1.0,0.6) -- ++(0,0.35) ++(0,0.3) -- ++(0,0.3);
\draw[thin,densely dashed] (-0.05,1.2) ellipse (0.6 and 1.2);
\draw[thin,densely dashed] (1.25,1.2) ellipse (0.6 and 1.2);
\end{scope}

\node at (8,-2.1) {$\cong$};

\begin{scope}[xshift=9cm,yshift=-3.25cm]
\draw[link] (0.4,1.8) to[out=60,in=120] (0.8,1.8) (0.4,0.6) to[out=-60,in=-120] (0.8,0.6);
\draw[link] (-0.5,1.8) to[out=120,in=180] (-0.05,2.25) -- (1.25,2.25) to[out=0,in=60] (1.7,1.8);
\draw[link] (-0.5,0.6) to[out=-120,in=180] (-0.05,0.15) -- (1.25,0.15) to[out=0,in=-60] (1.7,0.6);
\draw[link] (-0.5,0.6) to[out=60,in=120] (0.4,0.6);
\draw[link] (-0.5,1.8) to[out=-60,in=240] (-0.05,1.5) to[out=60,in=90] (0.3,1) to[out=270,in=270] (-0.4,1);
\draw[link] (-0.4,1) to[out=90,in=120] (-0.05,1.5) to[out=-60,in=240] (0.4,1.8);
\begin{scope} 
\clip (-0.5,0.6) rectangle (-0.05,1.2);
\draw[link] (-0.5,0.6) to[out=60,in=120] (0.4,0.6);
\end{scope}
\begin{scope} 
\clip (-0.2,1.3) rectangle (0.1,1.7);
\draw[link] (-0.5,1.8) to[out=-60,in=240] (-0.05,1.5) to[out=60,in=90] (0.3,1);
\end{scope}
\draw[link] (1.7,1.8) to[out=240,in=300] (1.25,1.5) to[out=120,in=90] (0.9,1.2);
\draw[link] (0.8,1.8) to[out=300,in=60,looseness=1.5] (0.8,0.6);
\draw[link] (0.9,1.2) to[out=270,in=270] (1.6,1.2) to[out=90,in=90,looseness=4] (1.25,1.2);
\draw[link] (1.25,1.2) to[out=270,in=120,looseness=1.25] (1.7,0.6);
\begin{scope} 
\clip (1.1,1.25) rectangle (1.4,1.65);
\draw[link] (1.7,1.8) to[out=240,in=300] (1.25,1.5) to[out=120,in=90] (0.9,1.2);
\end{scope}
\draw[thin,densely dashed] (-0.05,1.2) ellipse (0.6 and 0.8);
\draw[thin,densely dashed] (1.25,1.2) ellipse (0.6 and 0.8);
\end{scope}

\node at (8,-4.85) {$\cong$};
\begin{scope}[xshift=9cm,yshift=-6cm]
\draw[link] (0.4,1.8) to[out=60,in=120] (0.8,1.8) (0.4,0.6) to[out=-60,in=-120] (0.8,0.6);
\draw[link] (-0.5,1.8) to[out=120,in=180] (-0.05,2.25) -- (1.25,2.25) to[out=0,in=60] (1.7,1.8);
\draw[link] (-0.5,0.6) to[out=-120,in=180] (-0.05,0.15) -- (1.25,0.15) to[out=0,in=-60] (1.7,0.6);
\draw[link] (-0.5,0.6) to[out=60,in=120] (0.4,0.6);
\draw[link] (-0.5,1.8) to[out=-60,in=240] (-0.05,1.5) to[out=60,in=90] (0.3,1) to[out=270,in=270] (-0.4,1);
\draw[link] (-0.4,1) to[out=90,in=120] (-0.05,1.5) to[out=-60,in=240] (0.4,1.8);
\begin{scope} 
\clip (-0.5,0.6) rectangle (-0.05,1.2);
\draw[link] (-0.5,0.6) to[out=60,in=120] (0.4,0.6);
\end{scope}
\begin{scope} 
\clip (-0.2,1.3) rectangle (0.1,1.7);
\draw[link] (-0.5,1.8) to[out=-60,in=240] (-0.05,1.5) to[out=60,in=90] (0.3,1);
\end{scope}
\draw[link] (1.25,1.2) to[out=270,in=60,looseness=1.25] (0.8,0.6);
\draw[link] (1.6,1.2) to[out=270,in=270] (0.9,1.2) to[out=90,in=90,looseness=4] (1.25,1.2);
\draw[link] (1.7,1.8) to[out=240,in=120,looseness=1.5] (1.7,0.6);
\draw[link] (0.8,1.8) to[out=300,in=240] (1.25,1.5) to[out=60,in=90] (1.6,1.2);
\begin{scope} 
\clip (1.1,1.25) rectangle (1.4,1.65);
\draw[link] (0.9,1.2) to[out=90,in=90,looseness=4] (1.25,1.2);
\end{scope}
\draw[thin,densely dashed] (-0.05,1.2) ellipse (0.6 and 0.8);
\draw[thin,densely dashed] (1.25,1.2) ellipse (0.6 and 0.8);
\node at (0.6,-0.25) {$10_{153}$};
\end{scope}

\begin{scope}[xshift=0cm,yshift=-6.25cm,local bounding box=tanglescope]
\begin{scope}
\draw[link,rounded corners=3mm] (-0.6,2.1) rectangle (1.0,0.3);
\draw[link,rounded corners=3mm] (-0.2,1.9) rectangle (0.6,0.5);
\node[circle,draw,fill=white,inner sep=0pt,minimum size=1cm] at (-0.4,1.2) {$T_A$};
\node[circle,draw,fill=white,inner sep=0pt,minimum size=1cm] at (0.8,1.2) {$T_B$};
\end{scope}

\begin{scope}[xshift=2.75cm]
\draw[link,rounded corners=2mm] (-0.2,2.1) rectangle (0.6,0.3);
\draw[link,rounded corners=2mm] (-0.6,1.9) rectangle (1.0,0.5);
\node[circle,draw,fill=white,inner sep=0pt,minimum size=1cm,xscale=-1] at (-0.4,1.2) {$T_A$};
\node[circle,draw,fill=white,inner sep=0pt,minimum size=1cm,xscale=-1] at (0.8,1.2) {$T_B$};
\end{scope}

\begin{scope}[xshift=5.5cm]
\draw[link,rounded corners=3mm] (-0.6,2.1) rectangle (1.0,0.3);
\draw[link,rounded corners=3mm] (-0.2,1.9) rectangle (0.6,0.5);
\node[circle,draw,fill=white,inner sep=0pt,minimum size=1cm,xscale=-1] at (-0.4,1.2) {$T_A$};
\node[circle,draw,fill=white,inner sep=0pt,minimum size=1cm,xscale=-1] at (0.8,1.2) {$T_B$};
\end{scope}

\node at (1.575,1.2) {$\cong$};
\node at (4.325,1.2) {$\cong$};
\draw[densely dashed] ($(tanglescope.north east)+(0.2,0.2)$) rectangle ($(tanglescope.south west)+(-0.2,-0.2)$);
\end{scope}
\end{tikzpicture}
\caption{Realizing $S^3_1(P(7,3,-3))$ as the branched double cover of $10_{153}$, following the same process as in Figure~\ref{fig:P-2k3-dcover}.  In the last step we flip both tangles as illustrated in the inset box.}
\label{fig:dcover-10_153}
\end{figure}
\end{proof}

\begin{lemma} \label{lem:dcover-154}
We have $ \dcover(10_{154})\cong S^3_{13}(10_{139})$, and \[\dim I^\#(\dcover(10_{154}))\] is either $13$ or $15$.
\end{lemma}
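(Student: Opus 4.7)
The proof splits into two parts: identifying $\dcover(10_{154})$ as a Dehn surgery on $10_{139}$, and then showing that the framed instanton homology of that surgery has complex dimension in $\{13, 15\}$.

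For the homeomorphism, I would follow the Montesinos-style approach used in the proofs of Lemmas~\ref{lem:dcover-145} and \ref{lem:dcover-153}. The knot $10_{139}$ can be drawn as the closure of a positive $3$-braid, hence is strongly invertible via $180^\circ$ rotation about a horizontal axis bisecting the braid word. Quotienting $S^3\setminus N(10_{139})$ by this involution yields a tangle in a ball whose branched double cover is the knot exterior; plugging in the rational tangle of slope $+13$ produces a link $L\subset S^3$ with $\dcover(L) \cong S^3_{13}(10_{139})$. A sequence of Reidemeister moves, guided by SnapPy in the style of the previous lemmas, should identify $L$ with $10_{154}$ (with chirality fixed via the convention of \cite{rolfsen}).

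For the dimension bound, observe that $10_{139}$ is a positive knot of Seifert genus $g=4$, so Proposition~\ref{prop:slice-bennequin-positive} gives $\cinvt(10_{139}) = 2g-1 = 7$. Theorem~\ref{thm:rational-surgeries} then yields
\[
\dim_\C I^\#\bigl(S^3_{13}(10_{139});\C\bigr) \;=\; r_0(10_{139}) + |13 - 7| \;=\; r_0(10_{139}) + 6,
\]
with $r_0(10_{139}) \geq 7$ and $r_0(10_{139}) \equiv 7 \pmod 2$. The spectral sequence of Theorem~\ref{thm:kh-to-i-ss} combined with $\dim_\C \Khoddr(10_{154};\C) = 17$ gives $\dim I^\# \leq 17$, and together with $|\chi(I^\#)| = |H_1(\dcover(10_{154});\Z)| = 13$ this narrows the possibilities to $\dim I^\# \in \{13, 15, 17\}$, i.e., $r_0(10_{139}) \in \{7, 9, 11\}$.

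The crux is then to rule out $r_0(10_{139}) = 11$. I would attempt this via an auxiliary surgery identification: most favorably, by realizing some positive integer surgery $S^3_n(10_{139})$ with $n \geq 7$ as the branched double cover of a quasi-alternating link, which by \cite[Corollary~1.2]{scaduto} would make it an instanton L-space and force $r_0(10_{139}) = 7$; failing that, any identification of $S^3_n(10_{139})$ for some $n$ with a surgery on a simpler knot whose $\cinvt$ and $r_0$ are already known would suffice, provided the resulting dimension is strictly less than $11 + |n-7|$. This last step is the one I expect to be the most delicate, since it is exactly the one that must break the symmetry between the two candidate values left open by the spectral-sequence bound, and it is the same sort of obstruction that leaves the answer in the lemma ambiguous between $13$ and $15$.
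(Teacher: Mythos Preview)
Your plan for the homeomorphism $\dcover(10_{154}) \cong S^3_{13}(10_{139})$ is essentially the paper's: exploit the strong invertibility of $10_{139}$, pass to the quotient tangle, fill with the appropriate rational tangle, and let SnapPy identify the result as $10_{154}$ up to mirroring, then fix the chirality. (The paper pins down the chirality by noting that $10_{154}$ is a positive knot of genus $3$, hence $\chominvt(10_{154})=3$, and that the diagram of $K$ produced can be unknotted by changing four positive crossings to negative, forcing $\chominvt(K)\geq 0$.)

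Where your proposal falls short is in the dimension calculation. Your spectral-sequence bound $\dim I^\#\leq \dim\Khoddr(10_{154};\C)=17$ is correct, and together with the Euler characteristic it gives $\dim I^\#\in\{13,15,17\}$. But the last step---eliminating $17$---is not carried out; you only sketch two strategies. The first (finding an instanton L-space surgery on $10_{139}$) would actually prove $r_0(10_{139})=7$ and hence $\dim I^\#=13$, which is stronger than the lemma and almost certainly false: the paper remarks that it expects $\dim I^\#=15$ precisely because $10_{139}$ is not a Heegaard Floer L-space knot. So that route is a dead end. The second strategy (identifying $S^3_n(10_{139})$ with a surgery on a simpler knot) is closer, but still too vague to count as a proof.

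The paper's actual argument is a concrete realization of this second idea, but via a surgery \emph{triad} rather than a single identification. Using SnapPy one checks that $S^3_{14}(10_{139})$, $L(9,2)$, and $S^3_5(4_1)$ are all Dehn fillings of the cusped census manifold \texttt{m011} along slopes of pairwise distance one. The resulting exact triangle, together with $\dim I^\#(L(9,2))=9$ and $\dim I^\#(S^3_5(4_1))=7$, gives $\dim I^\#(S^3_{14}(10_{139}))\leq 16$; the Euler characteristic then forces this to be $14$ or $16$. Since $\cinvt(10_{139})=7<14$, Theorem~\ref{thm:rational-surgeries} gives $\dim I^\#(S^3_{13}(10_{139}))=\dim I^\#(S^3_{14}(10_{139}))-1\in\{13,15\}$. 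Note that the paper never invokes the Khovanov spectral sequence here; the triad bound alone already does the work.
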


\begin{proof}
The knot $10_{139}$ is strongly invertible, so we use the procedure described in \S\ref{ssec:pretzels} to find a knot $K$ whose branched double cover is $S^3_{13}(10_{139})$.  This is illustrated in Figure~\ref{fig:dcover-10_154}, and the resulting diagram of $K$ has 25 crossings, but SnapPy easily recognizes $K$ as either $10_{154}$ or its mirror.
\begin{figure}
\tikzset{twistregion/.style={draw, fill=white, thick, minimum width=0.6cm}}
\tikzset{thinlink/.style = { white, double = black, line width = 1.2pt, double distance = 0.6pt, looseness=1.75 }}
\newcommand{\AxisRotator}{\tikz [x=0.25cm,y=0.10cm,line width=.2ex,-stealth] \draw[thin] (0,0) arc (105:435:1 and 1);}
\begin{tikzpicture}

\begin{scope} 
\clip (-1.4,-1.5) rectangle (1.4,2.15); 
\draw[semithick] (0,-0.9) -- (0,2.1); 
\draw[link] (0,1.5) to[out=180,in=90] (-0.8,0.5) to[out=270,in=210] (0,-0.5) to[out=30,in=270,looseness=1] (0.5,0.5) to[out=90,in=45] (0,1);
\draw[link] (0,1.5) to[out=0,in=90] (0.8,0.5) to[out=270,in=-30] (0,-0.5) to[out=150,in=270,looseness=1] (-0.5,0.5) to[out=90,in=135] (0,1);
\draw[link] (0,1) to[out=315,in=180] (0.75,0.75) to[out=0,in=0,looseness=2.5] (0,0);
\draw[link] (0,1) to[out=225,in=0] (-0.75,0.75) to[out=180,in=180,looseness=2.5] (0,0);
\begin{scope} \clip (0.25,1) rectangle (1,0.5);
\draw[link] (0,1.5) to[out=0,in=90] (0.8,0.5);
\draw[link] (0.5,0.5) to[out=90,in=45] (0,1);
\end{scope}
\begin{scope} \clip (-0.3,0.7) rectangle (0.3,1.3);
\draw[link] (-0.5,0.5) to[out=90,in=135] (0,1) to[out=315,in=180] (0.75,0.75);
\end{scope}
\begin{scope} \clip (-1,-0.25) rectangle (-0.25,0.4);
\draw[link] (-0.8,0.5) to[out=270,in=210] (0,-0.5);
\draw[link] (0,-0.5) to[out=150,in=270,looseness=1] (-0.5,0.5);
\end{scope}
\draw[semithick] (0,-0.25) -- (0,0.25) (0,1.25) -- (0,1.75);
\node at (0,1.85) {\AxisRotator};
\node at (0,-1.25) {$10_{139}$};
\end{scope}

\node at (2,0.5) {$\xrightarrow{\textrm{\tiny quotient}}$};

\begin{scope}[xshift=4cm] 
\draw[semithick,looseness=2] (0,-0.8) -- (0,1.6) to[out=90,in=90] (0.5,1.6) -- (0.5,-0.8) to[out=270,in=270] (0,-0.8); 
\draw[link] (0,1.5) to[out=180,in=90] (-0.8,0.5) to[out=270,in=180,looseness=1] (-0.4,-0.7);
\draw[link] (-0.4,-0.7) to[out=0,in=180,looseness=1] (0,-0.7) to[out=0,in=0,looseness=2.5] (0,-0.3);
\draw[link] (0,-0.3) to[out=180,in=270,looseness=1] (-0.5,0.5);
\draw[link] (-0.5,0.5) to[out=90,in=180,looseness=1] (0,1.15) to[out=0,in=0,looseness=2.5] (0,0.75);
\draw[link] (0,0.75) to[out=180,in=0,looseness=0.5] (-0.3,0.75) -- (-0.75,0.75) to[out=180,in=180,looseness=2.5] (0,0);
\begin{scope} \clip (-1,-0.25) rectangle (-0.25,0.4);
\draw[link] (-0.8,0.5) to[out=270,in=180,looseness=1] (-0.4,-0.7);
\draw[link] (0,-0.3) to[out=180,in=270,looseness=1] (-0.5,0.5);
\end{scope}
\draw[semithick] (0,-0.15) -- (0,0.25) (0,1.25) -- (0,1.6);
\draw[thinlink] (0,-0.8) -- (0,-0.5) (0,0.55) -- (0,0.95);
\end{scope}

\node at (5.2,0.5) {$\xrightarrow{\textrm{\tiny isotope}}$};

\begin{scope}[xshift=7.7cm] 
\tikzset{thindouble/.style={line width=2.4pt,white,preaction={preaction={draw,line width=5pt,white},draw,line width=3.6pt,black},looseness=1.75}}
\draw[semithick,looseness=2] (0,-0.8) -- (0,0) (0,0.4) -- (0,1.3) (0,1.7) to[out=90,in=90] (0.5,1.7) -- (0.5,-0.8) to[out=270,in=270] (0,-0.8); 
\draw[semithick] ($(-0.2,1.5)+(0,1.5pt)$) to[out=0,in=270,looseness=1] (0,1.7);
\draw[semithick] ($(-0.2,1.5)+(0,-1.5pt)$) to[out=0,in=90,looseness=1] (0,1.3);
\draw[thindouble] (-0.2,1.5) to[out=180,in=90] (-1.2,0.5) -- (-1.2,-0.8);
\draw[thindouble] (-1.2,-1.2) to[out=270,in=270,looseness=2] (-0.4,-1.2);
\draw[thindouble] (-0.4,-1.2) to[out=90,in=180,looseness=1] (0,-0.7) to[out=0,in=0,looseness=2.5] (0,-0.3);
\draw[thindouble] (0,-0.3) to[out=180,in=270,looseness=1] (-0.75,0.5);
\draw[thindouble] (-0.75,0.5) to[out=90,in=180,looseness=1] (0,1.15) to[out=0,in=0,looseness=2.5] (0,0.75);
\draw[thindouble] (0,0.75) to[out=180,in=0,looseness=0.5] (-0.45,0.75) -- (-1.125,0.75) to[out=180,in=180,looseness=3] (-0.2,0.2);
\draw[semithick] ($(-0.2,0.2)+(0,1.5pt)$) to[out=0,in=270,looseness=1] (0,0.4);
\draw[semithick] ($(-0.2,0.2)+(0,-1.5pt)$) to[out=0,in=90,looseness=1] (0,0);
\begin{scope} \clip (-1.5,-0.25) rectangle (0,0.5);
\draw[thindouble] (-1.2,0.5) -- (-1.2,0);
\draw[thindouble] (0,-0.3) to[out=180,in=270,looseness=1] (-0.75,0.5);
\end{scope}
\draw[thinlink] (0,-0.8) -- (0,-0.5) (0,0.55) -- (0,0.95);
\node[twistregion,semithick] at (-1.2,-0.2) {\tiny$-12$};
\draw[line width=0.75pt,fill=white] (-1.2,-1.05) circle (0.4);
\begin{scope} \clip (-1.2,-1.05) circle (0.4);
\draw[semithick] (-1.2,-0.65) circle (0.5656);
\draw[semithick,dotted] (-1.6,-1.05) to[out=30,in=150] (-0.8,-1.05);
\end{scope}
\end{scope}

\node at (8.65,0.5) {$\xrightarrow{\textrm{\tiny fill}}$};

\begin{scope}[xshift=10.8cm] 
\tikzset{thickdouble/.style={line width=1.75pt,white,preaction={preaction={draw,line width=6pt,white},draw,line width=4.25pt,black},looseness=1.75}}
\draw[link,looseness=2] (0,-0.8) -- (0,0) (0,0.4) -- (0,1.3) (0,1.7) to[out=90,in=90] (0.5,1.7) -- (0.5,-0.8) to[out=270,in=270] (0,-0.8); 
\draw[link] ($(-0.2,1.5)+(0,1.5pt)$) to[out=0,in=270,looseness=1] (0,1.7);
\draw[link] ($(-0.2,1.5)+(0,-1.5pt)$) to[out=0,in=90,looseness=1] (0,1.3);
\draw[thickdouble] (-0.2,1.5) to[out=180,in=90] (-1.2,0.5) -- (-1.2,0.1);
\draw[thickdouble] (-1.2,-0.3) to[out=270,in=180,looseness=1] (-0.6,-0.7);
\draw[thickdouble] (-0.6,-0.7) to[out=0,in=180,looseness=1] (0,-0.7) to[out=0,in=0,looseness=2.5] (0,-0.3);
\draw[thickdouble] (0,-0.3) to[out=180,in=270,looseness=1] (-0.75,0.5);
\draw[thickdouble] (-0.75,0.5) to[out=90,in=180,looseness=1] (0,1.15) to[out=0,in=0,looseness=2.5] (0,0.75);
\draw[thickdouble] (0,0.75) to[out=180,in=0,looseness=0.5] (-0.45,0.75) -- (-1.125,0.75) to[out=180,in=180,looseness=3] (-0.2,0.2);
\draw[link] ($(-0.2,0.2)+(0,1.5pt)$) to[out=0,in=270,looseness=1] (0,0.4);
\draw[link] ($(-0.2,0.2)+(0,-1.5pt)$) to[out=0,in=90,looseness=1] (0,0);
\begin{scope} \clip (-1.5,-0.25) rectangle (0,0.5);
\draw[thickdouble] (-1.2,0.5) -- (-1.2,0);
\draw[thickdouble] (0,-0.3) to[out=180,in=270,looseness=1] (-0.75,0.5);
\end{scope}
\draw[link] (0,-0.8) -- (0,-0.5) (0,0.55) -- (0,0.95);
\draw[link] ($(-1.2,0.1)+(-1.5pt,0)$) to[out=270,in=90,looseness=1] ++(3pt,-0.4);
\draw[link] ($(-1.2,0.1)+(1.5pt,0)$) to[out=270,in=90,looseness=1] ++(-3pt,-0.4);
\node at (-0.6,-1.25) {$K$};
\draw[very thin] (0,0.75) circle (0.2);
\draw[very thin] (0,-0.7) circle (0.2);
\end{scope}

\end{tikzpicture}
\caption{Realizing the complement of $10_{139}$ as the branched double cover of a tangle, and then filling in a rational tangle to get a knot $K$ whose branched double cover is $S^3_{13}(10_{139})$.  The box labeled ``$-12$'' contains $12$ left-handed half-twists.  Changing the four positive crossings circled at right to negative ones turns $K$ into an unknot.}
\label{fig:dcover-10_154}
\end{figure}
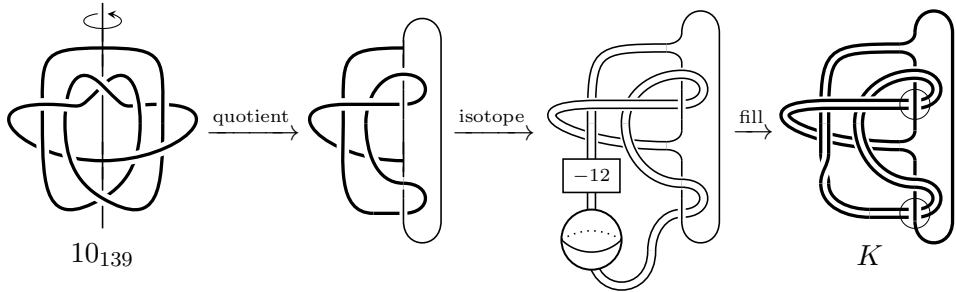
To determine the chirality of $K$, we note that $10_{154}$ is a positive knot of genus $3$, so that $\chominvt(10_{154}) = 3$ by Proposition~\ref{prop:slice-bennequin-positive} and then $\chominvt(\mirror{10_{154}})=-3$.  But we can change four positive crossings of $K$ to negative ones to unknot it, as shown in Figure~\ref{fig:dcover-10_154}, so repeated application of Proposition~\ref{prop:tau-change} says that $\chominvt(K) \geq \chominvt(U) = 0$, hence $K=10_{154}$.

In order to compute $\dim I^\#(\dcover(10_{154}))$, we first observe that $\cinvt(10_{139})=7$ by Proposition~\ref{prop:slice-bennequin-positive}, since $10_{139}$ is a positive knot of genus $4$.  Thus
\[ \dim I^\#(S^3_{13}(10_{139})) = \dim I^\#(S^3_{14}(10_{139})) - 1, \]
so it will suffice to show that $\dim I^\#(S^3_{14}(10_{139}))$ is either $14$ or $16$.

We can identify $S^3_{14}(10_{139})$ as a filling of the hyperbolic census manifold \texttt{m011}, and verify that another filling of \texttt{m011} is in fact $5$-surgery on the figure eight up to orientation:
\begin{verbatim}
In[1]: Manifold('10_139(14,1)').identify()
Out[1]: [m011(1,3)]
In[2]: Manifold('4_1(5,1)').is_isometric_to(Manifold('m011(1,2)'))
Out[2]: True
\end{verbatim}
(In order to verify that \texttt{10\char`_139(14,1)} really means $14$-surgery on $10_{139}$ rather than on its mirror, we compute the normal boundary slopes of \texttt{10\char`_139}:
\begin{verbatim}
In[3]: Manifold('10_139').normal_boundary_slopes()
Out[3]: [(12, 1), (13, 1), (14, 1), (18, 1), (20, 1)]
\end{verbatim}
The $13$-surgery is $\pm\dcover(10_{154})$, in which a Conway sphere lifts to an incompressible torus, so $13$ is a boundary slope for $10_{139}$ but not for its mirror, and hence \texttt{10\char`_139} is indeed $10_{139}$.) The manifold \texttt{m011(0,1)} is not hyperbolic, but it has fundamental group $\Z/9\Z$, and Regina identifies it as the lens space $L(9,2)$.

Since the filling slopes $(0,1)$, $(1,2)$, and $(1,3)$ have pairwise distance 1, there is a surgery exact triangle relating $I^\#(L(9,2))$, $I^\#(S^3_5(4_1))$, and $I^\#(S^3_{14}(10_{139}))$. We also know that \begin{align*}
\dim I^\#(L(9,2)) &=9, & \dim I^\#(S^3_5(4_1)) &=7, 
\end{align*} which implies that 
\[\dim I^\#(S^3_{14}(10_{139}))\leq 16,\] by exactness. Since the latter group has Euler characteristic 14 by \eqref{eq:framed-chi},  we have
\[ \dim I^\#(S^3_{14}(10_{139})) = 14 \mathrm{\ or\ } 16, \] which implies that \[ \dim I^\#(\dcover(10_{154})) = \dim I^\#(S^3_{13}(10_{139})) = 13 \mathrm{\ or\ } 15, \]
as claimed.
\end{proof}

\begin{remark}
We expect that $\dim I^\#(S^3_{13}(10_{139}))$ is $15$ rather than $13$, because $10_{139}$ is not an L-space knot in Heegaard Floer homology.
\end{remark}

\begin{proof}[Proof of Theorem \ref{thm:main-ss}]
As noted above, this follows immediately from the fact that \[\dim \Khoddr(K;\C)>\dim I^\#(\dcover(K);\C)\] for all of the knots $K$ in \eqref{eq:sevenknots} besides $10_{152}$, as indicated in Table \ref{table:ss-table}.
\end{proof}

\section{Hyperbolic manifolds with small volume}
\label{sec:small-hyperbolic}

In this section, we compute the framed instanton homology for many of the smallest manifolds in the Hodgson--Weeks census of closed hyperbolic 3-manifolds, proving Theorem \ref{thm:hyperbolic}, which states that these computations are as summarized in Table~\ref{table:hw-census}. Our proof follows from Propositions \ref{prop:hw-surgeries}, \ref{prop:hw-branched}, and \ref{prop:census-2-7-16}.

\begin{proposition} \label{prop:hw-surgeries}
Twelve of the first twenty Hodgson--Weeks census manifolds are surgeries on knots in $S^3$, and their framed instanton homology is as specified in Table~\ref{table:hw-table1}.
\end{proposition}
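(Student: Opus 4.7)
The plan is to handle this proposition essentially as a bookkeeping exercise leveraging Theorem~\ref{thm:rational-surgeries}. First I would use SnapPy to identify, among the first twenty manifolds $Y_i$ in the Hodgson--Weeks census, which ones admit a description as $S^3_{p/q}(K)$ for some knot $K \subset S^3$ and some rational slope $p/q$. For each such $Y_i$, I would record an explicit identification $Y_i \cong S^3_{p_i/q_i}(K_i)$, verified by SnapPy's \texttt{is\_isometric\_to} method (comparing against the cusped manifold obtained by drilling $K_i$, with the appropriate filling slope). The expectation, based on the small-volume nature of these census manifolds, is that most of the knots $K_i$ that arise are low-crossing knots already appearing in Table~\ref{table:isharp-values}, twist knots from Theorem~\ref{thm:main-twist}, pretzel knots of the form $P(n,3,-3)$ from Theorem~\ref{thm:main-pretzels}, or instanton L-space knots (in particular torus knots) covered by Theorem~\ref{thm:nu-l-space}. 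The chirality of each $K_i$ must be pinned down, which I would do by comparing homological or hyperbolic invariants, or (where necessary) by checking boundary slopes as in the proof of Lemma~\ref{lem:dcover-154}.

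Once each $Y_i \cong S^3_{p_i/q_i}(K_i)$ is identified with $K_i$ in a class where we know both $\cinvt(K_i)$ and $r_0(K_i)$, the dimension computation is immediate from Theorem~\ref{thm:rational-surgeries}:
\[ \dim I^\#(Y_i) = q_i\cdot r_0(K_i) + |p_i - q_i\cinvt(K_i)|, \]
subject to the caveat that $p_i/q_i \neq 0$, or else $K_i$ is not W-shaped. In the cases where $K_i$ is W-shaped and the slope is $0$, I would note that $p_i \neq 0$ whenever $|H_1(Y_i;\Z)| \neq 0$, which is satisfied for all manifolds in Table~\ref{table:hw-census} (indeed, the first homology of each $Y_i$ is finite and nonzero, so in particular $p_i \neq 0$ for any presentation as a surgery on a knot in $S^3$). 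The formula above then gives dimensions matching the entries in Table~\ref{table:hw-table1}, and one verifies consistency with the Euler characteristic $|H_1(Y_i;\Z)|$ via \eqref{eq:framed-chi}.

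The main obstacle I anticipate is not conceptual but practical: some census manifolds may a priori have multiple natural surgery descriptions, and choosing one where both $\cinvt(K_i)$ and $r_0(K_i)$ are known may require some searching. In particular, if SnapPy suggests a $K_i$ not already treated in our tables, I would try alternative surgery descriptions, or work inductively via the surgery exact triangle and Theorem~\ref{thm:conc-invt} to compute $\cinvt$ and $r_0$ for the new knot. For cases involving cables or satellite knots, Theorem~\ref{thm:nu-l-space} together with Theorem~\ref{thm:l-space-cable} gives $r_0$ and $\cinvt$ for any instanton L-space cable. The remaining proposition, Proposition~\ref{prop:hw-branched}, presumably handles census manifolds which are branched double covers of links (via Theorem~\ref{thm:kh-to-i-ss} and Proposition~\ref{prop:ss-thin}), and Proposition~\ref{prop:census-2-7-16} deals with the three remaining ad hoc cases; but the twelve manifolds expressible as knot surgeries are the ones treated here, and for each the computation reduces to a single application of Theorem~\ref{thm:rational-surgeries}.
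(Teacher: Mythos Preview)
Your proposal is correct and follows essentially the same approach as the paper: use SnapPy's \texttt{is\_isometric\_to} to identify each of the twelve census manifolds as a Dehn surgery on a specific knot, pin down the chirality of that knot, and then read off $\dim I^\#$ from Theorem~\ref{thm:rational-surgeries} using known values of $\cinvt$ and $r_0$. The paper carries this out concretely: the knots that arise are $4_1$, $\mirror{5_2}$, the pretzel $P(-2,3,7)$, and the twisted torus knot $k5_1$ (the latter two being instanton L-space knots, so Proposition~\ref{prop:nu-l-space} applies), and chirality is settled not via boundary slopes but by checking which mirror has a lens space surgery or a non-hyperbolic small surgery.
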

\begin{table}
\bgroup
\def\arraystretch{1.25}
\begin{tabular}{c|ccc|cc}
\# & $M$ & $N$ & Manifold & $|H_1|$ & $\dim I^\#$ \\ \hline
1 & \texttt{m003(-2,3)} & \texttt{4\char`_1(5,1)} & $S^3_5(4_1)$ & $5$ & $7$ \\
4 & \texttt{m004(6,1)} & \texttt{4\char`_1(6,1)} & $S^3_6(4_1)$ & $6$ & $8$ \\
5 & \texttt{m004(1,2)} & \texttt{4\char`_1(1,2)} & $S^3_{1/2}(4_1)$ & $1$ & $5$ \\
6 & \texttt{m009(4,1)} & \texttt{5\char`_2(6,1)} & $S^3_{6}(\mirror{5_2})$ & $6$ & $8$ \\
9 & \texttt{m004(3,2)} & \texttt{4\char`_1(3,2)} & $S^3_{3/2}(4_1)$ & $3$ & $7$ \\
10 & \texttt{m004(7,1)} & \texttt{4\char`_1(7,1)} & $S^3_7(4_1)$ & $7$ & $9$ \\
11 & \texttt{m004(5,2)} & \texttt{4\char`_1(5,2)} & $S^3_{5/2}(4_1)$ & $5$ & $9$ \\
12 & \texttt{m003(-5,3)} & \texttt{K12n242(35,2)} & $S^3_{35/2}(P)$ & $35$ & $35$ \\
13 & \texttt{m007(1,2)} & \texttt{K12n242(21,1)} & $S^3_{21}(P)$ & $21$ & $21$ \\
17 & \texttt{m003(-5,4)} & \texttt{K5\char`_1(30,1)} & $S^3_{30}(k5_1)$ & $30$ & $30$ \\
18 & \texttt{m006(-3,2)} & \texttt{K12n242(15,1)} & $S^3_{15}(P)$ & $15$ & $15$ \\
19 & \texttt{m015(5,1)} & \texttt{5\char`_2(7,1)} & $S^3_7(\mirror{5_2})$ & $7$ & $9$
\end{tabular}
\egroup
\caption{Some manifolds in the Hodgson--Weeks census which are surgeries on knots in $S^3$.  Here $P$ denotes the pretzel $P(-2,3,7)$, and all identifications are up to orientation.}
\label{table:hw-table1}
\end{table}

\begin{proof}
The proof is by computation in SnapPy.  The columns of Table~\ref{table:hw-table1} are as follows: the label refers to the index within the census, and $M$ is the corresponding description within SnapPy as a filling of a cusped hyperbolic manifold, e.g.,
\begin{verbatim}
In[1]: OrientableClosedCensus[1]
Out[1]: m003(-2,3)
\end{verbatim}
The name in $N$ is a SnapPy description of another 3-manifold which can be rigorously proved isometric to $M$ as follows:
\begin{verbatim}
In[2]: M = Manifold("m003(-2,3)")
In[3]: N = Manifold("4_1(5,1)")
In[4]: M.is_isometric_to(N)
Out[4]: True
\end{verbatim}
These computations do not keep track of orientation, and a knot description like \texttt{5\char`_2} may refer to either $5_2$ or its mirror $\mirror{5_2}$, so we need to determine which is which for the column labeled ``Manifold''.  (For the amphichiral $4_1$ this does not matter.)  For \texttt{K12n242}, which is the pretzel $P = P(-2,3,7)$ up to mirroring, we can check that $18$-surgery is cyclic:
\begin{verbatim}
In[5]: Manifold('K12n242(18,1)').fundamental_group()
Out[5]: 
Generators:
   a
Relators:
   aaaaaaaaaaaaaaaaaa
\end{verbatim}
and since $S^3_{18}(P)$ is a lens space but $S^3_{18}(\mirror{P})$ is not, we have $\texttt{K12n242} = P$.  Similarly, \texttt{K5\char`_1} describes a twisted torus knot in the Callahan--Dean--Weeks census of hyperbolic knots \cite{cdw}:
\[ k5_1 = T(5,6)_{2,1} = \textrm{braid closure of } (\sigma_4\sigma_3\sigma_2\sigma_1)^6\sigma_1^2 \in B_5 \]
and \texttt{K5\char`_1(31,1)} has cyclic fundamental group, meaning it must be a positive surgery on the braid-positive knot $k5_1$, so $\texttt{K5\char`_1}=k5_1$.

This leaves only the question of \texttt{5\char`_2}.  In the proof of Proposition~\ref{prop:isharp-5_2-surgeries} we identified $5_2$ as the two-bridge knot $K(2,4)$ and thus used \eqref{eq:surgery-2-bridge-even} to assert that
\[ S^3_{-1}(5_2) = S^3_{-1}(K(2,4)) \cong S^3_{-1/2}(K(2,2)) = S^3_{-1/2}(3_1), \]
which is not hyperbolic since it is a surgery on the left handed trefoil.  Since \texttt{5\char`_2(-1,1)} is hyperbolic, and in fact isometric to $\texttt{m004(1,2)} \cong S^3_{1/2}(4_1)$:
\begin{verbatim}
In[6]: Manifold('5_2(-1,1)').identify()
Out[6]: [m004(1,2)]
\end{verbatim}
we must have $\texttt{5\char`_2} = \mirror{5_2}$.

Finally, we compute the dimension of $I^\#$ for each manifold in Table~\ref{table:hw-table1}  using Theorem \ref{thm:rational-surgeries}, the fact that 
\begin{align*}
(\cinvt(4_1), r_0(4_1)) &= (0, 2), &
(\cinvt(\mirror{5_2}), r_0(\mirror{5_2})) &= (1,3), 
\end{align*}per Table \ref{table:isharp-values},
and the fact that the remaining surgeries are on knots $K \in \{P,k5_1\}$ with a positive instanton L-space surgery, so that $\cinvt(K) = r_0(K) = 2g(K)-1$ by Proposition~\ref{prop:nu-l-space}.  We have $g(P) = 5$ and $g(k5_1) = 11$, so all of the relevant slopes are greater than $2g(K)-1$ and hence these manifolds are instanton L-spaces.
\end{proof}

\begin{proposition} \label{prop:hw-branched}
Census manifolds 0, 3, 8, 14, and 15 are instanton L-spaces.
\end{proposition}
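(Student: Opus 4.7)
The plan is to identify each of the five census manifolds with the branched double cover of a quasi-alternating link in $S^3$, and then apply Proposition~\ref{prop:ss-thin}. For each manifold $Y$ in the list, I would use SnapPy in the same way as in the proof of Proposition~\ref{prop:hw-surgeries} to find a link $L \subset S^3$ whose branched double cover is homeomorphic to $Y$ or $-Y$ (which is harmless because dimension counts are insensitive to orientation). The determinants of these candidate links must match the orders $|H_1(Y;\Z)|$ listed in Table~\ref{table:hw-census}, namely $25$, $25$, $35$, $21$, and $27$ for census manifolds $0$, $3$, $8$, $14$, and $15$ respectively.

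First I would look for diagrammatic identifications via Montesinos tricks, as carried out in \S\ref{sec:ss} and in Proposition~\ref{prop:pretzel-homeo}: if a census manifold $Y$ arises as Dehn surgery on a strongly invertible or periodic knot, then $Y$ is automatically a branched double cover of a link $L$, which we may realize by quotienting by the relevant axis of symmetry. Given a candidate link $L$, I would then verify quasi-alternation by exhibiting an unknotting crossing and recursing on the two resolutions, per the Manolescu--Ozsv\'ath definition, or equivalently by checking directly (for instance via the KnotTheory package) that its reduced odd Khovanov homology satisfies $\dim_\C \Khoddr(L;\C) = \det(L)$. Once $L$ is known to be quasi-alternating, Proposition~\ref{prop:ss-thin} immediately yields
\[ \dim_\C I^\#(-\dcover(L);\C) = \det(L) = |H_1(Y;\Z)|, \]
so $Y$ is an instanton L-space.

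Alternatively, for each $Y$ I could search for a description of $Y$ as a rational surgery $S^3_{p/q}(K)$ on a knot $K \subset S^3$ with a positive integral instanton L-space surgery, in which case Proposition~\ref{prop:nu-l-space} gives $\cinvt(K) = r_0(K) = 2g(K)-1$, and Theorem~\ref{thm:rational-surgeries} then yields $\dim I^\#(S^3_{p/q}(K)) = |p|$ whenever $p/q \geq 2g(K)-1$. Matching $|p|$ against the orders $|H_1(Y;\Z)|$ in the table would confirm the L-space property. This mirrors the treatment of census manifolds $12$, $13$, $17$, and $18$ in Proposition~\ref{prop:hw-surgeries}, where $K$ was taken to be the pretzel $P(-2,3,7)$ or the twisted torus knot $k5_1$, both of which are instanton L-space knots.

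The main obstacle will be the explicit identification step: finding, for each of the five manifolds, a computer-verified homeomorphism either to a rational surgery on a known instanton L-space knot, or to the branched double cover of a specific quasi-alternating link, with the correct framing or linking data so that the order of $H_1$ comes out to match the table. Once such identifications are in hand, the final verification is an immediate application of Proposition~\ref{prop:ss-thin} or of Theorem~\ref{thm:rational-surgeries} combined with Proposition~\ref{prop:nu-l-space}.
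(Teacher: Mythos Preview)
Your plan is exactly the paper's approach: identify each manifold as a branched double cover of a knot with thin reduced odd Khovanov homology and invoke Proposition~\ref{prop:ss-thin}. The paper carries this out concretely in Table~\ref{table:hw-table2}, matching census manifolds $0,3,8,14,15$ with the knots $9_{49}$, $10_{155}$, $10_{163}$, $K11n118$, $9_{47}$ respectively via SnapPy's \texttt{is\_isometric\_to}.

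One small caveat: your phrasing ``verify quasi-alternation \dots\ or equivalently by checking directly \dots\ that $\dim_\C \Khoddr(L;\C) = \det(L)$'' conflates two inequivalent conditions---thinness does not imply quasi-alternating. This matters for census manifold $14$: the paper notes that $K11n118$ is not known to be quasi-alternating, so it instead verifies thinness directly using Shumakovitch's \texttt{KhoHo} program, which is all that Proposition~\ref{prop:ss-thin} requires. Your proposal already allows for this fallback, so the logic survives; just drop the word ``equivalently.''
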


\begin{proof}
We use SnapPy to identify these manifolds as branched double covers of knots $K$ in $S^3$, as shown in Table~\ref{table:hw-table2}.
\begin{table}
\bgroup
\def\arraystretch{1.25}
\begin{tabular}{c|ccc|cc}
\# & $M$ & $K$ & QA? & $|H_1(M)|$ & $\dim I^\#(M)$ \\ \hline
0 & \texttt{m003(-3,1)} & \texttt{9\char`_49} &\checkmark& $25$ & $25$ \\
3 & \texttt{m003(-4,3)} & \texttt{10\char`_155} &\checkmark& $25$ & $25$ \\
8 & \texttt{m003(-4,1)} & \texttt{10\char`_163} &\checkmark& $35$ & $35$ \\
12 & \texttt{m003(-5,3)} & \texttt{10\char`_156} &\checkmark& $35$ & $35$ \\
13 & \texttt{m007(1,2)} & \texttt{10\char`_160} &\checkmark& $21$ & $21$ \\
14 & \texttt{m007(4,1)} & \texttt{K11n118} &?& $21$ & $21$ \\
15 & \texttt{m007(3,2)} & \texttt{9\char`_47} &\checkmark& $27$ & $27$ \\
18 & \texttt{m006(-3,2)} & \texttt{K11n92} &$\times$& $15$ & 15 \\
\end{tabular}
\egroup
\caption{Some manifolds $M$ in the Hodgson--Weeks census which are branched double covers of knots $K$ in $S^3$ for which $\Khoddr(K;\C)$ is thin. ``QA'' means quasi-alternating, and in the remaining cases we used Shumakovitch's \texttt{KhoHo} program \cite{khoho} to prove thinness.}
\label{table:hw-table2}
\end{table}
We can verify each of these examples as follows:
\begin{verbatim}
In[1]: M = Manifold("m003(-4,3)")
In[2]: S = Manifold("10_155(2,0)").covers(2)[0]
In[3]: M.is_isometric_to(S)
Out[3]: True
\end{verbatim}
In each of these cases, the reduced odd Khovanov homology of $K$ over $\C$ is thin, and this means by Proposition~\ref{prop:ss-thin} that \[\dim I^\#(M) = \det(K) = |H_1(M)|,\] so $M$ is an instanton L-space.
\end{proof}

\begin{proposition} \label{prop:census-2-7-16}
Census manifolds 2 and 16 are instanton L-spaces, and census manifold 7 has framed instanton homology of dimension either 10 or 12.
\end{proposition}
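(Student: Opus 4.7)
The plan is to adapt the strategy of Proposition~\ref{prop:hw-surgeries}: identify each of these three manifolds as a Dehn filling of a small cusped hyperbolic manifold (in each case, one of \texttt{m003}, \texttt{m006}, or \texttt{m007}, which are precisely the parent cusped manifolds of the other entries in Table~\ref{table:hw-census}), and then exploit the surgery exact triangle of Theorem~\ref{thm:exact-triangle} applied to triples of fillings whose slopes form a Farey triangle (that is, three slopes satisfying the pairwise distance-one condition of Proposition~\ref{prop:two-triangles}). Combined with the Euler characteristic lower bound $\dim I^\#(M) \geq |H_1(M;\Z)|$ from \eqref{eq:framed-chi} and the matching parity constraint, such a triangle will pin down $\dim I^\#$ whenever two of the three terms can be computed or bounded by independent means.

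For census manifold $2 = \texttt{m007}(3,1)$, the slopes $(3,1), (4,1), (7,2)$ form a Farey triangle, giving an exact triangle
\[ \cdots \to I^\#(\texttt{m007}(3,1)) \to I^\#(\texttt{m007}(4,1)) \to I^\#(\texttt{m007}(7,2)) \to \cdots. \]
Proposition~\ref{prop:hw-branched} provides $\dim I^\#(\texttt{m007}(4,1)) = 21$, so it remains to identify $\texttt{m007}(7,2)$ via SnapPy as either a Dehn surgery on a small knot in $S^3$ (so that Theorem~\ref{thm:rational-surgeries} applies, using the table of $\cinvt$ and $r_0$ from \S\ref{sec:computations-dim}) or as a branched double cover of a knot with thin reduced odd Khovanov homology (so that Proposition~\ref{prop:ss-thin} applies). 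Either outcome yields a sharp upper bound for $\dim I^\#(\texttt{m007}(3,1))$ which, together with $|H_1|=18$ and parity, forces the L-space equality. Census manifold $16 = \texttt{m006}(3,1)$ is treated identically, by selecting a Farey triangle containing $(3,1)$ together with slopes whose fillings can be identified as a surgery on a small knot or as a branched double cover of a thin-$\Khoddr$ knot.

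For census manifold $7 = \texttt{m003}(-3,4)$ we only need an upper bound $\dim I^\# \leq 13$: combined with $|H_1|=10$ and parity, this forces $\dim I^\# \in \{10,12\}$. We would therefore look for any Farey triangle containing $(-3,4)$ in which the other two slopes yield \texttt{m003}-fillings whose framed instanton homology we can already compute, using that the other \texttt{m003}-fillings $\texttt{m003}(-3,1)$, $\texttt{m003}(-2,3)$, $\texttt{m003}(-4,3)$, $\texttt{m003}(-4,1)$, $\texttt{m003}(-5,3)$, $\texttt{m003}(-5,4)$ appearing in Table~\ref{table:hw-census} are already pinned down by Propositions~\ref{prop:hw-surgeries} and \ref{prop:hw-branched}.

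The main obstacle is locating suitable Farey triangles: not every slope-triple admits an identification of at least two of its fillings as a surgery on a small knot or a branched double cover of a thin knot, and the search is essentially done by a SnapPy computation over candidate slope triples. The fact that census manifold 7 is not pinned down to a single value presumably reflects the absence of a sharp triangle forcing $\dim I^\# = 10$ exactly; the argument we expect to give just falls one step short, yielding only the two-element possibility $\{10,12\}$ instead.
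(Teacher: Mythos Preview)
Your overall strategy---surgery exact triangles from Farey triples of fillings, combined with the Euler characteristic and parity constraints---is exactly what the paper does. The gap is in the specific triangles you propose.

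For census manifold~2, the triangle $(3,1),(4,1),(7,2)$ cannot work: since $\dim I^\#(\texttt{m007}(4,1))=21$ already exceeds $|H_1(\texttt{m007}(3,1))|=18$, the triangle inequality $\dim I^\#(\texttt{m007}(3,1)) \le 21 + \dim I^\#(\texttt{m007}(7,2))$ is never below $22$, no matter what you prove about $\texttt{m007}(7,2)$. To force the L-space equality you need the \emph{other two} terms to sum to exactly $18$. The paper uses the slopes $(1,0),(2,1),(3,1)$, identifying $\texttt{m007}(1,0)\cong L(3,1)$ (via Regina, since it is not hyperbolic) and $\texttt{m007}(2,1)\cong \dcover(8_{21})$ (quasi-alternating, hence an instanton L-space of order~$15$); then $3+15=18$ is sharp. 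Census manifold~16 is handled the same way with $\texttt{m006}(1,0)\cong L(5,2)$ and $\texttt{m006}(2,1)\cong \texttt{m003}(-3,1)$ (census manifold~0), giving $5+25=30$.

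For census manifold~7, restricting the search to the other $\texttt{m003}$ census fillings you list cannot succeed: among those slopes only $(-2,3)$ has distance~$1$ from $(-3,4)$, and the two slopes completing a Farey triangle with this pair are $(-1,1)$ and $(-5,7)$, neither of which is on your list. The paper uses $(-1,1)$, identifying $\texttt{m003}(-1,1)\cong L(5,1)$, so that $5+7=12$ gives the upper bound and hence $\dim I^\#\in\{10,12\}$.

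The missing idea in all three cases is that the auxiliary fillings one wants are typically \emph{not} hyperbolic---they are lens spaces or other small Seifert fibered spaces---and so will not appear in the census or be found by \texttt{is\_isometric\_to}; one needs Regina (or a fundamental group computation) to recognize them.
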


\begin{proof}
\begin{table}
\bgroup
\def\arraystretch{1.25}
\begin{tabular}{c|cc|lcc}
\# & $M$ & $|H_1(M)|$ & $M_i$ & $|H_1(M_i)|$ & $\dim I^\#(M_i)$ \\ \hline
2 & \texttt{m007(3,1)} & 18 & $\texttt{m007(1,0)} \cong L(3,1)$ & 3 & 3 \\
&&& $\texttt{m007(2,1)} \cong \dcover(8_{21})$ & 15 & 15 \\ \hline
7 & \texttt{m003(-3,4)} & 10 & $\texttt{m003(-1,1)} \cong L(5,1)$ & 5 & 5 \\
&&& $\texttt{m003(-2,3)}$ & 5 & 7 \\ \hline
16 & \texttt{m006(3,1)} & 30 & $\texttt{m006(1,0)} \cong L(5,2)$ & 5 & 5 \\
&&& $\texttt{m006(2,1)} \cong \texttt{m003(-3,1)}$ & 25 & 25 \\
\end{tabular}
\egroup
\caption{Some surgery triads $(M,M_0,M_1)$ which can be used to compute $I^\#(M)$ for various manifolds $M$ in the Hodgson--Weeks census.}
\label{table:hw-table3}
\end{table}
In each case, given the census manifold $M$, we find two other manifolds $M_0$ and $M_1$ such that $(M,M_0,M_1)$ is a ``surgery triad'': the three manifolds are all Dehn fillings of the same 3-manifold along slopes of pairwise distance one.  Then $I^\#(M)$, $I^\#(M_0)$, and $I^\#(M_1)$ fit into a surgery exact triangle.  These are presented in Table~\ref{table:hw-table3}.

Each $M_i$ which is a lens space can be identified as such in Regina, and then it is an instanton L-space.  We do the same for $\texttt{m007(2,1)} \cong \dcover(8_{21})$ (Regina  identifies both of these as \texttt{SFS [S2: (2,1) (3,2) (3,-1)]}, which is Seifert fibered with finite fundamental group), and this is an instanton L-space of order $\det(8_{21})=15$ because $8_{21}$ is quasi-alternating and hence has thin reduced odd Khovanov homology.  We then use SnapPy  for the remaining identification:
\begin{verbatim}
In[1]: Manifold('m006(2,1)').is_isometric_to(Manifold('m003(-3,1)'))
Out[1]: True
\end{verbatim}
and we already know both $I^\#(\texttt{m003(-2,3)})$ and $I^\#(\texttt{m003(-3,1)})$ because these are census manifolds 1 and 0 respectively.

Having identified $M_0$ and $M_1$ as well as their framed instanton homologies, exactness of the surgery triangle plus equation~\eqref{eq:framed-chi} tells us that
\[ |H_1(M)| \leq \dim I^\#(M) \leq \dim I^\#(M_0) + \dim I^\#(M_1) \]
and this forces $\dim I^\#(\texttt{m007(3,1)}) = 18$ and $\dim I^\#(\texttt{m006(3,1)}) = 30$.  It also implies that $\dim I^\#(\texttt{m003(-3,4)})$ is between 10 and 12, and it cannot be 11 since the Euler characteristic is $10$.
\end{proof}

\begin{proof}[Proof of Theorem \ref{thm:hyperbolic}]
The theorem follows  from Propositions \ref{prop:hw-surgeries}, \ref{prop:hw-branched}, and \ref{prop:census-2-7-16}.
\end{proof}

\section{A comparison with Heegaard Floer homology}
\label{sec:comparison}

As was explained to us by Jen Hom, the Heegaard Floer $\tau$ invariant can also be expressed as the homogenization of a concordance invariant coming from surgeries. This is  central to the proofs of Propositions \ref{prop:tau-tau} and \ref{prop:mutation}. We will explain this in detail below, and then prove Proposition \ref{prop:tau-tau} and Theorem \ref{thm:main-comparison} (we already proved Proposition \ref{prop:mutation} in the introduction, assuming these results).

In \cite[\S9]{osz-rational}, Ozsv{\'a}th--Szab{\'o} defined an integer-valued concordance invariant $\nu(K)$ from the knot Floer filtration associated to $K$. They proved that  \begin{equation}\label{eq:nu-tau}\nu(K)\in\{\tau(K),\tau(K)+1\},\end{equation}
and showed that $\nu(K)$ is related to the Heegaard Floer homology of nonzero rational surgeries on $K$ \cite[Proposition 9.6]{osz-rational}.
 Hanselman later used the immersed curves formulation of bordered Heegaard Floer homology to prove the following (essentially equivalent result) in \cite[Proposition 13]{hanselman-cosmetic}, a direct analogue of our Theorem \ref{thm:main-surgery}:

\begin{proposition}\label{prop:hanselman-surgery}
For every knot $K\subset S^3$,  there are integers $\hat\nu(K)$ and  $\hat r_0(K)$ such that
\[ \dim_\F \hfhat(S^3_{p/q}(K);\F) = q\cdot \hat r_0(K) + |p - q\hat\nu(K)| \]
for all nonzero rational  $p/q$ with $p$ and $q$ relatively prime and $q \geq 1$. \end{proposition}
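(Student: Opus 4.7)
The plan is to imitate the proof of Theorem~\ref{thm:main-surgery} essentially verbatim in Heegaard Floer homology, exploiting the fact that the arguments in Sections~\ref{sec:nu-sharp} and \ref{sec:rational-surgeries} rely only on formal properties (the surgery exact triangle, an adjunction inequality for 2-handle cobordism maps, vanishing of compositions via embedded spheres, and $\hfhat(S^3)\cong\F$) that also hold for $\hfhat$ with $\F$ coefficients. Consequently the proof will produce a Heegaard Floer analogue $\hat N(K)$ of $N(K)$, whose difference $\hat N(K)-\hat N(\mirror{K})$ will serve as $\hat\nu(K)$, and whose associated dimension at $\hat\nu(K)$ gives $\hat r_0(K)$.

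The first step is to play back the analysis of Propositions \ref{prop:unimodal-rank-part-1} and \ref{prop:unimodal-rank}. In Heegaard Floer one has a surgery exact triangle
\[
\dots \to \hfhat(S^3) \xrightarrow{\hat F_n} \hfhat(S^3_n(K)) \to \hfhat(S^3_{n+1}(K)) \xrightarrow{\hat G_{n+1}} \dots,
\]
where $\hat F_n$ is the sum over $\spc$ structures of the 2-handle cobordism map. The genus-$g_s(K)$ cobordism surface in $X_n(K)$ with self-intersection $n$ forces $\hat F_n = 0$ for $n \geq \max(2g_s(K)-1,1)$ by the adjunction inequality for $\hfhat$ (see Ozsv{\'a}th--Szab{\'o}, \emph{Holomorphic disks and genus bounds}). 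The analogue of Lemma~\ref{lem:cancel-2-handles}—namely $\hat F_n \circ \hat G_n = 0$ for $n\neq 0$—follows from the same embedded 2-sphere of self-intersection zero, paired against a surface with nonzero intersection when $n\neq 0$, using the standard vanishing result in $\hfhat$. From these two facts I obtain $\hat N(K)$ as the least nonnegative integer $n$ with $\hat F_n=0$, and by induction the unimodal dimension formula
\[
\dim_\F\hfhat(S^3_n(K);\F) = \dim_\F\hfhat(S^3_{\hat N(K)}(K);\F)+|n-\hat N(K)|
\]
for $n\geq 0$, together with its mirror extension giving the full integer statement modulo a possible W-shape correction at $n=0$.

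The second step is the rational surgery case. Here I would reproduce Proposition~\ref{prop:two-triangles} verbatim, using the construction of Figure~\ref{fig:slam-dunks} and the Heegaard Floer surgery triangle, and reproduce Lemma~\ref{lem:compute-1/n} and Proposition~\ref{prop:rational-rank-above-nu} by the same inductive splitting argument: the composition of two successive 2-handle maps in the nested triangles vanishes because of an embedded $(-1)$-sphere or $0$-sphere with nonzero intersection against a spanning surface—exactly as proved by Proposition~\ref{prop:adjunction-inequality} and the argument around equation~\eqref{eq:-1-vs-0mu}, but carried out with the analogous $\hfhat$ adjunction inequality. Setting $\hat\nu(K):=\hat N(K)-\hat N(\mirror{K})$ and
\[
\hat r_0(K):= \dim_\F\hfhat(S^3_{\hat\nu(K)}(K);\F)
\]
(with the W-shaped modification via $\mu$ as in Definition~\ref{def:shape}), the induction on $q$ in the proof of Theorem~\ref{thm:rational-surgeries} carries over unchanged to yield the desired formula for all coprime $(p,q)$ with $q\geq 1$ and $p/q\neq 0$.

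The main obstacle is verifying that every instance of the embedded-sphere vanishing lemma (Proposition~\ref{prop:adjunction-inequality}) and the product-versus-twisted-bundle subtlety that arose in \eqref{eq:-1-vs-0mu} transports correctly to $\hfhat$. In instanton theory we needed to distinguish the two bundles on $S^3_0(K)$ and to show that certain composite maps vanish via a neck-stretching argument; in Heegaard Floer the corresponding statement is replaced by the $\spc$-structure decomposition of cobordism maps and the fact that a $2$-handle cobordism containing a $(-1)$-sphere or a $(+1)$-sphere of appropriate intersection induces the zero map on $\hfhat$ because the relevant first Chern class evaluation is forced outside the adjunction range. Once this is checked, everything else is a mechanical translation, and in fact the conclusion is already stated by Ozsv{\'a}th--Szab{\'o} \cite[Proposition~9.6]{osz-rational} and by Hanselman \cite[Proposition~13]{hanselman-cosmetic}, so the proof will amount to observing that either of these references supplies the required formula with $\hat\nu$ the Ozsv{\'a}th--Szab{\'o} $\nu$-invariant.
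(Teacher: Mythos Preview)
The paper does not give its own proof of this proposition: it is quoted as a result of Hanselman \cite[Proposition~13]{hanselman-cosmetic}, proved there via the immersed curves formulation of bordered Floer homology, with an essentially equivalent earlier statement attributed to Ozsv\'ath--Szab\'o \cite[Proposition~9.6]{osz-rational}. So there is no ``paper's proof'' to compare against beyond the citation.

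Your plan to translate the arguments of \S\ref{sec:nu-sharp}--\ref{sec:rational-surgeries} into $\hfhat$ is sound in principle, and indeed this is exactly the philosophy the paper invokes later in the proof of Theorem~\ref{thm:main-comparison}, where it is observed that the relevant formal ingredients (surgery triangle, adjunction for 2-handle maps, vanishing via embedded spheres, $\hfhat(S^3)\cong\F$) all have Heegaard Floer analogues. One small error: your closing claim that ``$\hat\nu$ is the Ozsv\'ath--Szab\'o $\nu$-invariant'' is not correct. As the paper shows in Lemma~\ref{lem:nuhat}, the relation is $\hat\nu(K)=\max(2\nu(K)-1,0)$ when $\nu(K)\geq\nu(\mirror K)$ (and the mirror statement otherwise), so $\hat\nu$ and $\nu$ are related but not equal.
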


We  show that $\hat\nu$ and $\nu$ are related as in Lemma \ref{lem:nuhat}, which then implies the relationship between $\hat\nu$ and $\tau$ in Corollary \ref{cor:nuhat-tau}. First, we describe some basic properties of $\hat\nu$.

\begin{remark}Since $S^3_n(K)\cong -S^3_{-n}(\mirror{K})$ for any knot $K$ and any integer $n$, Proposition \ref{prop:hanselman-surgery} implies that $\hat\nu(\mirror{K}) = -\hat\nu(K)$ and $\hat r_0(\mirror{K}) = \hat r_0(K)$ for all $K$.
\end{remark}

\begin{corollary}\label{cor:nu-adjunction}For any knot $K\subset S^3$, we have $|\hat\nu(K)|\leq \max(2g_s(K)-1,1).$
\end{corollary}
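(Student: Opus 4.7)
The proof proceeds by mimicking the argument for the instanton-theoretic bound $|\cinvt(K)| \leq \max(2g_s(K)-1,1)$ from Proposition \ref{prop:unimodal-rank-part-1}, with framed instanton homology replaced throughout by $\hfhat$. The key inputs will be the Heegaard Floer surgery exact triangle and the Ozsv\'ath--Szab\'o adjunction inequality for 2-handle cobordism maps in $\hfhat$, which plays exactly the role that Proposition \ref{prop:adjunction-inequality} plays for us.

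First I would define $\hat N(K) \geq 0$ to be the smallest nonnegative integer such that the 2-handle cobordism map $\hfhat(S^3;\F) \to \hfhat(S^3_n(K);\F)$ vanishes for all $n \geq \hat N(K)$.  Since the trace $X_n(K)$ of $n$-surgery on $K$ contains a closed surface of genus $g_s(K)$ and self-intersection $n$, built exactly as in the proof of Proposition \ref{prop:unimodal-rank-part-1} by capping off a genus-minimizing Seifert surface with a core of the $n$-framed $2$-handle, the Heegaard Floer adjunction inequality forces this map to vanish whenever $n \geq \max(2g_s(K)-1,1)$.  In particular $\hat N(K) \leq \max(2g_s(K)-1,1)$.

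Next, since $\hfhat(S^3;\F) \cong \F$, each such cobordism map is either zero or injective, and the surgery exact triangle in Heegaard Floer homology yields $\dim \hfhat(S^3_{n+1}(K);\F) - \dim \hfhat(S^3_n(K);\F) = \pm 1$, with the $+1$ case occurring precisely when the cobordism map vanishes.  Comparing this with the formula of Proposition \ref{prop:hanselman-surgery} specialized to integer slopes $n$, in which $\dim \hfhat(S^3_n(K);\F) = \hat r_0(K) + |n - \hat\nu(K)|$ has slope $-1$ for $n < \hat\nu(K)$ and slope $+1$ for $n > \hat\nu(K)$, one sees that the cobordism map is injective for $n < \hat\nu(K)$ and vanishes for $n \geq \hat\nu(K)$.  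Hence $\hat N(K) = \max(\hat\nu(K),0)$; applying the same reasoning to $\mirror K$ and using the identity $\hat\nu(\mirror K) = -\hat\nu(K)$ gives $\hat N(\mirror K) = \max(-\hat\nu(K),0)$, so
\[
|\hat\nu(K)| = \max\bigl(\hat N(K),\, \hat N(\mirror K)\bigr) \leq \max(2g_s(K)-1,\,1),
\]
as desired.

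The main obstacle is the rigorous identification $\hat N(K) = \max(\hat\nu(K),0)$, which requires that Proposition \ref{prop:hanselman-surgery} really does dictate the behavior of the integer surgeries on the nose, including the edge case $\hat\nu(K) = 0$.  However, the unimodal shape of $n \mapsto \dim\hfhat(S^3_n(K);\F)$ guaranteed by Proposition \ref{prop:hanselman-surgery} combined with the surgery triangle leaves no freedom: the location of the minimum (and hence of the sign change in the cobordism map) is forced to equal $\hat\nu(K)$, modulo the same W-shaped versus V-shaped dichotomy that appears in our setting, which does not affect the bound.
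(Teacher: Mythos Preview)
Your proposal is correct and follows essentially the same approach as the paper: use the Heegaard Floer adjunction inequality to show the $2$-handle cobordism maps vanish for $n \geq \max(2g_s(K)-1,1)$, deduce via the surgery exact triangle that $\dim_\F\hfhat(S^3_{n+1}(K);\F) = \dim_\F\hfhat(S^3_n(K);\F)+1$ in that range, compare with Proposition~\ref{prop:hanselman-surgery}, and then mirror.  The paper's proof is slightly more streamlined in that it never introduces $\hat N(K)$ or attempts the precise identification $\hat N(K) = \max(\hat\nu(K),0)$ (which, as you note, is delicate at $n=0$); it simply observes that the increasing dimensions for large $n$ force $\hat\nu(K) \leq \max(2g_s(K)-1,1)$ directly from the Hanselman formula, sidestepping the edge case entirely.
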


\begin{proof} The  map $\hat F_n$ in the surgery exact triangle
\[\dots\to\hfhat(S^3;\F)\xrightarrow{\hat F_n}\hfhat(S^3_{n}(K);\F)\to\hfhat(S^3_{n+1}(K);\F)\to\dots\] induced by the trace of $n$-surgery $X_n(K)$ must vanish for integers \[n \geq \max(2g_s(K)-1,1)\] by the adjunction inequality in Heegaard Floer homology, since $X_n(K)$ contains a surface of genus $\max(g_s(K),1)$ with self-intersection $n$. This implies that \[\dim_\F\hfhat(S^3_{n+1}(K);\F) = \dim_\F\hfhat(S^3_{n}(K);\F)+1\] for all $n$ in this range. Comparing this to Proposition \ref{prop:hanselman-surgery} shows that $\hat\nu(K)\leq \max(2g_s(K)-1,1).$ The same argument applied to $\mirror{K}$ completes the proof of the corollary.\end{proof}

\begin{lemma} 
\label{lem:nuhat}The integer $\hat\nu(K)$ can be expressed in terms of $\nu(K)$ by \[\hat\nu(K) = \begin{cases}
\max(2\nu(K)-1,0),& \nu(K)\geq \nu(\mirror{K})\\
-\max(2\nu(\mirror{K})-1,0),& \nu(K)\leq \nu(\mirror{K}).
\end{cases}\]
\end{lemma}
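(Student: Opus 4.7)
The plan is to mimic the analysis of \S\ref{sec:nu-sharp} inside Heegaard Floer homology, and then identify the resulting invariant with $\nu(K)$ using Ozsv\'ath--Szab\'o's integer surgery formula. Every ingredient that went into the proof of Theorem~\ref{thm:conc-invt} has a Heegaard Floer analogue: there is a surgery exact triangle for $\hfhat$, we have $\dim_\F \hfhat(S^3;\F) = 1$, there is a K\"unneth theorem for connected sums, and the adjunction inequality guarantees that the trace cobordism maps
\[
\hat F_n : \hfhat(S^3;\F) \to \hfhat(S^3_n(K);\F)
\]
vanish for $n \geq \max(2g_s(K)-1,1)$. Defining $\hat N(K) \geq 0$ to be the least integer for which $\hat F_n = 0$ whenever $n \geq \hat N(K)$, the arguments of Propositions~\ref{prop:unimodal-rank-part-1} and \ref{prop:unimodal-rank} go through verbatim and yield a W-shaped vs.~V-shaped dichotomy; comparing with Hanselman's Proposition~\ref{prop:hanselman-surgery} shows that in the V-shaped case $\hat\nu(K) = \hat N(K) - \hat N(\mirror{K})$ and $\hat N(K) = \max(\hat\nu(K),0)$, while in the W-shaped case $\hat\nu(K) = 0$ and $\hat N(K) = 1$.

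The next step is to identify $\hat N(K)$ with Ozsv\'ath--Szab\'o's $\nu(K)$. Via the integer surgery formula, the map $\hat F_n$ factors through a piece of the mapping cone whose vanishing is controlled by the invariants $V_s(K)$ of the knot Floer complex, and $\nu(K) = \min\{s \geq 0 : V_s(K) = 0\}$ (when this is defined). Tracking the precise dictionary yields
\[
\hat N(K) \;=\; \max\bigl(2\nu(K) - 1,\, 0\bigr) \quad \text{in the V-shaped case,}
\]
while the W-shaped case forces $\nu(K) = \nu(\mirror{K}) = 0$ and $\hat N(K) = 1$ (the unknot is the motivating example).

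The lemma then follows from a short case analysis. A standard property of $\nu$ (see e.g.\ \cite{hom-cabling}) is that $\nu(K) + \nu(\mirror{K}) \in \{0,1\}$, so at most one of $\nu(K), \nu(\mirror{K})$ is positive. If $\nu(K) \geq \nu(\mirror{K})$, then $\nu(\mirror{K}) \leq 0$ and either $\nu(K) \geq 1$ (so $\hat N(K) = 2\nu(K)-1$ and $\hat N(\mirror{K}) = 0$, giving $\hat\nu(K) = 2\nu(K)-1$), or $\nu(K) = \nu(\mirror{K}) = 0$ (so $\hat\nu(K) = 0$ whether $K$ is V- or W-shaped, since in the W-shaped case both $\hat N$'s equal $1$); in either case $\hat\nu(K) = \max(2\nu(K)-1,0)$. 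The case $\nu(K) \leq \nu(\mirror{K})$ follows by applying this to $\mirror{K}$ and using $\hat\nu(\mirror{K}) = -\hat\nu(K)$.

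\textbf{Main obstacle.} The main work is in Step 2: translating the vanishing condition for $\hat F_n$ into a statement about $\nu(K)$ requires unpacking Ozsv\'ath--Szab\'o's mapping cone carefully, especially in order to distinguish the V-shaped and W-shaped cases, which do not appear explicitly in \cite{osz-integer,osz-rational}. All of the pieces are present in the literature, but the bookkeeping is delicate at the W-shaped boundary $\hat N(K) = \hat N(\mirror{K}) = 1$.
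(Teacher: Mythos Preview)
Your plan would work, but it is far more elaborate than necessary, and the paper's proof is substantially shorter. The key observation you are missing is that the ``main obstacle'' you identify---translating the surgery dimension into a statement about $\nu(K)$---is already done in the literature: \cite[Proposition~9.6]{osz-rational} gives an explicit formula for $\dim_\F \hfhat(S^3_n(K);\F)$ in terms of $\nu(K)$ (and $\nu(\mirror{K})$). There is therefore no need to rebuild the analysis of \S\ref{sec:nu-sharp} in Heegaard Floer homology, nor to unpack the mapping cone.

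The paper's proof simply compares two existing formulas for the same quantity. Assuming $\nu(K)\geq\nu(\mirror{K})$, one has $\nu(K)\geq 0$ by \cite[Lemma~9.2]{osz-rational}. If $\nu(K)\geq 1$, then \cite[Proposition~9.6]{osz-rational} shows that the sequence $n\mapsto \dim_\F\hfhat(S^3_n(K);\F)$ is uniquely minimized at $n=2\nu(K)-1$; comparing with Hanselman's formula (Proposition~\ref{prop:hanselman-surgery}) forces $\hat\nu(K)=2\nu(K)-1$. If $\nu(K)=0$, the same proposition gives $\dim_\F\hfhat(S^3_1(K);\F)=\dim_\F\hfhat(S^3_{-1}(K);\F)$, which forces $\hat\nu(K)=0$. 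That is the entire argument.

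Your approach of reconstructing $\hat N(K)$, the V/W dichotomy, and then identifying $\hat N(K)$ via the mapping cone would eventually reproduce the content of \cite[Proposition~9.6]{osz-rational}, but there is no reason to do this when the result is already available. The case analysis you sketch at the end is correct but becomes unnecessary once you use the direct comparison.
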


\begin{proof} 
Since $\hat\nu(\mirror{K})=-\hat\nu(K)$, we may assume without loss of generality that $\nu(K)\geq \nu(\mirror{K})$. Then it follows from \cite[Lemma 9.2]{osz-rational} that $\nu(K)\geq0$. Assume first that $\nu(K)\geq 1$. Then \cite[Proposition 9.6]{osz-rational} implies   that the sequence
\[ \dim_\F \hfhat(S^3_n(K);\F), \qquad n\in\Z \]
is uniquely minimized at $n=2\nu(K)-1$. From Proposition \ref{prop:hanselman-surgery}, we must therefore have $\hat\nu(K) = 2\nu(K)-1$ as desired. If $\nu(K)=0$, then  \cite[Proposition 9.6]{osz-rational} says \[\dim_\F \hfhat(S^3_1(K);\F) =\dim_\F \hfhat(S^3_{-1}(K);\F),\] which forces $\hat\nu=0$ as desired, by Proposition \ref{prop:hanselman-surgery}.
\end{proof}

This immediately implies the corollary below since $\nu$ is a concordance invariant; alternatively, one can prove Corollary \ref{cor:concordance} directly from Proposition \ref{prop:hanselman-surgery} in the same way we proved that $\cinvt$ is a concordance invariant in Theorem \ref{thm:conc-invt}.

\begin{corollary} 
\label{cor:concordance} The integer $\hat\nu(K)$ depends only on the concordance class of $K$.
\end{corollary}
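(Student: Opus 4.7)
The plan is to give two proofs, following the two approaches hinted at in the excerpt, and then to spell out which is cleaner.

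The first approach is essentially immediate. Lemma~\ref{lem:nuhat} expresses $\hat\nu(K)$ as an explicit function of the pair $(\nu(K),\nu(\mirror{K}))$. Since Ozsv\'ath--Szab\'o proved in \cite{osz-rational} that $\nu$ is a smooth concordance invariant, and since mirroring descends to the smooth concordance group (with $\mirror{K_0}$ concordant to $\mirror{K_1}$ whenever $K_0$ is concordant to $K_1$), the pair of values on the right side of the formula in Lemma~\ref{lem:nuhat} is itself a concordance invariant. Therefore so is $\hat\nu(K)$. This is the path I would take first, since it requires essentially no additional work.

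The second, self-contained approach would mimic the proof of Theorem~\ref{thm:conc-invt} verbatim in Heegaard Floer homology. For each integer $n$, let
\[ \hat F_{K,n}: \hfhat(S^3;\F) \to \hfhat(S^3_n(K);\F) \]
denote the cobordism map induced by the trace $X_n(K)$ of $n$-surgery. By Corollary~\ref{cor:nu-adjunction} and the adjunction inequality in Heegaard Floer homology, this map vanishes for all $n$ sufficiently large, and the same handleslide/blowdown argument used for $\cinvt$ shows that the rank of $\hat F_{K,n}$ (which is either $0$ or $1$, since $\hfhat(S^3;\F) \cong \F$) depends only on the smooth concordance class of $K$: a concordance $C$ from $K_0$ to $K_1$ extends to an embedding $X_n(K_1) \hookrightarrow X_n(K_0) \cup V$ that factors $\hat F_{K_1,n}$ through $\hat F_{K_0,n}$ (and conversely). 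The unique integer $\hat N(K) \geq 0$ beyond which $\hat F_{K,n}$ vanishes is therefore a concordance invariant, and one checks directly from Proposition~\ref{prop:hanselman-surgery} that $\hat\nu(K)$ is recovered from the pair $\hat N(K)$ and $\hat N(\mirror{K})$ exactly as $\cinvt(K)$ was recovered from $N(K)$ and $N(\mirror{K})$ in Definition~\ref{def:conc-invt}.

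The hard part, in principle, would be verifying that the factorization argument of Theorem~\ref{thm:conc-invt} goes through in Heegaard Floer homology; but since that argument used only the existence and naturality of 2-handle cobordism maps, together with the fact that the surface $\nu_n$ intersects a capped-off Seifert surface with a parity determined by $n$, it transfers without change. In practice I would simply invoke the first approach, as the concordance invariance of $\nu$ is a standard result, and record the second approach only as a remark.
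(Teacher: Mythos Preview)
Your proposal is correct and matches the paper's own treatment exactly: the paper states that the corollary follows immediately from Lemma~\ref{lem:nuhat} since $\nu$ is a concordance invariant, and mentions as an alternative that one can argue directly from Proposition~\ref{prop:hanselman-surgery} by mimicking the proof of Theorem~\ref{thm:conc-invt}. You have simply written out both of these approaches in more detail.
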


\begin{remark}Since $\hat\nu(U) = 0$ for the unknot $U$, and $\hat\nu(\mirror{K}) = -\hat\nu(K)$ for any $K$, we have that $\hat\nu(K)=0$ for $K$ slice or amphichiral. 
\end{remark}

Moreover, $\hat\nu$ is related to $\tau$ as follows:

\begin{corollary}\label{cor:nuhat-tau}We have $\hat\nu(K)\in\{2\tau(K)-1,2\tau(K),2\tau(K)+1\}$.\end{corollary}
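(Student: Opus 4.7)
The plan is to prove this by a short case analysis using Lemma~\ref{lem:nuhat} together with the bounds $\nu(K)\in\{\tau(K),\tau(K)+1\}$ from \eqref{eq:nu-tau}. First, by the symmetry $\hat\nu(\mirror{K})=-\hat\nu(K)$ and $\tau(\mirror{K})=-\tau(K)$, it suffices to treat the case $\nu(K)\geq \nu(\mirror{K})$; the other case then follows by applying the result to $\mirror{K}$ and negating.

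Assume therefore that $\nu(K)\geq\nu(\mirror{K})$, so by Lemma~\ref{lem:nuhat} we have $\hat\nu(K)=\max(2\nu(K)-1,0)$. I would split into two subcases based on the sign of $2\nu(K)-1$. If $\nu(K)\geq 1$, then $\hat\nu(K)=2\nu(K)-1$, and since $\nu(K)\in\{\tau(K),\tau(K)+1\}$ by \eqref{eq:nu-tau}, we get $\hat\nu(K)\in\{2\tau(K)-1,2\tau(K)+1\}$, which is contained in the desired set.

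The remaining subcase, $\nu(K)\leq 0$, is the one step that requires a tiny argument: here $\hat\nu(K)=0$, and I need to show $0\in\{2\tau(K)-1,2\tau(K),2\tau(K)+1\}$, i.e.\ $\tau(K)=0$. The inequality \eqref{eq:nu-tau} gives $\tau(K)\leq \nu(K)\leq 0$. On the other hand, applying \eqref{eq:nu-tau} to $\mirror{K}$ yields $-\tau(K)=\tau(\mirror{K})\leq \nu(\mirror{K})$, and using the hypothesis $\nu(\mirror{K})\leq \nu(K)\leq 0$ gives $-\tau(K)\leq 0$, i.e.\ $\tau(K)\geq 0$. Hence $\tau(K)=0$ and $\hat\nu(K)=0=2\tau(K)$.

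There is no real obstacle here; the only mildly nontrivial step is the pinching argument in the $\nu(K)\leq 0$ subcase, which uses both directions of \eqref{eq:nu-tau} (for $K$ and for $\mirror{K}$) combined with the ordering hypothesis on $\nu(K)$ and $\nu(\mirror{K})$. Once this is done, the other case follows formally from $\hat\nu(\mirror{K})=-\hat\nu(K)$, as noted above.
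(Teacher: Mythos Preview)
Your proof is correct and follows essentially the same approach as the paper: reduce by the mirror symmetry to the case $\nu(K)\geq\nu(\mirror{K})$, apply Lemma~\ref{lem:nuhat}, and split according to whether $\nu(K)\geq 1$ or not. The only minor difference is in the second subcase: the paper first invokes \cite[Lemma~9.2]{osz-rational} to conclude $\nu(K)\geq 0$ (so only $\nu(K)=0$ remains) and then rules out $\tau(K)=-1$ by contradiction, whereas you bypass that citation with a direct pinching argument using \eqref{eq:nu-tau} for both $K$ and $\mirror{K}$ to force $\tau(K)=0$.
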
 

\begin{proof}
Since $\hat\nu(\mirror{K})=-\hat\nu(K)$ and $\tau(\mirror{K})=-\tau(K)$, we may assume without loss of generality that $\nu(K)\geq \nu(\mirror{K})$. Then $\nu(K)\geq 0$ by \cite[Lemma 9.2]{osz-rational}. 

Assume first that $\nu(K)\geq 1$. Then $\hat\nu(K) = 2\nu(K)-1$ by Lemma \ref{lem:nuhat}, and the corollary follows from \eqref{eq:nu-tau}. 

Now assume $\nu(K) = 0$. Then $\tau(K) = 0$ or $-1$ by \eqref{eq:nu-tau}, and $\hat\nu(K)=0$ by Lemma \ref{lem:nuhat}. If $\tau(K) = 0$, we are done. If $\tau(K) = -1$ then $\tau(\mirror{K})=1$, which implies by \eqref{eq:nu-tau} that $\nu(\mirror{K})\geq 1>\nu(K)$, a contradiction.
\end{proof}

It follows immediately from Corollary \ref{cor:nuhat-tau} that $\hat\nu$ is a quasimorphism from the smooth concordance group to $\Z$, and $\tau$ is one-half of its homogenization, \[\tau(K) = \frac{1}{2}\lim_{n\to \infty} \frac{\hat\nu(\#^nK)}{n}.\] The following is an analogue of Corollary \ref{cor:tau-maximal}, which will help us compute $\hat\nu$.

\begin{corollary}
If $|\tau(K)| = g_s(K)>0,$ then $|\hat\nu(K)| = 2g_s(K)-1$ and $\hat\nu(K)$ has the same sign as $\tau(K)$.
\end{corollary}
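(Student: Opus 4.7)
The plan is to mimic the proof of Corollary~\ref{cor:tau-maximal}, replacing the inputs from framed instanton homology with their Heegaard Floer counterparts established just above. First I would observe that Corollary~\ref{cor:nuhat-tau} can be rewritten as the quasi-morphism-type bound
\[ |\hat\nu(K) - 2\tau(K)| \leq 1 \]
for every knot $K \subset S^3$, and that Corollary~\ref{cor:nu-adjunction} gives the slice-genus bound $|\hat\nu(K)| \leq 2g_s(K)-1$ whenever $g_s(K) \geq 1$ (the case $g_s(K)=0$ is vacuous here since the hypothesis forces $g_s(K) > 0$).

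Combining these two inequalities via the triangle inequality yields
\[ |2\tau(K)| \leq |2\tau(K) - \hat\nu(K)| + |\hat\nu(K)| \leq 1 + (2g_s(K)-1) = 2g_s(K), \]
which recovers the standard slice-genus bound for $\tau$. Now under the hypothesis $|\tau(K)| = g_s(K) > 0$, equality must hold at every step of this chain, which forces $|\hat\nu(K)| = 2g_s(K)-1$ and $|\hat\nu(K) - 2\tau(K)| = 1$ simultaneously.

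It remains to pin down the sign. Since $|2\tau(K)| = 2g_s(K) \geq 2$, if $\hat\nu(K)$ and $\tau(K)$ had opposite signs (or if $\hat\nu(K) = 0$, which cannot happen since $2g_s(K)-1 \geq 1$), then we would get
\[ |\hat\nu(K) - 2\tau(K)| = |\hat\nu(K)| + |2\tau(K)| = (2g_s(K)-1) + 2g_s(K) \geq 3, \]
contradicting $|\hat\nu(K) - 2\tau(K)| = 1$. Hence $\hat\nu(K)$ has the same sign as $\tau(K)$, completing the proof. No step looks like a genuine obstacle; the only mild subtlety is bookkeeping the equality cases, which is why I would phrase everything through the single chain of inequalities above so that the equality conclusion is automatic.
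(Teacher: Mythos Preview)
Your proof is correct and uses the same two inputs as the paper (Corollaries~\ref{cor:nu-adjunction} and \ref{cor:nuhat-tau}); you package them via the triangle-inequality chain exactly as in Corollary~\ref{cor:tau-maximal}, whereas the paper's own argument here is marginally more direct---it assumes without loss of generality that $\tau(K)=g_s(K)>0$ and observes that the constraint $\hat\nu(K)\in\{2g_s(K)-1,2g_s(K),2g_s(K)+1\}$ together with $|\hat\nu(K)|\leq 2g_s(K)-1$ forces $\hat\nu(K)=2g_s(K)-1$ immediately. The two arguments are equivalent in substance.
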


\begin{proof}Suppose $\tau(K) = g_s(K)>0.$ Then  we must have $\hat\nu(K) = 2g_s(K)-1$ by Corollaries \ref{cor:nu-adjunction} and \ref{cor:nuhat-tau}. The same argument applied to $\mirror{K}$ completes the proof.
\end{proof}

\begin{remark}
\label{rmk:HFlspaceknot}We further note that $\hat\nu(K) = \hat r_0(K) = 2g(K)-1$ for Heegaard Floer L-space knots, by \cite[Corollary 1.4]{osz-rational}, so the analogue of Theorem \ref{thm:nu-l-space} (and, of course Lemma \ref{lem:t2q-surgery}) holds in Heegaard Floer homology.
\end{remark}

With these results in place, we may now prove Proposition \ref{prop:tau-tau} and Theorem \ref{thm:main-comparison}.

\begin{proof}[Proof of Proposition \ref{prop:tau-tau}]
Suppose \[\dim_\C I^\#(Y;\C) = \dim_\F \hfhat(Y;\F)\] for all $Y$ obtained via integer surgery on a knot in $S^3$. Let $K\subset S^3$ be a knot. Then \[\dim_\C I^\#(S^3_{\pm N}(\#^nK);\C) = \dim_\F \hfhat(S^3_{\pm N}(\#^nK);\F)\] for all positive integers $N$ and $n$. As in Remark \ref{rmk:nufromsurgery}, $\cinvt(\#^nK)$ is completely determined by the former dimensions for $N$ sufficiently large. But it is clear given Proposition \ref{prop:hanselman-surgery} that $\hat\nu(\#^nK)$ is determined by the latter in exactly the  same way. Thus, $\cinvt(\#^n K) = \hat\nu(\#^nK)$ for all positive integers  $n$, from which it follows that $\chominvt(K) = \tau(K)$, by definition.
\end{proof}

\begin{proof}[Proof of Theorem \ref{thm:main-comparison}]
The equality of dimensions \begin{equation}\label{eq:equality-dim}\dim_\C I^\#(Y;\C) = \dim_\F \hfhat(Y;\F)\end{equation} holds for nonzero surgeries on knots which are both Heegaard Floer and instanton L-space knots (in particular, for surgeries on any knot with a lens space surgery), by Remark \ref{rmk:HFlspaceknot}. The rest of the theorem follows from the facts that \eqref{eq:equality-dim} holds for $S^3$; that it holds for $Y=S^3_0(8_8)$; the  formal properties of $\cinvt,\chominvt,\hat\nu,\tau$; that Proposition \ref{prop:hanselman-surgery} is the direct analogue of Theorem \ref{thm:main-surgery}; that both theories admit surgery exact triangles and have the same Euler characteristics; and the fact that there are spectral sequences from (odd) Khovanov homology to the Floer homology of branched double covers in both settings. We spell this out in more detail below.

Theorem \ref{thm:main-twist} followed from Propositions~\ref{prop:twist-2n+2-surgery} and \ref{prop:twist-2k-1-surgery}. Apart from purely topological results, the former only used Theorem~\ref{thm:main-surgery}, together with computations of the dimension of framed instanton homology for surgeries on trefoils and the figure eight (Lemmas~\ref{lem:t2q-surgery} and \ref{lem:figure-eight}), which followed from Corollary~\ref{cor:lens-spaces}, whose proof only required the single computation $\dim I^\#(S^3)=1$.  The latter used these in addition to the fact that $\cinvt=1$ for positive knots of genus 1 (which follows from the fact that $\chominvt = 1=g_s$ for such knots, and Corollary \ref{cor:tau-maximal}). These all have identical analogues in the Heegaard Floer setting, which proves \eqref{eq:equality-dim} for surgeries on these twist knots.

Theorem \ref{thm:main-pretzels} ultimately followed from topological results, Theorem \ref{thm:main-surgery}, the fact that $\cinvt = 0$ for slice knots, and the framed instanton homology of $S^3_{\pm 1}(K_4=6_1)$, which followed from Proposition \ref{prop:twist-2n+2-surgery}. The equality \eqref{eq:equality-dim} thus holds for surgeries on these pretzel knots.

To prove Theorem \ref{thm:8-crossings}, we first computed $\chominvt$ for the knots in Table \ref{table:main} (actually, for all prime knots through 8 crossings) using the fact that all of these knots (or their mirrors) are either alternating or  quasipositive. We then found $\cinvt$ for these knots using the computations of $\chominvt$ together with the fact that $\cinvt = 0$ for slice and amphichiral knots, and  Corollary \ref{cor:tau-maximal} (we also needed Proposition \ref{prop:twist-2n+2-surgery} to pin down $\cinvt(8_1)$). These values of $\chominvt$ and $\cinvt$ therefore agree with their analogues $\tau$ and $\hat\nu$. The computations of $r_0$ for these knots, and thus of the dimension of framed instanton homology of nonzero surgeries on these knots, used only Theorem \ref{thm:main-surgery} together with the computations for surgeries on torus knots, on twist knots (for $6_2, 7_3,7_4, 8_2,8_3,8_4$; see Propositions \ref{prop:isharp-5_2-surgeries} and \ref{prop:dim-8_234}), on pretzel knots (for $8_5$ and $8_{20}$), and the fact that \[\dim_\C I^\#(S^3_0(8_8);\C) = 14\] (for $6_3,8_6,8_8$; see Proposition \ref{prop:6_3-surgeries}).  The same is true of Theorem~\ref{thm:main-pretzels-2}, which relied on the computation for $6_2$ and the fact that $2\chominvt$ is a slice-torus invariant.  All of these things have identical analogues in Heegaard Floer homology, as discussed, except possibly for the calculation of $\dim_\C I^\#(S^3_0(8_8);\C)$. But it is easy to see from the formula in \cite[Theorem 1.4]{osz-alternating} that we also have \[\dim_\F \hfhat(S^3_0(8_8);\F) = 14,\] so the equality \eqref{eq:equality-dim} also holds for nonzero surgeries on the knots in Table \ref{table:main}.

Finally, Theorem \ref{thm:hyperbolic}, which justifies the computations of framed instanton homology for  the census manifolds in Table \ref{table:hw-census}, followed from Propositions \ref{prop:hw-surgeries}, \ref{prop:hw-branched}, and \ref{prop:census-2-7-16}. The first of these propositions, which justifies the computations in Table \ref{table:hw-table1}, follows from the fact that the manifolds in this table are surgeries on $4_1$, $\mirror{5_2}$, or knots which admit lens space surgeries. We therefore have \eqref{eq:equality-dim} for these manifolds. The second, which justifies the computations in Table \ref{table:hw-table2}, follows from the fact that these manifolds are branched double covers of knots with thin odd Khovanov homology. They also have thin mod 2 reduced Khovanov homology, which proves \eqref{eq:equality-dim} for these manifolds, by the spectral sequence \[\Khr(L;\F)\Rightarrow \hfhat(-\Sigma(L);\F)\] \cite{osz-branched}. Finally, Proposition \ref{prop:census-2-7-16}, which justifies the computations in Table \ref{table:hw-table3}, follows for the two manifolds for which we actually can compute $I^\#$ from the fact that these manifolds fit into surgery triads with other instanton L-spaces (which are either lens spaces or branched double covers of quasi-alternating knots). The analogous argument in Heegaard Floer homology proves \eqref{eq:equality-dim} for the same two manifolds.
\end{proof}

\bibliographystyle{alpha}
\bibliography{References}

\end{document}